\DeclareMathOperator{\Supp}{Supp}
\DeclareMathOperator{\Sym}{Sym}
\DeclareMathOperator{\im}{im}
\DeclareMathOperator{\Proj}{Proj}
\DeclareMathOperator{\Spec}{Spec}
\DeclareMathOperator{\Hom}{Hom}
\DeclareMathOperator{\Ext}{Ext}
\DeclareMathOperator{\pd}{pd}
\DeclareMathOperator{\depth}{depth}
\DeclareMathOperator{\hgt}{ht}
\DeclareMathOperator{\Tor}{Tor}
\DeclareMathOperator{\Hilb}{Hilb}
\DeclareMathOperator{\Quot}{Quot}
\DeclareMathOperator{\Grass}{Grass}
\newcommand{\xExt}{\mathscr{E}xt}
\newcommand{\xHom}{\mathscr{H}om}
\newcommand{\cHilb}{\mathcal{H}ilb}
\newcommand{\cQuot}{\mathcal{Q}uot}
 \newcommand{\x}{\mathscr}
 \newcommand{\C}{\mathbb C}
 \newcommand{\N}{\mathbb N}
 \newcommand{\PP}{\mathbb P}
 \newcommand{\A}{\mathbb A}
 \newcommand{\Q}{\mathbb Q}
 \newcommand{\Z}{\mathbb Z}
\renewcommand\section{\@startsection{section}{1}{\z@}%
                                  {-3.5ex \@plus -1ex \@minus -.2ex}%
                                  {2.3ex \@plus.2ex}%
                                  {\tt\large\bfseries}}
 \numberwithin{equation}{subsection}
 \numberwithin{footnote}{subsection}
 \newtheorem{cor}[subsection]{Corollary}
 \newtheorem{lem}[subsection]{Lemma}
 \newtheorem{thm}[subsection]{Theorem}
    \newtheoremstyle{upright}%
        {8pt plus2pt minus4pt}%
        {8pt plus2pt minus4pt}%
        {\upshape}%
        {}%
        {\bfseries\scshape}%
        {}%
        {1em}%
        {}%
\theoremstyle{upright}        
 \newtheorem{defn}[subsection]{Definition}
 \newtheorem{defn-rem}[subsection]{Definition-Remark}
 \newtheorem{exa}[subsection]{Example}
 \newtheorem{constr}[subsection]{Construction}
 \newtheorem{rem}[subsection]{Remark}
  \newtheorem{rem-thm}[subsection]{Remark-Theorem}
 \newcommand{\ke}[1]{$\acute{\mbox{e}}$}
 \newcommand{\ku}[1]{$\acute{\mbox{u}$}}
 \newcommand{\kl}[1]{$\acute{\mbox{l}}$}
 \newcommand{\kh}[1]{$\acute{\mbox{h}}$}
 \newcommand{\kr}[1]{$\acute{\mbox{r}}$}
 \newcommand{\kx}[1]{$\acute{\mbox{x}}$}
 \newcommand{\ki}[1]{${\^\i}$}
\title{\vspace{-5cm}\tt\textbf{Topics in algebraic geometry\\ \vspace{0.5cm}
{\small Lecture notes of an advanced graduate course}}}
\author{\tt Caucher Birkar\\  {\small (c.birkar@dpmms.cam.ac.uk)}}
\date{\today}
\begin{document}
\maketitle

\begin{center}
\bf Dedicated to Grothendieck
\end{center}

\vspace{3cm}
%\begin{center}
%\includegraphics*[scale=0.7]{Grothendieck.jpg}
%\end{center}

\tableofcontents
%%%%%%%%%%%%%%%%%%%%%%%%%%%%%%%%%%%%%%%%%%%%%%%

%%%%%%%%%%%%%%%%%%%%%%%%%%%%%%%%
%%%%%%%%%%%%%%%%%%%%%%%%%%%%%%%%
\chapter*{{\tt Introduction}}

This text contains the slightly expanded lecture notes of an advanced graduate course on algebraic geometry that I taught in Winter 2010 at the DPMMS, Cambridge. A range of advanced topics are treated in detail which are of a foundational 
nature. Most of the topics 
come from Grothendieck's monumental works, EGA, SGA, and FGA. The way he has treated 
algebraic geometry is truly amazing. His ideas show a spectacular level of 
depth and maturity which may be unparalleled in mathematics.

I believe that anyone who wants to pursue algebraic geometry 
should ideally be taught three foundational courses. The first course would be 
on classical topics in the spirit of Shafarevich [\ref{Shafarevich}]. 
The second would be a course on sheaves and schemes using parts of Hartshorne [\ref{Hartshorne}]. 
The third would be a course on more advanced topics similar to the present 
one.

I briefly describe the chapters. In chapter one, various types of cohomology are 
discussed such as the ext sheaves and groups, higher direct images, cohomology 
with support, and local cohomology, and their basic properties are established. 
The local duality theorem for local cohomology is proved. Projective dimension 
and depth are also discussed. 

In chapter two, the relative duality theorem is proved for a projective morphism of 
Noetherian schemes and coherent sheaves satisfying a certain base change property. 
This approach follows Kleiman. Some applications of the duality theorem 
are treated, and the chapter ends with proving some basic properties of Cohen-Macaulay 
schemes.
 
In chapter three, flatness and base change are treated in detail.  The $T$ functors 
and the complex of Grothendieck are studied and applied to the base change and 
semi-continuity problems. The local invariance of the Euler characteristic and   
the Hilbert polynomial, and the generic flatness for flat sheaves are proved. 
Some basic properties of flat morphisms and flat families are treated. 
The stratification by Hilbert polynomials of a sheaf is proved, and 
some further related problems are also discussed.

In chapter four, parameterising schemes and sheaves are discussed. The 
quotient and Hilbert functors are introduced and several examples are 
given, in particular, the Grassmannian. Finally, the existence of quotient 
schemes is proved in a relatively general situation which in particular 
implies the existence of Hilbert schemes.\\

\textbf{Pre-requisites:} We assume familiarity with basic algebraic geometry. To be more precise, 
we assume that the reader is familiar with chapter I, chapter II, and sections 1-5 of chapter III of Hartshorne [\ref{Hartshorne}] but  
the rest of the materials in chapter III, except the theorem on the formal functions and smooth morphisms, are treated in these notes often in a much more general form.
We also assume familiarity with basic commutative algebra. The more advanced results 
and notions of commutative algebra are in many cases explained. We also assume 
some basic facts from homological algebra having to do with complexes and their cohomology 
objects. Moreover, Grothendieck 
spectral sequence associated with the composition of two functors is used many times.

\textbf{Conventions:} All rings are commutative with identity elements, and homomorphisms 
of rings send the identity element to identity element. The notion of a projective 
morphism of schemes is taken from Hartshorne [\ref{Hartshorne}]. For a projective 
morphism $f\colon X\to Y$, $\x{O}_X(1)$ denotes a very ample invertible sheaf on $X$ over $Y$ 
(this is not unique but we assume that a choice has been made). However, when $Y=\Spec A$ and 
$X$ is given as the Proj of a graded algebra $S$ over $A$ which is generated by elements 
of degree one, then $\x{O}_X(1)$ is uniquely determined by $S$.  
A similar situation arises when $X$ is the Proj of a graded sheaf of algebras.
A morphism of sheaves on a ringed space $(X,\x{O}_X)$ is always meant in the sense of $\x{O}_X$-modules unless otherwise stated.

\textbf{Caution:} When $Z$ is a closed subset of a topological space $X$ with 
inclusion map $i\colon Z\to X$, we sometimes instead of $i_*\x{F}$ 
just write $\x{F}$ where $\x{F}$ is a sheaf on $Z$.\\ 

\clearpage

\textbf{Notation:} Here we list some of the notations used in the text.\\

\begin{tabular}{l l}
$\mathfrak{Ab} \hspace{1cm}$ & the category of abelian groups\\
$\mathfrak{S}et \hspace{1cm}$ & the category of sets\\
$\mathfrak{S}h(X) \hspace{1cm}$ & the category of sheaves on a topological space $X$\\
$\mathfrak{M}(X) \hspace{1cm}$ & the category of $\x{O}_X$-modules on a ringed space $(X,\x{O}_X)$\\
$\mathfrak{Q}(X) \hspace{1cm}$ & the category of quasi-coherent $\x{O}_X$-modules on a scheme $X$\\
$\mathfrak{C}(X) \hspace{1cm}$ & the category of coherent $\x{O}_X$-modules on a scheme $X$\\
$\mathfrak{M}(A) \hspace{1cm}$ & the category of $A$-modules for a ring $A$\\
$\x{I}^\bullet \hspace{1cm}$ & a complex $\x{I}^0 \to \x{I}^1 \to \cdots$\\
$\x{L}_\bullet \hspace{1cm}$ & a complex $\cdots \to \x{L}_1 \to \x{L}_0 $\\
$\xExt^p_f(\x{F},\x{G}) \hspace{1cm}$ & the $p$-th relative ext sheaf of the sheaves $\x{F},\x{G}$\\
$\xExt^p_{\x{O}_X}(\x{F},\x{G}) \hspace{1cm}$ & the $p$-th ext group of the sheaves $\x{F},\x{G}$ on $X$\\
$\Ext^p_{A}(M,N) \hspace{1cm}$ & the $p$-th ext group of the modules $M,N$ over $A$\\
$\pd M \hspace{1cm}$  & the projective dimension of the module $M$\\
$\depth_IM \hspace{1cm}$  & the depth of the module $M$ in the ideal $I$\\
$H^p_Z(X,\x{F}) \hspace{1cm}$  & the $p$-th cohomology of the sheaf $\x{F}$ with support in $Z$\\
$H^p_I(M) \hspace{1cm}$  &  the $p$-th local cohomology of the module $M$ in the ideal $I$\\ 
$(f^!,t_f) \hspace{1cm}$  & the dualising pair of a morphism $f$\\ 
$\omega_f \hspace{1cm}$  & the dualising sheaf of a morphism $f$\\
$\Tor^A_p(M,N) \hspace{1cm}$  & the $p$-th tor group of the modules $M,N$ over $A$\\
$T^p_{\x F}(M) \hspace{1cm}$  & the $p$-th $T$ functor of a flat sheaf $\x{F}$\\
$\Phi \hspace{1cm}$ & often denotes the Hilbert polynomial of a sheaf or scheme\\ 
$\rm{N}\mathfrak{S}ch/Y \hspace{1cm}$ & the category of Noetherian schemes over a scheme $Y$\\  
$\cHilb_{X/Y}^{\Phi,\x{L}} \hspace{1cm}$ & the Hilbert functor of $X/Y$, $\Phi$, $\x{L}$\\ 
$\Hilb_{X/Y}^{\Phi,\x{L}} \hspace{1cm}$ &  the Hilbert scheme of $X/Y$, $\Phi$, $\x{L}$\\
$\cQuot_{\x{F}/X/Y}^{\Phi,\x{L}} \hspace{1cm}$ & the quotient functor of $X/Y$, $\Phi$, $\x{L}$, $\x{F}$\\
$\Quot_{\x{F}/X/Y}^{\Phi,\x{L}} \hspace{1cm}$ & the quotient scheme of $X/Y$, $\Phi$, $\x{L}$, $\x{F}$\\
$\xHom_{Y}^\Phi(X,X') \hspace{1cm}$ & the Hom functor $X,X'/Y$, $\Phi$\\
$\Hom_{Y}^\Phi(X,X') \hspace{1cm}$ & the Hom scheme $X,X'/Y$, $\Phi$\\
$\Sym^m X \hspace{1cm}$ & the $m$-th symmetric product of $X$\\
$\Grass(\x{F},d) \hspace{1cm}$ & the Grassmannian of a locally free sheaf $\x{F}$\\
\end{tabular}

%%%%%%%%%%%%%%%%%%%%%%%%%%%%%%%5

\chapter{\tt Cohomology} 
 
In this chapter we discuss several types of cohomology functors such as the 
ext groups, ext sheaves, higher direct images, cohomology with support, and local cohomology. 
In the context of ext we discuss a local-to-global spectral sequence, 
and projective modules, and in relation with local cohomology 
we discuss the notion of depth of modules, and local duality.

More information about these topics can be found in Grothendieck SGA 2 [\ref{SGA2}].
Though here we have discussed the ext sheaves more general than [\ref{SGA2}] in some 
sense.\\

\section{Ext sheaves and groups}

\begin{defn}\label{d-general-ext}
Let $f\colon (X,\x{O}_X)\to (Y,\x{O}_Y)$ be a morphism of ringed 
spaces, and let $\mathfrak{M}(X)$ and $\mathfrak{M}(Y)$ be the category of $\x{O}_X$-modules 
and $\x{O}_Y$-modules respectively. Let $\x{F}\in \mathfrak{M}(X)$. 
 We define $\xExt^p_f(\x{F},-)$ to be the right derived functors of the left exact functor $f_*\xHom_{\x{O}_X}(\x{F},-)\colon \mathfrak{M}(X)\to \mathfrak{M}(Y)$.
\end{defn}

Evidently, the ringed structure $\x{O}_Y$ on $Y$ does not play any role 
but having such a structure would make the sheaves $\xExt^p_f(\x{F},\x{G})$ into 
$\x{O}_Y$-modules. 
If $f\colon X\to Y$ is just a continuous map of topological spaces, then 
the above functors are defined by considering $X$ and $Y$ as ringed spaces 
by taking the ringed structure to be the one defined by the constant sheaf 
associated with $\Z$.

Obviously the above definition is very general and it should not come as a surprise that 
we can derive from it several types of cohomologies by considering special cases.  

\begin{defn-rem}\label{dr-ext}
Assume the setting of Definition \ref{d-general-ext}. 
\begin{enumerate}
\item When $Y$ is just a point we put $\Ext^p_{\x{O}_X}(\x{F},\x{G}):=\xExt^p_f(\x{F},\x{G})$. That is, $\Ext^p_{\x{O}_X}(\x{F},-)$ are the right derived functors of the left exact functor $\Hom_{\x{O}_X}(\x{F},-)$. Note that the usual cohomology functor $H^p(X,-)\simeq\Ext^p_{\x{O}_X}(\x{O}_X,-)$ 
because we have $\Hom_{\x{O}_X}(\x{O}_X,-)\simeq H^0(X,-)$.\\

\item If $f$ is the identity, then instead of $\xExt^p_f(\x{F},\x{G})$ we  
write $\xExt^p_{\x{O}_X}(\x{F},\x{G})$. That is, $\xExt^p_{\x{O}_X}(\x{F},-)$ are the right derived functors of the left exact functor $\xHom_{\x{O}_X}(\x{F},-)$. In particular, 
 $\xExt^0_{\x{O}_X}(\x{O}_X,\x{G})\simeq \x{G}$ 
and $\xExt^p_{\x{O}_X}(\x{O}_X,\x{G})=0$ if $p>0$ because the functor $\xHom_{\x{O}_X}(\x{O}_X,-)\simeq -$ is 
exact and so its right derived functors are trivial.\\

\item Since $f_*\xHom_{\x{O}_X}(\x{O}_X,-)=f_*(-)$, we have $R^pf_*(-)=\xExt^p_f(\x{O}_X,-)$. 
The functors $R^pf_*(-)$ are the right derived functors of the left exact functor $f_*$.\\

\item Later we will see that cohomology with support is also a special 
instance of Ext. 
\end{enumerate}
\end{defn-rem}

\begin{thm}\label{t-ext-sheaf-associated}
The sheaf $\xExt^p_f(\x{F},\x{G})$ is the sheaf associated to the 
presheaf $U\mapsto \Ext^p_{\x{O}_{f^{-1}U}}(\x{F}|_{f^{-1}U},\x{G}|_{f^{-1}U})$ on $Y$. 
In particular, for any open subset $W\subseteq Y$, we have 
$$
\xExt^p_f(\x{F},\x{G})|_{W}\simeq  \xExt^p_f(\x{F}|_{f^{-1}W},\x{G}|_{f^{-1}W})
$$
\end{thm}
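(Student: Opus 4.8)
The plan is to reduce everything to the statement that $\xExt^p_f(\x{F},\x{G})$ is the sheafification of the presheaf $U\mapsto \Ext^p_{\x{O}_{f^{-1}U}}(\x{F}|_{f^{-1}U},\x{G}|_{f^{-1}U})$, and then deduce the restriction isomorphism as an easy consequence. First I would fix an injective resolution $\x{G}\to \x{I}^\bullet$ in $\mathfrak{M}(X)$. By construction, $\xExt^p_f(\x{F},\x{G})$ is the $p$-th cohomology sheaf of the complex $f_*\xHom_{\x{O}_X}(\x{F},\x{I}^\bullet)$ of $\x{O}_Y$-modules. Now for each open $U\subseteq Y$ the complex of sections over $U$ is
$$
\Gamma\bigl(U, f_*\xHom_{\x{O}_X}(\x{F},\x{I}^\bullet)\bigr) = \Gamma\bigl(f^{-1}U, \xHom_{\x{O}_X}(\x{F},\x{I}^\bullet)\bigr) = \Hom_{\x{O}_{f^{-1}U}}(\x{F}|_{f^{-1}U}, \x{I}^\bullet|_{f^{-1}U}).
$$
Since cohomology sheaves are the sheafification of the presheaf of cohomology groups of a complex of sheaves, $\xExt^p_f(\x{F},\x{G})$ is the sheafification of
$$
U\longmapsto H^p\bigl(\Hom_{\x{O}_{f^{-1}U}}(\x{F}|_{f^{-1}U}, \x{I}^\bullet|_{f^{-1}U})\bigr).
$$

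The remaining point — and the only place where something needs to be checked — is to identify this last group with $\Ext^p_{\x{O}_{f^{-1}U}}(\x{F}|_{f^{-1}U},\x{G}|_{f^{-1}U})$; equivalently, to show that $\x{I}^\bullet|_{f^{-1}U}$ computes $\Ext$ on $f^{-1}U$, i.e.\ that the restriction of an injective $\x{O}_X$-module to an open subset is acyclic for $\Hom_{\x{O}_{f^{-1}U}}(\x{F}|_{f^{-1}U},-)$. For this I would invoke the standard fact that if $j\colon V\hookrightarrow X$ is an open immersion, then $j^*$ carries injective $\x{O}_X$-modules to injective $\x{O}_V$-modules; this follows because $j^*$ has an exact left adjoint, extension by zero $j_!$. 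Hence $\x{I}^\bullet|_{f^{-1}U}$ is an injective resolution of $\x{G}|_{f^{-1}U}$, so its $\Hom_{\x{O}_{f^{-1}U}}(\x{F}|_{f^{-1}U},-)$-complex does compute $\Ext^p_{\x{O}_{f^{-1}U}}$. This establishes the sheafification statement, which is the main assertion.

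For the "in particular", let $W\subseteq Y$ be open and write $g = f|_{f^{-1}W}\colon f^{-1}W\to W$. Restricting the presheaf description to open subsets $U\subseteq W$: on one side $\xExt^p_f(\x{F},\x{G})|_W$ is the sheafification over $W$ of $U\mapsto \Ext^p_{\x{O}_{f^{-1}U}}(\x{F}|_{f^{-1}U},\x{G}|_{f^{-1}U})$; on the other side, applying the already-proved statement to $g$ (noting $g^{-1}U = f^{-1}U$ for $U\subseteq W$), $\xExt^p_g(\x{F}|_{f^{-1}W},\x{G}|_{f^{-1}W})$ is the sheafification over $W$ of the very same presheaf. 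Since sheafification of a presheaf on $W$ depends only on its restriction to opens of $W$, the two sheaves agree, and the isomorphism is natural. The one genuine obstacle is the acyclicity input in the second paragraph — that restriction preserves injectives — but this is a routine consequence of the $(j_!, j^*)$ adjunction and the exactness of $j_!$, so no real difficulty arises.
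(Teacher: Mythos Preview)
Your proof is correct and follows essentially the same route as the paper: take an injective resolution of $\x{G}$, identify the sections of $f_*\xHom_{\x{O}_X}(\x{F},\x{I}^\bullet)$ over $U$ with $\Hom_{\x{O}_{f^{-1}U}}(\x{F}|_{f^{-1}U},\x{I}^\bullet|_{f^{-1}U})$, use that restriction to an open preserves injectives (the paper records this as Exercise~\ref{exe-injective-restriction-tensor}, and you supply the $(j_!,j^*)$ argument), and then invoke the fact that the cohomology sheaf of a complex is the sheafification of the cohomology presheaf. The paper spells out this last step by hand with the auxiliary presheaves $\x{P}_1,\x{P}_2,\x{P}_3$, whereas you cite it as a standard fact, but the content is identical.
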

\begin{proof}
Let 
$$
0 \to \x{G} \to \x{I}^0 \to  \x{I}^1 \to \cdots
$$
be an injective resolution in $\mathfrak{M}(X)$. Then, the sheaves $\xExt^p_f(\x{F},\x{G})$ 
are the cohomology objects of the complex 
$$
0 \to f_*\xHom(\x{F},\x{I}^0) \xrightarrow{d^0}  f_*\xHom(\x{F},\x{I}^1) \xrightarrow{d^1} \cdots
$$

Let $\x{P}_3=\ker d^p$, $\x{P}_2=\im d^{p-1}$, and let $\x{P}_1$ be the presheaf image of 
$d^{p-1}$. Then, $\xExt^p_f(\x{F},\x{G})=\frac{\x{P}_3}{\x{P}_2}$, and the presheaf quotient 
$\frac{\x{P}_3}{\x{P}_1}$ is the presheaf which assigns to $U$ the $p$-th cohomology object of the complex 
$$
0 \to \Hom_{\x{O}_{f^{-1}U}}(\x{F}|_{f^{-1}U},\x{I}^0|_{f^{-1}U}) \xrightarrow{d^0} \Hom_{\x{O}_{f^{-1}U}}(\x{F}|_{f^{-1}U},\x{I}^1|_{f^{-1}U}) \xrightarrow{d^1} \cdots
$$
which is $\Ext^p_{\x{O}_{f^{-1}U}}(\x{F}|_{f^{-1}U},\x{G}|_{f^{-1}U})$ because 
$$
0 \to \x{G}|_{f^{-1}U} \to \x{I}^0|_{f^{-1}U} \to  \x{I}^1|_{f^{-1}U} \to \cdots
$$
is an injective resolution (see Exercise \ref{exe-injective-restriction-tensor}). By construction, the associated sheaf $\x{P}_1^+=\x{P}_2$. Since 
taking associated sheaves of presheaves is an exact functor, 
$$
(\frac{\x{P}_3}{\x{P}_1})^+=\frac{\x{P}_3^+}{\x{P}_1^+}=\frac{\x{P}_3}{\x{P}_2}=\xExt^p_f(\x{F},\x{G})
$$ 

Regarding the second statement, $\xExt^p_f(\x{F},\x{G})|_{W}$ is the sheaf associated 
to the presheaf $U\mapsto \Ext^p_{\x{O}_{f^{-1}U}}(\x{F}|_{f^{-1}U},\x{G}|_{f^{-1}U})$ 
where $U$ runs through the open subsets of $W$. This in turn is the 
sheaf $\xExt^p_f(\x{F}|_{f^{-1}W},\x{G}|_{f^{-1}W})$.\\
\end{proof}

\begin{thm}\label{t-ext-locally-free}
Let $\x{L}\in \mathfrak{M}(X)$ and $\x{N}\in\mathfrak{M}(Y)$ be locally free sheaves of finite rank. Then, we have 
$$
\xExt^p_f(\x{F}\otimes \x{L},-\otimes f^*\x{N})\simeq \xExt^p_f(\x{F},-\otimes {\x{L}}^{\vee}\otimes f^*\x{N})\simeq \xExt^p_f(\x{F},-\otimes {\x{L}}^{\vee})\otimes \x{N}
$$
where $^{\vee}$ is the dual.
\end{thm}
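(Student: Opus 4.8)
The plan is to reduce everything to the two elementary facts that drove the previous definition-remarks: tensoring a sheaf of modules by a locally free sheaf of finite rank is an exact functor, and it commutes with $\xHom$ from a fixed sheaf. Concretely, I would first establish the natural isomorphism of the underlying left exact functors on $\mathfrak{M}(X)$, namely
$$
f_*\xHom_{\x{O}_X}(\x{F}\otimes\x{L},-\otimes f^*\x{N})\ \simeq\ f_*\xHom_{\x{O}_X}(\x{F},-\otimes{\x{L}}^{\vee})\otimes\x{N},
$$
and then pass to right derived functors. For the first isomorphism one uses the standard adjunction/tensor identity $\xHom_{\x{O}_X}(\x{F}\otimes\x{L},\x{G})\simeq\xHom_{\x{O}_X}(\x{F},\x{G}\otimes{\x{L}}^{\vee})$, valid because $\x{L}$ is locally free of finite rank (so ${\x{L}}^{\vee}{}^{\vee}\simeq\x{L}$ and $\xHom(\x{L},-)\simeq-\otimes{\x{L}}^{\vee}$ locally); apply this with $\x{G}=\x{G}'\otimes f^*\x{N}$ and note $f^*\x{N}\otimes{\x{L}}^{\vee}\simeq{\x{L}}^{\vee}\otimes f^*\x{N}$, giving the middle term. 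For the second isomorphism one uses the projection-type identity $\xHom_{\x{O}_X}(\x{F},\x{G}\otimes f^*\x{N})\simeq\xHom_{\x{O}_X}(\x{F},\x{G})\otimes f^*\x{N}$ (again because $\x{N}$, hence $f^*\x{N}$, is locally free of finite rank), followed by the projection formula $f_*(\x{H}\otimes f^*\x{N})\simeq f_*\x{H}\otimes\x{N}$, which also holds for $\x{N}$ locally free of finite rank since the question is local on $Y$ and $\x{N}$ is locally a finite direct sum of copies of $\x{O}_Y$.

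With the functorial identity in hand, the derived-functor statement follows from a balancing argument. Choose an injective resolution $\x{G}\to\x{I}^\bullet$ in $\mathfrak{M}(X)$; I would show that ${\x{L}}^{\vee}\otimes\x{I}^\bullet$ (resp.\ $\x{I}^\bullet\otimes f^*\x{N}$, resp.\ ${\x{L}}^{\vee}\otimes\x{I}^\bullet\otimes f^*\x{N}$) is still an injective resolution of the corresponding twisted sheaf: it is exact because $-\otimes{\x{L}}^{\vee}$ and $-\otimes f^*\x{N}$ are exact (locally free of finite rank), and each term stays injective because twisting by a locally free sheaf of finite rank preserves injectives — locally it is a finite direct sum of the identity functor, and injectivity can be checked locally since restriction to opens preserves injectives (Exercise \ref{exe-injective-restriction-tensor}, already invoked in the proof of Theorem \ref{t-ext-sheaf-associated}). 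Hence $\xExt^p_f(\x{F},-\otimes{\x{L}}^{\vee}\otimes f^*\x{N})$ is computed by $f_*\xHom_{\x{O}_X}(\x{F},{\x{L}}^{\vee}\otimes\x{I}^\bullet\otimes f^*\x{N})$, and likewise for the other two entries; the functorial identity then identifies these three complexes (for the last one using the projection formula termwise and the fact that $-\otimes\x{N}$ is exact, so it commutes with taking cohomology of the complex), and taking cohomology yields the asserted isomorphisms of $\xExt^p_f$.

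The main obstacle — really the only non-formal point — is the precise justification that twisting by a locally free sheaf of finite rank sends injective $\x{O}_X$-modules to injective $\x{O}_X$-modules and commutes with $f_*$ in the required way. One must argue that, although these operations are only locally finite direct sums of the trivial ones, the conclusions are genuinely local on the base and are preserved under the relevant restrictions, so that a covering argument suffices; the hypothesis "finite rank" is essential here, since an infinite direct sum of injectives need not be injective and the projection formula can fail. Once this is settled, everything else is a routine chase through the two sheaf-theoretic tensor identities and the balancing of derived functors, so I would present it briskly.
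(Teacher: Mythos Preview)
Your proposal is correct and follows essentially the same route as the paper: take an injective resolution $0\to\x{G}\to\x{I}^\bullet$, use that tensoring by a locally free sheaf of finite rank preserves injectives (Exercise~\ref{exe-injective-restriction-tensor}), apply the tensor--hom identity $\xHom(\x{F}\otimes\x{L},-)\simeq\xHom(\x{F},-\otimes\x{L}^\vee)$ and the projection formula termwise, and read off the result on cohomology. The paper's proof is simply a terser version of exactly this argument.
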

\begin{proof}
If $\x{I}\in \mathfrak{M}(X)$ is injective then $\x{I}\otimes \x{M}$ is also 
injective for any locally free sheaf $\x{M}$ of finite rank (see Exercise \ref{exe-injective-restriction-tensor}). Now let $\x{G}\in \mathfrak{M}(X)$ and let $0 \to \x{G} \to \x{I}^\bullet$ be an injective resolution. The functorial isomorphisms  
$$
\xHom_{\x{O}_X}(\x{F}\otimes \x{L},\x{I}^p\otimes f^*\x{N})\simeq \xHom_{\x{O}_X}(\x{F},\x{I}^p\otimes {\x{L}}^{\vee}\otimes f^*\x{N})
$$
$$
\simeq \xHom_{\x{O}_X}(\x{F},\x{I}^p\otimes {\x{L}}^{\vee})\otimes f^*\x{N}
$$
give
$$
f_*\xHom_{\x{O}_X}(\x{F}\otimes \x{L},\x{I}^\bullet\otimes f^*\x{N})\simeq f_*\xHom_{\x{O}_X}(\x{F},\x{I}^\bullet\otimes {\x{L}}^{\vee}\otimes f^*\x{N})
$$
$$
\simeq f_*(\xHom_{\x{O}_X}(\x{F},\x{I}^\bullet\otimes {\x{L}}^{\vee}))\otimes \x{N}
$$
which implies the result immediately.\\
\end{proof}

In the theorem, by taking $\x{L}$ to be $\x{O}_X$ we get the projection formula
$$
\xExt^p_f(\x{F},\x{G}\otimes f^*\x{N})\simeq \xExt^p_f(\x{F},\x{G})\otimes \x{N}
$$
and further by taking $\x{F}$ to be $\x{O}_X$ we get the projection 
formula 
$$
R^pf_*(\x{G}\otimes f^*\x{N})\simeq R^pf_*(\x{G})\otimes \x{N}
$$
  
On the other hand, by taking $f$ to be the identity we get 
$$
\xExt^p_{\x{O}_X}(\x{F}\otimes \x{L},\x{G})\simeq \xExt^p_{\x{O}_X}(\x{F},\x{G}\otimes {\x{L}}^\vee)
\simeq \xExt^p_{\x{O}_X}(\x{F},\x{G})\otimes {\x{L}^\vee}
$$\\

\begin{thm}\label{t-ext-long-sequence}
Let $0\to \x{F}'\to \x{F} \to \x{F}'' \to 0$ be an exact sequence of $\x{O}_X$-modules. 
Then, for any $\x{O}_X$-module $\x{G}$ we get a long exact sequence 
{
$$
\xymatrix{
\cdots \to \xExt^p_f(\x{F}'', \x{G}) \to 
 \xExt^p_f(\x{F}, \x{G}) \to
 \xExt^p_f(\x{F}', \x{G}) \to
\xExt^{p+1}_f(\x{F}'', \x{G}) \to\cdots 
}
$$}
\end{thm}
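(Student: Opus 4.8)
The plan is to compute every term from a single injective resolution of $\x{G}$, rather than from (possibly non-existent) projective resolutions of the sheaves $\x{F}',\x{F},\x{F}''$: the defining presentation of $\xExt^p_f(-,\x{G})$ derives the \emph{second} variable, and $\mathfrak{M}(X)$ need not have enough projectives, so there is no direct way to get a long exact sequence out of the first variable. Concretely, fix an injective resolution $0\to\x{G}\to\x{I}^\bullet$ in $\mathfrak{M}(X)$. Since this same resolution is what is used to form the right derived functors of $f_*\xHom_{\x{O}_X}(\x{F}^?,-)$ for each $\x{F}^?\in\{\x{F}',\x{F},\x{F}''\}$, Definition \ref{d-general-ext} tells us that $\xExt^p_f(\x{F}^?,\x{G})$ is the $p$-th cohomology sheaf of the complex $f_*\xHom_{\x{O}_X}(\x{F}^?,\x{I}^\bullet)$ (compare the proof of Theorem \ref{t-ext-sheaf-associated}).

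Next I would manufacture a short exact sequence of these complexes. Fix $q$. As $\x{I}^q$ is injective in $\mathfrak{M}(X)$, its restriction $\x{I}^q|_U$ is injective in $\mathfrak{M}(U)$ for every open $U$ (Exercise \ref{exe-injective-restriction-tensor}), so $\Hom_{\x{O}_U}(-,\x{I}^q|_U)$ is exact; applying the contravariant functor $\xHom_{\x{O}_X}(-,\x{I}^q)$ to $0\to\x{F}'\to\x{F}\to\x{F}''\to 0$ therefore gives an exact sequence of sheaves
$$
0\to\xHom_{\x{O}_X}(\x{F}'',\x{I}^q)\to\xHom_{\x{O}_X}(\x{F},\x{I}^q)\to\xHom_{\x{O}_X}(\x{F}',\x{I}^q)\to 0
$$
(exactness may be verified on each open $U$, where it is just the exactness of $\Hom_{\x{O}_U}(-,\x{I}^q|_U)$ on $0\to\x{F}'|_U\to\x{F}|_U\to\x{F}''|_U\to 0$). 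The key point is that $\xHom_{\x{O}_X}(\x{F}'',\x{I}^q)$ is flasque: for an open $j\colon U\hookrightarrow X$, the restriction map $\Hom_{\x{O}_X}(\x{F}'',\x{I}^q)\to\Hom_{\x{O}_U}(\x{F}''|_U,\x{I}^q|_U)$ is identified, under the adjunction between extension by zero $j_!$ and restriction, with the map $\Hom_{\x{O}_X}(\x{F}'',\x{I}^q)\to\Hom_{\x{O}_X}(j_!(\x{F}''|_U),\x{I}^q)$, which is surjective because $j_!(\x{F}''|_U)\hookrightarrow\x{F}''$ and $\x{I}^q$ is injective. Since $f_*$ sends a short exact sequence of sheaves with flasque left term to a short exact sequence, applying $f_*$ in each degree $q$ yields a short exact sequence of complexes
$$
0\to f_*\xHom_{\x{O}_X}(\x{F}'',\x{I}^\bullet)\to f_*\xHom_{\x{O}_X}(\x{F},\x{I}^\bullet)\to f_*\xHom_{\x{O}_X}(\x{F}',\x{I}^\bullet)\to 0 .
$$

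Finally, the long exact cohomology sequence of this short exact sequence of complexes is, by the identification in the first paragraph, precisely the asserted
$$
\cdots\to\xExt^p_f(\x{F}'',\x{G})\to\xExt^p_f(\x{F},\x{G})\to\xExt^p_f(\x{F}',\x{G})\to\xExt^{p+1}_f(\x{F}'',\x{G})\to\cdots .
$$
To complete the argument one checks, by the usual comparison-of-injective-resolutions lemma, that the maps obtained this way (including the connecting homomorphisms) are independent of the chosen $\x{I}^\bullet$ and are functorial in the given short exact sequence. I expect the only genuine obstacle to be exactly the exactness preserved by $f_*$ in the middle step: $f_*$ is merely left exact, so one really does need the flasqueness (equivalently, the $f_*$-acyclicity) of $\xHom_{\x{O}_X}(\x{F}'',\x{I}^q)$ to get surjectivity on the right; everything else is the formal machinery of derived functors.
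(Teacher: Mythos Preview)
Your proof is correct and follows essentially the same approach as the paper: take an injective resolution of $\x{G}$, produce a short exact sequence of complexes $0\to f_*\xHom_{\x{O}_X}(\x{F}'',\x{I}^\bullet)\to f_*\xHom_{\x{O}_X}(\x{F},\x{I}^\bullet)\to f_*\xHom_{\x{O}_X}(\x{F}',\x{I}^\bullet)\to 0$, and read off the long exact sequence. The only difference is in how exactness after $f_*$ is justified: the paper observes directly that for every open $U\subseteq X$ the sequence $0\to\Hom_{\x{O}_U}(\x{F}''|_U,\x{I}^p|_U)\to\Hom_{\x{O}_U}(\x{F}|_U,\x{I}^p|_U)\to\Hom_{\x{O}_U}(\x{F}'|_U,\x{I}^p|_U)\to 0$ is exact (since $\x{I}^p|_U$ is injective), so the sequence of $\xHom$ sheaves is already exact at the level of sections and $f_*$ preserves it trivially; you instead establish the short exact sequence of sheaves and then invoke flasqueness of $\xHom_{\x{O}_X}(\x{F}'',\x{I}^p)$ to push it through $f_*$. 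Your route is a harmless detour---the flasqueness argument you give is exactly the one used in the proof of Theorem \ref{t-ext-spectral-sequence}---but the paper's observation that exactness holds sectionwise makes the separate flasqueness step unnecessary here.
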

\begin{proof}
Let  $0 \to \x{G} \to \x{I}^\bullet$ be an injective resolution in $\mathfrak{M}(X)$.
For any open subset $U\subseteq X$ and any $p$, the sheaf $\x{I}^p|_U$ is injective hence the sequence 
$$
\xymatrix{
0 \to \Hom_{\x{O}_U}(\x{F}''|_U, \x{I}^p|_U) \to
 \Hom_{\x{O}_U}(\x{F}|_U, \x{I}^p|_U) \to
 \Hom_{\x{O}_U}(\x{F}'|_U, \x{I}^p|_U) \to  0 
}
$$
is exact which implies the exactness of the sequence 
$$
\xymatrix{
0 \to f_*\xHom_{\x{O}_X}(\x{F}'', \x{I}^p) \to
 f_*\xHom_{\x{O}_X}(\x{F}, \x{I}^p) \to
 f_*\xHom_{\x{O}_X}(\x{F}', \x{I}^p) \to  0 
}
$$
so we get an exact sequence of complexes
$$
\xymatrix{
0 \to f_*\xHom_{\x{O}_X}(\x{F}'', \x{I}^\bullet) \to
 f_*\xHom_{\x{O}_X}(\x{F}, \x{I}^\bullet) \to
 f_*\xHom_{\x{O}_X}(\x{F}', \x{I}^\bullet) \to  0 
}
$$
whose long exact sequence is the one we are looking for.\\
\end{proof}

\begin{thm}\label{t-acyclic-resolution}
Fix $\x{G}\in\mathfrak{M}(X)$. Suppose that $\x{L}_\bullet \to  \x{F} \to 0$ is an exact sequence where  
$$
\x{L}_\bullet:\hspace{1cm} \cdots \to \x{L}_1 \to \x{L}_0 
$$
is a sequence in which $\xExt^p_f(\x{L}_i, \x{G})=0$ for every $i\ge 0$ and $p\ge 1$. 
Then, $\xExt^p_f(\x{F}, \x{G})$ are the cohomology objects of the complex 
$0\to f_*\xHom_{\x{O}_X}(\x{L}_\bullet, \x{G})$.
\end{thm}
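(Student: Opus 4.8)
The plan is to run the standard balancing argument for Ext, via the two spectral sequences of a first-quadrant double complex. Choose an injective resolution $0\to \x{G}\to \x{I}^\bullet$ in $\mathfrak{M}(X)$ and form the double complex
$$
K^{i,j}=f_*\xHom_{\x{O}_X}(\x{L}_i,\x{I}^j),\qquad i,j\ge 0,
$$
with horizontal differentials induced by $\x{L}_{i+1}\to \x{L}_i$ and vertical differentials induced by $\x{I}^j\to \x{I}^{j+1}$. Since $\x{L}_i=0$ for $i<0$ and $\x{I}^j=0$ for $j<0$, this is a first-quadrant double complex, so both spectral sequences attached to it converge to the cohomology of the associated total complex $\mathrm{Tot}\,K^{\bullet,\bullet}$.

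First I would take cohomology in the $j$-direction. For fixed $i$ the $i$-th column is $f_*\xHom_{\x{O}_X}(\x{L}_i,\x{I}^\bullet)$, whose cohomology is $\xExt^j_f(\x{L}_i,\x{G})$ by the very definition of these sheaves; by hypothesis this vanishes for $j\ge 1$ and equals $f_*\xHom_{\x{O}_X}(\x{L}_i,\x{G})$ for $j=0$. Thus the $E_1$-page is concentrated in the row $j=0$, where the $d_1$-differential is exactly the differential of the complex $f_*\xHom_{\x{O}_X}(\x{L}_\bullet,\x{G})$; the spectral sequence degenerates at $E_2$ and gives
$$
H^p(\mathrm{Tot}\,K^{\bullet,\bullet})\simeq H^p\big(f_*\xHom_{\x{O}_X}(\x{L}_\bullet,\x{G})\big).
$$

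Next I would take cohomology in the $i$-direction. The key point is that for each $j$ the $j$-th row $K^{\bullet,j}$, augmented on the left by $f_*\xHom_{\x{O}_X}(\x{F},\x{I}^j)$, is an exact complex of sheaves on $Y$. To check this I would evaluate on sections over $W:=f^{-1}V$ for an arbitrary open $V\subseteq Y$: the augmented row then becomes the complex $\Hom_{\x{O}_W}(\x{F}|_W,\x{I}^j|_W)\to \Hom_{\x{O}_W}(\x{L}_0|_W,\x{I}^j|_W)\to \cdots$, and since $\x{I}^j|_W$ is injective in $\mathfrak{M}(W)$ (see Exercise \ref{exe-injective-restriction-tensor}) the contravariant functor $\Hom_{\x{O}_W}(-,\x{I}^j|_W)$ is exact, so it carries the exact complex $\x{L}_\bullet|_W\to \x{F}|_W\to 0$ (restriction to $W$ being exact) to an exact complex. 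As this holds for every $V$, passing to stalks shows the augmented row of sheaves is exact. Hence the $E_1$-page of this spectral sequence is concentrated in the column $i=0$, where it equals $f_*\xHom_{\x{O}_X}(\x{F},\x{I}^\bullet)$ with its usual differential; it degenerates and yields $H^p(\mathrm{Tot}\,K^{\bullet,\bullet})\simeq H^p\big(f_*\xHom_{\x{O}_X}(\x{F},\x{I}^\bullet)\big)=\xExt^p_f(\x{F},\x{G})$. Combining the two computations proves the theorem.

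The only delicate point I anticipate is the exactness statement used in the second spectral sequence, namely that applying $f_*\xHom_{\x{O}_X}(-,\x{I}^j)$ to the resolution $\x{L}_\bullet\to \x{F}\to 0$ produces an exact complex; arguing on sections over $f^{-1}V$ and invoking the injectivity of $\x{I}^j$ on open subsets handles this cleanly, without having to treat $f_*$ and $\xHom_{\x{O}_X}$ separately. Beyond that it is only a matter of keeping the indices straight, remembering that $\x{L}_\bullet$ is a chain complex whereas $\xHom_{\x{O}_X}(\x{L}_\bullet,-)$ and $\x{I}^\bullet$ are cochain complexes.
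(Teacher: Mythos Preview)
Your argument is correct, but it takes a genuinely different route from the paper. You run the standard balancing argument via the two spectral sequences of the double complex $f_*\xHom_{\x{O}_X}(\x{L}_i,\x{I}^j)$; the paper instead proceeds elementarily by dimension-shifting. It breaks the resolution into short exact sequences $0\to \x{K}_{i+1}\to \x{L}_{i+1}\to \x{K}_i\to 0$ (with $\x{K}_{-1}=\x{F}$), applies the long exact sequence of Theorem~\ref{t-ext-long-sequence} in the first variable, and uses the vanishing hypothesis to get isomorphisms $\xExt^p_f(\x{K}_{i+1},\x{G})\simeq \xExt^{p+1}_f(\x{K}_i,\x{G})$ for $p\ge 1$; after verifying $p=0,1$ by hand, induction reduces higher $p$ to these cases. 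Your approach is cleaner and more conceptual, and makes the functoriality transparent; the paper's approach is more hands-on but has the virtue of using no spectral-sequence machinery beyond the long exact sequence already established. Both rely on the same key input, namely that $f_*\xHom_{\x{O}_X}(-,\x{I}^j)$ is exact because $\x{I}^j|_{f^{-1}V}$ is injective for every open $V\subseteq Y$ (this is implicit in the paper's proof of Theorem~\ref{t-ext-long-sequence}, which you rightly cite via Exercise~\ref{exe-injective-restriction-tensor}).
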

\begin{proof}
We use induction on $p$.
Let $\x{K}_{-1}:=\x{F}$ and for each $i\ge 0$, let $\x{K}_i$ be the image of $\x{L}_{i+1} \to \x{L}_{i}$. Then,  we have exact sequences 
$$
0\to \x{K}_{i+1} \to \x{L}_{i+1} \to \x{K}_{i}\to 0
$$
for every $i\ge -1$ from which we get a long exact sequence  
$$
\cdots \to \xExt^p_f(\x{K}_i, \x{G}) \to 
 \xExt^p_f(\x{L}_{i+1}, \x{G}) \to
 \xExt^p_f(\x{K}_{i+1}, \x{G}) \to
\xExt^{p+1}_f(\x{K}_i, \x{G}) \to\cdots 
$$
which gives an exact sequence 
$$
0 \to \xExt^0_f(\x{K}_i, \x{G}) \to 
 \xExt^0_f(\x{L}_{i+1}, \x{G}) \to
 \xExt^0_f(\x{K}_{i+1}, \x{G}) \to
\xExt^{1}_f(\x{K}_i, \x{G}) \to 0
$$
and an isomorphism 
$\xExt^p_f(\x{K}_{i+1}, \x{G}) \simeq \xExt^{p+1}_f(\x{K}_i, \x{G})$ for each $p\ge 1$.

For $p=0$, the result is clear from the above sequences. For $p=1$, $\xExt^{1}_f(\x{F}, \x{G})$
is the quotient of  $\xExt^0_f(\x{K}_{0}, \x{G})$ by the image of the morphism 
$ \xExt^0_f(\x{L}_{0}, \x{G}) \to \xExt^0_f(\x{K}_{0}, \x{G})$. The exact 
sequence $\x{L}_2 \to \x{L}_1 \to \x{K}_0\to 0$ gives the exact sequence 
$$
0 \to \xExt^0_f(\x{K}_0, \x{G}) \to 
 \xExt^0_f(\x{L}_{1}, \x{G}) \to
 \xExt^0_f(\x{L}_{2}, \x{G})
$$
meaning that $\xExt^0_f(\x{K}_{0}, \x{G})$ is just the kernel of the morphism 
$\xExt^0_f(\x{L}_{1}, \x{G}) \to
 \xExt^0_f(\x{L}_{2}, \x{G})$.
On the other hand, since  
$
\xExt^0_f(\x{K}_0, \x{G}) \to 
 \xExt^0_f(\x{L}_{1}, \x{G})
$
is injective, the image of the morphism  
$
\xExt^0_f(\x{L}_0, \x{G}) \to 
 \xExt^0_f(\x{L}_{1}, \x{G})
$
and the image of 
$
\xExt^0_f(\x{L}_0, \x{G}) \to 
 \xExt^0_f(\x{K}_{0}, \x{G})
$
are isomorphic. Therefore, $\xExt^{1}_f(\x{F}, \x{G})$ is the cohomology of the 
complex 
$$
\xExt^0_f(\x{L}_0, \x{G}) \to 
 \xExt^0_f(\x{L}_{1}, \x{G}) \to
 \xExt^0_f(\x{L}_{2}, \x{G})
$$
in the middle.

Note that the same arguments apply when we replace $\x{F}$ by $\x{K}_0$, 
and we replace $\cdots \to \x{L}_1 \to \x{L}_0 $ by $\cdots \to \x{L}_2 \to \x{L}_1$. 
This process can be continued by replacing $\x{K}_0$ by $\x{K}_1$ and so on.

Now the isomorphism $\xExt^p_f(\x{K}_{i+1}, \x{G}) \simeq \xExt^{p+1}_f(\x{K}_i, \x{G})$ 
for $p\ge 1$, and the above arguments reduce the problem inductively to the case 
$p=0,1$ which was verified.\\
\end{proof}

%%%%%%%%%%%
\section{The local-to-global spectral sequence}

Let $f\colon (X,\x{O}_X)\to (Y,\x{O}_Y)$ and $g\colon (Y,\x{O}_Y)\to (Z,\x{O}_Z)$ be morphisms 
of ringed spaces and let $h=gf$. The question of what kind of relations one might expect 
between the ext sheaves defined for $f$, $g$, and $h$ arises naturally. The answer lies in 
the theory of spectral sequences. Let $\x{F}$ be an $\x{O}_X$-module. Then, we have a 
commutative diagram of functors 
$$
\xymatrix{
\mathfrak{M}(X) \ar[rd]^{\gamma} \ar[r]^{\alpha} & \mathfrak{M}(Y)\ar[d]^{\beta}\\
& \mathfrak{M}(Z)
}
$$
in which $\alpha$ is the functor  $f_*\xHom_{\x{O}_X}(\x{F},-)$, $\beta$ is 
the functor $g_*$ and $\gamma$ is the functor $h_*\xHom_{\x{O}_X}(\x{F},-)$.\\

\begin{thm}\label{t-ext-spectral-sequence}
For any $\x{G}\in\mathfrak{M}(X)$ 
there is a spectral sequence 
$$
E_2^{q,p}=R^qg_*\xExt^p_f(\x{F}, \x{G}) \implies \xExt^{q+p}_h(\x{F}, \x{G})
$$
\end{thm}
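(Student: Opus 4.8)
The plan is to apply the Grothendieck spectral sequence for the composition of two functors to the factorization $\gamma = \beta \circ \alpha$ displayed just before the statement, where $\alpha = f_*\xHom_{\x{O}_X}(\x{F},-)$, $\beta = g_*$, and $\gamma = h_*\xHom_{\x{O}_X}(\x{F},-)$. Recall that the Grothendieck spectral sequence requires three things: that $\alpha$ and $\beta$ be left exact functors between categories with enough injectives, that $\gamma \simeq \beta\alpha$, and crucially that $\alpha$ send injective objects to $\beta$-acyclic objects. Granting these, one gets a spectral sequence $E_2^{q,p} = (R^q\beta)(R^p\alpha)(\x{G}) \implies R^{q+p}\gamma(\x{G})$, which upon unwinding the definitions reads $R^qg_*\,\xExt^p_f(\x{F},\x{G}) \implies \xExt^{q+p}_h(\x{F},\x{G})$, exactly as asserted.

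The steps, in order, are as follows. First I would note that $\mathfrak{M}(X)$, $\mathfrak{M}(Y)$, $\mathfrak{M}(Z)$ are categories of modules over ringed spaces and hence have enough injectives, and that $\alpha$ and $\beta$ are left exact (both $f_*$, $g_*$ and $\xHom_{\x{O}_X}(\x{F},-)$ are left exact, and composites of left exact functors are left exact; the functor $\alpha$ is left exact for the same reason used implicitly in Definition \ref{d-general-ext} to make sense of the right derived functors $\xExt^p_f$). Second, the commutativity $\gamma \simeq \beta\alpha$ is just the identity $h_* = g_* f_*$ combined with $\gamma = h_*\xHom_{\x{O}_X}(\x{F},-) = g_*\big(f_*\xHom_{\x{O}_X}(\x{F},-)\big) = \beta\alpha$. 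Third, and this is the technical heart, I must check that if $\x{I}$ is an injective $\x{O}_X$-module then $\alpha(\x{I}) = f_*\xHom_{\x{O}_X}(\x{F},\x{I})$ is $g_*$-acyclic, i.e. $R^qg_*\big(f_*\xHom_{\x{O}_X}(\x{F},\x{I})\big) = 0$ for $q > 0$. Fourth, with the hypotheses verified, I invoke the Grothendieck spectral sequence and translate: $R^p\alpha(\x{G}) = \xExt^p_f(\x{F},\x{G})$ by Definition \ref{d-general-ext}, $R^q\beta = R^qg_*$ by Definition-Remark \ref{dr-ext}(3), and $R^n\gamma(\x{G}) = \xExt^n_h(\x{F},\x{G})$ again by Definition \ref{d-general-ext} applied to $h$. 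Re-indexing $E_2^{q,p}$ rather than $E_2^{p,q}$ is a harmless bookkeeping choice that matches the stated form.

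The main obstacle is the acyclicity condition in the third step: showing $f_*\xHom_{\x{O}_X}(\x{F},\x{I})$ is $g_*$-acyclic for injective $\x{I}$. The natural route is to show the stronger statement that $\xHom_{\x{O}_X}(\x{F},\x{I})$ is already flasque (or at least $g_* f_*$-acyclic in a way that decouples), since flasque sheaves are acyclic for any direct image functor and a flasque $\x{O}_Y$-module pushed forward stays flasque. Concretely, for $\x{I}$ an injective $\x{O}_X$-module, $\xHom_{\x{O}_X}(\x{F},\x{I})$ is flasque: for open $U \subseteq V \subseteq X$ the extension-by-zero sheaf map $j_{U!}(\x{F}|_U) \to j_{V!}(\x{F}|_V)$ (composed into $\x{F}$) is a monomorphism of $\x{O}_X$-modules, so applying $\Hom_{\x{O}_X}(-,\x{I})$ and using injectivity of $\x{I}$ gives surjectivity of the restriction $\xHom_{\x{O}_X}(\x{F},\x{I})(V) \to \xHom_{\x{O}_X}(\x{F},\x{I})(U)$. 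Then $f_*$ of a flasque sheaf is flasque, hence $g_*$-acyclic, which is exactly what is needed. I would also double-check the compatibility: the edge maps and the identification of $E_2^{q,0}$ and $E_2^{0,p}$ with the expected terms, but these are formal once the setup is in place. An alternative, if one prefers to avoid flasqueness, is to cite directly that $\xHom_{\x{O}_X}(\x{F},\x{I})$ is injective as an $\x{O}_X$-module when $\x{F}$ is locally free — but that hypothesis is not present here, so the flasque argument is the robust choice.
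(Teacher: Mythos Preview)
Your proposal is correct and follows essentially the same approach as the paper: both verify the hypothesis of the Grothendieck spectral sequence by showing that $\xHom_{\x{O}_X}(\x{F},\x{I})$ is flasque for injective $\x{I}$ via the extension-by-zero trick, then observe that $f_*$ preserves flasqueness so $\alpha(\x{I})$ is $g_*$-acyclic. Your write-up is in fact a bit more thorough in spelling out the left-exactness and enough-injectives prerequisites, but the core argument is the same.
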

\begin{proof}
Let $\x{I}$ be an injective sheaf in $\mathfrak{M}(X)$. Then, we will show that 
$\xHom_{\x{O}_X}(\x{F},\x{I})$ is a flasque sheaf. Indeed let $U$ be an open subset of $X$ 
and let $\phi\in \xHom_{\x{O}_X}(\x{F},\x{I})(U)$, that is, a morphism 
$\phi \colon \x{F}|_U\to \x{I}|_U$. If $j\colon U\to X$ is the inclusion, 
then we get a morphism $j_!\phi \colon j_!\x{F}|_U\to j_!\x{I}|_U$. 
The fact that $\x{I}$ is injective and that we have a natural injective 
morphism $j_!\x{F}|_U\to \x{F}$ induces a morphism $\psi\colon \x{F}\to \x{I}$ 
which restricts to $\phi$. So,  $\xHom_{\x{O}_X}(\x{F},\x{I})(X)\to \xHom_{\x{O}_X}(\x{F},\x{I})(U)$
is surjective hence the claim follows.

Now $f_*\xHom_{\x{O}_X}(\x{F},\x{I})$ is also a flasque sheaf. Thus, $R^pg_*(f_*\xHom_{\x{O}_X}(\x{F},\x{I}))=0$ if $p>0$, that is, $\alpha$ sends injective objects to $\beta$-acyclic objects. 
Therefore, a theorem of Grothendieck provides the desired spectral 
sequence.\\
\end{proof} 

In the theorem, when $Z$ is just a point and $f$ is the identity map 
the spectral sequence is called the local-to-global spectral sequence for ext.\\

%%%%%%%%%%%%%%%
\section{Quasi-coherence and coherence of ext sheaves}

\begin{rem-thm}\label{r-ext-coherent}
Let $f\colon X\to Y$ be a morphism of Noetherian schemes.\\

\begin{enumerate}
\item Suppose that $\xExt^p_f(\x{F},\x{G})$ is quasi-coherent for every $p$   
and assume that $Y$ is affine. Then, $H^q(Y,\xExt^p_f(\x{F},\x{G}))=0$ for 
any $p$ and any $q>0$. Now let $Z$ be a single point and let $X\to Z$ and $Y\to Z$ be the 
constant maps. Then, the above vanishing and the spectral sequence of Theorem \ref{t-ext-spectral-sequence} give an isomorphism
$$
H^0(Y,\xExt^p_f(\x{F},\x{G}))\simeq \Ext^p_{\x{O}_X}(\x{F},\x{G})
$$
hence the sheaf $\xExt^p_f(\x{F},\x{G})$ is just $\Ext^p_{\x{O}_X}(\x{F},\x{G})^\sim$.\\

\item Assume that $\x{G}$ is quasi-coherent and that there is a resolution $\x{L}_\bullet$ 
of $\x{F}$ as in Theorem \ref{t-acyclic-resolution} such that all the 
$\x{L}_i$ are coherent. Then, each $f_*\xHom_{\x{O}_X}(\x{L}_{i}, \x{G})$ would be 
quasi-coherent hence the sheaves $\xExt^p_f(\x{F},\x{G})$ would all be 
quasi-coherent. Moreover, if $\x{G}$ is coherent and if $f$ is projective 
then each\\ $f_*\xHom_{\x{O}_X}(\x{L}_{i}, \x{G})$ is 
coherent hence the sheaves $\xExt^p_f(\x{F},\x{G})$ are all coherent.\\

\item Let $\x{F}$ be coherent and let $\xExt^p_f(\x{F}, -)_{\mathfrak Q}$ be the right derived functors of the left exact functor
$f_*\xHom_{\x{O}_X}(\x{F}, -)\colon \mathfrak{Q}(X)\to \mathfrak{M}(Y)$ 
which form a universal $\delta$-functor. Note that by definition 
$\xExt^p_f(\x{F}, \x{G})_{\mathfrak Q}$ is quasi-coherent for any quasi-coherent $\x{G}$. 

On the other hand, the functors $\xExt^p_f(\x{F}, -)$ restricted to $\mathfrak{Q}(X)$ form 
a $\delta$-functor. The isomorphism $\xExt^0_f(\x{F}, -)_{\mathfrak Q}\simeq \xExt^0_f(\x{F}, -)$ 
induces a unique sequence of maps $\xExt^p_f(\x{F}, -)_{\mathfrak Q}\to\xExt^p_f(\x{F}, -)$.

Now assume that $\x{F}=\x{O}_X$. Then the functors $\xExt^p_f(\x{O}_X, -)$ also form a universal $\delta$-functor 
because in this case the functors are effaceable for any $p>0$ : each $\x{G}\in\mathfrak{Q}(X)$ can be embedded into a flasque quasi-coherent sheaf $\x{H}$ which satisfies $\xExt^p_f(\x{O}_X, \x{H})=0$ by Theorem \ref{t-ext-sheaf-associated} and Definition-Remark \ref{dr-ext}.
Therefore, the maps $\xExt^p_f(\x{O}_X, -)_{\mathfrak Q}\to\xExt^p_f(\x{O}_X, -)$ are isomorphisms and so the sheaves $R^pf_*\x{G}$ are all quasi-coherent when 
$\x{G}$ is quasi-coherent.\\

\item Fix a coherent sheaf $\x{G}$ on $X$. If $f$ is projective then locally over $Y$ 
every coherent sheaf $\x{F}$ has a resolution $\x{L}_\bullet$ as in Theorem \ref{t-acyclic-resolution}. In fact, we could assume that $Y$ is affine, and if $\x{O}_X(1)$ is a 
very ample invertible sheaf for $f$, then the sheaf $\x{F}(l)$ is generated by 
finitely many global sections for any $l\gg 0$. Pick $l$ sufficiently large. 
Then we get a surjective morphism
$\x{O}_X^n\to \x{F}(l)$ for some $n$ hence a surjective morphism $\x{L}_0:=\x{O}_X^n(-l)\to \x{F}$.
Moreover, for any $p>0$
$$
\xExt^p_f(\x{L}_0,\x{G})\simeq \xExt^p_f(\x{O}_X,\x{G}\otimes {\x{L}_0}^{\vee})\simeq 
R^pf_*(\x{G}\otimes {\x{L}_0}^{\vee})
$$
$$
\simeq R^pf_*\x{G}(l)^n=0 
$$
because the latter sheaf is the sheaf associated to $H^p(X,\x{G}(l))^n$ which is zero 
by Hartshorne [\ref{Hartshorne}, III, Theorem 5.2].

By considering the kernel of $\x{L}_0\to \x{F}$ and continuing this process we can construct $\x{L}_\bullet$. Since $\x{F}$ and $\x{G}$ are both assumed to be coherent 
then the sheaves $\xExt^p_f(\x{F},\x{G})$ are all coherent.\\

\item Assume that $X=\Spec A$, $f$ is the identity, and $M$ and $N$ 
are $A$-modules with $M$ finitely generated. Then $\x{F}=\tilde{M}$ 
has a resolution $\x{L}_\bullet$ as in Theorem \ref{t-acyclic-resolution}, 
consisting of free coherent sheaves, which works for every $\x{G}=\tilde{N}$.
Thus, $\xExt^p_{\x{O}_X}(\tilde{M},\tilde{N})$ is quasi-coherent and it is the 
sheaf associated to the module $\Ext^p_{\x{O}_X}(\tilde{M},\tilde{N})$. 

We define $\Ext^p_{A}({M},-)$ to be the right derived functors of the 
left exact functor $\Hom_A(M,-)$ from the category of $A$-modules to itself 
(in general $M$ need not be finitely generated).
We will prove that $\Ext^p_{\x{O}_X}(\tilde{M},\tilde{N})\simeq \Ext^p_{A}({M},{N})$. 
This holds when $p=0$ because of the isomorphism  $\Hom_{\x{O}_X}(\tilde{M},\tilde{N})\simeq \Hom_{A}({M},{N})$. 
 On the other hand, for 
a fixed $N$, both $\Ext^p_{\x{O}_X}(-,\tilde{N})$ and $\Ext^p_{A}(-,{N})$ define $\delta$-functors from the category of 
finitely generated $A$-modules to the category of $A$-modules. Both $\delta$-functors are universal as one can show 
that they are coeffaceable by considering free resolutions of finitely generated $A$-modules. 
Therefore we get 
isomorphisms for all $p$.\\

\item\label{t-ext-qc-o-module} Let $\x{F}$ be coherent and assume that there is a resolution $\x{L}_\bullet\to \x{F}\to 0$  such that all the $\x{L}_i$ are coherent and $\xExt^p_f(\x{L}_i, \x{G})_{\mathfrak Q}=0$ for every $i\ge 0$ and $p\ge 1$. 
Then, $\xExt^p_f(\x{F}, \x{G})_{\mathfrak Q}$ are the cohomology objects of the complex 
$0\to f_*\xHom_{\x{O}_X}(\x{L}_\bullet, \x{G})$. This can be proved exactly 
as in the proof of Theorem \ref{t-acyclic-resolution}. 

In particular, if $\x{G}$ is also coherent, then under the assumptions of (4), the 
sequence $\x{L}_\bullet$ constructed in (4) also satisfies $\xExt^p_f(\x{L}_i, \x{G})_{\mathfrak Q}=0$. Therefore, if $f$ is projective the map $\xExt^p_f(\x{F}, -)_{\mathfrak Q}\to\xExt^p_f(\x{F}, -)$ in (3) 
can be shown, locally, to be an isomorphism because the cohomology objects of the complex $0\to f_*\xHom_{\x{O}_X}(\x{L}_\bullet, \x{G})$ calculate both sides of the map.\\ 
\end{enumerate}
\end{rem-thm}

\begin{thm}\label{t-ext-local-global-degen}
Let $f\colon X\to Y$ and $g\colon Y\to Z$ be projective morphisms of Noetherian schemes, $h=gf$
and $\x{F}$ and $\x{G}$ coherent sheaves on $X$. If $\x{O}_Y(1)$ is a very ample invertible sheaf 
for $g$, then for any $l\gg 0$ we have isomorphisms 
$$
g_*\xExt^p_f(\x{F}, \x{G}\otimes f^*\x{O}_Y(l)) \simeq \xExt^{p}_h(\x{F}, \x{G}\otimes f^*\x{O}_Y(l))
$$
\end{thm}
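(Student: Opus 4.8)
The plan is to use the spectral sequence of Theorem \ref{t-ext-spectral-sequence}, namely $E_2^{q,p}=R^qg_*\xExt^p_f(\x{F}, \x{G}\otimes f^*\x{O}_Y(l)) \implies \xExt^{q+p}_h(\x{F}, \x{G}\otimes f^*\x{O}_Y(l))$, and to show that for $l\gg 0$ this spectral sequence degenerates at the $E_2$-page along the bottom row, leaving only the edge-map isomorphism $g_*\xExt^p_f(\x{F}, \x{G}\otimes f^*\x{O}_Y(l)) \simeq \xExt^{p}_h(\x{F}, \x{G}\otimes f^*\x{O}_Y(l))$. The key point is that $\xExt^p_f(\x{F}, \x{G}\otimes f^*\x{O}_Y(l))$ is a coherent sheaf on $Y$ (by Remark-Theorem \ref{r-ext-coherent}, since $f$ is projective and $\x{F},\x{G}$ are coherent — note $\x{G}\otimes f^*\x{O}_Y(l)$ is still coherent), and by the projection formula of Theorem \ref{t-ext-locally-free} we have $\xExt^p_f(\x{F}, \x{G}\otimes f^*\x{O}_Y(l))\simeq \xExt^p_f(\x{F}, \x{G})\otimes \x{O}_Y(l)$. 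So the $E_2$-terms are $R^qg_*\big(\xExt^p_f(\x{F}, \x{G})\otimes \x{O}_Y(l)\big)$.

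First I would fix, using Remark-Theorem \ref{r-ext-coherent}(4), a bound $p_0$ such that $\xExt^p_f(\x{F},\x{G})=0$ for all $p>p_0$; this is legitimate because locally on $Y$ the sheaf $\x{F}$ admits a finite-length resolution by sheaves of the form $\x{O}_X^{n}(-l)$ whose higher $\xExt_f$ against a coherent sheaf vanish, and $Y$ is quasi-compact. (Alternatively one may simply work with the finitely many $p$ that are relevant.) Thus only finitely many sheaves $\x{E}^p:=\xExt^p_f(\x{F},\x{G})$, $0\le p\le p_0$, are nonzero, each coherent on $Y$. Next I would invoke Serre vanishing on $Y$ relative to $g$ (Hartshorne III, Theorem 5.2, or its relative form): since $\x{O}_Y(1)$ is very ample for $g$ and each $\x{E}^p$ is coherent, there is an $l_p$ with $R^qg_*\big(\x{E}^p\otimes\x{O}_Y(l)\big)=0$ for all $q>0$ and all $l\ge l_p$. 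Taking $l\ge\max_p l_p$ (a finite max, thanks to the bound $p_0$), the $E_2$-page is concentrated in the row $q=0$: $E_2^{q,p}=0$ for $q>0$, and $E_2^{0,p}=g_*\big(\x{E}^p\otimes\x{O}_Y(l)\big)\simeq g_*\xExt^p_f(\x{F},\x{G}\otimes f^*\x{O}_Y(l))$.

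With a spectral sequence supported on a single row, all differentials $d_r$ ($r\ge 2$) vanish identically (their source or target is zero), so $E_2=E_\infty$, and the abutment filtration on $\xExt^{p}_h(\x{F}, \x{G}\otimes f^*\x{O}_Y(l))$ has a single nonzero graded piece, namely $E_\infty^{0,p}$. This yields the asserted isomorphism, and one should check it is the natural edge morphism (induced by $g_*\alpha \to$ the derived functors of $\gamma=h_*\xHom_{\x{O}_X}(\x{F},-)$ in the notation preceding Theorem \ref{t-ext-spectral-sequence}), which is automatic from the construction of the Grothendieck spectral sequence. The main obstacle is really just bookkeeping: making sure the relative Serre vanishing is applied correctly (it requires $g$ projective and $\x{O}_Y(1)$ relatively very ample, both in the hypotheses) and that the bound $p_0$ on the range of $p$ is justified so that "$l\gg 0$" can be chosen uniformly in $p$; once those are in place, the degeneration is formal.
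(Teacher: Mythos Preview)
Your proof is correct and follows essentially the same route as the paper: apply the projection formula $\xExt^p_f(\x{F},\x{G}\otimes f^*\x{O}_Y(l))\simeq \xExt^p_f(\x{F},\x{G})\otimes\x{O}_Y(l)$, invoke coherence of $\xExt^p_f(\x{F},\x{G})$ and relative Serre vanishing to kill the $q>0$ terms of the spectral sequence of Theorem~\ref{t-ext-spectral-sequence}, and read off the edge isomorphism. The paper does not worry about uniformity of $l$ in $p$ (the statement and its applications only require the isomorphism for each fixed $p$), so your bound $p_0$ is extra care; note also that Remark-Theorem~\ref{r-ext-coherent}(4) does not directly give a \emph{finite} resolution, but your parenthetical alternative of treating each $p$ separately is exactly what is needed.
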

\begin{proof}
By Theorem \ref{t-ext-locally-free}, we have 
$$
E^{q,p}_2:=R^qg_*\xExt^p_f(\x{F}, \x{G}\otimes f^*\x{O}_Y(l))\simeq R^qg_*(\xExt^p_f(\x{F}, \x{G})\otimes \x{O}_Y(l))
$$
By Remark \ref{r-ext-coherent} the sheaves $\xExt^p_f(\x{F}, \x{G})$ are coherent and
$$
 R^qg_*(\xExt^p_f(\x{F}, \x{G})\otimes \x{O}_Y(l))=0
$$ 
whenever $l\gg 0$ and $q>0$. Now the spectral sequence of Theorem \ref{t-ext-spectral-sequence} gives the result
because $E^{q,p}_\infty=E^{q,p}_2=0$ unless $q=0$.\\
\end{proof}

In the theorem, if $Z$ is just a single point, then we get the isomorphism 
$$
H^0(Y,\xExt^p_f(\x{F}, \x{G}\otimes f^*\x{O}_Y(l))) \simeq \Ext^{p}_{\x{O}_X}(\x{F}, \x{G}\otimes f^*\x{O}_Y(l))
$$
\vspace{0.1cm}

%%%%%%%%%%%%%%%%%%%%
\section{Ext and projective dimension}

\begin{defn}
Let $A$ be a ring and $M$ an $A$-module. The projective dimension of $M$ over $A$, denoted 
by $\pd(M)$ is the minimum of $r$ such that there is an exact sequence
$$
0\to P_r\to \cdots \to {P}_1 \to {P}_0\to M \to 0
$$ 
with each $P_i$ a projective 
$A$-module. If $M$ is already projective then obviously $\pd(M)=0$ by taking 
$P_0=M$ and $P_i=0$ for $i>0$. Conversely, if $\pd(M)=0$, then $M$ is projective.\\
\end{defn}

\begin{thm}\label{t-proj-dimension}
Let $A$ be a ring and $M$ an $A$-module. Then, $\pd(M)\le n$ if and only if 
$\Ext^p_A(M,N)=0$ for any $p> n$ and any $A$-module $N$.
\end{thm}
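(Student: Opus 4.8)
The plan is to prove both implications separately, with the forward direction ($\pd(M)\le n \Rightarrow \Ext^p_A(M,N)=0$ for $p>n$) being the routine one and the converse requiring the main idea. First I would record a standard lemma, proved exactly as in the sheaf setting of Theorem \ref{t-acyclic-resolution}: if $\cdots\to P_1\to P_0\to M\to 0$ is a resolution with each $P_i$ projective, then $\Ext^p_A(M,N)$ is the $p$-th cohomology of the complex $0\to \Hom_A(P_\bullet,N)$. (Projective modules are acyclic for $\Hom_A(-,N)$ since $\Hom_A(P,-)$ is exact, so the hypotheses of that theorem's module analogue hold; alternatively one invokes that $\Ext^p_A(-,N)$ is a universal $\delta$-functor, coeffaceable via free resolutions, hence computed by any projective resolution.)

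For the forward direction, suppose $\pd(M)\le n$, so there is a finite projective resolution $0\to P_n\to\cdots\to P_0\to M\to 0$. Extending by zeros gives a projective resolution with $P_i=0$ for $i>n$, so the complex $\Hom_A(P_\bullet,N)$ vanishes in degrees $>n$, and therefore $\Ext^p_A(M,N)=0$ for all $p>n$ and all $N$ by the lemma.

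For the converse, suppose $\Ext^p_A(M,N)=0$ for all $p>n$ and all $A$-modules $N$. Choose any projective resolution $\cdots\to P_1\to P_0\to M\to 0$ (e.g.\ a free one), set $K_{-1}=M$ and let $K_i$ be the image of $P_{i+1}\to P_i$, so that one has short exact sequences $0\to K_i\to P_i\to K_{i-1}\to 0$. The long exact sequences of $\Ext_A(-,N)$ attached to these, together with $\Ext^p_A(P_i,N)=0$ for $p\ge 1$, yield dimension-shifting isomorphisms $\Ext^p_A(K_{i},N)\simeq \Ext^{p+1}_A(K_{i-1},N)$ for $p\ge 1$, and hence $\Ext^1_A(K_{n-1},N)\simeq \Ext^{n+1}_A(M,N)=0$ for every $N$. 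The key step is then to deduce from the vanishing of $\Ext^1_A(K_{n-1},-)$ that $K_{n-1}$ is projective: applying the long exact sequence of $\Ext_A(K_{n-1},-)$ to an arbitrary short exact sequence $0\to N'\to N\to N''\to 0$ shows $\Hom_A(K_{n-1},-)$ is exact, which is precisely the statement that $K_{n-1}$ is projective. Consequently $0\to K_{n-1}\to P_{n-1}\to\cdots\to P_0\to M\to 0$ is a projective resolution of length $\le n$, so $\pd(M)\le n$.

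The main obstacle is the implication ``$\Ext^1_A(K_{n-1},-)\equiv 0 \Rightarrow K_{n-1}$ projective.'' This rests on characterizing projectivity by the lifting property and translating it into the surjectivity of $\Hom_A(K_{n-1},N)\to\Hom_A(K_{n-1},N'')$ for every surjection $N\twoheadrightarrow N''$; everything else is formal dimension-shifting with the long exact $\Ext$-sequence of Theorem \ref{t-ext-long-sequence} specialized to modules. One should also note the degenerate case $n=0$ (then $M=K_{-1}$ itself is projective by the same argument) and the trivial case where no finite resolution is needed because the hypothesis forces termination at stage $n$.
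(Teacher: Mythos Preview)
Your proof is correct and follows essentially the same approach as the paper's: both use dimension-shifting along the syzygies of a projective resolution and reduce to the key fact that $\Ext^1_A(K,-)\equiv 0$ forces $K$ to be projective. The only difference is organizational: the paper packages the argument as an induction on $n$ (peeling off one syzygy at a time in both directions), whereas you invoke the ``$\Ext$ is computed by any projective resolution'' lemma for the forward direction and perform the full dimension-shift $\Ext^1_A(K_{n-1},N)\simeq\Ext^{n+1}_A(M,N)$ in one stroke for the converse.
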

\begin{proof}
We do induction on $n$. First let $n=0$. If $\pd(M)=0$, then as mentioned above 
$M$ is projective. Let $N$ be an $A$-module and let $0\to N\to I\to K\to 0$ 
be a short exact sequence where $I$ is an injective $A$-module. 
Then, we have the long exact sequence 
$$
0 \to \Hom_A(M,N)\to \Hom_A(M,I)\to \Hom_A(M,K)\to \Ext^1_A(M,N)\to \Ext^1_A(M,I) \to \cdots 
$$ 
Since $I$ is injective, $\Ext^p_A(M,I)=0$ for any $p>0$. On the other hand, 
since $M$ is projective the functor $\Hom_A(M,-)$ is exact hence 
$\Ext^1_A(M,N)=0$ and $ \Ext^p(M,K)\simeq \Ext^{p+1}_A(M,N)$ for every $p>0$. 
But the same argument gives $\Ext^1_A(M,K)=0$ hence $\Ext^{2}_A(M,N)=0$ and eventually 
$\Ext^p_A(M,N)=0$ for any $p>0$.

Conversely, assume that $\Ext^p_A(M,N)=0$ for any $p>0$ and any $A$-module $N$. 
Then, the functor $\Hom_A(M,-)$ is exact hence $M$ is projective. Clearly, 
only the condition for $p=1$ is enough to imply the projectivity of $M$.

Now assume that $n>0$ and that the theorem holds up to $n-1$. Suppose that $\pd(M)\le n$. 
By induction, we may assume that $\pd(M)=n$. Then, we have a resolution 
$$
0\to P_n\to \cdots \to {P}_1 \to {P}_0\to M \to 0
$$
where each $P_i$ is projective. Let $E$ be the kernel of $P_0\to M$. 
We then have the exact sequence 
$$
0\to P_n\to \cdots \to {P}_2 \to {P}_1\to E \to 0
$$
which in particular means that $\pd(E)\le n-1$. So, by induction 
$\Ext^p_A(E,N)=0$ for any $p>n-1$ and any $A$-module $N$. On the other hand, 
the short exact sequence $0 \to E\to P_0\to M\to 0$ gives the long 
exact sequence  
$$
 \cdots \to \Ext^{p}_A(M,N)\to \Ext^p_A(P_0,N) \to \Ext^p_A(E,N) \to \Ext^{p+1}_A(M,N) \to\cdots 
$$ 
(proved similar to Theorem \ref{t-ext-long-sequence}) which implies that $\Ext^p(E,N)\simeq \Ext^{p+1}_A(M,N)$ for every $p>0$. 
Therefore, $\Ext^p_A(M,N)=0$ for any $p>n$.

Conversely, assume that $\Ext^p_A(M,N)=0$ for any $p>n$ and any $A$-module $N$. 
Let $P_0\to M$ be a surjective map with $P_0$ projective, and let $E$ be the 
kernel. Then, the latter long exact sequence implies that 
$\Ext^p_A(E,N)=0$ for any $p>n-1$ and any $A$-module $N$. By induction, 
$\pd(E)\le n-1$ and there is a resolution 
$$
0\to P_r\to \cdots \to {P}_2 \to {P}_1\to E \to 0
$$
by projective modules where $r\le n$. We then get the resolution
$$
0\to P_r\to \cdots \to {P}_1 \to {P}_0\to M \to 0
$$
which implies that $\pd(M)\le n$.\\
\end{proof}

%%%%%%%%%%%%%%%%%%%%%%%%%%

\section{Cohomology with support}

\begin{defn}
Let $Z$ be a closed subset of a topological space $X$, and let $U=X\setminus Z$. For any sheaf 
$\x{F}$ on $X$, put $\Gamma_Z(X,\x{F})=\ker \x{F}(X)\to \x{F}(U)$. It is easy to see 
that the functor $\Gamma_Z(X,-)\colon \mathfrak{S}h(X)\to \mathfrak{Ab}$ from the category 
of sheaves on $X$ to the category of abelian groups is a left exact functor. We denote  
its right derived functors by $H^p_Z(X,-)$ which are called the cohomology groups 
with support in $Z$.
\end{defn}

The cohomology with support is actually a special case of ext. In fact, let 
$\x{O}_X$ be the ringed structure on $X$ given by the constant sheaf 
associated to $\Z$. The inclusion maps $j\colon U\to X$ and $i\colon Z\to X$ give 
an exact sequence 
$$
0 \to j_!j^*\x{O}_X \to \x{O}_X \to i_*i^{-1}\x{O}_X \to 0
$$ 
and for any 
sheaf $\x{F}$ on $X$ we have 
$$
\x{F}(X)=\Hom_{\x{O}_X}(\x{O}_X,\x{F}) ~\mbox{ and} ~\x{F}(U)=\Hom_{\x{O}_X}(j_!j^*\x{O}_X,\x{F})
$$ 
Therefore, the exact sequence 
$$
0\to \Hom_{\x{O}_X}(i_*i^{-1}\x{O}_X,\x{F}) \to \Hom_{\x{O}_X}(\x{O}_X,\x{F}) \to \Hom_{\x{O}_X}(j_!j^*\x{O}_X,\x{F})
$$
gives $\Gamma_Z(X,\x{F})=\Hom_{\x{O}_X}(i_*i^{-1}\x{O}_X,\x{F})$ hence the long exact 
sequence 
$$
\cdots \to \Ext^p_{\x{O}_X}(i_*i^{-1}\x{O}_X,\x{F})\to \Ext^p_{\x{O}_X}(\x{O}_X,\x{F})\to \Ext^p_{\x{O}_X}(j_!j^*\x{O}_X,\x{F})\to \cdots 
$$ 
 is the same as the exact sequence in the next theorem.\\

\begin{thm}\label{t-lc-exact-sequence} 
For every sheaf $\x{F}$ on $X$ we have an exact sequence 
$$
\cdots \to H^p_Z(X,\x{F}) \to H^p(X,\x{F})\to H^p(U,\x{F}|_U)\to \cdots
$$\\
\end{thm}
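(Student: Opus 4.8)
The plan is to realize the desired sequence as the long exact sequence of derived functors attached to a short exact sequence of functors, using the identification of cohomology with support as an Ext already indicated in the discussion preceding the statement. Concretely, equip $X$ with the ringed structure $\x{O}_X$ given by the constant sheaf $\Z$, and recall the exact sequence $0\to j_!j^*\x{O}_X\to \x{O}_X\to i_*i^{-1}\x{O}_X\to 0$ associated with the closed inclusion $i\colon Z\to X$ and the open inclusion $j\colon U\to X$. Applying $\Hom_{\x{O}_X}(-,\x{F})$ and then deriving in the second variable, Theorem \ref{t-ext-long-sequence} (with $f$ the identity, so that $\xExt$ becomes $\Ext$ over a point, i.e. $X\to Z$ the constant map) produces a long exact sequence
$$
\cdots \to \Ext^p_{\x{O}_X}(i_*i^{-1}\x{O}_X,\x{F})\to \Ext^p_{\x{O}_X}(\x{O}_X,\x{F})\to \Ext^p_{\x{O}_X}(j_!j^*\x{O}_X,\x{F})\to \Ext^{p+1}_{\x{O}_X}(i_*i^{-1}\x{O}_X,\x{F})\to\cdots
$$

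The remaining work is to identify the three terms. For the middle term, Definition-Remark \ref{dr-ext}(1) gives $\Ext^p_{\x{O}_X}(\x{O}_X,\x{F})\simeq H^p(X,\x{F})$. For the left term, the computation in the text shows $\Hom_{\x{O}_X}(i_*i^{-1}\x{O}_X,\x{F})=\Gamma_Z(X,\x{F})$ functorially in $\x{F}$; since both $\Ext^p_{\x{O}_X}(i_*i^{-1}\x{O}_X,-)$ and $H^p_Z(X,-)$ are the right derived functors of this same left exact functor, they agree, so the left term is $H^p_Z(X,\x{F})$. For the right term, one checks the functorial identification $\Hom_{\x{O}_X}(j_!j^*\x{O}_X,\x{F})\simeq \x{F}(U)=H^0(U,\x{F}|_U)$ from adjunction between $j_!$ and $j^*$ (restriction), and then must verify that the higher Ext groups $\Ext^p_{\x{O}_X}(j_!j^*\x{O}_X,\x{F})$ compute $H^p(U,\x{F}|_U)$. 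This is the step I expect to be the main obstacle: one wants $\Ext^p_{\x{O}_X}(j_!\x{O}_U,\x{F})\simeq H^p(U,\x{F}|_U)$, which follows once one knows that for an injective sheaf $\x{I}$ on $X$ the restriction $\x{I}|_U$ is injective on $U$ and that $\Hom_{\x{O}_X}(j_!\x{O}_U,\x{I})\simeq \x{I}(U)$; applying this to an injective resolution $0\to\x{F}\to\x{I}^\bullet$ gives that $\Ext^p_{\x{O}_X}(j_!\x{O}_U,\x{F})$ is the $p$-th cohomology of $\x{I}^\bullet(U)$, which is $H^p(U,\x{F}|_U)$ because $\x{I}^\bullet|_U$ is an injective (hence acyclic) resolution of $\x{F}|_U$.

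Substituting these three identifications into the long exact sequence above yields exactly
$$
\cdots \to H^p_Z(X,\x{F}) \to H^p(X,\x{F})\to H^p(U,\x{F}|_U)\to H^{p+1}_Z(X,\x{F})\to\cdots
$$
which is the assertion. I would present the argument essentially in this order: first recall the structural short exact sequence and the middle-term identification, then dispose of the left term via the derived-functor uniqueness, then spend the bulk of the proof on the right term (injectivity of $\x{I}|_U$, the adjunction computing $\Hom(j_!\x{O}_U,-)$, and the resulting cohomology identification), and finally assemble. An alternative, if one prefers to bypass the Ext formalism, is to build the sequence directly from an injective resolution $\x{I}^\bullet$ of $\x{F}$: the sequence of complexes $0\to \Gamma_Z(X,\x{I}^\bullet)\to \x{I}^\bullet(X)\to \x{I}^\bullet(U)\to 0$ is exact termwise (each $\x{I}^p$ being flasque, hence $\x{I}^p(X)\to\x{I}^p(U)$ surjective with kernel $\Gamma_Z(X,\x{I}^p)$), and its long exact cohomology sequence is the one sought, using again that $\x{I}^\bullet|_U$ computes $H^p(U,\x{F}|_U)$. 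Either route works; the flasque-resolution version is shorter but the Ext version fits the narrative the section has been developing.
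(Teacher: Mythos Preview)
Your proposal is correct and follows essentially the same route as the paper: the discussion immediately preceding the theorem statement already sets up the short exact sequence $0\to j_!j^*\x{O}_X\to\x{O}_X\to i_*i^{-1}\x{O}_X\to 0$, identifies the three $\Hom$ terms, and asserts that the resulting long exact Ext sequence ``is the same as the exact sequence in the next theorem,'' leaving the theorem itself without a separate proof. You have in fact done more than the paper, since you spell out the identification $\Ext^p_{\x{O}_X}(j_!\x{O}_U,\x{F})\simeq H^p(U,\x{F}|_U)$ via restriction of injectives (which the paper does not justify), and your alternative flasque-resolution argument is a perfectly good self-contained substitute.
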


The next theorem says that cohomology with support in $Z$ reflects properties 
of the sheaf near $Z$.\\

\begin{thm}[Excision]\label{t-lc-excision}
Assume that $Z\subseteq W$ for some open subset $W\subseteq X$. Then, for every $p$, we have 
$H^p_Z(W,\x{F}|_W)\simeq H^p_Z(X,\x{F})$.
\end{thm}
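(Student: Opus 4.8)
The plan is to reduce the statement to the behaviour of the functors $\Gamma_Z(X,-)$ and $\Gamma_Z(W,-)$ on the level of $0$-th cohomology, and then propagate the comparison to higher cohomology via a derived-functor / universal $\delta$-functor argument. First I would observe that for any sheaf $\x{F}$ on $X$, restriction gives a map $\Gamma_Z(X,\x{F}) \to \Gamma_Z(W,\x{F}|_W)$, and that this map is an isomorphism: a section of $\x{F}$ over $X$ vanishing on $X\setminus Z$ is the same data as a section of $\x{F}|_W$ over $W$ vanishing on $W\setminus Z$, because $Z$ is closed in $X$ and $Z\subseteq W$, so $\{W, X\setminus Z\}$ is an open cover of $X$ and a section supported on $Z$ is determined by its restriction to $W$ and is automatically zero on $X\setminus Z$. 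Thus $\Gamma_Z(X,-)\simeq \Gamma_Z(W,(-)|_W)$ as functors on $\mathfrak{S}h(X)$.

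Next I would pass to derived functors. The left-hand side has right derived functors $H^p_Z(X,-)$ by definition. For the right-hand side, note that $(-)|_W\colon \mathfrak{S}h(X)\to\mathfrak{S}h(W)$ is exact and sends injective sheaves to injective sheaves (restriction to an open subset has an exact left adjoint $j_!$, so it preserves injectives; cf. the argument in the proof of Theorem \ref{t-ext-spectral-sequence}). Therefore, given an injective resolution $0\to \x{F}\to \x{I}^\bullet$ on $X$, the complex $\x{I}^\bullet|_W$ is an injective resolution of $\x{F}|_W$ on $W$, and applying $\Gamma_Z(W,-)$ to it computes $H^p_Z(W,\x{F}|_W)$. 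But by the functorial isomorphism of the first step, $\Gamma_Z(W,\x{I}^\bullet|_W)\simeq \Gamma_Z(X,\x{I}^\bullet)$ as complexes of abelian groups, and the latter computes $H^p_Z(X,\x{F})$. Taking cohomology of these identified complexes yields the desired isomorphism $H^p_Z(W,\x{F}|_W)\simeq H^p_Z(X,\x{F})$ for all $p$, and one checks it is natural in $\x{F}$.

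The main obstacle — really the only point requiring care — is the claim that restriction to the open set $W$ preserves injective objects in $\mathfrak{S}h(X)$; everything else is formal once that is in hand. I would justify it exactly as in Theorem \ref{t-ext-spectral-sequence}: the extension-by-zero functor $j_!\colon\mathfrak{S}h(W)\to\mathfrak{S}h(X)$ is exact and is left adjoint to $(-)|_W$, so $(-)|_W$ sends injectives to injectives. (The reader who prefers to avoid invoking injectivity of $\x{I}^\bullet|_W$ could instead argue that both $H^p_Z(X,-)$ and $H^p_Z(W,(-)|_W)$ are universal $\delta$-functors agreeing in degree $0$ — the second because $(-)|_W$ is exact and effaceability is inherited — and hence are isomorphic; this is the same content packaged differently.) No serious computation is involved.
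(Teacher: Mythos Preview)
Your proof is correct and follows essentially the same route as the paper: establish the isomorphism $\Gamma_Z(X,-)\simeq\Gamma_Z(W,(-)|_W)$ via the open cover $\{W,X\setminus Z\}$, then transport it to higher cohomology by noting that the restriction of an injective resolution on $X$ is an injective resolution on $W$. The paper simply asserts the latter fact (relying on Exercise~\ref{exe-injective-restriction-tensor}), whereas you justify it via the $j_!$ adjunction as in Theorem~\ref{t-ext-spectral-sequence}; otherwise the arguments are identical.
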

\begin{proof}
Let 
$$
0 \to \x{F} \to \x{I}^0 \to  \x{I}^1 \to \cdots
$$
be an injective resolution in $\mathfrak{S}h(X)$. Then, $H^p_Z(X,\x{F})$ are 
by definition the cohomology objects of the complex
$$
0 \to \Gamma_Z(X,\x{I}^0) \to \Gamma_Z(X,\x{I}^1) \to \cdots  
$$ 
Now 
$$
0 \to \x{F}|_W \to \x{I}^0|_W \to  \x{I}^1|_W \to \cdots
$$
is an injective resolution in $\mathfrak{S}h(W)$ and so $H^p_Z(W,\x{F}|_W)$ are the cohomology objects of the complex
$$
0 \to \Gamma_Z(W,\x{I}^0|_W) \to \Gamma_Z(W,\x{I}^1|_W) \to \cdots  
$$ 

On the other hand, for every sheaf $\x{G}$ on $X$ we have a natural functorial map
$\alpha \colon \Gamma_Z(X,\x{G})\to \Gamma_Z(W,\x{G}|_W)$ induced by the restriction map 
$\x{G}(X)\to \x{G}(W)$. Note that the two open sets $U$ and $W$ give an open covering 
of $X$. Now if $\alpha(s)=0$, then $s=0$ because $s|_W=0$ and $s|_U=0$ hence $\alpha$ is 
injective. Moreover, if $t\in \Gamma_Z(W,\x{G}|_W)$, then $t|_{W\cap U}=0$. 
So, $t$ and the zero section in $\x{G}(U)$ glue together to give a section $s\in \x{G}(X)$. 
Since, $s|_U=0$, $s\in \Gamma_Z(X,\x{G})$, and $\alpha(s)=t$ hence $\alpha$ is surjective 
as well thus bijective which implies the theorem.\\  
\end{proof}

The following theorem is the analogue of the Mayer-Vietoris exact sequence in 
algebraic topology.\\

\begin{thm}\label{t-lc-Mayer-Vietoris}
Assume that $Z'$ is another closed subset of $X$. Then, for any sheaf $\x{F}$ 
on $X$ we have an exact sequence 
$$
\cdots \to H^p_{Z\cap Z'}(X,\x{F}) \to H^p_Z(X,\x{F})\oplus H^p_{Z'}(X,\x{F})\to 
H^p_{Z\cup Z'}(X,\x{F})\to \cdots
$$
\end{thm}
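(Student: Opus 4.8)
The plan is to imitate the proof of the Mayer--Vietoris sequence in ordinary sheaf cohomology, working at the level of injective resolutions. Fix an injective resolution $0\to\x{F}\to\x{I}^\bullet$ in $\mathfrak{S}h(X)$. By definition $H^p_Z(X,\x{F})$, $H^p_{Z'}(X,\x{F})$, $H^p_{Z\cap Z'}(X,\x{F})$ and $H^p_{Z\cup Z'}(X,\x{F})$ are the cohomology objects of the complexes obtained by applying $\Gamma_Z(X,-)$, $\Gamma_{Z'}(X,-)$, $\Gamma_{Z\cap Z'}(X,-)$ and $\Gamma_{Z\cup Z'}(X,-)$ respectively to $\x{I}^\bullet$ (each $\x{I}^p$ being injective, hence acyclic for all four functors). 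So it suffices to produce, for each injective sheaf $\x{I}$, a short exact sequence of abelian groups
$$
0\to \Gamma_{Z\cup Z'}(X,\x{I})\to \Gamma_Z(X,\x{I})\oplus\Gamma_{Z'}(X,\x{I})\to \Gamma_{Z\cap Z'}(X,\x{I})\to 0
$$
which is functorial in $\x{I}$; the long exact sequence of the theorem is then the long exact cohomology sequence of the resulting short exact sequence of complexes.

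First I would build the maps. The left-hand map is $s\mapsto(s,s)$, using that a section killed by restriction to $X\setminus(Z\cup Z')$ is a fortiori killed by restriction to the larger open sets $X\setminus Z$ and $X\setminus Z'$. The right-hand map is $(s,s')\mapsto (s-s')|_{X\setminus(Z\cap Z')}$, where one notes that a section of $\x{I}$ over $X\setminus(Z\cap Z')$ vanishing on $X\setminus Z$ automatically lies in $\Gamma_{Z'}(X\setminus(Z\cap Z'),\x{I})$, i.e. is killed by further restriction to $X\setminus(Z\cup Z')$; combined with the excision isomorphism of Theorem \ref{t-lc-excision} this identifies $\Gamma_{Z'}(X\setminus(Z\cap Z'),\x{I}|_{X\setminus(Z\cap Z')})$ with a suitable group, but in fact it is cleaner to work directly: the image group is $\Gamma_{Z\cap Z'}(X,\x{I})$ itself, since $s-s'$ vanishes off $Z\cap Z'$. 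Exactness at the left is immediate: $(s,s)=0$ forces $s=0$. Exactness in the middle: if $s-s'$ is zero in $\x{I}(X)$ then $s=s'$ is a common section vanishing off both $Z$ and off $Z'$, hence off $Z\cup Z'$.

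The main obstacle, and the only place injectivity of $\x{I}$ is used, is surjectivity of the right-hand map. Given $t\in\Gamma_{Z\cap Z'}(X,\x{I})$, I must write $t=s-s'$ with $s\in\Gamma_Z(X,\x{I})$, $s'\in\Gamma_{Z'}(X,\x{I})$. The idea: $t$ vanishes on $U:=X\setminus(Z\cap Z')$, and $U=(X\setminus Z)\cup(X\setminus Z')$ is an open cover of $U$; on $X\setminus Z'$, $t$ restricts to a section vanishing on $(X\setminus Z)\cap(X\setminus Z')=X\setminus(Z\cup Z')$, i.e. $t|_{X\setminus Z'}\in\Gamma_{Z\setminus(Z\cap Z')}(X\setminus Z',\x{I})$. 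I would use injectivity of $\x{I}$ (in the guise of the extension argument already used in Theorem \ref{t-ext-spectral-sequence}, that for an injective sheaf restriction along an open immersion is surjective on the relevant $\operatorname{Hom}$-groups, i.e. $\x{I}$ is flasque) to extend $t|_{X\setminus Z'}$, viewed appropriately, to a global section $s'\in\x{I}(X)$ that vanishes on $X\setminus Z'$; set $s:=t-$(the restriction of $s'$ to where $t$ lives) extended likewise — more carefully, one extends the section of $\x{I}$ over $X\setminus Z'$ that agrees with $t$ there to a global section $s$ of $\x{I}$, notes $s\in\Gamma_Z(X,\x{I})$ because $s$ agrees with $t$ on $X\setminus Z'$ hence vanishes on $X\setminus(Z\cup Z')$... here the bookkeeping must be done with care using flasqueness twice, once to extend across $Z'$ and once to correct the overlap, exactly as in the classical Mayer--Vietoris argument for flasque sheaves. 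Once this short exact sequence is in hand and checked to be functorial in $\x{I}$ (all maps are built from restriction maps, so functoriality is automatic), passing to the associated long exact sequence in cohomology finishes the proof.
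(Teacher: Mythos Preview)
Your short exact sequence is written the wrong way round, and this is not a sign convention issue but an actual error. You claim
\[
0\to \Gamma_{Z\cup Z'}(X,\x{I})\to \Gamma_Z(X,\x{I})\oplus\Gamma_{Z'}(X,\x{I})\to \Gamma_{Z\cap Z'}(X,\x{I})\to 0,
\]
but neither map is well-defined. For the left map you assert that a section vanishing on $X\setminus(Z\cup Z')$ automatically vanishes on ``the larger open sets $X\setminus Z$ and $X\setminus Z'$''. These open sets are indeed larger than $X\setminus(Z\cup Z')=(X\setminus Z)\cap(X\setminus Z')$, but vanishing on a \emph{smaller} open set does not imply vanishing on a larger one; the containment $\Gamma_{Z\cup Z'}\subseteq\Gamma_Z$ is false in general (the correct containment is $\Gamma_Z\subseteq\Gamma_{Z\cup Z'}$). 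Likewise, for the right map you claim $s-s'$ vanishes off $Z\cap Z'$: on $X\setminus Z\subseteq X\setminus(Z\cap Z')$ we have $s=0$ but $s'$ need not vanish, so $s-s'=-s'$ is generally nonzero. What is true is that $s-s'$ vanishes off $Z\cup Z'$.

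The correct short exact sequence (the one the paper uses) is
\[
0\to \Gamma_{Z\cap Z'}(X,\x{I})\xrightarrow{\,s\mapsto(s,-s)\,} \Gamma_Z(X,\x{I})\oplus\Gamma_{Z'}(X,\x{I})\xrightarrow{\,(s,s')\mapsto s+s'\,} \Gamma_{Z\cup Z'}(X,\x{I})\to 0,
\]
and this is exactly what gives the long exact sequence as stated in the theorem. Injectivity and middle exactness are immediate. The nontrivial point is surjectivity on the right for $\x{I}$ injective (equivalently flasque). The paper handles this by restricting to $W=X\setminus(Z\cap Z')$, where $Z\cap W$ and $Z'\cap W$ are disjoint, proving surjectivity there via the decomposition $e_*e^{-1}\x{O}_X\simeq i_*i^{-1}\x{O}_X\oplus i'_*{i'}^{-1}\x{O}_X$, and then lifting back using flasqueness. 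Your flasqueness-based extension sketch is aimed at the right kind of argument but, as written, it is trying to prove surjectivity onto the wrong group.
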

\begin{proof}
Note that in general for any closed subsets $Z\subseteq Z'$, any open subset $W$ and any sheaf 
$\x{F}$ we have an inclusion $\Gamma_Z(X,\x{F})\subseteq \Gamma_{Z'}(X,\x{F})$, and we have a 
natural map $\Gamma_Z(X,\x{F})\to \Gamma_{Z\cap W}(W,\x{F}|_W)$ which is surjective 
if $\x{F}$ is flasque and $X\setminus Z=W\setminus (Z\cap W)$. 

Now under the assumptions of the theorem, let $W=X\setminus (Z\cap Z')$. Then, 
for any sheaf $\x{I}$ on $X$ we have a commutative diagram  

$$
\xymatrix{
0\to   H^0_{Z\cap Z'}(X,\x{I}) \ar[d]\ar[r]^a & H^0_Z(X,\x{I})\oplus H^0_{Z'}(X,\x{I}) \ar[r]^{b}\ar[d]^{c} 
& H^0_{Z\cup Z'}(X,\x{I}) \ar[d]^{d} \\
 0\to  H^0_{\emptyset}(W,\x{I}|_W) \ar[r] & H^0_{Z\cap W}(W,\x{I}|_W)\oplus H^0_{Z'\cap W}(W,\x{I}|_W) \ar[r]
& H^0_{(Z\cup Z')\cap W}(W,\x{I}|_W) 
} 
$$ 
where the map $a$ sends $s$ to $(s,-s)$ and the map $b$ sends $(s,s')$ to $s+s'$ 
(similar definition for the maps in the lower row) hence making the rows exact. 

Now assume that $b$ is surjective for every injective sheaf $\x{I}$. Let 
$$
\x{I}^\bullet: \hspace{1cm} 0 \to \x{F} \to \x{I}^0 \to  \x{I}^1 \to \cdots
$$
be an injective resolution in $\mathfrak{S}h(X)$. Then, we get an exact sequence 
of complexes 
$$
0 \to \Gamma_{Z\cap Z'}(X,\x{I}^\bullet) \to \Gamma_Z(X,\x{I}^\bullet) \oplus \Gamma_{Z'}(X,\x{I}^\bullet)\to \Gamma_{Z\cup Z'}(X,\x{I}^\bullet)\to 0  
$$
whose associated long exact sequence gives the desired sequence in the theorem. 

Now we prove the surjectivity of $b$ for injective sheaves $\x{I}$. 
First, assume that in the above diagram the map in the lower row corresponding to $b$ is surjective.
 Since $\x{I}$ is flasque and since $X\setminus Z=W\setminus (Z\cap W)$, $X\setminus Z'=W\setminus (Z'\cap W)$, and $X\setminus (Z\cup Z')=W\setminus ((Z\cup Z')\cap W)$ the maps $c$ and $d$ are surjective. 
Let $t\in H^0_{Z\cup Z'}(X,\x{I})$. Then, there is $(s,s')$ such that $db(s,s')=d(t)$ hence 
$\overline{t}:=t-s-s'\in \ker d\subseteq\Gamma_{Z\cap Z'}(X,\x{I})$. So, $t=b(s+\overline{t},s')$ 
which means that $b$ is surjective.

Finally, we prove that  in the above diagram the map in the lower row corresponding to $b$ is an isomorphism. To do that 
we may simply assume that $W=X$ and $Z\cap Z'=\emptyset$.
Then, $ H^0_{\emptyset}(X,\x{I})=0$ which implies injectivity. 
Moreover, if $i\colon Z\to X$, $i'\colon Z'\to X$, and $e\colon Z\cup Z'\to X$ 
are the inclusions, then $e^{-1}\x{O}_X\simeq i^{-1}\x{O}_X\oplus {i'}^{-1}\x{O}_X$ 
where as usual $\x{O}_X$ is the constant sheaf on $X$ associated to $\Z$. Therefore, 
$e_*e^{-1}\x{O}_X\simeq i_*i^{-1}\x{O}_X\oplus i'_*{i'}^{-1}\x{O}_X$ and 
$$
\Hom_{\x{O}_X}(e_*e^{-1}\x{O}_X,\x{I})\simeq \Hom_{\x{O}_X}(i_*i^{-1}\x{O}_X,\x{I})
\oplus \Hom_{\x{O}_X}(i'_*{i'}^{-1}\x{O}_X,\x{I})
$$
which implies the claim.\\
\end{proof}

Cohomology with support can be quite useful for example in the context of \'etale cohomology.
 Suppose that 
$X$ is a smooth curve over an algebraically closed field, $Z$ is a finite set of closed points 
in $X$, and $U=X\setminus Z$. Suppose that $\x{F}$ is a sheaf on the \'etale site of $X$ 
and suppose that we know how to compute $H^p(U_{et},\x{F}|_U)$. Then using a fact similar to Theorem \ref{t-lc-exact-sequence} the study of the cohomology groups $H^p(X_{et},\x{F})$ can be reduced to the 
study of the cohomology groups $H^p(U_{et},\x{F}|_U)$ and $H^p_Z(X_{et},\x{F})$. For the latter, using a fact similar to Theorem 
\ref{t-lc-Mayer-Vietoris} we reduce the problem to the case when $Z$ is a single point.
 Here one may use another fact similar to 
 Theorem \ref{t-lc-excision} to replace $X$ with any \'etale
neighborhood $V$ of $Z$. In fact, we can write $H^p_Z(X_{et},\x{F})=\varinjlim H^p_Z(V_{et},\x{F}|_V)$
where $V$ runs through the \'etale neighborhoods of $Z$. In this way one can use a 
theorem of Grothendieck which says that 
$$
\varinjlim H^p_Z(V_{et},\x{F}|_V)=H^p_Z(\varprojlim V_{et},\x{F})
$$
But $\varprojlim V=\Spec \x{O}^h_{X,Z}$, that is, the Henselisation of the local ring at $Z$ which is rather well-known. For more information see Milne [\ref{Milne}, Theorem 14.3, and the proof of Poincar\'e duality 14.7].\\

\begin{rem}\label{r-lc-sheaves}
Let $X$ be a topological space and $Z$ a locally closed subset, i.e., there is an open subset 
$V$ of $X$ such that $Z$ is a closed subset of $V$. For any sheaf $\x{F}$ on $X$, one may 
define $\Gamma_Z(X,\x{F}):=\Gamma_Z(V,\x{F}|_V)$. It is easy to see that this definition 
does not depend on the choice of $V$. 

Now if $i\colon Z\to X$ is the inclusion map, then we can define a presheaf on $Z$ by 
assigning to an open subset $W$ of $Z$ the group $\Gamma_W(X,\x{F})$. It turns out that 
this is a subsheaf of $i^{-1}\x{F}$ which is denoted by $i^!\x{F}$. Note that if $Z$ is an 
open subset, then $i^!\x{F}=\x{F}|_Z$. On the other hand, by assigning to each open 
subset $U$ of $X$ the group $\Gamma_{Z\cap U}(U,\x{F}|_U)$ we obtain a sheaf on $X$ 
denoted by $\underline{\Gamma}_Z(\x{F})$. One can immediately check that 
$i_*i^!\x{F}=\underline{\Gamma}_Z(\x{F})$. 

When $Z$ is a closed subset 
$\underline{\Gamma}_Z(\x{F})\simeq \xHom_{\x{O}_X}(i_*i^{-1}\x{O}_X,\x{F})$ 
where $\x{O}_X$ is the constant sheaf associated to $\Z$. Moreover, in this case 
$i^!$ is a right adjoint of $i_*$, that is, for any sheaf $\x{F}$ on $X$ and 
any sheaf $\x{G}$ on $Z$ we have a functorial isomorphism 
$$
\Hom(i_*\x{G},\x{F})\simeq \Hom(\x{G},i^!\x{F})
$$

For more details see Grothendieck's SGA 2 [\ref{SGA2}, expos\'e I].\\

\end{rem}

\begin{rem}
Let $f\colon (X,\x{O}_X)\to (Y,\x{O}_Y)$ be a morphism of ringed spaces, and let $Z$ 
be a closed subset of $X$. 
One can use the functor $\underline \Gamma_Z(-)$ to define an even more general 
ext sheaf. More precisely, for any $\x{O}_X$-module $\x{F}$ the functor 
$$
f_*\underline \Gamma_Z(\xHom_{\x{O}_X}(\x{F},-))\colon \mathfrak{M}(X) \to \mathfrak{S}h(Y)
$$
is left exact as it is the composition of three left exact functors. The right derived 
functors of this functor are denoted by $\xExt^p_{f,Z}(\x{F},-)$. When $f$ is the identity 
map or the constant map to a single point, these functors are studies in 
Grothendieck's SGA 2 [\ref{SGA2}, expos\'e VI].\\
\end{rem}

%%%%%%%%%%%%%%%%%%%%%%%%%%%
\section{Local cohomology} 

\begin{defn}
Let $A$ be a ring, $I$ an ideal, and $M$ an $A$-module. We define the local 
cohomology groups of $M$ with respect to $I$ as $H^p_I(M):=H^p_{V(I)}(\Spec A, \tilde{M})$.\\ 
\end{defn}

\begin{thm}\label{t-lc-description} 
If $A$ is Noetherian then we have the following properties:\\
 
$(1)$ $H^0_I(M)=\{m\in M\mid I^lm=0 ~\mbox{for some $l$}\}$,

$(2)$ $H^p_I(-)$ are the right derived functors of the left exact functor\\ 
$$
H^0_I(-)\colon \mathfrak{M}(A)\to \mathfrak{M}(A)$$ 
from the category of $A$-modules to itself,   

$(3)$ $H^0_I(H^p_I(M))=H^p_I(M)$ for every $p$.\\

\end{thm}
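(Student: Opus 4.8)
The plan is to establish the three statements more or less independently, using the identification of local cohomology with cohomology with support and the sheafy description of $\underline{\Gamma}_{V(I)}$ discussed in Remark \ref{r-lc-sheaves}. First I would set up notation: write $X=\Spec A$, $Z=V(I)$, $U=X\setminus Z$, and recall that $H^p_I(M)=H^p_Z(X,\tilde M)$ by definition, and that on a Noetherian scheme the functor $\underline{\Gamma}_Z(-)$ sends quasi-coherent sheaves to quasi-coherent sheaves (this follows from the description $U\mapsto \Gamma_{Z\cap U}(U,\tilde M|_U)$ together with the fact that $U$ is covered by finitely many basic opens $D(g_i)$ with $g_i$ generating $I$ up to radical).

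For (1): I would compute $\Gamma_Z(X,\tilde M)=\ker(\tilde M(X)\to \tilde M(U))=\ker(M\to \bigoplus_i M_{g_i})$, using that $U=\bigcup_i D(g_i)$ where $I=(g_1,\dots,g_r)$ (possible since $A$ is Noetherian, hence $I$ finitely generated — this is the only place Noetherianness is really needed for (1)). An element $m\in M$ restricts to zero on every $D(g_i)$ iff each $g_i^{l_i}m=0$ for some $l_i$, and setting $l=\max l_i\cdot r$ (or any large enough power) one sees this is equivalent to $I^l m=0$ for some $l$; conversely $I^lm=0$ forces $m$ to die in every localization $M_{g_i}$. This gives the asserted formula.

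For (2): by definition $H^p_Z(X,-)$ are the right derived functors of $\Gamma_Z(X,-)$ on the category $\mathfrak{S}h(X)$ of all sheaves (equivalently $\mathfrak{M}(X)$ with the constant $\Z$-structure). I would show that, restricted to quasi-coherent sheaves $\tilde M$, these agree with the right derived functors of $H^0_I(-)\colon \mathfrak{M}(A)\to\mathfrak{M}(A)$. The standard argument is a universal $\delta$-functor / effaceability comparison, exactly as in Remark-Theorem \ref{r-ext-coherent}(3) and Remark-Theorem \ref{r-ext-coherent}(5): both sides are $\delta$-functors in $M$ agreeing in degree $0$ (by (1), $H^0_I(M)=\Gamma_Z(X,\tilde M)$), and the right derived functors of $H^0_I$ on $\mathfrak{M}(A)$ are universal by construction; for the geometric side one embeds $\tilde M$ into a flasque quasi-coherent sheaf (e.g. via an injective $A$-module containing $M$, or by the Godement-type construction staying within quasi-coherent sheaves on the Noetherian affine scheme) to see the functors are effaceable, hence also universal, so the unique morphism of $\delta$-functors extending the identity in degree $0$ is an isomorphism. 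The main obstacle is this step: one must be careful that the flasque resolution used to compute $H^p_Z(X,\tilde M)$ can be taken within quasi-coherent sheaves so that taking $\Gamma_Z$ commutes with the module-level computation, and that $\Gamma_Z(X,\tilde I^\bullet)=H^0_I(I^\bullet)$ termwise; this is where the Noetherian hypothesis (quasi-coherence of $\underline{\Gamma}_Z$, existence of finite affine covers of $U$) is essential.

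For (3): this is formal once (1) and (2) are in hand. By (1), $H^0_I(N)$ consists of the $I$-power-torsion elements of $N$, so it suffices to observe that $H^p_I(M)$ is already $I$-power-torsion for every $p$. I would argue this by taking an injective resolution $M\to I^\bullet$ in $\mathfrak{M}(A)$; then $H^p_I(M)$ is a subquotient of $H^0_I(I^p)$, and $H^0_I(-)$ applied to the whole complex yields $H^0_I(I^\bullet)$, whose cohomology is $H^p_I(M)$ by (2), and every term $H^0_I(I^p)$ is $I$-power-torsion by (1); a subquotient of an $I$-power-torsion module is $I$-power-torsion, and any $I$-power-torsion module $N$ satisfies $H^0_I(N)=N$. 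Hence $H^0_I(H^p_I(M))=H^p_I(M)$. I expect (3) to be entirely routine, (1) to be a short direct localization computation, and the genuine work to lie in the $\delta$-functor comparison of (2).
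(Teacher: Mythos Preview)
Your arguments for (1) and (3) are correct and essentially identical to the paper's. For (2), your universal $\delta$-functor/effaceability comparison is valid, but the paper takes a more direct route: it starts with an injective resolution $0\to M\to L^\bullet$ in $\mathfrak{M}(A)$, observes that the associated sheaf sequence $0\to \tilde M\to \tilde L^\bullet$ is a flasque resolution (injective $A$-modules sheafify to flasque sheaves on $\Spec A$), and then invokes Exercise~\ref{exe-flasque-cohomology-support} to compute $H^p_Z(X,\tilde M)$ as the cohomology of $\Gamma_Z(X,\tilde L^\bullet)$. Since $\Gamma_Z(X,\tilde L^p)=H^0_I(L^p)$ termwise, this is literally the complex whose cohomology gives the right derived functors of $H^0_I$, so both sides coincide by inspection rather than by abstract comparison. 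Your concern about needing the flasque resolution to stay within quasi-coherent sheaves and about the quasi-coherence of $\underline{\Gamma}_Z$ is unnecessary here: the paper never invokes $\underline{\Gamma}_Z$ or any delicate compatibility, only the elementary fact that injective modules give flasque sheaves and that flasqueness suffices for $\Gamma_Z$-acyclicity. So your route is correct but carries more overhead; the paper's buys simplicity at the cost of relying on one standard (but not entirely trivial) fact about injective modules on affine schemes.
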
 
\begin{proof}
Put $X=\Spec A$, $Z=V(I)$, and $U=X\setminus Z$. (1) By definition 
$$
H^0_I(M)=\ker H^0(X,\tilde{M})\to H^0(U,\tilde{M}|_U)
$$
If $I=\langle b_1,\dots,b_r \rangle$, then $U=D(b_1)\cup \cdots \cup D(b_r)$ where 
$D(b_i)=X\setminus V(b_i)$. Thus, 
$$
H^0_I(M)=\bigcap_i\ker H^0(X,\tilde{M})\to H^0(D(b_i),\tilde{M}|_{D(b_i)})
$$
Since $H^0(X,\tilde{M})=M$ and $H^0(D(b_i),\tilde{M}|_{D(b_i)})=M_{b_i}$, 
$$
\ker H^0(X,\tilde{M})\to H^0(D(b_i),\tilde{M}|_{D(b_i)})=\{m\in M\mid b_i^lm=0 ~\mbox{for some $l$}\}
$$
which implies that 
$$
H^0_I(M)=\{m\in M\mid I^lm=0 ~\mbox{for some $l$}\}
$$\\

(2) Let 
$$
 0 \to M \to {L}^0 \to  {L}^1 \to \cdots
$$
be an injective resolution in $\mathfrak{M}(A)$. Then, 
$$
 0 \to \tilde{M} \to \tilde{L}^0 \to  \tilde{L}^1 \to \cdots
$$
is a flasque resolution hence in view of Exercise \ref{exe-flasque-cohomology-support} 
the cohomology objects of the complex 
$$
 0 \to \Gamma_Z(X,\tilde{L}^0) \to  \Gamma_Z(X,\tilde{L}^1) \to \cdots
$$
are the cohomology groups $H^p_I(M)=H^p_Z(X,\tilde{M})$. The latter 
complex coincides with 
$$
 0 \to H^0_I({L}^0) \to  H^0_I({L}^1) \to \cdots
$$
whose cohomology objects are those given by the right derived functors of 
$H^0_I(-)$.\\

(3) This follows immediately from (1) and the complexes in (2). In fact, 
$H^p_I(M)$ is the quotient of some submodule of $H^0_I({L}^p)$. By (1), every element $t\in H^0_I({L}^p)$ 
satisfies $I^lt=0$ for some $l$ so the same is true for every element of 
$H^p_I(M)$ which implies the equality $H^0_I(H^p_I(M))=H^p_I(M)$.\\
\end{proof} 

\begin{thm}\label{t-lc-ext}
Let $A$ be a Noetherian ring. Then, for any $A$-module $M$ and any $p$ we have natural isomorphisms 
$$
\varinjlim \Ext^p_A({A}/{I^l},M) \simeq H^p_I(M) 
$$
\end{thm}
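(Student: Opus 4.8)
The plan is to identify both sides as the values of derived functors computed on an appropriate resolution, and to exploit the fact that $H^p_I(-)$ is, by Theorem \ref{t-lc-description}(2), the right derived functor of $H^0_I(-)=\varinjlim \Hom_A(A/I^l,-)$. First I would fix an injective resolution $0\to M\to L^\bullet$ in $\mathfrak{M}(A)$. Applying $\Hom_A(A/I^l,-)$ to this resolution and taking cohomology computes $\Ext^p_A(A/I^l,M)$, since each $L^q$ is injective. The key observation is that filtered direct limits are exact on $\mathfrak{M}(A)$, so they commute with taking cohomology of a complex; hence $\varinjlim_l \Ext^p_A(A/I^l,M)$ is the $p$-th cohomology of the complex $\varinjlim_l \Hom_A(A/I^l,L^\bullet) = H^0_I(L^\bullet)$. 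By the description in the proof of Theorem \ref{t-lc-description}(2), this complex $0\to H^0_I(L^0)\to H^0_I(L^1)\to\cdots$ has cohomology objects exactly $H^p_I(M)$, which gives the desired isomorphism.

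For this argument to go through I must check two compatibility points. The first is that the transition maps in the direct system $\{\Ext^p_A(A/I^l,M)\}_l$ are the ones induced by the surjections $A/I^{l+1}\to A/I^l$, and that these are compatible with the identification $\varinjlim_l \Hom_A(A/I^l,L^\bullet)=H^0_I(L^\bullet)$ — this is immediate from the definition of $H^0_I$ as the union of the submodules annihilated by powers of $I$ and the isomorphism $\Hom_A(A/I^l,N)\simeq\{n\in N\mid I^l n=0\}$, natural in $N$. The second is that the $L^q$ really are acyclic for the functor being derived, i.e. that cohomology of $H^0_I(L^\bullet)$ computes the right derived functors $H^p_I$; this is precisely the content of Theorem \ref{t-lc-description}(2) combined with the fact that an injective $A$-module is $H^0_I(-)$-acyclic (each injective is in particular $\Hom_A(A/I^l,-)$-acyclic for every $l$, and the direct limit of acyclics over a filtered system gives acyclicity since $\varinjlim$ is exact). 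Alternatively one can phrase the whole thing via the universal $\delta$-functor formalism: both $\{\varinjlim_l \Ext^p_A(A/I^l,M)\}_p$ and $\{H^p_I(M)\}_p$ are $\delta$-functors in $M$ agreeing in degree $0$, and the left one is effaceable in positive degrees because injectives efface each $\Ext^p_A(A/I^l,-)$ and effacement is preserved under filtered colimits, so the comparison map is an isomorphism.

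The main obstacle I anticipate is not the homological algebra but the bookkeeping of naturality: one must ensure that the isomorphism produced is genuinely natural in $M$ and compatible with the $\delta$-functor structure, so that statements like Theorem \ref{t-lc-local-duality} downstream can be invoked. Concretely, the delicate point is verifying that the identification $\varinjlim_l\Hom_A(A/I^l,L^\bullet)\cong H^0_I(L^\bullet)$ is an isomorphism of complexes (not just termwise), i.e. commutes with the differentials $d^q$ of $L^\bullet$; this follows because each $d^q$ is an $A$-module map and hence carries elements killed by $I^l$ to elements killed by $I^l$, so it restricts to the submodules $H^0_I(L^q)$ compatibly with the colimit description. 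Once this is in place the proof is essentially a one-line consequence of exactness of filtered colimits together with Theorem \ref{t-lc-description}.
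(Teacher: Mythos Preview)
Your proposal is correct and follows essentially the same route as the paper: take an injective resolution $0\to M\to L^\bullet$, identify $H^0_I(L^q)\simeq\varinjlim_l\Hom_A(A/I^l,L^q)$, and use exactness of filtered colimits to pass the direct limit through cohomology. The paper spells out the degree-zero identification a little more concretely via the map $\alpha\mapsto\alpha(1)$, but otherwise the arguments coincide; your extra remarks on naturality and the $\delta$-functor alternative are sound but not needed for the paper's purposes.
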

\begin{proof}
For every $l$, the surjective map $A/I^{l+1}\to A/I^l$ gives an injective map 
$$
\Hom_A(A/I^l,M)\to  \Hom_A(A/I^{l+1},M)
$$ 
making $\{\Hom_A(A/I^l,M)\}_{l\in \N}$
into a directed set of $A$-modules. For any $l$, we define a map $\phi_l\colon \Hom_A(A/I^l,M)\to H^0_I(M)$ 
which sends $\alpha\colon A/I^l\to M$ to $\alpha(1)$. This determines a 
directed system of $A$-homomorphisms hence by the universal property of direct 
limits we get an $A$-homomorphism $\phi \colon \varinjlim \Hom_A({A}/{I^l},M) \to H^0_I(M)$ 
out of the $\phi_l$. 

If $I^lm=0$ for some $m\in H^0_I(M)$, then the map $\alpha\colon A/I^l\to M$ which sends 
$a$ to $am$ is well-defined with $\alpha(1)=m$ hence $m$ belongs to the image of $\phi_l$.  
Thus, $\phi$ is surjective. On the other hand, the $\phi_l$ are all injective hence 
$\phi$ is injective as well. Therefore, $\phi$ is an isomorphism of $A$-modules. 

Now let 
$$
 0 \to M \to {L}^0 \to  {L}^1 \to \cdots
$$
be an injective resolution in $\mathfrak{M}(A)$. Then the complex 
$$
 0 \to H^0_I({L}^0)\simeq \varinjlim\Hom_A(A/I^{l},L^0) \to  H^0_I({L}^1)\simeq \varinjlim\Hom_A(A/I^{l},L^1) \to \cdots
$$
which calculates the local cohomology groups $H^p_I(M)$, is the direct limit of the complexes 
$$
 0 \to \Hom_A(A/I^{l},L^0) \to \Hom_A(A/I^{l},L^1) \to \cdots
$$
which implies the result as $\Ext^p_A({A}/{I^l},-)$ are the right 
derived functors of $\Hom_A({A}/{I^l},-)$.
\end{proof}

\begin{rem}
There is a global version of the previous theorem which goes as follows. 
 Let $(X,\x{O}_X)$ be a ringed space, $Z$ be a closed subset and $i\colon Z\to X$ be the 
inclusion map. We have an exact sequence 
$$
0 \to j_!j^*\x{O}_X \to \x{O}_X \to {i}_*i^{-1}\x{O}_X \to 0
$$ 
of $\x{O}_X$-modules where $j\colon U=X\setminus Z\to X$ is the inclusion map.
For any $\x{O}_X$-module $\x{F}$, we have 
$\Gamma_Z(X,\x{F})=\Hom_{\x{O}_X}({i}_*i^{-1}\x{O}_X, \x{F})$ hence the right derived 
functors of $\Gamma_Z(X,-)\colon \mathfrak{M}(X)\to \mathfrak{Ab}$ coincide with 
those of $\Hom_{\x{O}_X}({i}_*i^{-1}\x{O}_X,-)$. On the other hand, according to 
Exercise \ref{exe-flasque-cohomology-support},  the right derived 
functors of $\Gamma_Z(X,-)\colon \mathfrak{M}(X)\to \mathfrak{Ab}$ coincide with 
the restriction of the the right derived 
functors of $\Gamma_Z(X,-)\colon \mathfrak{S}h(X)\to \mathfrak{Ab}$. This then implies 
that $H^p_Z(X,\x{F})\simeq \Ext^p_{\x{O}_X}({i}_*i^{-1}\x{O}_X, \x{F})$ for any 
$\x{O}_X$-module $\x{F}$.

Now assume that $X$ is a Noetherian scheme, $Z$ is a closed subset with a subscheme structure 
corresponding to a quasi-coherent ideal sheaf $\x{I}_Z$. For any $l$, let $Z_l$ be the closed subscheme defined by $\x{I}_Z^l$.
Note that if $m\ge l$, we have a natural surjective morphism 
$\x{O}_{Z_m}\to \x{O}_{Z_l}$. This makes the sheaves $\x{O}_{Z_l}$ 
into an inverse system. 
We also have natural morphisms $i_*i^{-1}\x{O}_X\to \x{O}_{Z_l}$ for every $l$.
So, for every $\x{O}_X$-module $\x{F}$ and for every $l$ and $p$ we get a map 
$$
\Ext^p_{\x{O}_X}(\x{O}_{Z_l}, \x{F})\to \Ext^p_{\x{O}_X}({i}_*i^{-1}\x{O}_X, \x{F})\simeq H^p_Z(X,\x{F})
$$
which give rise to a map 
$$
\phi \colon \varinjlim_{l}\Ext^p_{\x{O}_X}(\x{O}_{Z_l}, \x{F})\to H^p_Z(X,\x{F})
$$

When $\x{F}$ is quasi-coherent the map $\phi$ is an isomorphism by Grothendieck's SGA 2 [\ref{SGA2}, expos\'e II, Theorem 6].\\
\end{rem}

%%%%%%%%%%%%%%%%%%%%%%%%%%%%%%%
\section{Local cohomology and depth}

Assume that $A$ is a Noetherian ring, $M$ is finitely generated $A$-module, and $IM\neq M$ 
for some ideal $I$ of $A$.  
An $M$-regular sequence in $I$ is a sequence $a_1,\dots, a_n$ of elements of $I$ such that 
 $a_i$ is not a zero-divisor of $\frac{M}{\langle a_1,\dots,a_{i-1}\rangle M}$ for any $i=1, \dots,n$. It is well-known that any two maximal $M$-regular sequences in $I$ have the same length (cf. [\ref{Eisenbud}, page 429]). One then defines the depth of $M$ with respect to $I$, denoted by $\depth_IM$, to be the common length of maximal $M$-regular sequences in $I$. 

The notion of depth is closely related to local cohomology as the next theorem 
shows.\\

\begin{thm}[Characterisation of depth]\label{t-lc-depth}
Assume that $A$ is a Noetherian ring, $M$ is a finitely generated $A$-module, and $I$ is an
 ideal of $A$ such that $IM\neq M$. Then, $\depth_IM\ge n$ if and only if $H^p_I(M)=0$ for any $p<n$.  
\end{thm}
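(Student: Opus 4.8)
The plan is to prove both directions by induction on $n$, using the characterization of local cohomology as the direct limit of $\operatorname{Ext}$ modules (Theorem \ref{t-lc-ext}) together with the long exact sequence coming from multiplication by a regular element. The base case $n=0$ is vacuous since the condition ``$H^p_I(M)=0$ for all $p<0$'' is empty and $\depth_IM\ge 0$ always holds. For $n=1$ one must show $\depth_IM\ge 1$ (i.e. there exists an $M$-regular element in $I$) if and only if $H^0_I(M)=0$. By Theorem \ref{t-lc-description}(1), $H^0_I(M)=\{m\in M\mid I^lm=0\text{ for some }l\}$, so $H^0_I(M)=0$ exactly when no nonzero element of $M$ is annihilated by a power of $I$. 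If there is an $M$-regular element $a\in I$, then $am\ne 0$ for $m\ne 0$, and iterating, $a^lm\ne0$, so certainly $I^lm\ne0$; hence $H^0_I(M)=0$. Conversely, if $H^0_I(M)=0$, I would argue that $I$ is not contained in any associated prime of $M$: since $A$ is Noetherian and $M$ finitely generated, $\operatorname{Ass}(M)$ is finite, and if $I\subseteq\mathfrak p$ for some $\mathfrak p\in\operatorname{Ass}(M)$, prime avoidance would fail and one produces a nonzero element killed by $I$ (using that $\mathfrak p=\operatorname{Ann}(m)$ and $I\subseteq\mathfrak p$ forces $Im=0$), contradicting $H^0_I(M)=0$; then prime avoidance gives $a\in I$ outside all associated primes of $M$, i.e. an $M$-regular element.

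For the inductive step, suppose the theorem holds for $n-1$ (with $n\ge 2$), and consider the claim for $n$. Assume first $\depth_IM\ge n$. Then there is an $M$-regular sequence $a_1,\dots,a_n$ in $I$; in particular $a:=a_1$ is $M$-regular and $\depth_I(M/aM)\ge n-1$ (the images of $a_2,\dots,a_n$ form an $(M/aM)$-regular sequence, and $I(M/aM)\ne M/aM$ since $a_n$ is a nonzerodivisor on the relevant quotient). The short exact sequence $0\to M\xrightarrow{a}M\to M/aM\to 0$ yields, upon applying the right-derived functors of $H^0_I(-)$ (Theorem \ref{t-lc-description}(2)), a long exact sequence
$$
\cdots\to H^{p-1}_I(M/aM)\to H^p_I(M)\xrightarrow{a}H^p_I(M)\to H^p_I(M/aM)\to\cdots
$$
By the inductive hypothesis $H^{p}_I(M/aM)=0$ for $p<n-1$, so from the piece $H^{p-1}_I(M/aM)\to H^p_I(M)\xrightarrow{a}H^p_I(M)$ we get that multiplication by $a$ is injective on $H^p_I(M)$ for $p<n$. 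But every element of $H^p_I(M)$ is annihilated by a power of $I$, hence by a power of $a$ (Theorem \ref{t-lc-description}(1),(3)); an element on which $a$ acts injectively yet which is killed by some $a^l$ must be zero. Therefore $H^p_I(M)=0$ for $p<n$.

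Conversely, assume $H^p_I(M)=0$ for $p<n$. Since $n\ge 1$, $H^0_I(M)=0$, so by the $n=1$ case there is an $M$-regular element $a\in I$. Using the same long exact sequence, the vanishing $H^p_I(M)=0$ for $p<n$ forces $H^p_I(M/aM)\cong H^{p+1}_I(M)=0$ for all $p<n-1$ (the connecting maps sandwich $H^p_I(M/aM)$ between $H^p_I(M)=0$ and $H^{p+1}_I(M)=0$ when $p\le n-2$, equivalently $p+1\le n-1$; one also checks $p=0<n-1$ is covered, using $H^0_I(M)=0$ and $H^1_I(M)=0$). Hence by the inductive hypothesis $\depth_I(M/aM)\ge n-1$, and prepending $a$ to a maximal $M/aM$-regular sequence in $I$ gives an $M$-regular sequence in $I$ of length $\ge n$, so $\depth_IM\ge n$. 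The main obstacle, and the point requiring care, is the bookkeeping in the inductive step: correctly reading off the vanishing of $H^p_I(M/aM)$ from the long exact sequence in the boundary cases, and justifying that an element of a local cohomology module on which a regular element acts injectively must vanish — this rests squarely on part (1) (equivalently (3)) of Theorem \ref{t-lc-description}, that local cohomology modules are $I$-torsion. One should also double-check at the outset that $IM\ne M$ is preserved when passing to $M/aM$, which follows because the maximal regular sequence has positive length there.
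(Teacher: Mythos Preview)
Your proof is correct and follows essentially the same route as the paper: induction on $n$, the long exact sequence in local cohomology coming from multiplication by a regular element, and the key observation (Theorem~\ref{t-lc-description}) that local cohomology modules are $I$-torsion so an injective action of $a\in I$ forces vanishing. One small remark: your opening sentence announces the $\Ext$ description of Theorem~\ref{t-lc-ext}, but you never actually use it---everything runs through Theorem~\ref{t-lc-description}, exactly as in the paper.
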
 
\begin{proof}
We use induction on $n$. The case $n=0$ is trivial so we may assume that $n>0$ and that 
the theorem holds for $n-1$. 

Assume that $\depth_IM=d$ and that $d\ge n$. Let $a_1,\dots, a_n$ be an $M$-regular sequence 
in $I$. Consider the short exact sequence 
$$
0 \to M\to M \to \frac{M}{a_1M} \to 0
$$
where $M\to M$ is given by multiplication by $a_1$. Then, $\depth_I(\frac{M}{a_1M})\ge n-1$ since $a_2,\dots, a_n$ is an $\frac{M}{a_1M}$-regular sequence in $I$. So, by induction 
 $H^p_I(\frac{M}{a_1M})=0$ for any $p<n-1$. By Theorem \ref{t-lc-description}, we have 
 a long exact sequence 
$$
\cdots \to H^p_I(M)\to H^p_I(M) \to H^p_I(\frac{M}{a_1M}) \to H^{p+1}_I(M) \to \cdots
$$
which implies that $\alpha\colon H^{n-1}_I(M)\to H^{n-1}_I(M)$ is injective. But 
$\alpha$ is just the map given by multiplication by $a_1$, and for every element $t \in H^{n-1}_I(M)$ 
we have $a_1^lt=0$ for some $l$ by Theorem \ref{t-lc-description}. Thus, $\alpha^l(t)=0$ for such a $t$ hence 
$\alpha$ can be injective only if  $H^{n-1}_I(M)=0$. So, $H^{p}_I(M)=0$ for any $p<n-1$.

 Conversely, assume that  $H^{p}_I(M)=0$ for any $p<n$. A well-known theorem in 
 commutative algebra states that the set of zero-divisors of $M$ together with 
 $0$ is the union of the finitely many associated primes of $M$. Now $I$ cannot 
 be a subset of that union otherwise by elementary properties of primes ideals 
 there is an associated prime $P$ of $M$ such that $I\subseteq P$. So, 
 there is a nonzero element $m\in M$ such that $Im=0$. But this contradicts the 
 assumption $H^{0}_I(M)=0$. Therefore,  
 there is an element $a_1\in I$ which is not a zero-divisor of $M$.
 
 The above long exact sequence implies that $H^p_I(\frac{M}{a_1M})=0$ for any $p<n-1$. 
 Thus, by induction $\depth_I(\frac{M}{a_1M})\ge n-1$ hence there is an $\frac{M}{a_1M}$-regular 
 sequence $a_2,\dots,a_{n}$ in $I$. But $a_1$ is already an $M$-regular sequence 
 so $a_1,\dots,a_{n}$ form an $M$-regular sequence in $I$ which in particular 
 implies that $\depth_IM\ge n$.\\
\end{proof} 
 
Depth actually has a geometric interpretation. Indeed, 
 using the exact sequence of Theorem \ref{t-lc-exact-sequence} we see that if $\depth_IM\ge n$ 
then the map $H^p(X,\tilde{M})\to H^p(U,\tilde{M}|_U)$ is injective for every $p<n$ 
and an isomorphism for every $p<n-1$.  
In other words, removing $Z$ from $X$ does not change the cohomology groups up to 
$n-2$ when $\depth_IM=n$.\\

%%%%%%%%%%%%%%%%%%%%%%%

\section{Local duality}

\begin{rem}
Let $A$ be a Noetherian ring. Let $L$ be an injective $A$-module. Then, 
it is well-known that $L$ can be written as a direct sum 
$$
L=\bigoplus_t E(A/P_t)
$$
where each $P_t$ is a prime ideal of $A$ and $E(A/P_t)$ is the injective 
envelope (=hull) of the $A$-module $A/P_t$. Moreover,  
for any ideal $I$ of $A$, we have 
$$
H^0_I(L)=\bigoplus_{I\subseteq P_t} H^0_I(E(A/P_t))=\bigoplus_{I\subseteq P_t} E(A/P_t)
$$

Now further assume that $A$ is local with maximal ideal $\mathfrak{m}$. Then, 
$$
H^0_{\mathfrak{m}}(L)=\bigoplus_{\mathfrak{m}=P_t} E(A/P_t)
$$
Further assume that $M$ is a finitely generated $A$-module. Then, using injective 
envelopes we can find an injective resolution 
$$
 0 \to M \to {L}^0 \to  {L}^1 \to \cdots
$$
such that in each $L^i$ only a finite number of copies of $E:=E(A/\mathfrak{m})$ 
occur. In particular, when we apply the functor $H^0_{\mathfrak{m}}(-)$ 
to this sequence each term would be just a finite direct sum of copies of 
$E(A/\mathfrak{m})$. On the other hand, it is known that $E(A/\mathfrak{m})$ 
is Artinian hence so the modules $H^p_{\mathfrak{m}}(M)$ would also be 
Artinian. Finally, it is well-known that for any Artinian $A$-module $N$ we have a natural isomorphism $N\simeq \Hom_A(\Hom_A(N,E),E)$. 
For more information see [\ref{CM-rings}, section 3.2].\\
\end{rem}

\begin{thm}
Let $A$ be a regular local Noetherian ring of dimension $d$ with maximal ideal 
$\mathfrak{m}$. Then, $H^d_\mathfrak{m}(A)\simeq E(A/\mathfrak{m})$ where 
$E(A/\mathfrak{m})$ is the injective envelope of $A/\mathfrak{m}$.
\end{thm}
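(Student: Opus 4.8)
I would prove the slightly stronger statement that $H^p_{\mathfrak{m}}(A)=0$ for every $p\neq d$ and $H^d_{\mathfrak{m}}(A)\simeq E(A/\mathfrak{m})$, by induction on $d=\dim A$. The base case $d=0$ is immediate: then $A$ is a field and $\mathfrak{m}=0$, so $H^0_{\mathfrak{m}}(A)=H^0_0(A)=A$ by Theorem \ref{t-lc-description}, the higher groups vanish since $H^0_0(-)$ is the identity functor, and $E(A/\mathfrak{m})=E(A)=A$ because a field is injective over itself.

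\textbf{The inductive step via a hyperplane section.} For $d\ge 1$ pick $x\in\mathfrak{m}\setminus\mathfrak{m}^2$, which is possible by Nakayama and is a nonzerodivisor because a regular local ring is a domain; then $\bar A:=A/xA$ is regular local of dimension $d-1$ with maximal ideal $\bar{\mathfrak{m}}=\mathfrak{m}/xA$. The long exact sequence of the derived functors $H^p_{\mathfrak{m}}(-)$ (Theorem \ref{t-lc-description}(2)) applied to $0\to A\xrightarrow{x}A\to\bar A\to 0$ reads
$$\cdots\to H^{p}_{\mathfrak{m}}(A)\xrightarrow{x}H^{p}_{\mathfrak{m}}(A)\to H^{p}_{\mathfrak{m}}(\bar A)\to H^{p+1}_{\mathfrak{m}}(A)\xrightarrow{x}H^{p+1}_{\mathfrak{m}}(A)\to\cdots,$$
and here $H^{p}_{\mathfrak{m}}(\bar A)$ (the local cohomology of $\bar A$ as an $A$-module) agrees with $H^{p}_{\bar{\mathfrak{m}}}(\bar A)$ computed over $\bar A$, by the standard base-independence of local cohomology: for the closed immersion $i\colon\Spec\bar A\to\Spec A$ the functor $i_*$ is exact, preserves flasqueness, and carries $\Gamma_{V(\bar{\mathfrak{m}})}(\Spec\bar A,-)$ to $\Gamma_{V(\mathfrak{m})}(\Spec A,-)$. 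Since a regular system of parameters is an $A$-regular sequence (each $A/(x_1,\dots,x_i)$ being regular local, hence a domain), Theorem \ref{t-lc-depth} gives $H^p_{\mathfrak{m}}(A)=0$ for $p<d$, and likewise $H^p_{\bar{\mathfrak{m}}}(\bar A)=0$ for $p<d-1$. For $p>d$ the induction hypothesis gives $H^{p-1}_{\bar{\mathfrak{m}}}(\bar A)=0$, so $x$ acts injectively on $H^p_{\mathfrak{m}}(A)$; but every element of $H^p_{\mathfrak{m}}(A)$ is annihilated by a power of $\mathfrak{m}$ (Theorem \ref{t-lc-description}(1),(3)), hence by a power of $x$, which forces $H^p_{\mathfrak{m}}(A)=0$. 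Plugging $H^{d-1}_{\mathfrak{m}}(A)=0$, $H^{d}_{\bar{\mathfrak{m}}}(\bar A)=0$ and $H^{d-1}_{\bar{\mathfrak{m}}}(\bar A)\simeq E_{\bar A}(A/\mathfrak{m})$ (induction) into the sequence near degree $d-1$ produces a short exact sequence
$$0\to E_{\bar A}(A/\mathfrak{m})\to N\xrightarrow{x}N\to 0,\qquad N:=H^d_{\mathfrak{m}}(A).$$

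\textbf{Identifying $N$ --- the main obstacle.} The remaining task, reconstructing $E(A/\mathfrak{m})$ from this data, is where the real content sits. We know $N$ is $\mathfrak{m}$-power torsion, that $N[x]:=\ker(x\colon N\to N)\simeq E_{\bar A}(A/\mathfrak{m})$ as $\bar A$-modules, and that $x$ acts surjectively on $N$. Since the socle $N[\mathfrak{m}]=(N[x])[\bar{\mathfrak{m}}]$ is the socle of $E_{\bar A}(A/\mathfrak{m})$, it is isomorphic to $A/\mathfrak{m}$ and simple; being $\mathfrak{m}$-torsion, every nonzero submodule of $N$ meets $N[\mathfrak{m}]$, so $N$ is an essential extension of $A/\mathfrak{m}$ and embeds into $E_A(A/\mathfrak{m})$. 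To see this embedding is onto: $x$ acts surjectively on $E_A(A/\mathfrak{m})$ as well (injective modules are divisible by nonzerodivisors), and via the identity $\Hom_A(A/xA,E_A(A/\mathfrak{m}))\simeq E_{\bar A}(A/\mathfrak{m})$ the inclusion $N[x]\hookrightarrow E_A(A/\mathfrak{m})[x]$ is a monomorphism between two copies of the indecomposable injective $E_{\bar A}(A/\mathfrak{m})$, hence an isomorphism; then for $e\in E_A(A/\mathfrak{m})$ with $x^{l}e=0$ an induction on $l$ gives $e\in N$ --- indeed $xe\in N$, so choosing $n\in N$ with $xn=xe$ (possible since $x$ is surjective on $N$) gives $e-n\in E_A(A/\mathfrak{m})[x]=N[x]\subseteq N$. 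Thus $N=E_A(A/\mathfrak{m})$, completing the induction. The facts I quote here without proof --- that $E_A(A/\mathfrak{m})$ is $\mathfrak{m}$-torsion with simple socle and is indecomposable, that $\Hom_A(A/I,E_A(A/\mathfrak{m}))\simeq E_{A/I}(A/\mathfrak{m})$, and that injective modules are divisible --- are the routine facts on injective hulls over a Noetherian local ring.

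\textbf{A non-inductive alternative.} One can instead compute $H^\bullet_{\mathfrak{m}}(A)$ directly from the \v{C}ech complex (the extended Koszul complex) on a regular system of parameters $x_1,\dots,x_d$, equivalently from $\varinjlim_l\Ext^\bullet_A(A/(x_1^l,\dots,x_d^l),A)$ via Theorem \ref{t-lc-ext} and the cofinality of $\{(x_1^l,\dots,x_d^l)\}$ with $\{\mathfrak{m}^l\}$: since the Koszul complex resolves $A/(x_1^l,\dots,x_d^l)$ and is self-dual, this gives $H^p_{\mathfrak{m}}(A)=0$ for $p\neq d$ and $H^d_{\mathfrak{m}}(A)=\varinjlim_l A/(x_1^l,\dots,x_d^l)$ with transition maps multiplication by $x_1\cdots x_d$, which one then identifies with $\varinjlim_l\Hom_A(A/(x_1^l,\dots,x_d^l),E_A(A/\mathfrak{m}))=E_A(A/\mathfrak{m})$ by the Gorenstein self-duality of those Artinian complete-intersection quotients. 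I would present the inductive argument, since it avoids keeping track of these transition maps.
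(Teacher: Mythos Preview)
Your argument is correct and complete. The paper itself does not prove this theorem: its proof consists entirely of a reference to Grothendieck's SGA~2 (expos\'e~IV, Theorems~4.7 and~5.4), so you have supplied what the paper omits.

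For comparison: the SGA~2 treatment is closer to your ``non-inductive alternative'' --- it computes $H^\bullet_{\mathfrak m}(A)$ via the Koszul/\v{C}ech complex on a regular system of parameters, obtaining $H^d_{\mathfrak m}(A)$ as the direct limit $\varinjlim_l A/(x_1^l,\dots,x_d^l)$ and then identifying this with $E(A/\mathfrak m)$ through the Gorenstein/Matlis duality of the Artinian quotients. Your inductive route trades that explicit limit calculation for a clean d\'evissage via the short exact sequence $0\to E_{\bar A}(A/\mathfrak m)\to N\xrightarrow{x}N\to 0$; the price is the socle-and-divisibility argument in your ``identifying $N$'' paragraph, which is where the content hides but which you carry out correctly. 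Both approaches are standard; the inductive one has the advantage of avoiding any bookkeeping of transition maps, as you note, while the Koszul approach yields the explicit description $H^d_{\mathfrak m}(A)\simeq A_{x_1\cdots x_d}/\sum_i A_{x_1\cdots\widehat{x_i}\cdots x_d}$ as a byproduct.
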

\begin{proof}
See Grothendieck's SGA 2 [\ref{SGA2}, expos\'e IV, Theorem 4.7, Theorem 5.4].
\end{proof}

We will use this theorem only to the effect that $H^d_{\mathfrak{m}}(A)$ 
is an injective $A$-module. 

The following theorem is a local version of the duality theorem in the next 
chapter.\\

\begin{thm}[Local duality]\label{t-lc-duality}
Let $A$ be a regular local Noetherian ring of dimension $d$ with maximal ideal $\mathfrak{m}$.
Then, for any finitely generated $A$-module $M$ we have an isomorphism 
$$
H^p_\mathfrak{m}(M)\simeq \Hom_A(\Ext^{d-p}_A(M,A),H^d_\mathfrak{m}(A))
$$
\end{thm}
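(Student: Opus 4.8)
The strategy is to reduce the statement to the already-established facts that $H^p_{\mathfrak m}(M) \simeq \varinjlim \Ext^p_A(A/\mathfrak m^l, M)$ (Theorem \ref{t-lc-ext}) and that $H^d_{\mathfrak m}(A)$ is injective, and then to build the duality isomorphism by a universal $\delta$-functor argument. First I would fix $E := H^d_{\mathfrak m}(A)$, which is an injective $A$-module by the previous theorem, and introduce the two families of functors on finitely generated $A$-modules: $T^p(M) := H^p_{\mathfrak m}(M)$ and $S^p(M) := \Hom_A(\Ext^{d-p}_A(M,A), E)$. The left side $T^\bullet$ is a $\delta$-functor since $H^p_{\mathfrak m}(-)$ are derived functors (Theorem \ref{t-lc-description}). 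For the right side, note that $\Ext^{d-p}_A(-,A)$ turns a short exact sequence into a long exact sequence with connecting maps lowering $d-p$, i.e.\ raising $p$; applying $\Hom_A(-,E)$, which is exact because $E$ is injective, preserves exactness and the connecting maps now raise $p$, so $S^\bullet$ is also a (covariant, cohomological) $\delta$-functor.

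The key point is that both $\delta$-functors vanish for $p > d$ and agree in degree $0$, and that one of them is universal. Since $A$ is regular local of dimension $d$, every finitely generated module has projective dimension $\le d$, so $\Ext^{d-p}_A(M,A) = 0$ for $p > d$, forcing $S^p = 0$ for $p>d$; likewise $H^p_{\mathfrak m}(M) = 0$ for $p > d$ by Grothendieck vanishing (or by the depth/dimension bounds). I would then check degree zero: $S^0(M) = \Hom_A(\Ext^d_A(M,A), E)$, and $T^0(M) = H^0_{\mathfrak m}(M)$ is the $\mathfrak m$-torsion submodule of $M$, which is Artinian, hence by Matlis duality $T^0(M) \simeq \Hom_A(\Hom_A(T^0(M), E), E)$. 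The isomorphism $H^0_{\mathfrak m}(M) \simeq \Hom_A(\Ext^d_A(M,A),E)$ in degree zero I would obtain from Theorem \ref{t-lc-ext} together with Matlis duality: $\Hom_A(A/\mathfrak m^l, M)$ are Artinian, so passing $\varinjlim \Ext^0_A(A/\mathfrak m^l,M)$ through the double-dual and matching with $\varprojlim \Ext^d_A(A/\mathfrak m^l,A) \otimes (-)$ identifies it with $\Hom_A(\Ext^d_A(M,A),E)$; alternatively one invokes that on an injective resolution of $A$ the top cohomology of $\Hom_A(M,-)$ localized at $\mathfrak m$ gives exactly this. Finally, universality: I would show $T^\bullet$ is coeffaceable (hence universal) because every finitely generated $M$ surjects from a finite free module $F$, and $H^p_{\mathfrak m}(F) = H^p_{\mathfrak m}(A)^{\oplus r}$, which vanishes for $p < d$ and is the single module $E$ for $p = d$ — so for $p \ge 1$ the functor $T^p$ is coeffaced by finite frees except possibly at $p = d$, which needs the separate observation that $H^d_{\mathfrak m}(-)$ is right exact (being the top derived functor) and agrees with $S^d$ directly via the degree-zero identity applied to a presentation. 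By the uniqueness of universal $\delta$-functors, the degree-zero isomorphism extends uniquely to isomorphisms $T^p \simeq S^p$ for all $p$.

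The main obstacle I anticipate is establishing the degree-zero isomorphism $H^0_{\mathfrak m}(M) \simeq \Hom_A(\Ext^d_A(M,A), H^d_{\mathfrak m}(A))$ cleanly and functorially, since this is where the regularity of $A$, the explicit description of $H^d_{\mathfrak m}(A)$, and Matlis duality all have to interact; the $\delta$-functor machinery around it is then routine. A secondary subtlety is handling the top degree $p = d$ in the coeffaceability argument, where the naive vanishing fails and one must instead exploit right-exactness of the top local cohomology functor together with the compatibility of the connecting homomorphisms on both sides. Once those two points are pinned down, comparing the two $\delta$-functors via their common universal property closes the proof.
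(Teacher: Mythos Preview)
Your universal-$\delta$-functor strategy is anchored at the wrong end. Coeffaceability (surjecting from objects on which the functors vanish) is the criterion for universality of covariant \emph{homological} $\delta$-functors; for covariant cohomological ones you need effaceability, i.e.\ monomorphisms into objects on which $T^p$ vanishes for $p\ge 1$, and in the category of finitely generated $A$-modules you do not have such objects. Consequently your plan to propagate the isomorphism upward from $p=0$ breaks down, and indeed you notice the crack yourself at $p=d$. The degree-zero identification $H^0_{\mathfrak m}(M)\simeq\Hom_A(\Ext^d_A(M,A),E)$ is also not established: the sketch invoking ``$\varprojlim\Ext^d_A(A/\mathfrak m^l,A)\otimes(-)$'' is not a well-formed argument, and in fact this degree is the hardest one to check directly, not the easiest.

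The paper goes the other way. It first \emph{constructs} the comparison map $\psi$ in every degree via the Yoneda pairing $\Ext^p_A(A/\mathfrak m^l,M)\otimes_A\Ext^{d-p}_A(M,A)\to\Ext^d_A(A/\mathfrak m^l,A)$ and a direct limit over $l$ (using Theorem~\ref{t-lc-ext}). It then checks $\psi$ is an isomorphism starting at the \emph{top}: for $p>d$ both sides vanish (this needs $\pd(A/\mathfrak m^l)\le d$ for the delicate case $p=d+1$); for $p=d$ a free presentation $A^r\to A^s\to M\to 0$ and right-exactness reduce to $M=A$; for $p<d$ one takes $0\to K\to A^s\to M\to 0$ and descends by induction using $H^p_{\mathfrak m}(A^s)=0$ (depth $=d$) and $\Ext^{d-p}_A(A^s,A)=0$ (projectivity). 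If you reindex $p\mapsto d-p$, this is precisely a homological-$\delta$-functor argument coeffaced by finite frees and anchored at $p=d$. So your instinct was right but the anchor degree and the direction of the induction are wrong: start from $H^d_{\mathfrak m}(M)\simeq\Hom_A(\Hom_A(M,A),E)$, which is easy, and descend.
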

\begin{proof}
We have a natural pairing 
$$
\Hom_A(A/\mathfrak{m}^l,M)\times \Hom_A(M,A)\to \Hom_A(A/\mathfrak{m}^l,A)
$$ 
which induces a Yoneda pairing 
$$
\Ext^p_A(A/\mathfrak{m}^l,M)\otimes_A \Ext^{d-p}_A(M,A)\to \Ext^d_A(A/\mathfrak{m}^l,A)
$$ 
One way to imagine the pairing is that an element $\alpha\in \Ext^p_A(A/\mathfrak{m}^l,M)$ 
corresponds to an exact sequence 
$$
 0 \to M \to {L}^1 \to  {L}^2 \to \dots \to L^p\to A/\mathfrak{m}^l \to 0
$$
and an element $\beta\in \Ext^{d-p}_A(M,A)$ corresponds to an exact sequence 
$$
 0 \to A \to {N}^1 \to  {N}^2 \to \dots \to N^{d-p}\to M \to 0
$$
giving rise to an exact sequence 
$$
 0 \to A \to {N}^1 \to  {N}^2 \to \dots \to N^{d-p}\to {L}^1 \to  {L}^2 \to \dots \to L^p\to A/\mathfrak{m}^l \to 0
$$
hence an element $\gamma \in \Ext^d_A(A/\mathfrak{m}^l,A)$ (see [\ref{Eisenbud}, Exercise A3.27]).

Now the Yoneda pairing induces a map 
$$
\psi_l\colon \Ext^p_A(A/\mathfrak{m}^l,M)\to \Hom_A(\Ext^{d-p}_A(M,A), \Ext^d_A(A/\mathfrak{m}^l,A))
$$ 
and since $\Ext^{d-p}_A(M,A)$ is finitely generated over $A$ by Remark-Theorem 
\ref{r-ext-coherent}, by taking direct limits we obtain a map
$$
\psi\colon \varinjlim\Ext^p_A(A/\mathfrak{m}^l,M)\to \Hom_A(\Ext^{d-p}_A(M,A), \varinjlim\Ext^d_A(A/\mathfrak{m}^l,A))
$$ 
Using Theorem \ref{t-lc-ext} we can consider $\psi$ as a map
$$
\psi\colon H^p_\mathfrak{m}(M)\to D^p(M):=\Hom_A(\Ext^{d-p}_A(M,A), H^d_\mathfrak{m}(A))
$$ 

We prove that $\psi$ is an isomorphism. First assume that $p>d$. Then, the exact 
sequence of Theorem \ref{t-lc-exact-sequence} and Grothendieck theorem on vanishing of 
cohomology imply that $H^p_{\mathfrak{m}}(M)=0$ if $p>d+1$. However, the vanishing 
$H^{d+1}_\mathfrak{m}(M)=0$ is more subtle: since $A$ is a regular local ring, 
$\pd(A/\mathfrak{m}^l)\le \dim A=d$ hence by Theorem \ref{t-proj-dimension} $\Ext^{d+1}_A(A/\mathfrak{m}^l,M)=0$ 
thus $H^{d+1}_\mathfrak{m}(M)=\varinjlim\Ext^{d+1}_A(A/\mathfrak{m}^l,M)=0$. 
On the other hand, since $d-p<0$, $\Ext^{d-p}_A(M,A)=0$ hence $\psi$ 
is an isomorphism in this case. 

Now assume that $p=d$. Since $M$ is finitely generated, there is an exact sequence 
$A^r\to A^s\to M$ giving rise to an exact sequence 
$$
0 \to \Ext^{0}_A(M,A) \to \Ext^{0}_A(A^s,A) \to \Ext^{0}_A(A^r,A) 
$$
hence a commutative diagram 
$$
\xymatrix{
H^d_\mathfrak{m}(A^r) \ar[r]\ar[d] & H^d_\mathfrak{m}(A^s)\ar[r]\ar[d] & H^d_\mathfrak{m}(M)\ar[r]\ar[d]& 0\\
D^d(A^r) \ar[r] & D^d(A^s) \ar[r] & D^d(M) \ar[r]& 0
}
$$
This reduces the problem to the case $M=A^r$. This case in turn follows from the facts 
that $H^d_\mathfrak{m}(A^r)\simeq H^d_\mathfrak{m}(A)^r$, $\Ext^{0}_A(A^r,A)\simeq A^r$, and 
$\Hom_A(A^r, H^d_\mathfrak{m}(A))\simeq H^d_\mathfrak{m}(A)^r$.

Now assume that $p< d$ and assume that the theorem holds for values more than 
$p$. Again since $M$ is finitely generated, there is an exact sequence 
$0\to K\to A^s\to M$ giving rise to an exact sequence 
$$
\cdots \to \Ext^{d-p-1}_A(K,A) \to \Ext^{d-p}_A(M,A) \to \Ext^{d-p}_A(A^s,A) \to \Ext^{d-p}_A(K,A)  \to \cdots 
$$
hence a commutative diagram 
$$
\xymatrix{
\cdots \ar[r] & H^p_\mathfrak{m}(K) \ar[r]\ar[d] & H^p_\mathfrak{m}(A^s)\ar[r]\ar[d] & H^p_\mathfrak{m}(M)\ar[r]\ar[d]& H^{p+1}_\mathfrak{m}(K) \ar[d]\\
\cdots \ar[r] & D^p(K) \ar[r] & D^p(A^s) \ar[r] & D^p(M) \ar[r]& D^{p+1}(K) 
}
$$ 
By Theorem \ref{t-lc-depth}, $H^p_\mathfrak{m}(A^s)\simeq H^p_\mathfrak{m}(A)^s=0$ 
since $\depth_\mathfrak{m}A=\dim A=d$. On the other hand, $\Ext^{d-p}_A(A^s,A)=0$ 
since $A$ is projective. So the result for $M$ follows from the above diagram.\\
\end{proof}

\section*{Exercises}
\begin{enumerate}
\item\label{exe-injective-restriction-tensor} Let $(X,\x{O}_X)$ be a dinged space, and let $\x{I}\in\mathfrak{M}(X)$ be an 
injective object. Show that $\x{I}|_U$ is an injective object of $\mathfrak{M}(X)$ 
for any open subset $U\subseteq X$. Moreover, show that $\x{I}\otimes \x{L}$ is also 
injective for any locally free sheaf $\x{L}\in\mathfrak{M}(X)$ of finite rank.\\ 

\item\label{exe-flasque-cohomology-support} Let $(X,\x{O}_X)$ be a ringed space and 
$Z$ a closed subset of $X$. Show that if $\x{F}$ is a flasque sheaf on $X$, then
$H^p_Z(X,\x{F})=0$ for any $p>0$. Deduce that the right derived functors of the 
left exact functor $\Gamma_Z(X,-)\colon \mathfrak{M}(X)\to \mathfrak{Ab}$ coincide 
with the functors $H^p_Z(X,-)$ restricted to $\mathfrak{M}(X)$.\\

\item Let $X$ be a topological space and $Z\subseteq Z'$ closed subsets and 
$W=Z'\setminus Z$. In view of Remark \ref{r-lc-sheaves}, show that there is a left  
exact sequence 
$$
0\to \underline{\Gamma}_Z(\x{F})\to \underline{\Gamma}_{Z'}(\x{F})\to \underline{\Gamma}_W(\x{F})\to 0
$$
Moreover, show that if $\x{F}$ is flasque, then the sequence is also exact on the right.\\

\item Let $X, Z,Z',W$ be as in the previous exercise. Show that 
$\Gamma_W(X,-)\colon \mathfrak{S}h(X)\to \mathfrak{Ab}$ is a left exact functor (see Remark \ref{r-lc-sheaves}). 
Its right derived functors are denoted by $H^p_W(X,-)$.  
Show that for any sheaf $\x{F}$ on $X$ there is an exact sequence
$$
\cdots \to H^p_Z(X,\x{F})\to H^p_{Z'}(X,\x{F})\to H^p_W(X,\x{F})\to H^{p+1}_Z(X,\x{F}) \to \cdots
$$\\

\item Let $X$ be a topological space and $\x{F}$ a sheaf on $X$. Show that $\x{F}$ is 
flasque if and only if $H^p_Z(X,\x{F})=0$ for any $p>0$ and any locally closed subset $Z$.\\

\end{enumerate}

%%%%%%%%%%%%%%%%%%%%%%%%%%%%%%%%%%%%%%%
%%%%%%%%%%%%%%%%%%%%%%%%%%%%%%%%%%%%%%%
%%%%%%%%%%%%%%%%%%%%%%%%%%%%%%%%%%%%%%% 
\chapter{\tt Relative duality}

In this chapter the relative duality theorem is proved based on ideas and formulation of Kleiman [\ref{Kleiman}] which 
is in turn based on Grothendieck duality theory [\ref{Hartshorne-duality}] (see also FGA [\ref{FGA}, section 2]). We do not use derived categories but it comes with the price of putting certain 
strong assumptions on the sheaves involved. On the other hand, the proof is relatively short 
and neat. The main ingredient is a spectral sequence which is discussed in 3.2.\footnote{In this chapter, flatness and base change is used which will be discussed in Chapter 3. The 
theorem on the formal functions is also used for which we refer to Hartshorne [\ref{Hartshorne}, III, \S 11].}

Let $f\colon X\to Y$ be a morphism of schemes and let $\x{F}$ be an $\x{O}_X$-module. 
Then, for any point $y\in Y$ there is a natural base change map 
$$
(R^pf_*\x{F})\otimes k(y) \to H^p(X_y,\x{F}_y)
$$
where $k(y)$ is the residue field at $y$ and $\x{F}_y$ is the pullback of $\x{F}$ 
on the fibre $X_y$. We say that $R^pf_*\x{F}$ commutes with base change if the 
above map is an isomorphism for every $y$.\\

\section{Duality for $\PP^n_Y$}
We first prove the duality theorem on the projective space. The general case will be 
reduced to this case.\\

\begin{thm}\label{duality-projective-space}  
Let $Y$ be a Noetherian scheme, and $\pi \colon \PP^n_Y\to Y$ the projection. Then, for any $p$ and any coherent $\x{O}_X$-module $\x{F}$
 and coherent $\x{O}_Y$-module $\x{G}$ there are functorial morphisms 
$$
\theta^p\colon \xExt^p_\pi(\x{F},\omega_\pi\otimes \x{G}) \to \xHom_{\x{O}_Y}(R^{n-p}\pi_*\x{F},\x G)
$$  
where $\omega_\pi=\x{O}_{\PP^n_Y}(-n-1)$. For $p=0$, the morphism is always an isomorphism. Moreover, for a fixed $m$ and $\x{F}$ flat over $Y$, $R^{n-p}f_*\x{F}$  commutes with base change for every $p\le m$ if and only if $\theta^p$ is an isomorphism for every $\x{G}$ and every $p\le m$. 
\end{thm}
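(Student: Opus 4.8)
I would construct $\theta^p$ from an explicit resolution of $\x{F}$, obtain the $p=0$ case formally, and reduce the base-change equivalence to homological algebra of the resulting ``universal complex''. Since the statement is local on $Y$, I may assume $Y=\Spec A$. By Remark-Theorem \ref{r-ext-coherent}(4) I would take a resolution $\x{L}_\bullet\to\x{F}\to 0$ with each $\x{L}_i$ a finite direct sum of sheaves $\x{O}_{\PP^n_Y}(-a)$, $a\ge n+1$. By Theorem \ref{t-ext-locally-free} one has $\xExt^p_\pi(\x{L}_i,\omega_\pi\otimes\x{G})\simeq \bigoplus R^p\pi_*\x{O}_{\PP^n_Y}(a-n-1)\otimes\x{G}=0$ for $p>0$, while $R^q\pi_*\x{L}_i=0$ for $q\neq n$; so Theorem \ref{t-acyclic-resolution} computes $\xExt^p_\pi(\x{F},\omega_\pi\otimes\x{G})$ as $H^p$ of $\pi_*\xHom(\x{L}_\bullet,\omega_\pi\otimes\x{G})$. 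Setting $P_i:=R^n\pi_*\x{L}_i$ (finite locally free), relative Serre duality for line bundles on $\PP^n$ (Hartshorne [\ref{Hartshorne}, III, \S5]) and the projection formula identify $\pi_*\xHom(\x{L}_i,\omega_\pi\otimes\x{G})\simeq\xHom_{\x{O}_Y}(P_i,\x{G})$ compatibly with the differentials, whence $\xExt^p_\pi(\x{F},\omega_\pi\otimes\x{G})\simeq H^p(\xHom_{\x{O}_Y}(P_\bullet,\x{G}))$ for the complex $P_\bullet\colon\cdots\to P_1\to P_0\to 0$. The degenerate spectral sequence of the resolution $\x{L}_\bullet$ (only the row $q=n$ survives) gives $R^{n-p}\pi_*\x{F}\simeq H_p(P_\bullet)$. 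Then $\theta^p$ is the composite of these isomorphisms with the evident ``universal coefficients'' map $H^p(\xHom_{\x{O}_Y}(P_\bullet,\x{G}))\to\xHom_{\x{O}_Y}(H_p(P_\bullet),\x{G})$ (restrict a cocycle to cycles and descend); independence of the resolution and functoriality are routine. For $p=0$ this map is $\ker(\xHom(P_0,\x{G})\to\xHom(P_1,\x{G}))\xrightarrow{\sim}\xHom(\operatorname{coker}(P_1\to P_0),\x{G})$ by left exactness of $\xHom(-,\x{G})$, hence an isomorphism.

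\textbf{Base change of $P_\bullet$.} For $g\colon Y'\to Y$: since $\x{F}$ is flat over $Y$, the kernels in $\x{L}_\bullet\to\x{F}$ are flat over $Y$, so the pullback of $\x{L}_\bullet$ resolves $g^*\x{F}$ by sheaves of the same type on $\PP^n_{Y'}$; and since the cohomology of line bundles on $\PP^n$ is locally free and commutes with base change, $g^*P_\bullet$ is the analogous complex for $g^*\x{F}$. Hence $H_p(g^*P_\bullet)\simeq R^{n-p}\pi'_*g^*\x{F}$, compatibly with the base-change map, so ``$R^{n-p}\pi_*\x{F}$ commutes with base change for $p\le m$'' means exactly ``$g^*H_p(P_\bullet)\to H_p(g^*P_\bullet)$ is an isomorphism for all $g$ (equivalently, all residue fields) and all $p\le m$''.

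\textbf{The equivalence, and the main obstacle.} This is now a statement about the bounded-above complex $P_\bullet$ of finite locally free sheaves. If $R^{n-j}\pi_*\x{F}$ commutes with base change for all $j\le m$, then by cohomology and base change (to be proved in Chapter 3; cf. Hartshorne [\ref{Hartshorne}, III, 12.11]) the sheaves $H_j(P_\bullet)$ with $j\le m-1$ are locally free; hence for each $p\le m$ the part of $P_\bullet$ carrying those homologies splits off, $P_\bullet\simeq\bigoplus_{j<p}H_j(P_\bullet)[j]\oplus\widehat P_\bullet$ with $\widehat P_\bullet$ concentrated in degrees $\ge p$, and then $H^p(\xHom(P_\bullet,\x{G}))=\xHom(\operatorname{coker}(\widehat P_{p+1}\to\widehat P_p),\x{G})=\xHom(H_p(P_\bullet),\x{G})$ via $\theta^p$, so $\theta^p$ is an isomorphism. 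Conversely, if $\theta^j$ is an isomorphism for every coherent (hence, as $H_j(P_\bullet)$ is coherent, every quasi-coherent) $\x{G}$ and every $j\le p$, then examining $\ker\theta^j$ through the canonical truncations of $P_\bullet$ should show, inductively in $j$, that its vanishing forces $\Ext^1_A(H_{j-1}(P_\bullet),N)=0$ for all $A$-modules $N$, hence $H_{j-1}(P_\bullet)$ locally free (Theorem \ref{t-proj-dimension}); so $H_0(P_\bullet),\dots,H_{p-1}(P_\bullet)$ are locally free, $P_\bullet$ splits as above, and $R^{n-p}\pi_*\x{F}=\operatorname{coker}(\widehat P_{p+1}\to\widehat P_p)$ commutes with base change by right exactness (for $p=0$ this is automatic, since top direct images always commute with base change). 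I expect this converse to be the hard part: extracting local freeness of the lower direct images from ``$\theta^j$ an isomorphism'' is exactly where the base-change machinery of Chapter 3 is genuinely needed, and one must also be careful to identify $\theta^p$ with the universal-coefficients map of $P_\bullet$ and to keep all of these identifications compatible with base change.
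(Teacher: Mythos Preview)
Your approach is correct and genuinely different from the paper's. The paper never forms the full resolution complex $P_\bullet$; instead it (a) constructs $\theta^p$ intrinsically via a sheafified Yoneda pairing
\[
R^{n-p}\pi_*\x{F}\otimes \xExt^p_\pi(\x{F},\omega_\pi\otimes\x{G})\to R^n\pi_*(\omega_\pi\otimes\x{G})\simeq \x{G},
\]
(b) checks $p=0$ by reducing along a presentation $\x{L}'\to\x{L}\to\x{F}\to 0$ with $\x{L},\x{L}'$ sums of line bundles, and (c) handles the base-change equivalence by one-step dimension shifting through a single short exact sequence $0\to\x{K}\to\x{L}\to\x{F}\to 0$, reading off projectivity of the relevant kernel $T\subset H^n(\x{L})$ from exactness of $\Hom_A(-,\x{G})$ and then inducting on $m$ via $R^{n-p+1}\pi_*\x{K}\simeq R^{n-p}\pi_*\x{F}$. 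Your route instead packages everything into the complex $P_\bullet=R^n\pi_*\x{L}_\bullet$ of finite locally free $A$-modules and reduces both directions to the ``universal coefficients'' map $H^p(\Hom(P_\bullet,\x{G}))\to\Hom(H_p(P_\bullet),\x{G})$.

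What each buys: the paper's Yoneda definition of $\theta^p$ is resolution-free, so functoriality and independence of choices are automatic; its inductive step is short but must track the auxiliary sheaf $\x{K}$. Your $P_\bullet$ is precisely a Grothendieck complex for $\x{F}$ (Chapter~3), so the link to base change is transparent: $H_p(P_\bullet)\simeq R^{n-p}\pi_*\x{F}$, and the converse comes cleanly from the identity $\ker\theta^p\simeq \Ext^1_A(H_{p-1}(P_\bullet),\x{G})$ (obtained from $0\to B_{p-1}\to P_{p-1}\to H_{p-1}\to 0$ with $P_{p-1}$ projective), whence $\theta^p$ iso for all $\x{G}$ forces $H_{p-1}$ projective. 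Two points deserve care in your write-up: (i) the ``splitting'' $P_\bullet\simeq\bigoplus_{j<p}H_j[j]\oplus\widehat P_\bullet$ is true but needs the inductive argument that once $H_j$ is locally free the sequences $0\to B_j\to Z_j\to H_j\to 0$ and $0\to Z_j\to P_j\to B_{j-1}\to 0$ split, making $B_j,Z_j$ locally free; and (ii) you should check that your resolution-dependent $\theta^p$ agrees with the paper's Yoneda $\theta^p$ (they agree for $p=0$ and both are $\delta$-functorial in $\x{F}$, so this is routine). With those in place your proof is complete.
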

\begin{proof}
First, it is well-known that we have a natural isomorphism $R^n\pi_*\omega_\pi\simeq \x{O}_Y$ 
(cf. [\ref{Hartshorne}, III, Exercise 8.4]). There is an open covering $\mathcal{U}$ of $\PP^n_Y$ by $n+1$ subsets 
such that for any open affine $U=\Spec A\subseteq Y$, the covering restricts to the 
standard covering of $\PP^n_A$ by the $n+1$ copies of $\A^n_A$. Let 
$$
\x{C}^\bullet:\hspace{1cm} 0 \to \omega_\pi \to \x{C}^0(\mathcal{U},\omega_\pi) \to \cdots \to \x{C}^{n-1}(\mathcal{U},\omega_\pi) \to \x{C}^n(\mathcal{U},\omega_\pi) \to 0
$$
be the associated \v{C}ech resolution. From this we get a complex 
$$
0 \to \pi_*\omega_\pi \to \pi_*\x{C}^0(\mathcal{U},\omega_\pi) \to \cdots \to \pi_*\x{C}^{n-1}(\mathcal{U},\omega_\pi) \to \pi_*\x{C}^n(\mathcal{U},\omega_\pi) \to 0
$$
inducing an exact sequence 
$$
 \pi_*\x{C}^{n-1}(\mathcal{U},\omega_\pi) \to^{d^{n-1}} \pi_*\x{C}^n(\mathcal{U},\omega_\pi) \to R^n\pi_*\omega_\pi \to 0
$$
because on any open affine $U=\Spec A\subseteq Y$ the module of sections of $\rm{coker}~ d^{n-1}$ 
is $\check{H}^n(\mathcal{U}|_{\pi^{-1}U}, \omega_{\pi}|_{\pi^{-1}U})$ which is 
isomorphic to $R^n\pi_*\omega_\pi(U)$.\footnote{Strictly speaking, we need to take an injective resolution $\x{I}^\bullet:\hspace{1cm}  0\to \omega_\pi \to \x{I}^0\to \cdots$ from which we get a morphism of complexes $\x{C}^\bullet \to \x{I}^\bullet$ and finally a morphism $\rm{coker}~d^{n-1}\to R^n\pi_*\omega_\pi$ which is an isomorphism as verified locally.}

On the other hand, we have the \v{C}ech resolution of 
$\omega_\pi\otimes \x{G}$:
$$
0 \to \omega_\pi\otimes \x{G} \to \x{C}^0(\mathcal{U},\omega_\pi\otimes \x{G}) \to \cdots \to \x{C}^{n-1}(\mathcal{U},\omega_\pi\otimes \x{G}) \to \x{C}^n(\mathcal{U},\omega_\pi\otimes \x{G}) \to 0
$$ 
and similarly we get an exact sequence  
$$
 \pi_*\x{C}^{n-1}(\mathcal{U},\omega_\pi\otimes \x{G}) \to \pi_*\x{C}^n(\mathcal{U},\omega_\pi\otimes \x{G}) \to R^n\pi_*(\omega_\pi\otimes \x{G}) \to 0
$$

On the other hand, we have natural morphisms 
$\pi_*\x{C}^{p}(\mathcal{U},\omega_\pi)\otimes \x{G}\to \pi_*\x{C}^{p}(\mathcal{U},\omega_\pi\otimes \x{G})$ which are isomophisms as can be checked locally hence 
in view of the exact sequence 
$$
 \pi_*\x{C}^{n-1}(\mathcal{U},\omega_\pi)\otimes \x{G} \to \pi_*\x{C}^n(\mathcal{U},\omega_\pi) \otimes \x{G} \to R^n\pi_*\omega_\pi \otimes \x{G}\to 0
$$
we get an isomorphism\footnote{Alternatively, one can use the fact that there is a 
natural morphism $R^n\pi_*(\omega_\pi)\otimes \x{G}\to R^n\pi_*(\omega_\pi\otimes \x{G})$ 
(see Grothendieck's EGA III [\ref{EGA}], Chapter 0, 12.2.2)
and then try to prove that this is an isomorphism by locally taking an exact sequence 
$\x{E}'\to \x{E}\to \x{G}\to 0$ where $\x{E}',\x{E}$ are free.}  

$$
R^n\pi_*(\omega_\pi)\otimes \x{G}\simeq R^n\pi_*(\omega_\pi\otimes \x{G})
$$

Now any morphism $\x{F}\to \omega_\pi\otimes \x{G}$ over any open 
set $U$ in $Y$ induces morphisms 
$$
R^n\pi_*\x{F}\to R^n\pi_*(\omega_\pi\otimes \x{G})\simeq R^n\pi_*(\omega_\pi)\otimes \x{G}
\simeq \x{G}
$$ 
over $U$ hence giving a map 
$$
\sigma \colon \pi_*\xHom_{\x{O}_{\PP^n_Y}}(\x{F}, \omega_\pi\otimes \x{G})\to \xHom_{\x{O}_Y}(R^n\pi_*\x{F},\x{G})
$$
To see that this is an isomophism we may assume that $Y=\Spec A$. In that case, 
there is an exact sequence $\x{L}'\to \x{L} \to \x{F} \to 0$ where $\x{L}',\x{L}$ 
are finite direct sums of sheaves of the form ${\x{O}_{\PP^n_Y}(-l)}$ for 
sufficiently large $l$. We get a commutative diagram 
{\tiny{
$$
\xymatrix{
0 \to \Hom_{\x{O}_{\PP^n_Y}}(\x{F}, \omega_\pi\otimes \x{G}) \ar[d]\ar[r] &
\Hom_{\x{O}_{\PP^n_Y}}(\x{L}, \omega_\pi\otimes \x{G}) \ar[d]\ar[r] &
\Hom_{\x{O}_{\PP^n_Y}}(\x{L}', \omega_\pi\otimes \x{G}) \ar[d]\\
0\to \Hom_A(H^n(\PP^n_Y,\x{F}),\x{G}(Y)) \ar[r]  &
\Hom_A(H^n(\PP^n_Y,\x{L}),\x{G}(Y)) \ar[r] &
\Hom_A(H^n(\PP^n_Y,\x{L}'),\x{G}(Y))
}
$$}} 
which reduces the problem to the case $\x{F}={\x{O}_{\PP^n_Y}(-l)}$ for 
sufficiently large $l$.\footnote{Note that here we have used the fact that 
 $R^{n+1}\pi_*\x{H}=0$ for any coherent sheaf $\x{H}$ which follows from 
 \v{C}ech cohomology and the fact that $\PP^n_A$ can be covered by 
 $n+1$ open affine sets.} 
 In this case, 
$\Hom_{\x{O}_{\PP^n_Y}}(\x{F}, \omega_\pi\otimes \x{G})\simeq H^0(\x{O}_{\PP^n_Y}, 
\omega_\pi\otimes \x{G}(l))$. 
Since $Y$ is affine and $\x{G}$ coherent we have an exact 
sequence $\x{H}'\to \x{H} \to \x{G} \to 0$
where the first two sheaves are free sheaves of finite rank. If $l$ is large enough we get an exact 
sequence 
$$
H^0(\x{O}_{\PP^n_Y}, \omega_\pi\otimes \x{H}'(l))\to 
H^0(\x{O}_{\PP^n_Y}, \omega_\pi\otimes \x{H}(l))\to 
H^0(\x{O}_{\PP^n_Y}, \omega_\pi\otimes \x{G}(l)) \to 0
$$
Comparing this with {\tiny
$$
\Hom_A(H^n(\PP^n_Y,\x{F}),\x{H'}(Y)) \to
\Hom_A(H^n(\PP^n_Y,\x{F}),\x{H}(Y)) \to
\Hom_A(H^n(\PP^n_Y,\x{F}),\x{G}(Y)) \to 0
$$}
implies that $\sigma$ is an isomorphism. To get the surjectivity in the latter 
sequence we have used the fact that $H^n(\PP^n_Y,\x{F})$ is a free $A$-module.

Now a Yoneda pairing as in the proof of Theorem \ref{t-lc-duality} combined 
with the presheaf description of Theorem \ref{t-ext-sheaf-associated} gives a 
sheaf version of the Yoneda pairing: 
$$
\xExt^{n-p}_\pi(\x{O}_{\PP^n_Y},\x{F})\otimes \xExt^p_\pi(\x{F},\omega_\pi\otimes\x{G})\to 
\xExt^n_\pi(\x{O}_{\PP^n_Y},\omega_\pi\otimes\x{G})
$$
and since 
$$
\xExt^n_\pi(\x{O}_{\PP^n_Y},\omega_\pi\otimes\x{G})\simeq  R^n\pi_*(\omega_\pi\otimes \x{G})
\simeq R^n\pi_*(\omega_\pi)\otimes \x{G}\simeq \x{G}
$$
and 
$$
\xExt^{n-p}_\pi(\x{O}_{\PP^n_Y},\x{F})\simeq  R^{n-p}\pi_*\x{F}
$$
we get a natural morphism 
$$
\theta^p \colon \xExt^p_\pi(\x{F},\omega_\pi\otimes\x{G}) \to \xHom_{\x{O}_Y}(R^{n-p}f_*\x{F},\x G)
$$
which coincides with $\sigma$ when $p=0$. 

Now fix a coherent $\x{F}$ flat over $Y$ and fix a number $m$. First assume that $R^{n-p}f_*\x{F}$  commutes with base change for every $p\le m$. This in particular implies that $R^{n-p}f_*\x{F}$ is locally free for every $p\le m-1$.
To prove that $\theta^p$ are isomorphisms for $p\le m$, we may assume that $Y$ is affine, 
say $\Spec A$. In that case, there is an exact sequence $0 \to \x{K}\to \x{L} \to \x{F} \to 0$ where $\x{L}$ 
is a finite direct sum of sheaves of the form ${\x{O}_{\PP^n_Y}(-l)}$ for 
sufficiently large $l$. Since $\x{F}$ and $\x{L}$ are flat over $Y$, so is $\x{K}$. 
We get a commutative diagram 
{\tiny
$$
\xymatrix{
\Ext^{p-1}_{\x{O}_{\PP^n_Y}}(\x{L}, \omega_\pi\otimes \x{G}) \ar[d]\ar[r] &
\Ext^{p-1}_{\x{O}_{\PP^n_Y}}(\x{K}, \omega_\pi\otimes \x{G}) \ar[d] \ar[r] &
\Ext^{p}_{\x{O}_{\PP^n_Y}}(\x{F}, \omega_\pi\otimes \x{G}) \ar[d] \ar[r]& 0\\
\Hom_A(H^{n-p+1}(\x{L}),\x{G}(Y)) \ar[r] &
\Hom_A(H^{n-p+1}(\x{K}),\x{G}(Y))\ar[r] &
\Hom_A(H^{n-p}(\x{F}),\x{G}(Y)) \ar[r] & 0
}
$$}
The zero's in the diagram come from the facts 
$$
\Ext^p_{\x{O}_{\PP^n_Y}}(\x{O}_{\PP^n_Y(-l)}, \omega_\pi\otimes \x{G})=
H^p({\PP^n_Y},\omega_\pi\otimes \x{G}(l))=0
$$
and 
$H^{n-p}({\PP^n_Y},{\x{O}_{\PP^n_Y}(-l)})=0$ when $p>0$ and $l\gg 0$. The upper 
row is exact by the properties of ext sheaves. However, the lower row is not in general 
exact but we prove that it is exact under our assumptions. First note that 
if $p>1$, then we get isomorphisms $R^{n-p+1}\pi_*\x{K}\simeq R^{n-p}\pi_*\x{F}$.
So,  $R^{n-p+1}\pi_*\x{K}$ is locally free for any $1<p<m$. Moreover, considering the exact sequence 
$$
(*) \hspace{1cm} 0\to R^{n-1}\pi_*\x{F} \to R^{n}\pi_*\x{K}\to R^n\pi_*\x{L}\to R^{n}\pi_*\x{F} \to 0
$$
if $m>1$ then the fact that $R^{n-1}\pi_*\x{F}$, $R^n\pi_*\x{L}$, and $R^{n}\pi_*\x{F}$ are 
all locally free implies that $R^{n}\pi_*\x{K}$ is also locally free. Therefore, 
$R^{n-p+1}\pi_*\x{K}$ commutes with base change for any $p\le m$ (if $m=1$, this is true 
for any coherent sheaf flat over $Y$).

First assume that $1<p\le m$. Then the left objects in the upper and lower rows vanish 
and the lower row also becomes exact. The result follows from induction on $m$ 
applied to the sheaf $\x{K}$. Now assume that $p=1$. In this case, the lower row in the 
diagram is again exact because $H^{n}({\PP^n_Y},\x{L})$ and $H^{n}({\PP^n_Y},\x{F})$ are projective $A$-modules which split the sequence $(*)$.
Thus, the result follows from the diagram and the case $p=0$.

Conversely, for the fixed $m$ and $\x{F}$ flat over $Y$, assume that $\theta^p$ is an isomorphism for every $\x{G}$ and every $p\le m$. We may assume that $m>0$. Let $T$ be the kernel of 
of $H^{n}({\PP^n_Y},\x{L})\to H^{n}({\PP^n_Y},\x{F})\to 0$. The assumptions imply that 
the lower row of the diagram is always exact for $p\le m$ and every coherent $\x{G}$. 
This in particular implies that $T$ is projective in the category of finitely generated 
$A$-modules. Since $T$ is itself finitely generated, it is then projective in the 
category of $A$-modules. This forces $R^n\pi_*\x{F}$ to be locally free which in turn 
implies that $R^{n-1}\pi_*\x{F}$ commutes with base change. If $m>1$ we use the 
isomorphisms $R^{n-p+1}\pi_*\x{K}\simeq R^{n-p}\pi_*\x{F}$ and induction on $m$ 
to be applied to $\x{K}$.\\
\end{proof}

\section{A spectral sequence}\label{duality-spectral-sequence} 
Let $f \colon X\to Y$ be a projective morphism of Noetherian schemes. Fix a closed immersion $e\colon X\to T$ such that $f=g e$ where 
$g\colon T\to Y$ is a projective morphism. Let $\x{F}$ be a coherent sheaf on 
$X$. For any quasi-coherent sheaf $\x{N}$ on $T$, the natural morphism 
$\x{O}_{T} \to \x{O}_X$ induces a morphism 
$$
\xHom_{\x{O}_{T}}(\x{O}_X, \x{N}) \to \xHom_{\x{O}_{T}}(\x{O}_{T}, \x{N})\simeq \x{N}
$$
which in turn induces a natural morphism 
$$
\phi \colon \xHom_{\x{O}_{T}}(\x{F},\xHom_{\x{O}_{T}}(\x{O}_X, \x{N})) \to \xHom_{\x{O}_{T}}(\x{F}, \x{N})
$$
which is an isomorphism as can be seen locally: on any open affine subscheme $U=\Spec A$ of 
$T$, $X$ is defined by an ideal $I$ and the above morphism corresponds to a homomorphism 
$$
\Hom_A(F,\Hom_A({A}/{I},N))\to \Hom_A(F,N)
$$
which is an isomorphism where $\x{F}=\tilde{F}$ and $\x{N}=\tilde{N}$ on $U$. 
Note that 
$$
\xHom_{\x{O}_{T}}(\x{F},\xHom_{\x{O}_{T}}(\x{O}_X, \x{N})) \simeq 
\xHom_{\x{O}_X}(\x{F},\xHom_{\x{O}_{T}}(\x{O}_X, \x{N}))
$$
since $\x{F}$ and $\xHom_{\x{O}_{T}}(\x{O}_X, \x{N})$ are both $\x{O}_X$-modules.
Thus, we get the isomorphism 
$$
\psi \colon f_*\xHom_{\x{O}_{X}}(\x{F},\xHom_{\x{O}_{T}}(\x{O}_X, \x{N})) \to g_*\xHom_{\x{O}_{T}}(\x{F}, \x{N})
$$

For 
similar reasons we have a natural isomorphism 
$$
\phi' \colon \Hom_{\x{O}_{X}}(\x{F}',\xHom_{\x{O}_{T}}(\x{O}_X, \x{N})) \to \Hom_{\x{O}_{T}}(\x{F}', \x{N})
$$
where $\x{F}'$ is any quasi-coherent sheaf on $X$.

Now we have a commutative diagram 
$$
\xymatrix{
\mathfrak{Q}(T) \ar[rd]^{\alpha} \ar[r]^{\beta} & \mathfrak{Q}(X)\ar[d]^{\gamma}\\
& \mathfrak{Q}(Y)
}
$$
where $\alpha$ is the functor  $g_*\xHom_{\x{O}_{T}}(\x{F},-)$, $\beta$ is 
the functor $\xHom_{\x{O}_{T}}(\x{O}_X,-)$ and $\gamma$ is the functor 
$f_*\xHom_{\x{O}_{X}}(\x{F},-)$. 

On the other hand, if $\x{N}$ is injective in $\mathfrak{Q}(T)$, then 
$\xHom_{\x{O}_{T}}(\x{O}_X, \x{N})$ is  injective in $\mathfrak{Q}(X)$
because the functor $\Hom_{\x{O}_{T}}(-, \x{N})$ is exact on 
$\mathfrak{Q}(T)$ and given the isomorphism $\phi'$, the functor 
$\Hom_{\x{O}_{X}}(-, \xHom_{\x{O}_{T}}(\x{O}_X, \x{N}))$ will be exact 
on $\mathfrak{Q}(X)$.

In view of the above, we get a spectral sequence 
$$
E_2^{p,q}=\xExt^p_{f}(\x{F},\xExt^q_{\x{O}_{T}}(\x{O}_X, \x{N})_{\mathfrak{Q}})_{\mathfrak{Q}} \implies \xExt^{p+q}_{g}(\x{F}, \x{N})_{\mathfrak{Q}}
$$
which by Remark-Theorem \ref{r-ext-coherent}(\ref{t-ext-qc-o-module}) translates into 
$$
E_2^{p,q}=\xExt^p_{f}(\x{F},\xExt^q_{\x{O}_{T}}(\x{O}_X, \x{N})) \implies \xExt^{p+q}_{g}(\x{F}, \x{N})
$$
if $\x{N}$ is coherent.\\

\section{Dualising pairs}

\begin{defn}
Let $f \colon X\to Y$ be a projective morphism of Noetherian schemes, and 
let $r=\max\{\dim X_y \mid y\in Y\}$. A pair 
$(f^!,t_f)$ is called a dualising pair for $f$ if $f^!$ is a covariant functor from the 
category of coherent sheaves on $Y$ to the category of coherent sheaves on $X$, 
and $t_f\colon R^rf_*f^!\to \rm{id}$ is a natural transformation  
inducing a bifunctorial isomorphism 
$$
\theta^0\colon f_*\xHom_{\x{O}_X}(\x{F},f^{!}\x{G}) \to \xHom_{\x{O}_Y}(R^{r}f_*\x{F},\x G)
$$ 
for any coherent sheaf $\x{F}$ on $X$ and any coherent sheaf $\x{G}$ on $Y$. 
We say that a dualising sheaf $\omega_f$ exists for $f$ if $f^!\x{G}\simeq \omega_f\otimes \x{G}$ 
for every $\x{G}$. If such a sheaf exists then obviously $\omega_f:=f^!{\x{O}_{_Y}}$.\\
\end{defn}

\begin{lem}
Let $f \colon X\to Y$ be a projective morphism of Noetherian schemes. 
If a dualising pair exists for $f$, then it is unique (up to isomorphism).
\end{lem}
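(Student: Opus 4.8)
The plan is to recognise the defining isomorphism $\theta^0$ as a representability statement and then quote Yoneda's lemma. Write $r=\max\{\dim X_y\mid y\in Y\}$, and let $(f^!,t_f)$ and $((f^!)',t_f')$ be two dualising pairs for $f$. Taking global sections over $Y$ in the isomorphism $\theta^0$ attached to each pair, we obtain for every coherent sheaf $\x{G}$ on $Y$ bifunctorial isomorphisms
$$\Hom_{\x{O}_X}(\x{F},f^!\x{G})\ \simeq\ \Hom_{\x{O}_Y}(R^rf_*\x{F},\x{G})\ \simeq\ \Hom_{\x{O}_X}(\x{F},(f^!)'\x{G}),$$
natural in the coherent sheaf $\x{F}$ on $X$. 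The content of the clause ``$t_f$ induces $\theta^0$'' is that the first isomorphism sends $\phi\colon\x{F}\to f^!\x{G}$ to $t_f(\x{G})\circ R^rf_*(\phi)$, and similarly for the primed pair; in particular it carries $\mathrm{id}_{f^!\x{G}}$ to $t_f(\x{G})$. So $(f^!\x{G},t_f(\x{G}))$ and $((f^!)'\x{G},t_f'(\x{G}))$ both represent the contravariant functor $\x{F}\mapsto\Hom_{\x{O}_Y}(R^rf_*\x{F},\x{G})$ on $\mathfrak{C}(X)$.

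For each fixed $\x{G}$, uniqueness of a representing object supplies a distinguished morphism $\eta_{\x{G}}\colon f^!\x{G}\to(f^!)'\x{G}$, namely the one corresponding to the universal element $t_f(\x{G})$, so that $t_f'(\x{G})\circ R^rf_*(\eta_{\x{G}})=t_f(\x{G})$. Running the construction the other way produces $\zeta_{\x{G}}\colon(f^!)'\x{G}\to f^!\x{G}$ with $t_f(\x{G})\circ R^rf_*(\zeta_{\x{G}})=t_f'(\x{G})$, and one checks $\zeta_{\x{G}}\circ\eta_{\x{G}}=\mathrm{id}$ and $\eta_{\x{G}}\circ\zeta_{\x{G}}=\mathrm{id}$ by feeding these composites through the relevant bijections: each has the same image as the identity endomorphism. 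Thus every $\eta_{\x{G}}$ is an isomorphism intertwining the two trace maps.

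It remains to see that $\eta=(\eta_{\x{G}})_{\x{G}}$ is a natural transformation $f^!\Rightarrow(f^!)'$. For a morphism $\psi\colon\x{G}\to\x{G}'$ I would run the two composites $\eta_{\x{G}'}\circ f^!(\psi)$ and $(f^!)'(\psi)\circ\eta_{\x{G}}$ through the bijection $\Hom_{\x{O}_X}(f^!\x{G},(f^!)'\x{G}')\simeq\Hom_{\x{O}_Y}(R^rf_*f^!\x{G},\x{G}')$; using the defining relations of $\eta_{\x{G}},\eta_{\x{G}'}$ together with the naturality of the transformations $t_f\colon R^rf_*f^!\Rightarrow\mathrm{id}$ and $t_f'\colon R^rf_*(f^!)'\Rightarrow\mathrm{id}$, both composites are carried to $\psi\circ t_f(\x{G})$ and hence coincide. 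This yields a natural isomorphism $f^!\Rightarrow(f^!)'$ which is compatible with the traces, i.e.\ the asserted uniqueness of the dualising pair.

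I do not expect a genuinely hard step: once $\theta^0$ is read as representing a functor, the whole argument is formal Yoneda bookkeeping. The only places requiring attention are making the clause ``$t_f$ induces $\theta^0$'' precise --- so that the universal element is exactly $t_f(\x{G})$ --- and keeping track of the various $\Hom$-groups in the naturality-in-$\x{G}$ step; projectivity of $f$ and Noetherianity of $Y$ enter only implicitly, through the earlier coherence results that keep all sheaves in sight in $\mathfrak{C}(X)$ and $\mathfrak{C}(Y)$.
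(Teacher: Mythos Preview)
Your proof is correct and follows essentially the same approach as the paper's. The paper compresses your argument into the single observation that the bifunctorial isomorphism $\Hom_{\x{O}_X}(\x{F},f^{!}\x{G}) \simeq \Hom_{\x{O}_Y}(R^{r}f_*\x{F},\x G)$ exhibits $f^!$ as an adjoint to $R^rf_*$ and then invokes uniqueness of adjoints; your Yoneda bookkeeping is precisely the content of that uniqueness statement, spelled out in detail, including the identification of $t_f(\x{G})$ as the counit evaluated at $\x{G}$.
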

\begin{proof}
Let $r=\max\{\dim X_y \mid y\in Y\}$ .If $(f^!,t_f)$ is a dualising pair for $f$, then from the definition we get 
a bifunctorial isomorphism 
$$
\Hom_{\x{O}_X}(\x{F},f^{!}\x{G}) \simeq \Hom_{\x{O}_Y}(R^{r}f_*\x{F},\x G)
$$ 
which means that $f^!$ is a left adjoint to $R^rf_*$. Since a left adjoint is 
unique, this uniquely determines $f^!$ up to isomorphism. On the other hand, 
the adjoint property applied to $\x{F}=f^{!}\x{G}$ implies that the identity 
morphism $f^{!}\x{G}\to f^{!}\x{G}$ corresponds exactly to the morphism 
$R^{r}f_*f^!\x{G}\to\x G$ given by $t_f$ hence $t_f$ is also uniquely 
determined up to isomorphism.\\
\end{proof}

\begin{thm}\label{t-dualising-pair}
 Let $f \colon X\to Y$ be a flat projective morphism of Noetherian schemes, and let $r=\max\{\dim X_y \mid y\in Y\}$. Fix a closed immersion $e\colon X\to \mathbb{P}^n_Y$ such that $f=\pi e$ where 
$\pi\colon \mathbb{P}^n_Y\to Y$ is the projection. Then, a dualising pair $(f^!,t_f)$ exists for $f$. More precisely, for any $p$, any coherent $\x{O}_X$-module $\x{F}$, and any coherent $\x{O}_Y$-module $\x{G}$ there is a morphism  
$$
\theta^p\colon \xExt^p_f(\x{F},f^{!}\x{G}) \to \xHom_{\x{O}_Y}(R^{r-p}f_*\x{F},\x G)
$$ 
which is functorial in $\x{F}$ and $\x{G}$ where $\theta^0$ is always an isomorphism  
and
$$
f^!\x{G}=\xExt^{n-r}_{\x{O}_{\PP^n_Y}}(\x{O}_X, \omega_\pi\otimes \x{G})
$$ 
\end{thm}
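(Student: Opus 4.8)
The plan is to factor $f = \pi e$ as given and to transport the projective‑space duality of Theorem \ref{duality-projective-space} to $X$ by means of the spectral sequence of Section \ref{duality-spectral-sequence}, taken with $T = \PP^n_Y$, $g = \pi$, and $\x{N} = \omega_\pi\otimes\x{G}$ (which is coherent). That sequence reads
$$
E_2^{p,q} = \xExt^p_f(\x{F},\,\xExt^q_{\x{O}_{\PP^n_Y}}(\x{O}_X,\,\omega_\pi\otimes\x{G})) \implies \xExt^{p+q}_\pi(e_*\x{F},\,\omega_\pi\otimes\x{G}),
$$
and I set $d := n-r$ and $f^!\x{G} := \xExt^{d}_{\x{O}_{\PP^n_Y}}(\x{O}_X,\,\omega_\pi\otimes\x{G})$, which is a coherent $\x{O}_X$‑module by Remark‑Theorem \ref{r-ext-coherent} and a covariant functor of $\x{G}$. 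I will use throughout that $R^q\pi_*(e_*\x{F}) = R^qf_*\x{F}$ (since $e$ is a closed immersion) and that $R^qf_*(-) = 0$ for $q>r$ (Grothendieck vanishing on the fibres together with the theorem on formal functions).

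The technical heart of the argument is the vanishing
$$
\xExt^q_{\x{O}_{\PP^n_Y}}(\x{O}_X,\,\omega_\pi\otimes\x{G}) = 0 \qquad\text{for all } q<d .
$$
I would prove this by working locally over an affine $\Spec R\subseteq Y$ and a standard chart, where $\x{O}_X$ becomes $\widetilde{S/I}$ with $S$ a localisation of $R[x_1,\dots,x_n]$ and $S/I$ flat over $R$, while $\omega_\pi\otimes\x{G}$ becomes $\widetilde{S\otimes_R H}$ for a finitely generated $R$‑module $H$. Since $S/I$ is $R$‑flat with fibres $(S/I)\otimes_R k(\mathfrak p)$ of finite projective dimension over the regular rings $S\otimes_R k(\mathfrak p)$, the change‑of‑rings behaviour of projective dimension under flat base change shows $S/I$ has finite projective dimension over $S$; pick a finite free resolution $P_\bullet\to S/I$, which by $R$‑flatness restricts to a free resolution $P_\bullet\otimes_R k(\mathfrak p)$ of $(S/I)\otimes_R k(\mathfrak p)$. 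Put $Q^\bullet = \Hom_S(P_\bullet,S)$, a bounded complex of finitely generated free $S$‑modules, hence flat over $R$; then $H^q(Q^\bullet\otimes_R H)$ computes $\xExt^q_{\x{O}_{\PP^n_Y}}(\x{O}_X,\omega_\pi\otimes\x{G})$ locally, and $H^q(Q^\bullet\otimes_R k(\mathfrak p)) = \Ext^q_{S\otimes_R k(\mathfrak p)}((S/I)\otimes_R k(\mathfrak p),\,S\otimes_R k(\mathfrak p))$. On the fibre the ambient ring $S\otimes_R k(\mathfrak p)$ is regular while $X_{\mathfrak p}$ has dimension $\le r$, so by the dimension formula for schemes of finite type over the field $k(\mathfrak p)$ the ideal of $X_{\mathfrak p}$ has height $\ge n-r = d$ at every point; hence these fibrewise $\Ext$'s vanish for $q<d$. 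The base‑change results for bounded complexes of flat modules from Chapter 3 then propagate this fibrewise vanishing to $H^q(Q^\bullet\otimes_R H) = 0$ for all $H$ and $q<d$, which is the asserted vanishing. Assembling these ingredients — the local finite free resolution, the height estimate over the residue fields, and the fibrewise‑to‑total passage — is where I expect the main difficulty to lie.

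Granting the vanishing, the spectral sequence is concentrated in rows $q\ge d$. Consequently $\xExt^q_\pi(e_*\x{F},\omega_\pi\otimes\x{G}) = 0$ for $q<d$; no differential enters or leaves $E_\bullet^{0,d}$, so $E_\infty^{0,d} = E_2^{0,d}$, and it is the only graded piece of $\xExt^{d}_\pi(e_*\x{F},\omega_\pi\otimes\x{G})$, which gives a canonical isomorphism
$$
f_*\xHom_{\x{O}_X}(\x{F},\,f^!\x{G}) \;=\; E_2^{0,d} \;\xrightarrow{\ \sim\ }\; \xExt^{d}_\pi(e_*\x{F},\,\omega_\pi\otimes\x{G}).
$$
More generally, for every $p$ the bottom‑row edge homomorphism together with the inclusion of the lowest filtration step give natural maps $\xExt^p_f(\x{F},f^!\x{G}) = E_2^{p,d}\twoheadrightarrow E_\infty^{p,d}\hookrightarrow \xExt^{p+d}_\pi(e_*\x{F},\omega_\pi\otimes\x{G})$; composing with $\theta^{p+d}_\pi$ from Theorem \ref{duality-projective-space} applied to $e_*\x{F}$, and using $n-(p+d) = r-p$ and $R^{r-p}\pi_*(e_*\x{F}) = R^{r-p}f_*\x{F}$, produces the desired morphisms
$$
\theta^p\colon \xExt^p_f(\x{F},\,f^!\x{G}) \;\longrightarrow\; \xHom_{\x{O}_Y}(R^{r-p}f_*\x{F},\,\x{G}),
$$
functorial in $\x{F}$ and $\x{G}$; for $p=0$ the edge map is the identity, and $\theta^0$ is the composite of the displayed isomorphism with $\theta^d_\pi(e_*\x{F})$.

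It remains to show that $\theta^0$ is an isomorphism and to exhibit $t_f$. Both $\x{F}\mapsto f_*\xHom_{\x{O}_X}(\x{F},f^!\x{G})\simeq\xExt^{d}_\pi(e_*\x{F},\omega_\pi\otimes\x{G})$ and $\x{F}\mapsto\xHom_{\x{O}_Y}(R^rf_*\x{F},\x{G})$ are left‑exact contravariant functors of coherent $\x{F}$ — the former because $\xExt^{q}_\pi(e_*(-),\omega_\pi\otimes\x{G}) = 0$ for $q<d$, the latter because $R^rf_*$ is the top relative cohomology and hence right exact — and $\theta^0$ is a natural transformation between them. Choosing locally over $Y$ a presentation $\x{L}_1\to\x{L}_0\to\x{F}\to0$ with each $\x{L}_i$ a finite direct sum of twists $\x{O}_X(-l)$, $l\gg0$, the five lemma reduces the claim to $\x{F} = \x{O}_X(-l)$ with $l\gg0$. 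For such $\x{F}$ the sheaf $e_*\x{O}_X(-l)$ is flat over $Y$ (a line bundle on the $Y$‑flat scheme $X$, pushed forward along a closed immersion), and $R^q\pi_*(e_*\x{O}_X(-l)) = R^qf_*\x{O}_X(-l)$ vanishes for $q>r$ and commutes with base change for $q=r$ (top cohomology of a flat family); so by the second part of Theorem \ref{duality-projective-space} the maps $\theta^{q}_\pi(e_*\x{O}_X(-l))$ are isomorphisms for all $q\le d$, in particular $\theta^{d}_\pi(e_*\x{O}_X(-l))$ is, and hence so is $\theta^0$ on $\x{O}_X(-l)$. Finally, once $\theta^0$ is a bifunctorial isomorphism it exhibits $f^!$ as left adjoint to $R^rf_*$, and we take $t_f\colon R^rf_*f^!\to\mathrm{id}$ to be the counit, i.e. the morphism $R^rf_*f^!\x{G}\to\x{G}$ corresponding under $\theta^0$ to the identity of $f^!\x{G}$; by construction $t_f$ induces $\theta^0$, so $(f^!,t_f)$ is a dualising pair, unique by the Lemma above.
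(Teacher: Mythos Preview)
Your architecture matches the paper's exactly: the spectral sequence of \S\ref{duality-spectral-sequence}, the definition $f^!\x{G}=\xExt^{n-r}_{\x{O}_{\PP^n_Y}}(\x{O}_X,\omega_\pi\otimes\x{G})$, the edge maps composed with $\theta_\pi$ from Theorem~\ref{duality-projective-space}, and the five-lemma reduction of $\theta^0$ to sheaves $\x{O}_X(-l)$ with $l\gg 0$ are all as in the text, and your extraction of $t_f$ is the same.

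The one genuine difference is how you prove the key vanishing $\xExt^q_{\x{O}_{\PP^n_Y}}(\x{O}_X,\omega_\pi\otimes\x{G})=0$ for $q<n-r$. The paper bootstraps from Theorem~\ref{duality-projective-space} itself: over affine $Y$ the sheaf vanishes iff global sections of a high twist do, and by Theorem~\ref{t-ext-local-global-degen} that reduces to $\Ext^q_{\x{O}_{\PP^n_Y}}(e_*\x{O}_X(-l),\omega_\pi\otimes\x{G})$, which Theorem~\ref{duality-projective-space} identifies with $\Hom_{\x{O}_Y}(R^{n-q}\pi_*(e_*\x{O}_X(-l)),\x{G})=0$ since $e_*\x{O}_X(-l)$ is $Y$-flat and $R^{n-q}f_*\x{O}_X(-l)=0$ for $n-q>r$. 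This is a two-line argument once seen, and it even gives the vanishing for any $Y$-flat $\x{M}$ in place of $\x{O}_X$, which is reused in Theorem~\ref{t-duality}. Your local route via a finite free $S$-resolution of $S/I$ and fibrewise depth is viable but the two steps you flag as the main difficulty are not actually justified: the finite projective dimension of $S/I$ over $S$ does hold, but it needs the local criterion of flatness applied to the syzygies (your ``change-of-rings behaviour of projective dimension under flat base change'' is not a citable statement), and the passage from $H^q(Q^\bullet\otimes_R k(\mathfrak p))=0$ to $H^q(Q^\bullet\otimes_R H)=0$ for all $H$ is not covered by Chapter~3 as written, since the $Q^i$ are finitely generated over $S$ but not over $R$ and the $T^p$-functor arguments use finite generation over the base; one has to insert a separate Nakayama-over-$S$ step. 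The paper's approach sidesteps all of this.
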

\begin{proof}

Let $\x{M}$ be a coherent sheaf on $X$, flat over $Y$. If $p<n-r$ we will prove that  
$$
\xExt^p_{\x{O}_{\PP^n_Y}}(\x{M},\omega_\pi\otimes\x{G})=0
$$
 Since the statement is local over $Y$, we may assume that $Y$ is affine say $Y=\Spec A$. We will prove that 
$$
\xExt^p_{\x{O}_{\PP^n_Y}}(\x{M},\omega_\pi\otimes\x{G})\otimes\x{O}_{\PP^n_Y}(l)=0
$$ 
if $l\gg 0$. Since the sheaves involved are all coherent, the latter sheaf is generated 
by global sections hence it is enough to prove that 
$$
H^0(\PP^n_Y,\xExt^p_{\x{O}_{\PP^n_Y}}(\x{M},\omega_\pi\otimes \x{G})\otimes\x{O}_{\PP^n_Y}(l))=0
$$
By Theorem \ref{t-ext-local-global-degen}, we have 
$$
H^0(\PP^n_Y,\xExt^p_{\x{O}_{\PP^n_Y}}(\x{M},\omega_\pi\otimes\x{G})\otimes\x{O}_{\PP^n_Y}(l))=
\Ext^p_{\x{O}_{\PP^n_Y}}(\x{M},\omega_\pi\otimes\x{G}(l))
$$
$$
=\Ext^p_{\x{O}_{\PP^n_Y}}(\x{M}(-l),\omega_\pi\otimes\x{G})
$$
for large $l$ hence it is enough to prove that the right hand side vanishes. 
If $p<n-r$, then $r<n-p$ hence by a general theorem on higher direct images we have 
$R^{n-p}\pi_*\x{M}(-l)=0$. Therefore $R^{n-p}\pi_*\x{M}(-l)$ commutes with base change 
for every $p\le n-r$. Now by the duality on $\PP^n_Y$ (\ref{duality-projective-space}),  for 
$p<n-r$ we have   
$$
\Ext^p_{\x{O}_{\PP^n_Y}}(\x{M}(-l),\omega_\pi\otimes\x{G})\simeq 
\Hom_{\x{O}_Y}(R^{n-p}\pi_*\x{M}(-l),\x G)=0
$$
If $p<n-r$, another application of duality on $\PP^n_Y$ proves that $\xExt^p_\pi(\x{M},\omega_\pi\otimes\x{G})=0$.\\

From section \ref{duality-spectral-sequence}, we get the 
spectral sequence 
$$
E_2^{p,q}=\xExt^p_{f}(\x{F},\xExt^q_{\x{O}_{\PP^n_Y}}(\x{O}_X, \omega_\pi\otimes\x{G})) \implies \xExt^{p+q}_{\pi}(\x{F}, \omega_\pi\otimes\x{G})
$$
which satisfies $E^{p,q}_2=0$ if $q<n-r$. Thus for every $p$  we get a morphism  
$$
\mu^p\colon E^{p,n-r}_2=\xExt^p_f(\x{F},f^!\x{G}) \to \xExt^{n-r+p}_{\pi}(\x{F}, \omega_\pi\otimes\x{G})
$$
which is an isomorphism for $p=0$ where we have put $f^!\x{G}=\xExt^{n-r}_{\x{O}_{\PP^n_Y}}(\x{O}_X, \omega_\pi\otimes\x{G})$. 
This  in turn induces the morphism
$$
\theta^p\colon \xExt^p_f(\x{F},f^{!}\x{G}) \to \xHom_{\x{O}_Y}(R^{r-p}f_*\x{F},\x G)
$$ 
by composing $\mu$ and $\theta$ on $\PP^n_Y$ as in the proof 
of Theorem \ref{duality-projective-space}. Functoriality follows from the construction.
 
To see that $\theta^0$ is an isomorphism we may assume that $Y=\Spec A$. In that case, 
there is an exact sequence $\x{L}'\to \x{L} \to \x{F} \to 0$ where $\x{L}',\x{L}$ 
are finite direct sums of sheaves of the form ${\x{O}_{X}(-l)}$ for 
sufficiently large $l$. We get a commutative diagram 
{\tiny{
$$
\xymatrix{
0 \to \Ext^{n-r}_{\x{O}_{\PP^n_Y}}(\x{F}, \omega_\pi\otimes \x{G}) \ar[d]\ar[r] &
\Ext^{n-r}_{\x{O}_{\PP^n_Y}}(\x{L}, \omega_\pi\otimes \x{G}) \ar[d]\ar[r] &
\Ext^{n-r}_{\x{O}_{\PP^n_Y}}(\x{L}', \omega_\pi\otimes \x{G}) \ar[d]\\
0\to \Hom_A(H^r(\PP^n_Y,\x{F}),\x{G}(Y)) \ar[r]  &
\Hom_A(H^r(\PP^n_Y,\x{L}),\x{G}(Y)) \ar[r] &
\Hom_A(H^r(\PP^n_Y,\x{L}'),\x{G}(Y))
}
$$}} 
where the exactness of the upper row follows from the exactness of the 
corresponding sequence 
$$
0 \to \Hom_{\x{O}_{X}}(\x{F}, \omega_\pi\otimes \x{G}) \to 
\Hom_{\x{O}_{X}}(\x{L}, \omega_\pi\otimes \x{G}) \to
\Hom_{\x{O}_{X}}(\x{L}', \omega_\pi\otimes \x{G})
$$
Now by applying the duality on $\PP^n_Y$ to the flat sheaves $\x{L}',\x{L}$ we 
get the result that $\theta^0$ is an isomorphism. 

 Finally, the identity morphisms $f^!\x{G}\to f^!\x{G}$ induce 
morphisms $R^rf_*f^!\x{G} \to \x{G}$ giving $t_f$.\\
\end{proof}

\begin{rem}\label{r-dualising-sheaf}
In the setting of Theorem \ref{t-dualising-pair}, if $\x{G}$ is locally free, then 
$$
f^!\x{G}=\xExt^{n-r}_{\x{O}_{\PP^n_Y}}(\x{O}_X, \omega_\pi\otimes\x{G})\simeq 
\xExt^{n-r}_{\x{O}_{\PP^n_Y}}(\x{O}_X, \omega_\pi)\otimes\x{G}\simeq\omega_f\otimes \x{G}
$$
where we define $\omega_f:=\xExt^{n-r}_{\x{O}_{\PP^n_Y}}(\x{O}_X, \omega_\pi)$. In particular, if $Y=\Spec k$ where $k$ is a field, 
then $\x{G}$ is always free hence the formula holds.\\
\end{rem}

%%%%%%%%%%%%%%%%%%%%%%%%%%%%%%%%%%%%
\section{General case of duality}

Let $f\colon X\to Y$ be a morphism of schemes. We say that the fibre $X_y$ over $y\in Y$ is of pure dimension $r$ if every irreducible component of $X_y$ has dimension $r$. In the following duality theorem we are interested in the case when the fibres $X_y$ are Cohen-Macaulay schemes (see Definition \ref{d-cm-schemes}).\\

\begin{thm}[Relative duality]\label{t-duality}
Let $f \colon X\to Y$ be a flat projective morphism of Noetherian schemes, with fibres 
of pure dimension $r$, and let $(f^!,t_f)$ and $\theta^p$ be as in Theorem \ref{t-dualising-pair}. 
Then the following are equivalent:

(i) the fibres of $f$ are Cohen-Macaulay schemes, 

(ii) if $\x{M}$ is a coherent locally free $\x{O}_X$-module, then 
 for every sufficiently large $l$,  the sheaf $R^{q}f_*\x{M}(-l)$ 
commutes with base change for every $q$ and it vanishes for $q\neq r$,

(iii)  a dualising sheaf $\omega_f$ exists for $f$, and it is flat over $Y$. Moreover, for fixed $m$ and $\x{F}$ flat over $Y$, $R^{r-p}f_*\x{F}$  commutes with base change 
for every $p\le m$ if and only if $\theta^p$ is an isomorphism for every $\x{G}$ and every $p\le m$. 
 \\

\end{thm}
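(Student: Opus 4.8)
The plan is to introduce the auxiliary condition
$$
(\ast)\qquad \xExt^q_{\x{O}_{\PP^n_Y}}(\x{O}_X,\omega_\pi\otimes\x{G})=0\quad\text{for every coherent }\x{G}\text{ on }\PP^n_Y\text{ and every }q\neq n-r,
$$
and to prove the chain (i) $\Leftrightarrow$ $(\ast)$ $\Rightarrow$ (iii) $\Rightarrow$ (ii) $\Rightarrow$ $(\ast)$, where $e\colon X\to\PP^n_Y$ and $\pi\colon\PP^n_Y\to Y$ are as in Theorem~\ref{t-dualising-pair}. I use freely that $f^{!}\x{G}=\xExt^{n-r}_{\x{O}_{\PP^n_Y}}(\x{O}_X,\omega_\pi\otimes\x{G})$ and that $\xExt^q_{\x{O}_{\PP^n_Y}}(\x{O}_X,\omega_\pi\otimes\x{G})=0$ for $q<n-r$, both of which are established inside the proof of Theorem~\ref{t-dualising-pair}.

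\textbf{(i) $\Leftrightarrow$ $(\ast)$.} For $y\in Y$ the scheme $\PP^n_{k(y)}$ is regular, so local duality (Theorem~\ref{t-lc-duality}) applies in its local rings; together with the characterisation of depth (Theorem~\ref{t-lc-depth}) and Grothendieck non-vanishing it shows that a closed subscheme $Z\subseteq\PP^n_{k(y)}$ of pure dimension $r$ is Cohen--Macaulay if and only if $\xExt^q_{\x{O}_{\PP^n_{k(y)}}}(\x{O}_Z,\omega_{\PP^n_{k(y)}})=0$ for all $q\neq n-r$. To transfer this to $Y$ I use flatness of $f$: locally on $Y$ pick a resolution $\x{L}_\bullet\to\x{O}_X\to 0$ by locally free sheaves on $\PP^n_Y$; since $\x{O}_X$ is $Y$-flat all syzygies are $Y$-flat, so $\x{L}_\bullet$ restricts on every fibre to a resolution of $\x{O}_{X_y}$, and the complex $\x{D}^\bullet:=\xHom_{\x{O}_{\PP^n_Y}}(\x{L}_\bullet,\omega_\pi)$ computing $\xExt^\bullet_{\x{O}_{\PP^n_Y}}(\x{O}_X,\omega_\pi)$ restricts on fibres to the complex computing $\xExt^\bullet_{\x{O}_{\PP^n_{k(y)}}}(\x{O}_{X_y},\omega_{\PP^n_{k(y)}})$. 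A cohomology-and-base-change argument for such complexes of $Y$-flat sheaves then shows that fibrewise concentration in degree $n-r$ (i.e. condition (i)) is equivalent to $\xExt^q_{\x{O}_{\PP^n_Y}}(\x{O}_X,\omega_\pi)=0$ for $q\neq n-r$, and that in that case $\omega_f:=\xExt^{n-r}_{\x{O}_{\PP^n_Y}}(\x{O}_X,\omega_\pi)$ is flat over $Y$, hence locally free; since $\x{D}^\bullet$ is then a complex of locally free sheaves with single cohomology sheaf $\omega_f$, locally free, tensoring $\x{D}^\bullet$ with an arbitrary coherent $\x{G}$ keeps the cohomology concentrated in degree $n-r$, where it equals $\omega_f\otimes\x{G}$; this yields $(\ast)$ for all $\x{G}$ together with $f^{!}\x{G}\simeq\omega_f\otimes\x{G}$. \emph{I expect this step --- in effect a base-change theorem for the Ext sheaves $\xExt^\bullet_{\x{O}_{\PP^n_Y}}(\x{O}_X,\omega_\pi)$, delicate because $\PP^n_Y$ need not be regular, so $\x{L}_\bullet$ may be infinite --- to be the main obstacle.}

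\textbf{$(\ast)$ $\Rightarrow$ (iii), and (iii) $\Rightarrow$ (ii).} Assuming $(\ast)$, the spectral sequence of Section~\ref{duality-spectral-sequence},
$$
E_2^{p,q}=\xExt^p_f\big(\x{F},\xExt^q_{\x{O}_{\PP^n_Y}}(\x{O}_X,\omega_\pi\otimes\x{G})\big)\Longrightarrow \xExt^{p+q}_\pi(\x{F},\omega_\pi\otimes\x{G}),
$$
has only the row $q=n-r$ nonzero, so it degenerates to isomorphisms $\xExt^p_f(\x{F},f^{!}\x{G})\simeq\xExt^{n-r+p}_\pi(\x{F},\omega_\pi\otimes\x{G})$ for every $p$. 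Composing with the maps $\theta^\bullet$ on $\PP^n_Y$ of Theorem~\ref{duality-projective-space} (which in indices below $n-r$ are isomorphisms between zero sheaves, as both sides vanish there) and invoking the base-change criterion of that theorem, together with $R^{n-(n-r+p)}\pi_*\x{F}=R^{r-p}f_*\x{F}$, one recovers exactly the asserted equivalence: $R^{r-p}f_*\x{F}$ commutes with base change for all $p\le m$ if and only if $\theta^p$ is an isomorphism for all $\x{G}$ and all $p\le m$; flatness of $\omega_f$ and $f^{!}\x{G}\simeq\omega_f\otimes\x{G}$ were obtained above (compare also Remark~\ref{r-dualising-sheaf}). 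This is (iii). For (iii) $\Rightarrow$ (ii), take $\x{M}$ locally free and $\x{F}=\x{M}(-l)$ with $l\gg 0$. Since $\omega_f$ is coherent, $\xExt^p_f(\x{M}(-l),\omega_f\otimes\x{G})\simeq R^pf_*(\x{M}^{\vee}\otimes\omega_f\otimes\x{G}(l))$ vanishes for $p\ge 1$ by relative Serre vanishing, and for $p=0$ equals $f_*(\x{M}^{\vee}\otimes\omega_f(l))\otimes\x{G}$ with $f_*(\x{M}^{\vee}\otimes\omega_f(l))$ locally free, as $\omega_f$ is $Y$-flat. Since $\theta^0$ is always an isomorphism, this forces $R^rf_*\x{M}(-l)$ to be locally free; the cohomology and base change theorem then propagates the base-change property one cohomological degree downwards, and by the equivalence in (iii) this makes $\theta^1$ an isomorphism; its source vanishing gives $R^{r-1}f_*\x{M}(-l)=0$, and iterating downwards yields $R^qf_*\x{M}(-l)=0$ for every $q<r$ together with base change for every $q$. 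As $R^qf_*\x{M}(-l)=0$ for $q>r$ as well (fibre dimension $r$), this is (ii).

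\textbf{(ii) $\Rightarrow$ $(\ast)$.} Apply (ii) with $\x{M}=\x{O}_X$: for $l\gg 0$ the sheaf $R^q\pi_*(e_*\x{O}_X(-l))=R^qf_*\x{O}_X(-l)$ vanishes for $q\neq r$ and commutes with base change for every $q$. By the base-change criterion of Theorem~\ref{duality-projective-space} applied to the $Y$-flat sheaf $e_*\x{O}_X(-l)$ on $\PP^n_Y$, every $\theta^p$ on $\PP^n_Y$ is an isomorphism, so $\xExt^p_\pi(e_*\x{O}_X(-l),\omega_\pi\otimes\x{G})\simeq\xHom_{\x{O}_Y}(R^{n-p}f_*\x{O}_X(-l),\x{G})=0$ for all $p\neq n-r$ and all coherent $\x{G}$. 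Now the local-to-global spectral sequence for Ext (Theorem~\ref{t-ext-spectral-sequence} with identity first morphism), together with Serre vanishing for a large twist, identifies this sheaf with $\pi_*\big(\xExt^p_{\x{O}_{\PP^n_Y}}(\x{O}_X,\omega_\pi\otimes\x{G})(l)\big)$; since this vanishes for every $l\gg 0$, while a nonzero coherent sheaf on $\PP^n_Y$ has nonzero $\pi_*$ after a large enough twist, we conclude $\xExt^p_{\x{O}_{\PP^n_Y}}(\x{O}_X,\omega_\pi\otimes\x{G})=0$ for $p\neq n-r$, i.e. $(\ast)$. This closes the chain, so (i), (ii), (iii) are equivalent.
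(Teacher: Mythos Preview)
Your auxiliary condition $(\ast)$ is exactly the right pivot, and your arguments $(\ast)\Rightarrow$(iii), (iii)$\Rightarrow$(ii), and (ii)$\Rightarrow(\ast)$ are essentially the paper's Steps 4, 5, and 2 respectively (the paper runs (iii)$\Rightarrow$(ii) only for $\x{M}=\x{O}_X$ and then continues to (i), but your version for general locally free $\x{M}$ is correct and uses the same mechanism). The one place you and the paper genuinely diverge is the link with (i), and here the paper's route is both different from and cleaner than yours.

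You attempt (i)$\Leftrightarrow(\ast)$ directly, via base change for the local Ext sheaves $\xExt^q_{\x{O}_{\PP^n_Y}}(\x{O}_X,\omega_\pi)$, computed by a locally free resolution $\x{L}_\bullet$ of $\x{O}_X$ on $\PP^n_Y$. You are right to flag this as the obstacle: since $\PP^n_Y$ need not be regular, $\x{L}_\bullet$ is in general unbounded to the right, and the standard ``fibrewise concentration in one degree $\Rightarrow$ global concentration plus flatness'' argument (Nakayama descending from the top nonzero cohomology) does not get started without an a priori upper bound on the nonvanishing degrees. For $(\ast)\Rightarrow$(i) there is a partial fix---take $\x{G}$ to be the skyscraper $k(y)$ at a closed point $y$ and use $Y$-flatness of $\x{O}_X$ to identify the result with the fibrewise Ext---but this only directly treats closed fibres.

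The paper sidesteps all of this by routing through (ii) in both directions: its chain is (i)$\Rightarrow$(ii)$\Rightarrow$(iii)$\Rightarrow$(i). For (i)$\Rightarrow$(ii) one observes that $H^q(X_y,\x{M}(-l)_y)=0$ forces $R^qf_*\x{M}(-l)=0$ near $y$ by ordinary semicontinuity/base change, so it suffices to prove the fibrewise cohomology vanishing; but each fibre sits in the \emph{regular} scheme $\PP^n_{k(y)}$, where duality for $\PP^n$, finite projective dimension, and local duality (Theorem~\ref{t-lc-duality}) in the local rings are all available, and these convert the Cohen--Macaulay hypothesis into the required vanishing. For (iii)$\Rightarrow$(i) the paper first runs your (iii)$\Rightarrow$(ii) argument with $\x{M}=\x{O}_X$ to obtain $H^q(X_y,\x{O}_{X_y}(-l))=0$ for $q<r$, and then again works on a single fibre: your own (ii)$\Rightarrow(\ast)$ argument, applied with $Y=\Spec k(y)$, gives the fibrewise Ext vanishing, and local duality on the regular $\PP^n_{k(y)}$ converts that back into $\depth=r$, i.e.\ Cohen--Macaulay. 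In short, the paper trades the delicate base change for local Ext sheaves (your obstacle) for the well-established base change for $R^qf_*$, and carries out every Ext/local-duality computation only over a field, where regularity of $\PP^n$ comes for free.
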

\begin{proof}

\emph{Step 1.} Fix a closed immersion $e\colon X\to \mathbb{P}^n_Y$ such that $f=\pi e$ where 
$\pi\colon \mathbb{P}^n_Y\to Y$ is the projection.
From section \ref{duality-spectral-sequence}, we get the 
spectral sequence 
$$
E_2^{p,q}=\xExt^p_{f}(\x{F},\xExt^q_{\x{O}_{\PP^n_Y}}(\x{O}_X, \omega_\pi\otimes\x{G})) \implies \xExt^{p+q}_{\pi}(\x{F}, \omega_\pi\otimes\x{G})
$$
which satisfies  $E^{p,q}_2=0$ if $q<n-r$ by the proof of Theorem \ref{t-dualising-pair}.\\

\emph{Step 2.} Before we prove the equivalence of the conditions in the theorem we need some preparations. Let $\x{M}$ be a coherent sheaf on $X$, flat over $Y$, such that 
for every sufficiently large $l$, the sheaf $R^{q}f_*\x{M}(-l)$ 
commutes with base change for every $q$ and it vanishes for $q\neq r$.
We will show that 
 $$
 {L}^{q}(\x{G}):=\xExt^{n-q}_{\x{O}_{\PP^n_Y}}(\x{M},\omega_\pi\otimes\x{G})=0
 $$ 
 for every $q<r$ and every $\x{G}$. The problem is local hence we could take $Y$ affine.
Note that $L^q(\x{G})=0$ if and only if 
$L^{q}(\x{G})\otimes \x{O}_{\PP^n_Y(l)}=0$. If $l\gg 0$, then $L^{q}(\x{G})\otimes \x{O}_{\PP^n_Y}(l)=0$ if and only if $H^0(\PP^n_Y,L^q(\x{G})\otimes \x{O}_{\PP^n_Y(l)})=0$. 
By Theorem \ref{t-ext-local-global-degen} and by duality on $\PP^n_Y$, 
$$
H^0(\PP^n_Y,L^q(\x{G})\otimes \x{O}_{\PP^n_Y}(l))\simeq \Ext^{n-q}_{\x{O}_{\PP^n_Y}}(\x{M},\omega_\pi\otimes \x{G}(l))\simeq 
$$
$$
\Ext^{n-q}_{\x{O}_{\PP^n_Y}}(\x{M}(-l),\omega_\pi\otimes \x{G})\simeq \Hom_{\x{O}_{Y}}(R^q\pi_*\x{M}(-l),\x{G})=0
$$
if $q<r$.\\

\emph{Step 3.} (i) $\implies$ (ii): 
Let $\x{M}$ be a coherent locally free $\x{O}_X$-module. Since $f$ is flat, any coherent locally 
free sheaf on $X$ is also flat over $Y$. By base change theory if $H^q(X_y,\x{M}(-l)_y)=0$ for some $y\in Y$, then 
$R^{q}f_*\x{M}(-l)=0$ in a neighborhood of $y$. Since $Y$ is Noetherian, it is then 
enough to assume that $Y=\Spec k$ for some field $k$. By duality on $\PP^n_Y$, 
$H^q(X,\x{M}(-l))$ is dual to $\Ext^{n-q}_{\x{O}_{\PP^n_Y}}(\x{M}(-l),\omega_\pi)$.
By Theorem \ref{t-ext-local-global-degen}, the vanishing of the latter group follows from the vanishing of the 
sheaf $\xExt^{n-q}_{\x{O}_{\PP^n_Y}}(\x{M},\omega_\pi)=0$ for any large $l$. 
Let $U=\Spec B$ be an open affine subscheme of $\PP^n_Y$ and assume that $X$ is 
defined on $U$ by an ideal $I$ of $B$. We may in addition assume that $\omega_\pi$ is 
given by the module $B$ on $U$. By Remark-Theorem \ref{r-ext-coherent}, we then need to prove that 
$\Ext^{n-q}_B(B/I,B)=0$. By localising we may assume that $B$ is local with maximal ideal 
$\mathfrak{m}$. By the local duality Theorem \ref{t-lc-duality}, 
$$
\Ext^{n-q}_B(B/I,B)=\Hom_B(H^q_{\mathfrak m}(B/I),H^n_{\mathfrak m}(B))
$$
Since $B/I$ is Cohen-Macaulay, $\depth_{\mathfrak{m}/I} B/I=\dim B/I=r$ 
which in particular implies that $\depth_{\mathfrak m} B/I\ge r$. Now 
use Theorem \ref{t-lc-depth} to get the vanishing $H^q_{\mathfrak{m}}(B/I)=0$.\\

\emph{Step 4.} (ii) $\implies$ (iii): 
We prove the second statement first. 
By Step 2,
 $$
 \xExt^{n-q}_{\x{O}_{\PP^n_Y}}(\x{O}_X,\omega_\pi\otimes\x{G})=0
 $$ 
for every $q<r$ and every coherent $\x{G}$. 
Thus in  the spectral sequence 
$$
E_2^{p,q}=\xExt^p_{f}(\x{F},\xExt^q_{\x{O}_{\PP^n_Y}}(\x{O}_X, \omega_\pi\otimes\x{G})) \implies \xExt^{p+q}_{\pi}(\x{F}, \omega_\pi\otimes\x{G})
$$
$E_2^{p,q}=0$ unless $q=n-r$ in view of the vanishings obtained above (including the proof of Theorem \ref{t-dualising-pair}), and so we get the isomorphisms  
$$
\mu^p\colon \xExt^p_{f}(\x{F},f^!\x{G}) \to \xExt^{n-r+p}_{\pi}(\x{F}, \omega_\pi\otimes\x{G})
$$
where as usual $f^!\x{G}=\xExt^{n-r}_{\x{O}_{\PP^n_Y}}(\x{O}_X, \omega_\pi\otimes\x{G})$. 

Fix $m$ and a coherent $\x{F}$ flat over $Y$. Assume that $R^{r-p}f_*\x{F}$  commutes with base change for every $p\le m$.
Now apply duality on $\PP^n_Y$ to deduce that  
$$
\theta^p\colon \xExt^p_f(\x{F},f^{!}\x{G}) \to \xHom_{\x{O}_Y}(R^{r-p}f_*\x{F},\x G)
$$ 
is an isomorphism for every $p\le m$ and every coherent $\x{G}$. Conversely, 
assume that $\theta^p$ is an isomorphism for every $p\le m$ and every coherent $\x{G}$. 
Therefore, the morphisms 
$$
\xExt^{n-r+p}_{\pi}(\x{F}, \omega_\pi\otimes\x{G}) \to \xHom_{\x{O}_Y}(R^{r-p}\pi_*\x{F},\x G)
$$
are isomorphisms for every $p\le m$ and every coherent $\x{G}$. Moreover, by the 
proof of Theorem \ref{t-dualising-pair} the maps are also isomorphisms for any 
$p<0$ since both sides simply vanish. Now apply the duality theorem on $\PP^n_Y$ 
to deduce that $R^{r-p}f_*\x{F}$  commutes with base change for every $p\le m$.

Proof of the first statement: the sheaf version of the Yoneda pairing gives a morphism  
$$
\xExt^{n-r}_{\x{O}_{\PP^n_Y}}(\x{O}_{X},\x{O}_{\PP^n_Y})\otimes 
\xHom_{\x{O}_{\PP^n_Y}}(\x{O}_{\PP^n_Y},\x{G})\to 
\xExt^{n-r}_{\x{O}_{\PP^n_Y}}(\x{O}_{X},\x{G})
$$
and by tensoring $\omega_\pi$ we get a morphism 
$$
\phi\colon \omega_f\otimes \x{G}\to 
\xExt^{n-r}_{\x{O}_{\PP^n_Y}}(\x{O}_{X},\omega_\pi\otimes \x{G})
$$
which we prove to be an isomorphism where $\omega_f:=\xExt^{n-r}_{\x{O}_{\PP^n_Y}}(\x{O}_{X},\omega_\pi)$. Since the problem is local we may assume that $Y$ is affine. 
Since $\xExt^{n-q}_{\x{O}_{\PP^n_Y}}(\x{O}_X,\omega_\pi\otimes\x{G})=0$ unless 
$q=r$, the functor $f^!$ is exact. Now let $\x{H}'\to \x{H} \to \x{G}\to 0$ 
be an exact sequence where $\x{H}'$ and $\x{H}$ are free sheaves of finite rank.
Thus,  $f^!\x{H}'\to f^!\x{H} \to f^!\x{G}\to 0$ is also exact. By 
Remark \ref{r-dualising-sheaf}, we have $f^!\x{H}'\simeq \omega_f\otimes \x{H}'$ 
and $f^!\x{H}\simeq \omega_f\otimes \x{H}$. Now the exact sequence 
$\omega_f\otimes \x{H}'\to \omega_f\otimes \x{H} \to \omega_f\otimes \x{G}\to 0$  
implies the isomorphism $f^!\x{G}\simeq \omega_f\otimes \x{G}$. Flatness of 
$\omega_f$ follows from exactness of $f^!$.\\

\emph{Step 5.} (iii) $\implies$ (i): We show that the sheaves $R^{q}f_*\x{O}_X(-l)$ commute 
with base change for every $q$ and $l\gg 0$. Since this is a local problem we may take $Y$ to be affine, 
say $\Spec A$. 
By assumptions the functor $f^!$ is exact. 
An argument similar to Step 2 show that if $l$ is sufficiently large, then 
$H^q(X,f^!\x{G}\otimes \x{O}(l))=0$ for any coherent $\x{G}$ and 
any $q>0$ (the bigness of $l$ 
does not depend on $\x{G}$ - see the exercises). Thus, $H^0(X,\x{O}_X(l)\otimes f^!-)$ is an exact functor on $\mathfrak{C}(Y)$. 
On the other hand, for any coherent $\x{G}$ we have isomorphisms 
$$
H^0(X,\x{O}_X(l)\otimes f^!\x{G})\simeq \Hom_{\x{O}_X}(\x{O}_X(-l),f^{!}\x{G}) \simeq \Hom_{\x{O}_Y}(R^{r}f_*\x{O}_X(-l),\x G)
$$
which implies that the functor $\Hom_{\x{O}_Y}(R^{r}f_*\x{O}_X(-l),-)$ is exact 
on $\mathfrak{C}(Y)$. This also implies that the same functor is exact on 
$\mathfrak{Q}(Y)$ too hence $H^r(X,\x{O}_X(-l))$ is a projective $A$-module. 
Therefore, the sheaf $R^{r}f_*\x{O}_X(-l)$ is locally free and 
it commutes with base change since $R^{q}f_*\x{O}_X(-l)=0$ if $q>r$. 
Base change theory implies that $R^{r-1}f_*\x{O}_X(-l)$ also 
commutes with base change and using the isomorphism 
$$
0=\Ext^1_{\x{O}_X}(\x{O}_X(-l),f^{!}\x{G}) \simeq \Hom_{\x{O}_Y}(R^{r-1}f_*\x{O}_X(-l),\x G)
$$
we get the vanishing $R^{r-1}f_*\x{O}_X(-l)=0$. By continuing this process 
one proves that $R^{q}f_*\x{O}_X(-l)=0$ and that it commutes with base change for every $q<r$ and $l\gg 0$.
In particular, if $y\in Y$, then $H^q(X_y,\x{O}_{X_y}(-l))=0$ for every $q<r$ and every $l\gg 0$.
To prove that $X_y$ is Cohen-Macaulay, we may then simply assume that $X=X_y$, that is, 
$Y=\Spec k$ for some field $k$. By Step 2,  $\xExt^{n-q}_{\x{O}_{\PP^n_Y}}(\x{O}_X,\omega_\pi)=0$ for every $q<r$. Locally, as in Step 2, this can be translated into the vanishing 
$$
0=\Ext^{n-q}_B(B/I,B)=\Hom_B(H^q_{\mathfrak m}(B/I),H^n_{\mathfrak m}(B))
$$
hence $H^q_{\mathfrak{m}}(B/I)=0$ for every $q<r$. 
Thus, $\depth_{\mathfrak m} B/I\ge r$ and so $\depth_{\mathfrak{m}/I} B/I=\dim B/I=r$ 
which implies that $B/I$ is Cohen-Macaulay. Therefore, $X$ is Cohen-Macaulay.\\
\end{proof}

\begin{rem}
In the relative duality theorem, if $Y=\Spec k$ where $k$ is a field, then the 
conditions on flatness and base change are automatically satisfied. This situation 
is where the duality theorem is most frequently used.\\
\end{rem}

\begin{rem}
Let $X$ be a normal projective variety of dimension $r$ over a field $k$ with the structure morphism 
$f\colon X\to \Spec k$, and let $X_s\subseteq X$ be the set of smooth points of 
$X$ and $j\colon X_s\to X$ the inclusion morphism. By Remark \ref{r-dualising-sheaf}, a dualising sheaf 
$\omega_f$ exists for $f$. However, in this case one can calculate $\omega_f$ in  
a more direct way. It is well-known that 
$\omega_f\simeq j_*\omega_{X_s}$ where $\omega_{X_s}$ is the canonical sheaf of $X_s$
defined as the exterior power $\wedge^r \Omega_{X_s}$ of the sheaf of differential forms 
(cf. Koll\'ar-Mori [\ref{Kollar-Mori}, Theorem 5.75]).\\
\end{rem}

%%%%
\section{Applications.} We present some of the consequences of the 
ideas we have developed in relation with duality theory.

\begin{exa}[Riemann-Roch]
In fact, the duality theorem is 
an indispensable tool in the geometry of varieties. For example, it immediately implies the 
Riemann-Roch theorem for curves (cf. Hartshorne [\ref{Hartshorne}, IV, Theorem 1.3]). 
On a smooth projective curve $X$ over an algebraically closed field $k$,  
if $D$ is a divisor, then using the duality theorem one can write 
the Euler characteristic $\mathcal{X}(\x{O}_X(D))$ as $\dim_kH^0(X,\x{O}_X(D))-\dim_kH^0(X,\omega_X\otimes \x{O}_X(-D))$, and using induction in a sense on the degree of $D$ one can get the Riemann-Roch.  
With some more efforts the Riemann-Roch theorem for surfaces 
is also derived from the duality theorem (cf. Hartshorne [\ref{Hartshorne}, V, Theorem 1.6]).\\
\end{exa}

\begin{exa}[Kodaira vanishing]\label{exa-Kodaira-vanishing}
The Kodaira vanishing theorem states that on a smooth projective variety $X$ of 
dimension $r$ over 
the complex numbers, $H^p(X,\omega_X\otimes \x{L})=0$ if $p>0$ and $\x{L}$ is 
an ample invertible sheaf. The theorem is a fundamental tool in birational geometry 
over the complex numbers. In fact, it is so essential that it has prevented birational 
geometry over fields of positive characteristic to move any further because 
Kodaira vanishing fails in this case.
By duality, the above vanishing is equivalent to the vanishing 
$H^p(X,\x{L}^{\vee})=0$ for $p<r$. Using Hodge theory one proves that 
for some $m\gg 0$ the map 
$$
H^p(X,\x{L}^{-m})\to H^p(X,\x{L}^{\vee})
$$
is surjective for any $p$  where $\x{L}^{-m}$ denotes $m$ times the tensor product of $\x{L}^{\vee}$ with itself. By another application of duality, 
$$
\dim_k H^p(X,\x{L}^{-m})=\dim_k H^{r-p}(X,\omega_X\otimes \x{L}^{m})
$$ 
The latter dimension is zero if $p<r$ by the 
Serre vanishing theorem (cf. Koll\'ar-Mori [\ref{Kollar-Mori}, section 2.4]).\\
\end{exa}

Once one goes beyond curves, the so-called adjunction formula plays a very important role, in particular in birational geometry, to reduce problems to lower-dimensional varieties. The formula follows from the machinery of duality.\\ 

\begin{thm}[Adjunction]\label{t-adjunction}
Let $f \colon X\to Y$ be a flat projective morphism of Noetherian schemes with Cohen-Macaulay fibres of pure dimension $r$. Assume that $D$ is an effective Cartier divisor on $X$ and 
$g\colon D\to Y$ is the induced morphism such that the fibres of $g$ are of pure dimension $r-1$. Then, $\omega_g\simeq \omega_f\otimes \x{O}_X(D)\otimes \x{O}_D$.  
\end{thm}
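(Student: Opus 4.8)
The plan is to realise both $\omega_f$ and $\omega_g$ as $\xExt$-sheaves of $\omega_\pi$ on one common projective space $\PP^n_Y$, and then read the adjunction sequence off the long exact $\xExt$-sequence attached to $0\to\x{O}_X(-D)\to\x{O}_X\to\x{O}_D\to 0$. First I would check that $g\colon D\to Y$ is again a flat projective morphism whose fibres are Cohen--Macaulay of pure dimension $r-1$. Projectivity is clear, since $D$ is closed in $X$. If $t$ is a local equation of $D$ and $y\in Y$, then $t|_{X_y}$ is a nonzero-divisor on $\x{O}_{X_y}$: a Cohen--Macaulay scheme is equidimensional with no embedded components, so otherwise $t|_{X_y}$ would lie in a minimal prime and $D_y=V(t)\cap X_y$ would contain an $r$-dimensional component, contradicting the purity hypothesis. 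Hence $D_y\hookrightarrow X_y$ is an effective Cartier divisor and $\x{O}_{D_y}=\x{O}_{X_y}/(t|_{X_y})$ is Cohen--Macaulay; tensoring $0\to\x{O}_X(-D)\to\x{O}_X\to\x{O}_D\to 0$ with $k(y)$ and using $Y$-flatness of $\x{O}_X$ together with injectivity of $\x{O}_{X_y}(-D_y)\to\x{O}_{X_y}$ gives $\Tor_1^{\x{O}_Y}(\x{O}_D,k(y))=0$ for all $y$, so $\x{O}_D$ is flat over $Y$ by the local criterion of flatness (Chapter 3). Fixing a closed immersion $e\colon X\to\PP^n_Y$ with $f=\pi e$ and composing with $D\hookrightarrow X$ we obtain a closed immersion $D\to\PP^n_Y$, so by Theorem \ref{t-dualising-pair} and Remark \ref{r-dualising-sheaf} the dualising sheaves exist and
$$
\omega_f=\xExt^{n-r}_{\x{O}_{\PP^n_Y}}(\x{O}_X,\omega_\pi),\qquad
\omega_g=\xExt^{n-r+1}_{\x{O}_{\PP^n_Y}}(\x{O}_D,\omega_\pi).
$$

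Next I would record the needed vanishing. Because $f$ has Cohen--Macaulay fibres of pure dimension $r$, the vanishing proved inside Theorem \ref{t-dualising-pair} (for degrees $<n-r$, from the bound on fibre dimensions) together with that of Step 2 in the proof of Theorem \ref{t-duality} (for degrees $>n-r$, from the Cohen--Macaulay property) yields $\xExt^p_{\x{O}_{\PP^n_Y}}(\x{O}_X,\omega_\pi)=0$ for all $p\neq n-r$; applying the same reasoning to $g$ (whose fibres are Cohen--Macaulay of pure dimension $r-1$) yields $\xExt^p_{\x{O}_{\PP^n_Y}}(\x{O}_D,\omega_\pi)=0$ for all $p\neq n-r+1$. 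Finally, since $\x{O}_X(-D)$ is an invertible $\x{O}_X$-module, hence locally on $\PP^n_Y$ isomorphic to $\x{O}_X$, an argument as in the construction of section \ref{duality-spectral-sequence} (or as in Theorem \ref{t-ext-locally-free}) gives a functorial isomorphism
$$
\xExt^p_{\x{O}_{\PP^n_Y}}(\x{O}_X(-D),\omega_\pi)\simeq \xExt^p_{\x{O}_{\PP^n_Y}}(\x{O}_X,\omega_\pi)\otimes_{\x{O}_X}\x{O}_X(D),
$$
which therefore vanishes for $p\neq n-r$ and equals $\omega_f\otimes\x{O}_X(D)$ for $p=n-r$.

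Now I would apply $\xExt^\bullet_{\x{O}_{\PP^n_Y}}(-,\omega_\pi)$ to $0\to\x{O}_X(-D)\to\x{O}_X\to\x{O}_D\to 0$ (Theorem \ref{t-ext-long-sequence}). All terms outside degrees $n-r-1,n-r,n-r+1$ vanish by the above, while $\xExt^{n-r}_{\x{O}_{\PP^n_Y}}(\x{O}_D,\omega_\pi)=0$ and $\xExt^{n-r+1}_{\x{O}_{\PP^n_Y}}(\x{O}_X,\omega_\pi)=0$, so the surviving part of the sequence is
$$
0\to\omega_f\to\omega_f\otimes\x{O}_X(D)\to\omega_g\to 0.
$$
By functoriality of the isomorphism of the previous paragraph, applied to the inclusion $\x{O}_X(-D)\hookrightarrow\x{O}_X$ — which after twisting by $\x{O}_X(D)$ is the canonical inclusion $\x{O}_X\hookrightarrow\x{O}_X(D)$ — the first arrow is $\mathrm{id}_{\omega_f}\otimes(\x{O}_X\hookrightarrow\x{O}_X(D))$. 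Tensoring the exact sequence $0\to\x{O}_X\to\x{O}_X(D)\to\x{O}_X(D)\otimes\x{O}_D\to 0$ with $\omega_f$ then identifies the cokernel of that arrow with $\omega_f\otimes\x{O}_X(D)\otimes\x{O}_D$, and comparison with the displayed sequence gives $\omega_g\simeq\omega_f\otimes\x{O}_X(D)\otimes\x{O}_D$.

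The step I expect to be the main obstacle is the identification of the first arrow in the collapsed sequence: one has to chase the functorial projection-formula isomorphism $\xExt^\bullet(\x{O}_X(-D),\omega_\pi)\simeq\xExt^\bullet(\x{O}_X,\omega_\pi)\otimes\x{O}_X(D)$ through the long exact sequence to be sure the map $\omega_f\to\omega_f\otimes\x{O}_X(D)$ is the canonical one rather than some twist of it. Alternatively one may note that $\omega_g$ is an $\x{O}_D$-module, so the surjection $\omega_f\otimes\x{O}_X(D)\twoheadrightarrow\omega_g$ factors through $\omega_f\otimes\x{O}_X(D)\otimes\x{O}_D$, and then argue that the two resulting submodules of $\omega_f\otimes\x{O}_X(D)$ coincide — but this again rests on the same functoriality. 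The preliminary verification that $g$ has Cohen--Macaulay fibres of pure dimension $r-1$ and that $D$ is flat over $Y$ is routine but genuinely needed, since it is what makes $\omega_g$ exist with the $\xExt$-description used above.
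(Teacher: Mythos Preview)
Your approach is essentially the same as the paper's: embed in $\PP^n_Y$, use the $\xExt$-description of $\omega_f$ and $\omega_g$, record the Cohen--Macaulay vanishing, and read off the short exact sequence from the long exact $\xExt$-sequence of $0\to\x{O}_X(-D)\to\x{O}_X\to\x{O}_D\to 0$. The one place you diverge is exactly the step you flag as the main obstacle. Rather than globally identifying the first arrow $\omega_f\to\omega_f\otimes\x{O}_X(D)$ as $\mathrm{id}\otimes(\x{O}_X\hookrightarrow\x{O}_X(D))$, the paper tensors the short exact sequence with $\x{O}_D$ \emph{first} and then checks locally that the resulting map $\xExt^{n-r}(\x{O}_X,\omega_\pi)\otimes\x{O}_D\to\xExt^{n-r}(\x{O}_X(-D),\omega_\pi)\otimes\x{O}_D$ is zero: on $\Spec B$ with $D$ cut out by $t$, the map $\Ext^{n-r}_B(B/I,B)\to\Ext^{n-r}_B(B/I,B)$ induced by $\x{O}_X(-D)\hookrightarrow\x{O}_X$ is multiplication by $t$, which vanishes after $\otimes B/J$. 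This sidesteps the functoriality chase you were worried about and directly gives $\omega_g\simeq\xExt^{n-r}(\x{O}_X(-D),\omega_\pi)\otimes\x{O}_D\simeq\omega_f\otimes\x{O}_X(D)\otimes\x{O}_D$, the last isomorphism coming from the Yoneda pairing as you indicated.
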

\begin{proof}
We have an exact sequence 
$$
(*) \hspace{1cm} 0\to \x{O}_X(-D)\to \x{O}_X \to \x{O}_D\to 0
$$ 
since $\x{O}_X(-D)$ 
is isomorphic to the ideal sheaf $\x{I}_D$. Since the fibres of $f$ are Cohen-Macaulay, the fibres of $g$ are also Cohen-Macaulay because locally $D$ is defined by an element which is not a zero divisor nor invertible. Fix a closed immersion $e\colon X\to \PP^n_Y$ so that $f=\pi e$ where $\pi\colon \PP^n_Y\to Y$ 
is the projection. From the proof of Theorem \ref{t-dualising-pair} and Theorem \ref{t-duality}, 
we have  $\xExt^{n-q}_{\x{O}_{\PP^n_Y}}(\x{O}_X,\omega_\pi)=0=\xExt^{n-r}_{\x{O}_{\PP^n_Y}}(\x{O}_X(-D),\omega_\pi)$ for every $q\neq r$ and 
$\xExt^{n-q}_{\x{O}_{\PP^n_Y}}(\x{O}_D,\omega_\pi)=0$ for every $q\neq r-1$.  

Thus, we get the short exact sequence 
$$
0 \to \xExt^{n-r}_{\x{O}_{\PP^n_Y}}(\x{O}_X,\omega_\pi) \to 
\xExt^{n-r}_{\x{O}_{\PP^n_Y}}(\x{O}_X(-D),\omega_\pi) \to 
\xExt^{n-r+1}_{\x{O}_{\PP^n_Y}}(\x{O}_D,\omega_\pi) \to 0
$$
from which we get an exact sequence 
$$
 \xExt^{n-r}_{\x{O}_{\PP^n_Y}}(\x{O}_X,\omega_\pi)\otimes \x{O}_D \to 
\xExt^{n-r}_{\x{O}_{\PP^n_Y}}(\x{O}_X(-D),\omega_\pi)\otimes \x{O}_D \to 
\xExt^{n-r+1}_{\x{O}_{\PP^n_Y}}(\x{O}_D,\omega_\pi)\otimes \x{O}_D \to 0
$$
We will show that the first morphism in the sequence is zero. In fact, on any sufficiently small open subscheme $\Spec B\subset \PP^n_Y$, if $X$ is defined by an ideal $I$ in $B$, then 
$D$ is defined by the ideal $J=I+\langle t \rangle$ where we assume that $D$ is defined by the 
element $t$ on $X\cap \Spec B$. In this setting, the exact sequence $(*)$ corresponds tothe exact sequence  $0 \to B/I \to B/I \to B/J \to 0$ where the first map is just multiplication 
by $t$. We then get the exact sequence 
$$
\Ext^{n-r}_{B}(B/I,B)\otimes B/J \to 
\Ext^{n-r}_{B}(B/I,B/I)\otimes B/J \to 
\Ext^{n-r+1}_{B}(B/J,B)\otimes B/J \to 0
$$
which corresponds to the above exact sequence for sheaves. By construction, the first map 
takes $m\otimes b$ to $tm\otimes b=m\otimes tb=m\otimes 0=0$ showing that the first map is indeed 
the zero map. This local analysis implies that the morphism 
$$
\xExt^{n-r}_{\x{O}_{\PP^n_Y}}(\x{O}_X(-D),\omega_\pi)\otimes \x{O}_D \to 
\xExt^{n-r+1}_{\x{O}_{\PP^n_Y}}(\x{O}_D,\omega_\pi)\otimes \x{O}_D=\omega_g
$$
is an isomorphism. 
 
On the other hand, the Yoneda pairing 
$$
\xHom_{\x{O}_{\PP^n_Y}}(\x{O}_{X}(-D),\x{O}_X)\otimes 
\xExt^{n-r}_{\x{O}_{\PP^n_Y}}(\x{O}_{X},\omega_\pi)
\to \xExt^{n-r}_{\x{O}_{\PP^n_Y}}(\x{O}_X(-D),\omega_\pi)
$$ 
gives the natural morphism 
$$
\x{O}_{X}(D)\otimes \omega_f
\to \xExt^{n-r}_{\x{O}_{\PP^n_Y}}(\x{O}_X(-D),\omega_\pi)
$$
which is an isomorphism as can be checked locally. Putting all the above together 
yields the result.\\
\end{proof}

\section{Cohen-Macaulay schemes}

A Noetherian ring $A$ is called Cohen-Macaulay if the following equivalent conditions 
hold:

$\bullet$ for every maximal ideal $P$ of $A$, $\depth_{\mathfrak{m}}A_P=\dim A_P$ 
where $\mathfrak{m}$ is the maximal ideal of $A_P$,

$\bullet$ for every prime ideal $P$ of $A$, $\depth_{\mathfrak{m}}A_P=\dim A_P$ 
where $\mathfrak{m}$ is the maximal ideal of $A_P$,

$\bullet$ for every proper ideal $I$ of $A$, $\hgt I=\depth_IA$ where $\hgt I$ denotes 
the height of $I$.\\

For an ideal $I$ of a ring $A$, some authors use the term codimension of $I$ instead of 
height of $I$. However, here will use the term codimension of $I$ to mean $\dim A-\dim A/I$.

\begin{lem}\label{l-cm-properties}
Let $A$ be a local Cohen-Macaulay ring. Then, 

(i) if $a\in A$ is not a zero-divisor and not an invertible element, then $A/\langle a \rangle$ is 
Cohen-Macaulay of dimension $\dim A-1$,

(ii) any two maximal sequences of prime ideals of $A$ have the same 
length,

(iii) every associated prime of $A$ is a minimal prime ideal.
\end{lem}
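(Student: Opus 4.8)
The plan is to prove the three assertions in turn, writing $\mathfrak m$ for the maximal ideal and $d=\dim A$, and using freely that $A$ is local with $\depth_{\mathfrak m}A=d$, that every localisation $A_P$ is again local Cohen-Macaulay, that a localisation of a non-zero-divisor is a non-zero-divisor, the description of depth via maximal regular sequences together with Theorem \ref{t-lc-depth}, the standard fact that the zero-divisors of a finitely generated module form the union of its associated primes, and basic dimension theory (Krull's principal ideal theorem and the Krull intersection theorem). For \emph{(i)}: one first checks $\dim A/\langle a\rangle=d-1$. The inequality $\ge d-1$ holds for any non-unit $a$ by lifting a system of parameters of $A/\langle a\rangle$ and adjoining $a$; the inequality $\le d-1$ holds because a non-zero-divisor lies in no minimal prime, so $V(\langle a\rangle)$ contains no minimal prime and every chain of primes in $A/\langle a\rangle$ can be extended below. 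For depth, $a$ alone is an $A$-regular sequence, which we extend to a maximal $A$-regular sequence $a=a_1,\dots,a_d$ in $\mathfrak m$; then $\bar a_2,\dots,\bar a_d$ is $A/\langle a\rangle$-regular, so $\depth_{\mathfrak m/\langle a\rangle}(A/\langle a\rangle)\ge d-1$, while conversely any $A/\langle a\rangle$-regular sequence in $\mathfrak m/\langle a\rangle$ lifts, after prepending $a$, to an $A$-regular sequence, so has length $\le d-1$. Hence $\depth_{\mathfrak m/\langle a\rangle}(A/\langle a\rangle)=d-1=\dim A/\langle a\rangle$.

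For \emph{(iii)}: given $P\in\Ass(A)$, pass to $A_P$, which is local Cohen-Macaulay with $PA_P\in\Ass(A_P)$; this reduces us to showing that $\mathfrak m\in\Ass(A)$ forces $\dim A=0$. But if $\dim A\ge1$ then $\depth_{\mathfrak m}A=\dim A\ge1$, so by the argument in the proof of Theorem \ref{t-lc-depth} the ideal $\mathfrak m$ is not contained in the union of the associated primes and hence contains a non-zero-divisor, contradicting $\mathfrak m\in\Ass(A)$. Thus $\dim A=0$, $\mathfrak m$ is the only prime, and $P$ is minimal.

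For \emph{(ii)}: induct on $d$, the case $d=0$ being trivial. Let $\mathfrak p_0\subsetneq\cdots\subsetneq\mathfrak p_n=\mathfrak m$ be a maximal chain; then $\mathfrak p_0$ is minimal, the chain is saturated, and $n\ge1$. By (iii) the zero-divisors of $A$ are the union of the minimal primes, and since $\mathfrak p_1\supsetneq\mathfrak p_0$, prime avoidance yields a non-zero-divisor $a\in\mathfrak p_1$. By (i) applied to $A/\langle a\rangle$ and to $A_{\mathfrak p_1}/\langle a\rangle$, these are Cohen-Macaulay of dimensions $d-1$ and $\hgt\mathfrak p_1-1$, and the height of $\mathfrak p_1/\langle a\rangle$ in $A/\langle a\rangle$ is $\hgt\mathfrak p_1-1$. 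The inherited chain $\mathfrak p_1/\langle a\rangle\subsetneq\cdots\subsetneq\mathfrak m/\langle a\rangle$ is saturated of length $n-1$; extending it downward to a maximal chain of $A/\langle a\rangle$ and invoking the inductive hypothesis (all maximal chains of $A/\langle a\rangle$ have length $d-1$) shows that $\mathfrak p_1/\langle a\rangle$ has height $d-n$ in $A/\langle a\rangle$, hence $\hgt\mathfrak p_1=d-n+1$. If $n\ge2$ then $\hgt\mathfrak p_1<d$, so applying the inductive hypothesis to $A_{\mathfrak p_1}$, in which $\mathfrak p_0A_{\mathfrak p_1}\subsetneq\mathfrak p_1A_{\mathfrak p_1}$ is a maximal chain of length $1$, gives $\hgt\mathfrak p_1=\dim A_{\mathfrak p_1}=1$ and therefore $n=d$.

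\emph{The main obstacle} is the remaining case $n=1$: a minimal prime $\mathfrak p_0$ with no prime strictly between $\mathfrak p_0$ and $\mathfrak m$ (equivalently, the equidimensionality of $A$), for which the dimension-reduction above fails since $A_{\mathfrak p_1}=A$. To settle it, write $\mathfrak p_0=\mathrm{Ann}(x)$ with $x\ne0$ (using (iii), so $\mathfrak p_0\in\Ass(A)$), suppose $d\ge2$, and pick a regular sequence $a_1,a_2$ in $\mathfrak m$ (possible since $\depth_{\mathfrak m}A=d\ge2$, using (i) for the existence of $a_2$). Since $\dim A/\mathfrak p_0=1$ and $a_1\notin\mathfrak p_0$, the ideal $\mathfrak m$ is minimal over $\mathfrak p_0+\langle a_1\rangle$, so $a_2^k\in\mathfrak p_0+\langle a_1\rangle$ for some $k$; writing $a_2^k=p+a_1r$ with $p\in\mathfrak p_0$ and multiplying by $x$ gives $a_2^kx=a_1rx$, so $\bar x=0$ in $A/\langle a_1\rangle$ because $\bar a_2$ is a non-zero-divisor there; hence $x=a_1y$, and one checks $\mathrm{Ann}(y)=\mathfrak p_0$ as well, so the argument iterates to give $x\in\bigcap_m\langle a_1^m\rangle=0$ by the Krull intersection theorem — a contradiction. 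Therefore $d=1$, which completes the induction and the proof.
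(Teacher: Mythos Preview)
Your proof is correct. The arguments for (i) and (iii) are essentially the paper's, with more detail supplied. For (ii), however, your organisation differs: you induct on $d=\dim A$ and split into the cases $n\ge 2$ (handled by a two-step reduction, first to $A/\langle a\rangle$ and then to $A_{\mathfrak p_1}$) and $n=1$ (the equidimensionality obstacle, handled via the Krull intersection theorem). The paper instead inducts on the chain length $l$ and proves directly that $\hgt P_1=1$ by localising at $P_1$ and arguing that every element of $\mathfrak m$ becomes a zero-divisor modulo a suitable $b$; once this is known, the induction is a single passage to $A/\langle a_1\rangle$. Your $n=1$ case is in effect a fleshed-out version of the paper's $\hgt P_1=1$ step---the paper's justification there is rather terse, and your explicit appeal to Krull intersection makes the argument watertight. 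The trade-off is that your double reduction in the $n\ge2$ case is more elaborate than necessary, whereas the paper's route is shorter once the key height-one fact has been isolated. One small remark: you invoke (iii) for ``$\mathfrak p_0\in\Ass(A)$'', but (iii) gives the opposite implication; what you actually use is the standard fact that minimal primes of a Noetherian ring are always associated.
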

\begin{proof} Assume that $d=\dim A$. (i) If $a$ is invertible, then the statement is trivial, so we assume that 
$a$ is not invertible. Since $a$ is not a zero-divisor and since $A$ is a local ring,
$\dim  A/\langle a \rangle=d-1$. On the other hand, there is a maximal 
$A$-regular sequence $a_1=a, a_2,\cdots, a_d$ of elements of the maximal ideal $\mathfrak{m}$ 
of $A$. So, the elements 
$a_2,\cdots, a_d$ give a regular sequence of $A/\langle a \rangle$ implying 
$\depth_{\mathfrak{m}/\langle a \rangle}A/\langle a \rangle\ge d-1$. By some general 
results on depth,
we also have $\depth_{\mathfrak{m}/\langle a \rangle}A/\langle a \rangle\le d-1$ 
hence we have $\depth_{\mathfrak{m}/\langle a \rangle}A/\langle a \rangle= d-1$.

(ii)  We will 
prove that if $P_0\subset P_1\subset \cdots \subset P_l=\mathfrak{m}$ is any maximal sequence of 
prime ideals in $A$, then $l=d$. 
This obviously implies the claim of (ii). Here, the sequence being maximal means that there is 
no prime ideal between $P_i$ and $P_{i+1}$, and that $P_0$ is a minimal prime ideal.

We use induction on $l$. If $l=0$, there is nothing to prove so we may assume that $l>0$. 
Since $A$ is Cohen-Macaulay, $\depth_{P_1}A=\hgt P_1>0$. For the moment assume that 
$\depth_{P_1}A=1$. Then, 
there is some $a_1\in P_1$ which is not a zero-divisor and $ P_1/\langle a_1\rangle\subset \cdots \subset P_l/\langle a_1\rangle$ is a maximal 
sequence of prime ideals of $A/\langle a_1\rangle$. Since the latter ring is Cohen-Macaulay, 
by induction $l-1=\dim A/\langle a_1\rangle=d-1$ which implies the 
result.

To show that $\depth_{P_1}A=1$, by localising at $P_1$, we may assume that 
$P_1=\mathfrak{m}$. Pick an element $b\in P_1$ which is not in $P_0$. 
Since there is no other prime divisor between $P_0$ and $P_1$, $P_1$ is the only prime
ideal over $P_0+\langle b \rangle$, so $P_1^n\subseteq P_0+\langle b \rangle$ 
for some $n>0$. In particular, this means that any element $b'\in P_1$ is 
a zero-divisor of $A/\langle b \rangle$ because $P_0$ is a minimal prime ideal 
and all its elements are zero-divisors. This implies that $\depth_{P_1}A=1$. 

(iii) Let $P$ be an associated prime ideal of $A$. By definition, every element 
of $P$ is a zero-divisor hence $\depth_P A=0$. Thus, $\hgt P=0$ which means that 
$P$ is a minimal prime ideal.\\
\end{proof}

\begin{defn}\label{d-cm-schemes}
A Noetherian scheme $X$ is said to be Cohen-Macaulay if the local ring 
$\x{O}_x$ is Cohen-Macaulay for every $x\in X$.
\end{defn}

\begin{rem}
(1) If $X=\Spec A$ with $A$ Noetherian then obviously $X$ is Cohen-Macaulay 
if and only if $A$ is Cohen-Macaulay.
(2) Any regular ring is Cohen-Macaulay hence regular schemes are Cohen-Macaulay too.
(3) It is well-known that a local ring $A$ is Cohen-Macaulay if and only if 
its completion $\widehat{A}$ is Cohen-Macaulay.
\end{rem}
 
In many respects, Cohen-Macaulay schemes behave like regular schemes. Their 
importance is already clear from the duality theorem, and  
their class is much bigger than that of regular schemes.
Many examples of 
Cohen-Macaulay schemes can be constructed by successive applications of the next theorem.\\

\begin{thm}
Let $X$ be a Cohen-Macaulay scheme and let $D$ be an effective Cartier 
divisor on $X$. Then, $D$ is also a Cohen-Macaulay scheme.
\end{thm}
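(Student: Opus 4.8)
The statement is local on $X$, so the plan is to reduce it to a statement about local rings and then quote Lemma \ref{l-cm-properties}(i). Fix a point $x\in D$ and set $A=\x{O}_{X,x}$, which is a local Cohen-Macaulay ring by hypothesis. Since $D$ is an effective Cartier divisor, its ideal sheaf $\x{I}_D$ is locally principal, generated by a non-zero-divisor; hence at $x$ we may write $\x{I}_{D,x}=\langle t\rangle$ for some $t\in A$ that is a non-zero-divisor in $A$. Because $x$ actually lies on $D$, the element $t$ belongs to the maximal ideal of $A$, so it is not invertible. Therefore $\x{O}_{D,x}\simeq A/\langle t\rangle$.

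Now I would simply invoke Lemma \ref{l-cm-properties}(i): its hypotheses, namely that $t$ is a non-zero-divisor and a non-unit of the local Cohen-Macaulay ring $A$, are exactly what we have just arranged. The conclusion is that $A/\langle t\rangle$ is Cohen-Macaulay (of dimension $\dim A-1$), that is, $\x{O}_{D,x}$ is a Cohen-Macaulay local ring. Since $x\in D$ was arbitrary, every local ring of $D$ is Cohen-Macaulay, which is precisely Definition \ref{d-cm-schemes} applied to $D$.

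Two routine points deserve a word of care. First, a point $x\in D$ corresponds to a prime ideal $\mathfrak{p}$ of (an affine chart of) $X$ containing the local equation $t$, and $\x{O}_{D,x}$ is the localization $(A/\langle t\rangle)_{\mathfrak p}\simeq A_{\mathfrak p}/\langle t\rangle$; here $A_{\mathfrak p}$ is again Cohen-Macaulay, since the definition of a Cohen-Macaulay ring adopted in the previous section quantifies over all primes, and $t/1$ remains a non-zero-divisor in $A_{\mathfrak p}$, so the appeal to Lemma \ref{l-cm-properties}(i) is legitimate at the level of $A_{\mathfrak p}$. Second, if the local generator $t$ happens to be a unit in $\x{O}_{X,x}$, then $x\notin D$ and there is nothing to check at $x$ (in particular the empty divisor is vacuously Cohen-Macaulay). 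There is essentially no real obstacle: once one records that an effective Cartier divisor is, locally, cut out by a single non-zero-divisor, the theorem is an immediate corollary of Lemma \ref{l-cm-properties}.
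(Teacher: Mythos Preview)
Your proof is correct and follows essentially the same route as the paper's own argument: both reduce to the local ring $\x{O}_{X,x}$, observe that the ideal of $D$ is generated by a single non-zero-divisor $t$ which is a non-unit when $x\in D$, and then invoke Lemma~\ref{l-cm-properties}(i) to conclude that $\x{O}_{D,x}=\x{O}_{X,x}/\langle t\rangle$ is Cohen--Macaulay. Your added remarks about localization and the case $x\notin D$ are just extra care and do not differ in substance from the paper's treatment.
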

\begin{proof}
The ideal sheaf $\x{I}_D$ of $D$ is simply the invertible sheaf $\x{O}_X(-D)$.
For any $x\in X$, there is an ideal $I_x$ of $\x{O}_x$ corresponding to 
$\x{I}_D$ which is generated by a single element $t_x$ which is not a 
zero-divisor. If $D=0$ near $x$, then $t_x$ is invertible and there is nothing to prove so 
we assume that $D\neq 0$ near $x$. Thus, $t_x$ is not invertible and so it can be regarded 
as an $\x{O}_x$-regular sequence of length one. In particular, $\x{O}_x/\langle t_x \rangle$ is also 
a Cohen-Macaulay ring which is nothing but the local ring of $D$ at $x$. 
Therefore, $D$ is Cohen-Macaulay.\\
\end{proof}

\begin{exa}[Local complete intersections]
Let $X$ be a regular Noetherian scheme and $Z$ a closed subscheme. We say that 
$Z$ is a local complete intersection at $x\in X$ if the ideal $I_x$ of $Z$ in 
$\x{O}_x$ can be generated by $n:=\dim \x{O}_x-\dim \x{O}_x/I_x$ elements 
$a_1,\dots, a_n$. We show that these elements form an $\x{O}_x$-regular 
sequence which by Lemma 
\ref{l-cm-properties} proves that $\x{O}_x/I_x$  is 
Cohen-Macaulay hence $Z$ is Cohen-Macaulay at $x$. If each $a_i$ is a zero-divisor, then $I_x$ is a 
subset of some associated prime of $\x{O}_x$ and again by Lemma \ref{l-cm-properties} 
such primes are minimal. Therefore, another application of the lemma 
shows that dimension of $\x{O}_x/I_x$ and $\x{O}_x$ are the same 
hence $I_x=0$. So, we may assume that some element, say $a_1$, is not 
a zero-divisor. Now apply induction to $\x{O}_x/\langle a_1 \rangle$.
\end{exa}

In contrast to the above observations, the next few theorems give necessary geometric 
conditions for being Cohen-Macaulay hence yielding many examples of schemes 
which are not Cohen-Macaulay.\\

\begin{thm}\label{t-cm-dimension}
Let $X$ be a Cohen-Macaulay scheme. Then, $X$ is equidimensional at any point 
$x\in X$, i.e., all irreducible components of $X$ passing through $x$ have the 
same dimension.
\end{thm}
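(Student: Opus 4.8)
The plan is to reduce the statement to a fact about the local ring and then invoke Lemma \ref{l-cm-properties}(ii). Since equidimensionality at $x$ is a local condition, first I would replace $X$ by $\Spec A$ with $A=\mathcal{O}_{X,x}$, a Noetherian local Cohen--Macaulay ring with maximal ideal $\mathfrak{m}$, and $x$ the closed point. Under this reduction the irreducible components of $X$ passing through $x$ correspond bijectively to the minimal primes $\mathfrak{p}$ of $A$, the component attached to $\mathfrak{p}$ being $V(\mathfrak{p})\cong \Spec A/\mathfrak{p}$, of dimension $\dim A/\mathfrak{p}$. So the theorem becomes the purely ring-theoretic assertion that $\dim A/\mathfrak{p}$ is independent of the minimal prime $\mathfrak{p}$; in fact I would show it always equals $d:=\dim A$.

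One inequality is automatic: $\dim A/\mathfrak{p}\le \dim A=d$. For the reverse, fix a minimal prime $\mathfrak{p}$ and choose a saturated chain of prime ideals $\mathfrak{p}=P_0\subsetneq P_1\subsetneq\cdots\subsetneq P_l=\mathfrak{m}$; such a chain exists because $A$ is Noetherian local and $\mathfrak{p}$ is minimal, so any chain through $\mathfrak{p}$ that is maximal among all chains of primes of $A$ necessarily starts at $\mathfrak{p}$ and ends at $\mathfrak{m}$. The key input is Lemma \ref{l-cm-properties}(ii): in a local Cohen--Macaulay ring every such maximal chain has length exactly $d$, so $l=d$. Passing to the quotient, the images $P_0/\mathfrak{p}\subsetneq\cdots\subsetneq P_l/\mathfrak{p}$ form a chain of primes in $A/\mathfrak{p}$ of length $l=d$, whence $\dim A/\mathfrak{p}\ge d$. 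Combining the two inequalities gives $\dim A/\mathfrak{p}=d$ for every minimal prime, which is the claim.

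There is no serious obstacle once Lemma \ref{l-cm-properties}(ii) is available --- that lemma is where the Cohen--Macaulay hypothesis is actually used, via the interplay of depth, regular sequences and height. The only points needing a little care are the bookkeeping in the reduction step (that the components of $X$ through $x$ and their dimensions are faithfully recorded by $\Spec\mathcal{O}_{X,x}$) and checking that the chain one picks genuinely begins at a minimal prime, so that Lemma \ref{l-cm-properties}(ii) applies. A cosmetic alternative to citing (ii) would be an induction on $d$: pick a nonzerodivisor $a_1\in\mathfrak{m}$, which exists and avoids every minimal prime since by Lemma \ref{l-cm-properties}(iii) the minimal primes of $A$ are exactly its associated primes; then $A/\langle a_1\rangle$ is Cohen--Macaulay of dimension $d-1$ by Lemma \ref{l-cm-properties}(i), while $a_1\notin\mathfrak{p}$ forces $\dim A/(\mathfrak{p}+\langle a_1\rangle)=\dim A/\mathfrak{p}-1$ by Krull's principal ideal theorem, and one concludes by induction. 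But this merely re-proves a special case of Lemma \ref{l-cm-properties}(ii), so invoking (ii) directly is the cleanest route.
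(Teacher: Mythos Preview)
Your proof is correct and follows essentially the same approach as the paper: reduce to the local ring $\x{O}_x$, identify irreducible components through $x$ with minimal primes, and deduce that $\dim \x{O}_x/P$ is independent of the minimal prime $P$ from Lemma \ref{l-cm-properties}(ii). The paper's proof is a one-liner citing that lemma, and you have simply spelled out the saturated-chain argument that makes the citation work.
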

\begin{proof}
Pick $x\in X$. First note that the irreducible components of $X$ passing through $x$ 
correspond to the minimal prime ideals of $\x{O}_x$. The theorem then states that if  
$P$ and $Q$ are minimal prime ideals, then $\dim \x{O}_x/P=\dim \x{O}_x/Q$. This 
immediately follows from Lemma \ref{l-cm-properties}.
\end{proof}

Thus the theorem implies that the scheme which consists of a curve and a surface intersecting at a point is not Cohen-Macaulay.\\

\begin{thm}\label{t-cm-embedded-point}
A Cohen-Macaulay scheme has no embedded points.
\end{thm}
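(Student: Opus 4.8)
The plan is to reduce the statement to Lemma \ref{l-cm-properties}(iii) by translating the scheme-theoretic notion of an embedded point into a statement about associated primes of a local ring. Recall that a point $x$ of the Noetherian scheme $X$ is an \emph{associated point} exactly when the maximal ideal $\mathfrak m_x$ of $\x{O}_x$ lies in $\Ass(\x{O}_x)$ — equivalently, when $\depth_{\mathfrak m_x}\x{O}_x=0$ — and that $x$ is an \emph{embedded point} when moreover $x$ is not the generic point of any irreducible component of $X$. So it suffices to show that every associated point of a Cohen-Macaulay scheme is in fact the generic point of an irreducible component; then there are no embedded points.

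First I would fix an associated point $x\in X$ and set $A:=\x{O}_x$, a local Cohen-Macaulay ring with maximal ideal $\mathfrak m:=\mathfrak m_x$. Since $x$ is associated we have $\mathfrak m\in\Ass(A)$, so Lemma \ref{l-cm-properties}(iii) tells us that $\mathfrak m$ is a minimal prime of $A$, i.e.\ $\dim A=0$. (Alternatively, and more directly: by the definition of a Cohen-Macaulay local ring $\dim A=\depth_{\mathfrak m}A$, and the latter is $0$ precisely because $x$ is an associated point; so again $\dim A=0$.) Next I would observe that a point with zero-dimensional local ring is a generic point of an irreducible component: choosing an affine open $\Spec B\subseteq X$ with $x\leftrightarrow\mathfrak p$, the equality $\dim B_{\mathfrak p}=0$ says there is no prime of $B$ strictly inside $\mathfrak p$, so $\mathfrak p$ is a minimal prime of $B$, hence $x$ is the generic point of an irreducible component of $\Spec B$.

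Finally I would upgrade "irreducible component of the affine chart" to "irreducible component of $X$": any irreducible closed subset $W\subseteq X$ containing $\overline{\{x\}}$ meets the open set $\Spec B$ in an irreducible closed subset containing $\overline{\{x\}}\cap\Spec B$, which is a component of $\Spec B$; hence $W\cap\Spec B=\overline{\{x\}}\cap\Spec B$, so $W$ has generic point $x$ and $W=\overline{\{x\}}$. Thus $\overline{\{x\}}$ is an irreducible component of $X$, so $x$ is not embedded, and since $x$ was an arbitrary associated point, $X$ has no embedded points. I do not expect a genuine obstacle here; the only step requiring a little care is this last compatibility between irreducible components of $X$ and of an open subscheme, and even that is routine.
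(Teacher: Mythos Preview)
Your proof is correct and follows essentially the same approach as the paper: both reduce immediately to Lemma~\ref{l-cm-properties}(iii). The only difference is cosmetic: the paper \emph{defines} an embedded point $x$ as one where $\mathfrak{m}_x\in\Ass(\x{O}_x)$ is not a minimal prime of $\x{O}_x$, so Lemma~\ref{l-cm-properties}(iii) gives a one-line contradiction, whereas your scheme-theoretic formulation (``not the generic point of a component of $X$'') forces the extra translation step at the end.
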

\begin{proof}
Let $X$ be a scheme. An embedded point $x\in X$ is by definition 
a point such that the maximal ideal $\mathfrak{m}$ of $\x{O}_x$ 
is an associated prime but it is not a minimal prime ideal. 
Assume that $x$ is an embedded point. If $X$ is Cohen-Macaulay, then 
by Lemma \ref{l-cm-properties}, 
$\mathfrak{m}$ is a minimal prime ideal, 
a contradiction.\\
\end{proof}

\begin{exa}
Let $k$ be a field and let $X$ be the closed subscheme of $\A^2_k=\Spec k[t_1,t_2]$ defined 
by the ideal $\langle t_1^2,t_1t_2 \rangle$. Then, the ideal $\langle t_1,t_2 \rangle$ 
corresponds to a point (the origin of the plane) which is an embedded point of $X$ hence 
$X$ is not Cohen-Macaulay.
\end{exa}

The following theorem is known as the Hartshorne connectedness theorem. It simply says 
that if $X$ is a connected Cohen-Macaulay scheme and if one removes a closed subset of codimension 
at least two, then the remaining scheme would still be connected.\\

\begin{thm}
Let $X$ be a Cohen-Macaulay scheme and $x\in X$. Assume that $X_1$ and $X_2$ are 
closed subschemes of $X$ such that $X=X_1\cup X_2$ and such that near $x$, $X_1\cap X_2\neq X_j$ 
for $j=1,2$. 
Then, near $x$, $X_1\cap X_2$ has codimension $\le 1$ with equality if $X_1$ and $X_2$ have 
no common components near $x$.
\end{thm}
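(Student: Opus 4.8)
The plan is to localise at $x$ and then set the connectedness of a punctured spectrum against the absence of idempotents in a local ring. Since the claim concerns the codimension of $X_1\cap X_2$ near $x$, I may replace $X$ by $\Spec A$ with $A=\x{O}_{X,x}$ a Cohen--Macaulay local ring with maximal ideal $\mathfrak m$, and $X_1,X_2$ by the closed subschemes $V(I_1),V(I_2)$; put $J=I_1+I_2$, so $X_1\cap X_2=V(J)=:Y$. We may assume $x\in Y$, i.e. $J\subseteq\mathfrak m$ (otherwise $Y$ is empty near $x$ and there is nothing to prove). From $X=X_1\cup X_2$ we get $V(I_1)\cup V(I_2)=\Spec A$, and the hypothesis that near $x$ neither $X_1\cap X_2=X_1$ nor $X_1\cap X_2=X_2$ (as sets) says precisely that the two closed subsets $U_1:=V(I_1)\setminus Y$ and $U_2:=V(I_2)\setminus Y$ of $U:=\Spec A\setminus Y$ are both nonempty. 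If $\dim A/J\ge\dim A-1$ the codimension is $\le 1$ and we are done, so I assume for contradiction that $\dim A/J\le\dim A-2$. Because $A$ is a Cohen--Macaulay local ring, it is catenary and equidimensional (Lemma \ref{l-cm-properties}, Theorem \ref{t-cm-dimension}), so $\hgt J=\dim A-\dim A/J$, and $\depth_J A=\hgt J$ by the characterisation of Cohen--Macaulay rings; hence $\depth_J A\ge 2$.

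Next I use local cohomology. By Theorem \ref{t-lc-depth}, $\depth_J A\ge 2$ gives $H^0_J(A)=H^1_J(A)=0$. Feeding this into the exact sequence of Theorem \ref{t-lc-exact-sequence} with $Z=Y$, and using $H^1(\Spec A,\tilde A)=0$ since $\Spec A$ is affine, we obtain that the restriction map $A\to\x{O}_U(U)$ is an isomorphism. On the other hand $U=U_1\sqcup U_2$ with $U_1,U_2$ closed in $U$: indeed $U_1\cup U_2=U\cap(V(I_1)\cup V(I_2))=U$ and $U_1\cap U_2=U\cap V(J)=\emptyset$. Hence each $U_i$ is also open in $U$, $U$ is the disjoint union of the nonempty open subschemes $U_1$ and $U_2$, and $\x{O}_U(U)\simeq\x{O}_U(U_1)\times\x{O}_U(U_2)$, a product of two nonzero rings, which therefore possesses the idempotent $(1,0)\notin\{0,1\}$. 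But $\x{O}_U(U)\simeq A$ is local and has no idempotents other than $0$ and $1$ --- a contradiction. Therefore $\dim A/J\ge\dim A-1$, i.e. $X_1\cap X_2$ has codimension $\le 1$ near $x$.

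For the equality clause, assume in addition that $X_1$ and $X_2$ have no common component near $x$; I must exclude $\dim A/J=\dim A$, equivalently $\sqrt J=\sqrt 0$. If $\sqrt J=\sqrt 0$ then every minimal prime of $A$ contains $J=I_1+I_2$, hence contains both $I_1$ and $I_2$; thus $V(I_1)=V(I_2)=\Spec A$, so near $x$ the schemes $X_1$ and $X_2$ have the same underlying space as $X$ and in particular share all their components, contradicting the hypothesis. Hence $X_1\cap X_2$ has codimension $\ge 1$ near $x$, and together with the previous paragraph it has codimension exactly $1$.

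The heart of the argument --- a punctured spectrum of depth $\ge 2$ cannot be disconnected, because its ring of global functions is the local ring $A$ --- is short once the setup is in place. The point that requires care is precisely the first reduction: one must verify that ``$X_1\cap X_2\neq X_j$ near $x$'' is meant set-theoretically (scheme-theoretic inequality is not enough, as non-reduced examples show) and translates, after passing to $\x{O}_{X,x}$, into the nonemptiness of $U_1$ and $U_2$; and one must justify the arithmetic $\depth_J A=\hgt J=\dim A-\dim A/J$ for the Cohen--Macaulay local ring $A$, which rests on equidimensionality and catenarity (Lemma \ref{l-cm-properties}(ii) and Theorem \ref{t-cm-dimension}). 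Nilpotents cause no difficulty, since everything except the vanishing $H^0_J(A)=H^1_J(A)=0$ is purely topological.
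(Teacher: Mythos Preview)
Your main argument is correct and genuinely different from the paper's. The paper applies the Mayer--Vietoris sequence (Theorem \ref{t-lc-Mayer-Vietoris}) to get
\[
0\to H^0_{I_1+I_2}(A)\to H^0_{I_1}(A)\oplus H^0_{I_2}(A)\to H^0_{I_1\cap I_2}(A)\to H^1_{I_1+I_2}(A),
\]
observes that $I_1\cap I_2$ is nilpotent so the third term is $A$, and concludes from the vanishing of the outer terms that $H^0_{I_1}(A)+H^0_{I_2}(A)=A$, forcing one $I_j$ to be nilpotent. You instead pass to the open complement $U=\Spec A\setminus V(J)$, use the excision sequence (Theorem \ref{t-lc-exact-sequence}) plus $H^0_J(A)=H^1_J(A)=0$ to get $\Gamma(U,\x O_U)\simeq A$, and then exploit the set-theoretic disconnection $U=U_1\sqcup U_2$ to produce a nontrivial idempotent in the local ring $A$. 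The two arguments are dual in spirit but yours avoids Mayer--Vietoris entirely and makes the ``connectedness'' content of the theorem visible; the paper's route is a line shorter once Mayer--Vietoris is in hand.

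One slip in the equality clause: the equivalence ``$\dim A/J=\dim A$, equivalently $\sqrt J=\sqrt 0$'' is false (take $A$ with two minimal primes and $J$ contained in only one). What $\dim A/J=\dim A$ does give, via equidimensionality and catenarity, is that \emph{some} minimal prime $P$ of $A$ contains $J$, hence contains both $I_1$ and $I_2$; the irreducible component corresponding to $P$ is then common to $X_1$ and $X_2$, contradicting the hypothesis. This is exactly the paper's argument; your conclusion is right, only the intermediate ``equivalently'' and the phrase ``every minimal prime'' should be replaced by ``some minimal prime''.
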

\begin{proof}
Let $I_1$ and $I_2$ be the ideals of $\x{O}_x$ corresponding 
to $X_1$ and $X_2$. Then, the theorem states that 
$\dim \frac{\x{O}_x}{I_1+I_2}\ge \dim \x{O}_x-1$. Since the length of any two maximal 
sequences of prime ideals of $\x{O}_x$ are equal by Lemma 
\ref{l-cm-properties}, the statement is equivalent to saying that 
$\hgt (I_1+I_2)\le 1$. Note that since $\x{O}_x$ is Cohen-Macaulay, 
$\hgt (I_1+I_2)=\depth_{I_1+I_2}\x{O}_x$. 

Assume that $\depth_{I_1+I_2}\x{O}_x>1$.
 The Mayer-Vietoris exact sequence gives the exact sequence 
$$
0\to H^0_{I_1+I_2}(\x{O}_x) \to H^0_{I_1}(\x{O}_x)\oplus H^0_{I_2}(\x{O}_x)
\to H^0_{I_1\cap I_2}(\x{O}_x)\to H^1_{I_1+I_2}(\x{O}_x)
$$
By Theorem \ref{t-lc-depth}, $H^0_{I_1+I_2}(\x{O}_x)=0$ and $H^1_{I_1+I_2}(\x{O}_x)=0$.  
 On the other hand, since 
$X=X_1\cup X_2$, $I_1\cap I_2$ is nilpotent which implies that 
$H^0_{I_1\cap I_2}(\x{O}_x)=\x{O}_x$. Therefore, we get the isomorphism 
$H^0_{I_1}(\x{O}_x)\oplus H^0_{I_2}(\x{O}_x)
\to \x{O}_x$ which implies that $H^0_{I_1}(\x{O}_x)+ H^0_{I_2}(\x{O}_x)
=\x{O}_x$. Thus, $H^0_{I_j}(\x{O}_x)=\x{O}_x$ for $j=1$ or $j=2$. 
But this is possible only if $I_j$ is nilpotent, that is, $X_j=X$ near $x$ 
which is a contradiction.

Finally, if $X_1$ and $X_2$ have no common components near $x$, then 
$\hgt (I_1+I_2)=1$ otherwise $I_1+I_2$ is contained in some minimal prime 
ideal which corresponds to a common component, a contradiction.\\
\end{proof}

\begin{exa}
Let $k$ be a field and let $X_1$ and $X_2$ be the closed subschemes of 
$\A^4_k=\Spec k[t_1,\dots,t_4]$ defined by the ideals $\langle t_1, t_2\rangle$ 
and $\langle t_3, t_4\rangle$ respectively. If $x$ is the point corresponding to 
the prime ideal $\langle t_1, \dots,t_4\rangle$, then $X_1\cap X_2=\{x\}$. Therefore, 
the scheme $X:=X_1\cup X_2$ is not Cohen-Macaulay at $x$ since $X_1\cap X_2$ has 
codimension two near $x$.\\
\end{exa}

\begin{rem}[Macaulayfication]
Let $f\colon X'\to X$ be a proper birational morphism of Noetherian schemes. If $X'$ 
is regular, then we call $f$ a resolution of singularities of $X$. In some cases 
it is known that such a resolution exists, for example, if $X$ is a variety over 
an algebraically closed field $k$ of characteristic zero. But if $k$ has positive characteristic, 
then the existence of resolutions is not known. 
Now in the above setting instead of assuming $X'$ to be regular, assume that it is 
Cohen-Macaulay. We then call $f$ a Macaulayfication of $X$. The existence of a 
Macaulayfication of $X$ is known if $X$ is a separated scheme of finite type 
over a Noetherian ring $A$ which has a dualising complex (eg, when $A$ is a regular ring, or when it is a finitely generated $k$-algebra for some field $k$)[\ref{Kawasaki}].\\
\end{rem}

\begin{exa}[Rational singularities]
Let $X$ be a normal variety over the complex numbers. We say that $X$ has rational singularities 
if for any resolution of singularities $f\colon X'\to X$ we have $R^qf_*\x{O}_{X'}=0$ if $q>0$. 
In general, rational singularities are Cohen-Macaulay and the proof is quite easy when 
$X$ is projective. In fact, if $X$ is projective one can use the duality 
theorem as follows. Let $\x{M}$ be a locally free sheaf of finite rank on $X$. 
It is enough to prove that $H^p(X,\x{M}(-l))=0$ if $p<r=\dim X$ and $l\gg 0$. There is a spectral 
sequence, called the Leray spectral sequence, which compares cohomology on $X$ and 
$X'$. It is given by the fact that the composition of the two functors $H^0(X,-)$ 
and $f_*$ is the same as $H^0(X',-)$. The spectral sequence is as 
$$
E^{p,q}_2=H^p(X, R^qf_*\x{F})\implies H^{p+q}(X',\x{F})
$$ 
where $\x{F}$ is any sheaf on $X'$. We take $\x{F}=f^*(\x{M}(-l))$. By the projection 
formula (see Theorem \ref{t-ext-locally-free} and remarks after it), $R^qf_*f^*(\x{M}(-l))=R^qf_*\x{O}_{X'}\otimes \x{M}(-l)$, and since $X$ is 
normal $f_*\x{O}_{X'}=\x{O}_X$. By assumptions,  
$R^qf_*\x{O}_{X'}=0$ if $q>0$, so 
$E^{p,q}_2=0$ unless $q=0$. The spectral sequence then gives isomorphisms 
$$
H^p(X, \x{M}(-l))\simeq H^{p}(X',f^*(\x{M}(-l)))
$$

On the other hand, by duality on $X'$ we have 
$$
\dim_\C H^{p}(X',f^*(\x{M}(-l)))=\dim_\C H^{r-p}(X',\omega_{X'}\otimes f^*(\x{M}(-l))^\vee)
$$
By a generalisation of the Kodaira vanishing theorem (see Example \ref{exa-Kodaira-vanishing}, and [\ref{KMM}, Theorem 1-2-3]), 
$R^qf_*(\omega_{X'}\otimes f^*(\x{M}(-l))^\vee)=0$ if $q>0$ and another 
application of the Leray spectral sequence with $\x{F}=\omega_{X'}\otimes f^*(\x{M}(-l))^\vee$, 
and the Serre vanishing show that
$$
H^{r-p}(X',\omega_{X'}\otimes f^*(\x{M}(-l))^\vee)\simeq H^{r-p}(X,f_*\omega_{X'}\otimes \x{M}^\vee(l))=0
$$
for any $r-p>0$ and $l\gg 0$. Therefore, $H^p(X, \x{M}(-l))=0$ for any $p<r$ and $l\gg 0$, so $X$ is Cohen-Macaulay.\\
\end{exa}

\begin{rem}[Serre condition $S_n$]
Let $A$ be a local Noetherian ring, and $n$ a natural number or $0$. We say that $A$ has 
property $S_n$ if for every prime ideal $P$ of $A$, we have $\depth_{\mathfrak m}A_P\ge \min\{n,\hgt P\}$ where $\mathfrak{m}$ is the maximal ideal of $A_P$. If $n=\dim A$, then 
$A$ satisfies $S_n$ if and only if $A$ is Cohen-Macaulay. On the other hand, any local 
Noetherian ring has property $S_0$, and it has property $S_1$ if and only if every 
associated prime of $A$ is a minimal prime ideal. Moreover, a theorem of Serre 
states that $A$ is normal if and only if it has property $S_2$ and for any prime 
ideal $P$ of $\hgt P\le 1$, $A_P$ is regular.\\
\end{rem}

Finally, we mention a theorem on the fibres of a flat morphism.

\begin{thm}
Let $f\colon X\to Y$ be a flat morphism of Noetherian schemes. Then, $X$ is  
Cohen-Macaulay if and only if the fibres of $f$ and the points in the image of $f$ 
are Cohen-Macaulay.  
\end{thm}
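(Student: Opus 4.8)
\emph{Strategy.} The plan is to localise the statement, reducing it to a property of the flat local homomorphism of local rings induced by $f$, and then to invoke the two standard additivity formulas for such a homomorphism. Being Cohen-Macaulay is a condition on local rings, so I would fix $x\in X$, put $y=f(x)$, and set $A=\x{O}_{Y,y}$, $B=\x{O}_{X,x}$; flatness of $f$ says exactly that $A\to B$ is a flat local homomorphism of Noetherian local rings. Writing $\mathfrak m,\mathfrak n$ for the maximal ideals, let $C:=B/\mathfrak m B$ be the closed fibre, which is the local ring of the fibre $X_y$ at $x$. As $x$ varies, $A$ runs through the local rings of the points in the image of $f$ (and their generizations, which again lie in the image since flat maps are generizing), while $C$ runs through the local rings of the fibres. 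Thus the theorem reduces to the local claim: $B$ is Cohen-Macaulay if and only if both $A$ and $C$ are Cohen-Macaulay.

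\emph{The two formulas.} Everything hinges on
$$\dim B=\dim A+\dim C,\qquad \depth B=\depth A+\depth C,$$
for a flat local homomorphism $A\to B$ with closed fibre $C$. The dimension formula I would simply quote from commutative algebra: $\dim B\ge\dim A+\dim C$ follows from going-down for flat maps, and the reverse inequality from a count with a system of parameters (see [\ref{Eisenbud}]). For the depth formula I would argue by induction. First reduce to the case $\depth C=0$: take a maximal $C$-regular sequence $\bar b_1,\dots,\bar b_s$ in $\mathfrak n/\mathfrak m B$, lift it to $b_1,\dots,b_s\in\mathfrak n$, and use the local criterion for flatness (from the flatness theory of Chapter 3) to see that $b_1,\dots,b_s$ is a $B$-regular sequence, that $B/(b_1,\dots,b_s)$ stays flat over $A$, and that its closed fibre is $C/(\bar b_1,\dots,\bar b_s)$, which has depth $0$; since killing a regular sequence drops depth by its length on $B$ and on $C$ alike, we may replace $(B,C)$ by $(B/(b_1,\dots,b_s),C/(\bar b_1,\dots,\bar b_s))$ without changing the formula to be proved. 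With $\depth C=0$ one shows $\depth B=\depth A$ by induction on $\depth A$: if $\depth A=0$, choose $0\neq a\in A$ with $\mathfrak m=(0:a)$, note that flatness turns the injection $A/\mathfrak m\hookrightarrow A$, $1\mapsto a$, into an injection $C=B/\mathfrak m B\hookrightarrow B$, and combine $a$ with a lift $b\in B$ of a nonzero element of the socle of $C$ to get $ab\neq0$ and $\mathfrak n\cdot ab=0$, so $\mathfrak n\in\Ass B$ and $\depth B=0$; if $\depth A>0$, pick a nonzerodivisor $a\in\mathfrak m$ on $A$, which by flatness is a nonzerodivisor on $B$, observe that $B/aB$ is flat over $A/aA$ with closed fibre $C$ again, and apply the inductive hypothesis. (One could instead cite [\ref{CM-rings}] for this formula outright.)

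\emph{Conclusion and the hard point.} With both formulas available the rest is bookkeeping: since $\depth A\le\dim A$ and $\depth C\le\dim C$, the number $\depth B-\dim B=(\depth A-\dim A)+(\depth C-\dim C)$ is a sum of two nonpositive terms, hence vanishes — i.e. $B$ is Cohen-Macaulay — precisely when each term does, i.e. when $A$ and $C$ are both Cohen-Macaulay. Reading this in both directions: if $X$ is Cohen-Macaulay then every such $B$ is, whence every $\x{O}_{Y,y}$ with $y\in f(X)$ and every fibre $X_y$ is Cohen-Macaulay; conversely, if the fibres of $f$ and the local rings at the image points are all Cohen-Macaulay then every $B=\x{O}_{X,x}$ is, so $X$ is Cohen-Macaulay. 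I expect the main obstacle to be the depth additivity formula — specifically the reduction that removes a maximal fibre-regular sequence while keeping the quotient flat over $A$, which is exactly where the local criterion for flatness is needed; the dimension formula and the concluding comparison are routine.
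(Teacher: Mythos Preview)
Your proposal is correct and follows essentially the same route as the paper: both localise at $x\in X$ with $y=f(x)$, identify the local ring of the fibre at $x$ as $\x{O}_x/\mathfrak m_y\x{O}_x$, and reduce to the local statement that for a flat local homomorphism $A\to B$ of Noetherian local rings with closed fibre $C$, the ring $B$ is Cohen-Macaulay if and only if $A$ and $C$ are. The only difference is that the paper simply cites this local result from Bruns--Herzog [\ref{CM-rings}, Theorem 2.1.7], whereas you sketch its proof via the additivity formulas $\dim B=\dim A+\dim C$ and $\depth B=\depth A+\depth C$; your sketch of the depth formula is the standard one and is correct.
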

\begin{proof}
A point $y$ in the image of $f$ being Cohen-Macaulay means that $\x{O}_y$ 
is Cohen-Macaulay. Since the statement is local both on $X$ and $Y$, we may assume that $Y=\Spec A$ 
and $X=\Spec B$. Let $x\in X$ and $y=f(x)$.  Since $f$ is flat, $\x{O}_x$ is flat over $\x{O}_y$. The fibre of $f$ over $y$ is simply 
$\Spec B\otimes \frac{\x{O}_y}{\mathfrak{m}_y}$ where 
$\mathfrak{m}_y$ is the maximal ideal of $\x{O}_y$.
Moreover, the fibre of $f$ over $y$ is isomorphic to the 
fibre of $f'\colon X'=\Spec B\otimes \x{O}_y \to \Spec \x{O}_y$ over $y$ where $y$ can be regarded 
as the closed point of $\Spec \x{O}_y$. In particular, the fibre of $f'$ is a closed 
subscheme of $X'$. Now if $x'$ is the point on $X'$ 
corresponding to $x$, then the local ring of $x'$ on $X'$ is just $\x{O}_x$ and 
its local ring on the fibre of $f'$ is 
$\x{O}_x\otimes \frac{\x{O}_y}{\mathfrak{m}_y}\simeq \frac{\x{O}_x}{\mathfrak{m}_y\x{O}_x}$.

For any $x\in X$, a commutative algebra result [\ref{CM-rings}, Theorem 2.1.7] implies that $\x{O}_x$ is Cohen-Macaulay if and only if both $\x{O}_y$ and 
$\frac{\x{O}_x}{\mathfrak{m}_y\x{O}_x}$ are Cohen-Macaulay. In other words, $X$ is Cohen-Macaulay 
at $x$ if and only if $Y$ is Cohen-Macaulay at $y$ and the fibre of $f$ is Cohen-Macaulay 
at the point corresponding to $x$.
\end{proof}

\section*{Exercises}
\begin{enumerate}
\item Let $f\colon X\to Y$ be a flat projective morphism of  Noetherian schemes 
with $Y$ affine. Let $(f^!,t_f)$ be a dualising pair for $f$ and assume that $f^!$ is exact. 
Show that there is $l_0$ such that $H^p(X,f^!\x{G}(l))=0$ for any $p>0$, any $l\ge l_0$, and 
any coherent sheaf $\x{G}$ on $Y$.\\

\item Under the assumptions of Theorem \ref{t-duality}, show that the conditions (i),(ii),(iii) 
are equivalent to the following: for every sufficiently large $l$,  the sheaf $R^{q}f_*\x{O}_X(-l)$ 
commutes with base change for every $q$ and it vanishes for $q\neq r$.\\

\item In the setting of Theorem \ref{t-adjunction}, show that 
$\omega_g\simeq  \xExt^{1}_{\x{O}_{X}}(\x{O}_D,\omega_f)$.\\

\item Let $A$ be a Noetherian ring. Show that $A$ is Cohen-Macaulay if and only if 
$\A^n_A$ is Cohen-Macaulay.\\

\item Prove or disprove the following: a scheme $X$ is Cohen-Macaulay at $x\in X$ 
if and only if it is Cohen-Macaulay in a neighborhood of $x$.\\

\item Let $A$ be a Noetherian local ring. Show that $A$ has property $S_n$ if and only 
if for any prime ideal $P$ with $\depth_{\mathfrak m}A_P\le n-1$, $A_P$ is Cohen-Macaulay 
where $\mathfrak m$ is the maximal ideal $A_P$.
\end{enumerate}

%%%%%%%%%%%%%%%%%%%%%%%%%%%%%%%%%%
%%%%%%%%%%%%%%%%%%%%%%%%%%%%%%%%%%
%%%%%%%%%%%%%%%%%%%%%%%%%%%%%%%%%%

\chapter{\tt Properties of morphisms of schemes}\label{ch-flat}

In this chapter, we discuss several topics which mostly have to do with 
families of schemes and the behavior of certain invariants in families.
The notion of a flat family seems to be a reasonable rigorous definition of 
the intuitive idea of a "nice" family of schemes. Base change on the other hand 
has to do with the way cohomology on the total space of a family is related 
to the cohomology on the fibres. Grothendieck's solution of the 
base change problem is an elegant example of the success of his philosophy.
These topics beside being very natural 
to study appear both in the relative duality theory and in the construction 
of Hilbert and Quotient schemes. 
A thorough treatment of flatness and base change can be found 
in Grothendieck's EGA IV part 2 and part 3 [\ref{EGA}] and EGA III part 2 [\ref{EGA}] respectively.

\section{Flat sheaves}
 Let $A$ be a ring and $M$ an $A$-module. Remember that $M$ is said to be 
flat over $A$ if the functor $M\otimes_A -\colon \mathfrak{M}(A)\to \mathfrak{M}(A)$. 
It is well-known that $M$ is flat over $A$ if and only if 
for any (finitely generated) ideal $I$ of $A$, the natural map $M\otimes_A I\to M\otimes_A A$ 
is injective. It is also well-known that flatness is a local condition, that is, 
$M$ is flat over $A$ if and only if $M_P$ is flar over $A_P$ for any prime 
ideal $P$ of $A$ if and only if $M_P$ is flat over $A_P$ for any maximal ideal 
$P$ of $A$. It is well-known that any free $A$-module $M$ is projective, and 
any projective $A$-module $M$ is flat, and that projectivity and flatness are equivalent 
if $M$ is finitely generated and $A$ is Noetherian. Moreover, if $A$‌ is a 
local Noetherian ring, then the 
three notions of freeness, projectivity, and flatness coincide for finitely 
generated modules. 

If we denote the left derived functors of the right exact functor 
$M\otimes_A-\colon \mathfrak{M}(A)\to \mathfrak{M}(A)$ by $\Tor^A_p(M,-)$ 
then it is easy to see that $M$ is flat over $A$ if and only if 
$\Tor^A_p(M,N)=0$ for any $A$-module $N$ if and only if 
$\Tor^A_1(M,N)=0$ for any $A$-module $N$.

A flat $A$-module $M$ is called faithfully flat if exactness of  
$$
0\to {N'}\otimes_A M\to {N}\otimes_A M \to {N''}\otimes_A M\to 0
$$ 
implies exactness of $0\to {N'}\to {N} \to {N''}\to 0$ where the latter is a 
sequence of $A$-modules.\\

\begin{defn}
Let $f\colon X\to Y$ be a morphism of schemes and let $\x{F}$ be an $\x{O}_X$-module.
We say that $\x{F}$ is flat at $x\in X$ over $Y$ if  $\x{F}_x$ is a flat 
$\x{O}_{f(x)}$-module. If $\x{F}$ is flat at every $x\in X$ over $Y$, we simply 
say that $\x{F}$ is flat over $Y$. If $\x{O}_X$ is flat over $Y$, we say that $f$ is flat.
 But if $f$ is the identity, we say that $\x{F}$ is flat 
over $X$.
\end{defn}

Note that in the definition, $\x{O}_x$ is an algebra over $\x{O}_{f(x)}$ 
and so $\x{F}_x$ being a module over $\x{O}_x$ is also naturally a module 
over $\x{O}_{f(x)}$. Moreover, note that flatness is a local condition both 
on $Y$ and $X$.\\

\begin{thm}\label{t-flat-affine}
Let $f\colon X=\Spec B \to Y=\Spec A$ be a morphism of schemes and $\x{F}=\widetilde{M}$ 
for some $B$-module $M$. Then, $\x{F}$ is flat over $Y$ if and only if $M$ is a 
flat $A$-module.
\end{thm}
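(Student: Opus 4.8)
The plan is to reduce the statement to the standard facts about localization and flatness recalled at the start of this section. Let $\phi\colon A\to B$ be the ring homomorphism corresponding to $f$. First I would translate the hypothesis on $\x{F}$ into a statement about localizations of $M$: a point $x\in X$ corresponds to a prime ideal $\mathfrak{q}\subseteq B$, and then $\x{F}_x=M_{\mathfrak{q}}$ while $\x{O}_{f(x)}=A_{\mathfrak{p}}$ with $\mathfrak{p}=\phi^{-1}(\mathfrak{q})$. Since $A\setminus\mathfrak{p}$ maps into $B\setminus\mathfrak{q}$, the module $M_{\mathfrak{q}}$ is naturally an $A_{\mathfrak{p}}$-module, and by definition $\x{F}$ is flat over $Y$ if and only if $M_{\mathfrak{q}}$ is a flat $A_{\mathfrak{p}}$-module for every prime $\mathfrak{q}$ of $B$.

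For the implication that flatness of $\x{F}$ over $Y$ forces $M$ to be $A$-flat, I would first use transitivity of flatness: $A_{\mathfrak{p}}$ is flat over $A$, so each $M_{\mathfrak{q}}$, being flat over $A_{\mathfrak{p}}$, is flat over $A$. Then I would check $A$-flatness of $M$ locally on $B$: given an injection of $A$-modules $N'\hookrightarrow N$, let $K$ be the kernel of $N'\otimes_A M\to N\otimes_A M$. For every prime $\mathfrak{q}$ of $B$, flatness of $B_{\mathfrak{q}}$ over $B$ gives $K_{\mathfrak{q}}=\ker(N'\otimes_A M_{\mathfrak{q}}\to N\otimes_A M_{\mathfrak{q}})$, which vanishes because $M_{\mathfrak{q}}$ is $A$-flat; hence $K=0$ and $M$ is $A$-flat.

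For the converse, if $M$ is flat over $A$ then any localization $S^{-1}M$ with $S\subseteq B$ multiplicatively closed is again $A$-flat, since $N\otimes_A S^{-1}M\simeq S^{-1}(N\otimes_A M)$ and localization is exact; in particular $M_{\mathfrak{q}}$ is $A$-flat. Finally, $M_{\mathfrak{q}}$ being an $A_{\mathfrak{p}}$-module, tensoring it over $A_{\mathfrak{p}}$ agrees with tensoring over $A$ (via $-\otimes_{A_{\mathfrak{p}}}M_{\mathfrak{q}}\simeq -\otimes_A A_{\mathfrak{p}}\otimes_{A_{\mathfrak{p}}}M_{\mathfrak{q}}\simeq -\otimes_A M_{\mathfrak{q}}$), so $A$-flatness of $M_{\mathfrak{q}}$ yields $A_{\mathfrak{p}}$-flatness, and the translation of the first paragraph finishes the proof.

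I do not expect a real obstacle in this argument; the only point needing a little care is that ``flatness is local'' has to be invoked with respect to the primes of $B$, not of $A$, which is why I would write out the kernel computation above rather than simply quoting the version of locality recalled in the text.
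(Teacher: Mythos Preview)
Your argument is correct. The paper, however, runs a single chain of equivalences rather than treating the two implications separately: it invokes the ideal-theoretic criterion for flatness and observes that every ideal of $A_{\mathfrak p}$ has the form $J_{\mathfrak p}$ for an ideal $J\subseteq A$, so the condition ``$M_{\mathfrak q}\otimes_{A_{\mathfrak p}}I\to M_{\mathfrak q}$ injective for all ideals $I$ of $A_{\mathfrak p}$'' becomes ``$(M\otimes_A J)\otimes_B B_{\mathfrak q}\to (M\otimes_A A)\otimes_B B_{\mathfrak q}$ injective for all ideals $J$ of $A$'', which is equivalent (by locality on $\Spec B$, just as you use) to injectivity of $M\otimes_A J\to M$. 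Your route instead appeals to transitivity of flatness along $A\to A_{\mathfrak p}$ for one direction and to the base-change identity $N\otimes_{A_{\mathfrak p}}M_{\mathfrak q}\simeq N\otimes_A M_{\mathfrak q}$ for the other. The paper's approach is a bit more economical in that it never separates the directions and keeps everything at the level of ideals; yours is more conceptual and makes explicit why locality must be taken with respect to primes of $B$, which the paper's chain leaves implicit in the passage from ``injective after $\otimes_B B_{\mathfrak q}$ for all $\mathfrak q$'' to ``injective''.
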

\begin{proof}
The statement follows from the following string of equivalences:
 $\x{F}$ is flat over $Y$ if and only if for any $x\in X$,  
$M_x$ is a flat $A_{f(x)}$-module if and only if for any $x\in X$ and for any ideal $I$ 
of $A_{f(x)}$ the map $M_x\otimes_{A_{f(x)}} I\to M_x\otimes_{A_{f(x)}} A_{f(x)}$ is injective 
if and only if for any ideal $J$ of $A$ and any  $x\in X$ the map $M_x\otimes_{A_{f(x)}} J_{f(x)}\to M_x\otimes_{A_{f(x)}} A_{f(x)}$ is injective if and only if 
for any ideal $J$ of $A$ and any  $x\in X$ the map $(M\otimes_A J)\otimes_B B_x \to (M\otimes_A A)\otimes_B B_x$ is injective 
if and only if 
for any ideal $J$ of $A$ the map $M\otimes_A J\to M\otimes_A A$ is injective if and only if 
$M$ is flat over $A$.\\ 
\end{proof}

\begin{thm}
Let $f\colon X\to Y$ be a finite morphism of Noetherian schemes, 
and $\x{F}$ a coherent $\x{O}_X$-module. Then, 
$\x{F}$‌ is flat over $Y$ if and only if $f_*\x{F}$ is locally free.
In particular, if $X$ is a Noetherian scheme and $\x{F}$ a coherent $\x{O}_X$-module, then 
$\x{F}$‌ is flat over $X$ if and only if $\x{F}$ is locally free.
\end{thm}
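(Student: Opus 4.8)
The plan is to reduce the statement to the affine case and then read it off from Theorem \ref{t-flat-affine} together with the elementary fact, recalled at the start of this section, that flatness and freeness coincide for finitely generated modules over a local Noetherian ring.

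First I would use that both conditions in the statement are local on $Y$: flatness of $\x{F}$ over $Y$ is tested at the stalks $\x{F}_x$, and local freeness of $f_*\x{F}$ is a local property on $Y$. So I may assume $Y=\Spec A$ with $A$ Noetherian. Since $f$ is finite it is affine, so $X=\Spec B$ where $B$ is an $A$-algebra that is finite as an $A$-module; in particular $B$ is Noetherian. Writing $\x{F}=\widetilde{M}$ for a finitely generated $B$-module $M$, the finiteness of $B$ over $A$ forces $M$ to be finitely generated over $A$, and $f_*\x{F}$ is $\widetilde{M}$ regarded as an $\x{O}_Y$-module; being finitely generated over the Noetherian ring $A$, it is coherent and finitely presented.

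Now I would apply the two dictionary translations. By Theorem \ref{t-flat-affine}, $\x{F}$ is flat over $Y$ if and only if $M$ is a flat $A$-module. On the other side, since $M$ is finitely presented, $f_*\x{F}=\widetilde{M}$ is locally free on $\Spec A$ if and only if $M_P$ is a free $A_P$-module for every prime $P$ of $A$ (lift a basis of $M_P$ to a map $A^n\to M$, which is an isomorphism over a neighbourhood of $P$ by finite presentation, so it is automatically of finite rank). The proof then comes down to the purely algebraic equivalence: $M$ is flat over $A$ if and only if $M_P$ is flat over $A_P$ for all $P$ (locality of flatness), and this in turn holds if and only if $M_P$ is free over $A_P$ for all $P$, the last step because $M_P$ is finitely generated over the local Noetherian ring $A_P$, where flatness, projectivity and freeness coincide. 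Chaining these equivalences with the two translations gives the first assertion, and the "in particular" statement is the special case $Y=X$ and $f=\mathrm{id}_X$, which is a finite morphism with $f_*\x{F}=\x{F}$ and for which "flat over $Y$" is by definition "flat over $X$".

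I do not expect a genuine obstacle here, since the argument is essentially a translation between the sheaf language and commutative algebra. The only points that need a little care are justifying the reduction to $Y$ affine (legitimate because both hypotheses and conclusions are local on $Y$) and the standard fact that for finitely presented modules over a Noetherian ring, Zariski-local freeness is equivalent to freeness at all localizations at primes; the one non-formal input, the coincidence of flatness and freeness for finitely generated modules over a local Noetherian ring, is already taken as known in the text.
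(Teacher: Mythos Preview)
Your proof is correct and follows essentially the same route as the paper: reduce to the affine case, use Theorem \ref{t-flat-affine} to translate flatness of $\x{F}$ into flatness of $M$ over $A$, note that $M$ is finitely generated over $A$ because $f$ is finite, and then invoke the equivalence of flatness and freeness for finitely generated modules over local Noetherian rings to identify this with local freeness of $f_*\x{F}$. You are simply a bit more explicit than the paper about the finite-presentation step linking stalkwise freeness to Zariski-local freeness.
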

\begin{proof}
Since the statement is local we may assume that 
$X=\Spec B$, $Y=\Spec A$, and $\x{F}=\widetilde{M}$ for some $B$-module $M$.
By Theorem \ref{t-flat-affine}, $\x{F}$ is flat over $Y$ if and only if 
$M$ is a flat $A$-module. Since $f$ is finite, $B$ is a finitely generated $A$-module 
hence $M$ is a finitely generated $A$-module too.
On the other hand, 
$M$ is a flat $A$-module if and only if $M_y$ is flat over $\x{O}_y$ for every 
$y\in Y$ which is equivalent to 
the freeness of $M_y$ over $\x{O}_y$ for every $y\in Y$ because $\x{O}_y$ is local 
Noetherian and $M_y$ is finitely generated.\\
\end{proof}

\begin{exa}
The previous theorem shows that any open immersion is a flat morphism 
but closed immersions are rarely flat.\\
\end{exa}

\begin{constr}\label{c-base-change} 
Let $f\colon X\to Y$ and $g\colon S\to Y$ be morphisms of schemes which 
induce the projection $X_S:=S\times_Y X \to S$. So, we get a commutative 
diagram 
$$
\xymatrix{
X_S \ar[r]^{g_S}\ar[d]^{f_S} & X\ar[d]^f\\
S \ar[r]^g & Y
}
$$
which we refer to as a base change - or we just say that we take a base change 
$g\colon S\to Y$. If $\x{F}$ is an $\x{O}_X$-module, we define $\x{F}_S:=g_S^*\x{F}$.
If $\x{G}$ is an $\x{O}_S$-module, then we use the notation $\x{F}\otimes_Y \x{G}$ 
to mean $g_S^*\x{F}\otimes_{\x{O}_{X_S}}f_S^*\x{G}$. 
If  $Y=\Spec A$, $S=\Spec C$, and 
 $\x{G}=\widetilde{N}$, then we further abuse notation 
by writing $\x{F}\otimes_Y M$ to mean $\x{F}\otimes_Y \x{G}$.
If in addition $X=\Spec B$ and $\x{F}=\widetilde{M}$, then it is easy to see that $\x{F}\otimes_Y \x{G}=\widetilde{M\otimes_AN}$.
\end{constr}
\vspace{0.5cm}

\begin{thm}\label{t-flat-properties}
Let $f\colon X\to Y$ be a morphism of schemes, and $\x{F}$ a quasi-coherent $\x{O}_X$-module.
Then,

$(i)$  $\x{F}$ is flat over $Y$ if and only if for any base change $g\colon S\to Y$ 
and any exact sequence $0\to \x{G}'\to \x{G} \to \x{G}''\to 0$ of quasi-coherent $\x{O}_S$-modules, 
the induced sequence $0\to \x{F}\otimes_Y \x{G}'\to \x{F}\otimes_Y\x{G} \to \x{F}\otimes_Y\x{G}''\to 0$ is exact,

$(ii)$ statement $(i)$ holds if we only consider $S=Y$,

$(iii)$ statement $(i)$ holds if we only consider schemes $S=\Spec \x{O}_y$ 
where $y\in Y$,

$(iv)$ flatness is preserved under base, that is, if $\x{F}$ is flat over $Y$, and if $g\colon S\to Y$ is a base change, then $\x{F}_S$ is flat over $S$,

$(v)$  let $g\colon S\to Y$ be a base change, $h\colon S\to Z$ a morphism, 
and $\x{G}$ a quasi-coherent $\x{O}_S$-module which is flat over $Z$. If 
$\x{F}$ is flat over $Y$, then $\x{F}\otimes_Y\x{G}$ is flat over $Z$.
\end{thm}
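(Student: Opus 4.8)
The strategy is to reduce every assertion to a statement about the \emph{stalks} of the sheaves involved and then invoke the standard facts about flat modules recalled in the text: the criterion that $M$ is flat over $A$ iff $M\otimes_A I\to M$ is injective for every ideal $I\subseteq A$, and the exactness of localization. I would first settle the equivalence of (i)--(iii) as a cycle of implications, then read off (iv), and finally (v). The implications (i)$\Rightarrow$(ii)$\Rightarrow$(iii) are immediate, as each step only shrinks the class of base changes $S$ under consideration. For (iii)$\Rightarrow$``flat over $Y$'', fix $x\in X$, put $y=f(x)$, and apply the hypothesis of (iii) with $S=\Spec\x{O}_y$ and $g$ the canonical morphism: $X_S=X\times_Y\Spec\x{O}_y$ has a point $x'$ over $x$ with $\x{O}_{x'}\simeq\x{O}_x$, so that $(\x{F}\otimes_Y\x{G})_{x'}\simeq\x{F}_x\otimes_{\x{O}_y}\x{G}_s$ for every quasi-coherent $\x{O}_S$-module $\x{G}$, where $s\in S$ is the image of $x'$ (the closed point) and $\x{G}_s$ is the corresponding $\x{O}_y$-module. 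Feeding in the exact sequences $0\to\widetilde I\to\x{O}_S\to\widetilde{\x{O}_y/I}\to 0$ for ideals $I\subseteq\x{O}_y$, the hypothesis gives that $\x{F}_x\otimes_{\x{O}_y}I\to\x{F}_x$ is injective for all $I$, which is exactly flatness of $\x{F}_x$ over $\x{O}_y$.

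For the substantive direction ``flat over $Y$'' $\Rightarrow$ (i), I would check exactness of $0\to\x{F}\otimes_Y\x{G}'\to\x{F}\otimes_Y\x{G}\to\x{F}\otimes_Y\x{G}''\to 0$ at each stalk. Writing $x=g_S(x')$, $s=f_S(x')$, $y=f(x)=g(s)$, and using that $\x{O}_{x'}$ is a localization of $\x{O}_x\otimes_{\x{O}_y}\x{O}_s$, a direct computation identifies $(\x{F}\otimes_Y\x{G})_{x'}$ with a localization $T^{-1}\big(\x{F}_x\otimes_{\x{O}_y}\x{G}_s\big)$, functorially in $\x{G}$. Since $0\to\x{G}'_s\to\x{G}_s\to\x{G}''_s\to 0$ is exact and $\x{F}_x$ is flat over $\x{O}_y$, tensoring by $\x{F}_x$ over $\x{O}_y$ (viewing the $\x{O}_s$-modules as $\x{O}_y$-modules along $\x{O}_y\to\x{O}_s$) preserves exactness, and so does $T^{-1}(-)$; hence the stalk sequence is exact. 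Together with the previous paragraph this shows that ``flat over $Y$'', (i), (ii), (iii) are all equivalent. Part (iv) is then immediate: for a quasi-coherent $\x{O}_S$-module $\x{G}$ one has $\x{F}_S\otimes_S\x{G}=\x{F}\otimes_Y\x{G}$ (base change along $\mathrm{id}_S$), so (i) applied to $\x{F}$ over $Y$ shows $\x{F}_S\otimes_S(-)$ is exact on short exact sequences of quasi-coherent $\x{O}_S$-modules, whence $\x{F}_S$ is flat over $S$ by criterion (ii).

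The final statement (v) I would prove via (iii) applied to the morphism $X_S\to Z$ (that is, $h\circ f_S$): it suffices to show that each stalk $(\x{F}\otimes_Y\x{G})_{x'}$ is flat over $\x{O}_z$, where $z=h(s)$, $s=f_S(x')$, and $x,y$ are as above. By the identification of the previous paragraph this stalk is a localization of $\x{F}_x\otimes_{\x{O}_y}\x{G}_s$, so it is enough to see that $\x{F}_x\otimes_{\x{O}_y}\x{G}_s$ is flat over $\x{O}_z$. For any $\x{O}_z$-module $N$ there is a natural isomorphism $\big(\x{F}_x\otimes_{\x{O}_y}\x{G}_s\big)\otimes_{\x{O}_z}N\simeq\x{F}_x\otimes_{\x{O}_y}\big(\x{G}_s\otimes_{\x{O}_z}N\big)$, because the $\x{O}_z$-module structure on $\x{F}_x\otimes_{\x{O}_y}\x{G}_s$ factors through the $\x{O}_z$-algebra $\x{O}_s$ acting on $\x{G}_s$. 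Now $\x{G}_s$ flat over $\x{O}_z$ makes $\x{G}_s\otimes_{\x{O}_z}(-)$ exact, and $\x{F}_x$ flat over $\x{O}_y$ makes $\x{F}_x\otimes_{\x{O}_y}(-)$ exact, so the composite functor is exact; therefore $\x{F}_x\otimes_{\x{O}_y}\x{G}_s$, and hence $(\x{F}\otimes_Y\x{G})_{x'}$, is flat over $\x{O}_z$, and $\x{F}\otimes_Y\x{G}$ is flat over $Z$.

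I expect the main obstacle to be purely bookkeeping: establishing the stalk identification $(\x{F}\otimes_Y\x{G})_{x'}\simeq T^{-1}\big(\x{F}_x\otimes_{\x{O}_y}\x{G}_s\big)$ --- i.e. the compatibility of $g_S^*\x{F}\otimes_{\x{O}_{X_S}}f_S^*\x{G}$ with passage to local rings on the fibre product $X_S=X\times_YS$ --- and keeping straight over which ring each tensor product is formed. Everything else reduces to the exactness of tensoring with a flat module and of localization. One may instead reduce at the outset to the case in which $X$, $Y$, $S$, $Z$ are all affine (choosing compatible affine charts) and use $\x{F}\otimes_Y\x{G}=\widetilde{M\otimes_A N}$ from Construction \ref{c-base-change}; the choice of compatible charts then plays the same role as the stalk bookkeeping above.
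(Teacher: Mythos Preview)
Your proposal is correct. The difference from the paper is organizational rather than mathematical: the paper reduces at the outset to the case $X=\Spec B$, $Y=\Spec A$, $S=\Spec C$, $\x{F}=\widetilde{M}$, $\x{G}=\widetilde{N}$ (invoking Theorem~\ref{t-flat-affine}) and then argues entirely with modules, whereas you carry the stalk identification $(\x{F}\otimes_Y\x{G})_{x'}\simeq T^{-1}(\x{F}_x\otimes_{\x{O}_y}\x{G}_s)$ through each step. You yourself note this alternative in your final paragraph, and indeed the affine reduction is exactly what the paper does; it has the advantage of eliminating the bookkeeping you flag as the main obstacle, since $\x{F}\otimes_Y\x{G}=\widetilde{M\otimes_A N}$ directly. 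For part (v) there is a small structural difference: the paper verifies criterion~(i) for $\x{F}\otimes_Y\x{G}$ over $Z$ by taking a base change $V\to Z$ and an exact sequence on $V$, then applying flatness of $\x{G}$ over $Z$ followed by flatness of $\x{F}_S$ over $S$ (the latter from (iv)) together with the associativity $\x{F}_S\otimes_S(\x{G}\otimes_Z\x{L})\simeq(\x{F}\otimes_Y\x{G})\otimes_Z\x{L}$; your stalkwise version reaches the same conclusion by composing the two exact functors $\x{G}_s\otimes_{\x{O}_z}(-)$ and $\x{F}_x\otimes_{\x{O}_y}(-)$. Both routes are the same ``composite of exact functors is exact'' argument, just localized differently.
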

\begin{proof}
Since all the statements are local on $S$, $X$, and $Y$ we may assume that 
$X=\Spec B$, $Y=\Spec A$, $S=\Spec C$, $\x{F}=\widetilde{M}$,
 $\x{G}'=\widetilde{N'}$, $\x{G}=\widetilde{N}$, and $\x{G}''=\widetilde{N''}$. 

(i) If $M$ is flat over $A$ and if $0\to {N'}\to {N} \to {N''}\to 0$ is an 
exact sequence of $C$-modules, then obviously $0\to {N'}\otimes_A M\to {N}\otimes_A M \to {N''}\otimes_A M\to 0$ is exact. Conversely, if $0\to {N'}\otimes_A M\to {N}\otimes_A M \to {N''}\otimes_A M\to 0$ is exact for any exact sequence $0\to {N'}\to {N} \to {N''}\to 0$ of $C$-modules, 
then the same statement holds for any exact sequence of $A$-modules which means the flatness of 
$M$ over $A$. 

(ii) The only if part follows from (i). The if part is a simple consequence of 
Theorem \ref{t-flat-affine}.

(iii) The only if part follows from (i). For the converse, if $0\to {N'}\to {N} \to {N''}\to 0$ is an exact sequence of $A$-modules, then for any $y\in Y$, 
$0\to {N'}_y\to {N}_y \to {N''}_y\to 0$ is an exact sequence of $\x{O}_y$-modules and the exactness 
of $0\to {N'}_y\otimes_{A} M\to {N}_y\otimes_{A} M \to {N''}_y\otimes_{A} M\to 0$ is the same as 
the exactness of $0\to ({N'}\otimes_A M)_y\to ({N}\otimes_A M)_y \to ({N''}\otimes_A M)_y\to 0$. 
This exactness for every $y$ implies the exactness of 
$0\to {N'}\otimes_A M\to {N}\otimes_A M \to {N''}\otimes_A M\to 0$ which shows that 
$M$ is flat over $A$.

(iv) This follows from (i) since any base change $T\to S$ induces a base change 
$T\to Y$ and for any quasi-coherent $\x{O}_T$-module $\x{L}$ we have 
$\x{F}_S\otimes_S\x{L}\simeq \x{F}\otimes_Y\x{L}$.

(v) Let $V\to Z$ be a base change, and let $0\to \x{L}'\to \x{L}\to \x{L}''\to 0$
be an exact sequence of quasi-coherent $\x{O}_V$-modules. Put $T=V\times_ZS$. 
Since $\x{G}$ is flat over $Z$, by (i), the sequence 
$0\to \x{G}\otimes_Z\x{L}'\to \x{G}\otimes_Z\x{L}\to \x{G}\otimes_Z\x{L}''\to 0$ 
is exact and since $\x{F}_S$ is flat over $S$, by (iv), the sequence 
$$
0\to \x{F}_S\otimes_S(\x{G}\otimes_Z\x{L}')\to \x{F}_S\otimes_S(\x{G}\otimes_Z\x{L})\to \x{F}_S\otimes_S(\x{G}\otimes_Z\x{L}'')\to 0
$$ 
is also exact. The result then follows from the isomorphism
$$
\x{F}_S\otimes_S(\x{G}\otimes_Z\x{L})\simeq (\x{F}\otimes_Y\x{G})\otimes_Z\x{L}
$$ 
and similar isomorphisms for $\x{L}'$ and $\x{L}''$.\\
\end{proof}

\begin{rem}
Note that in the theorem part (v), if we take $g$ to be the identity and $\x{G}=\x{O}_Y$, then the theorem says that $\x{F}$ flat over $Y$ and $h$ flat implies that $\x{F}$ is flat over $Z$. 
On the other hand, if we assume $h=g$, and $\x{F}$ and $\x{G}$ flat over 
$Y$, then $\x{F}\otimes_Y\x{G}$ is flat over $Y$.
\end{rem}

\begin{thm}\label{t-flat-exact-preserving}
Let $f\colon X\to Y$ be a morphism of schemes and $0\to \x{F}'\to \x{F} \to \x{F}''\to 0$ an exact sequence of quasi-coherent $\x{O}_X$-modules where $\x{F}''$ is flat over $Y$. Then, for any 
base change $g\colon S\to Y$ and any 
quasi-coherent $\x{O}_S$-module $\x{G}$ the sequence 
$$
0\to \x{F}'\otimes_Y \x{G}\to \x{F}\otimes_Y\x{G} \to \x{F}''\otimes_Y\x{G}\to 0
$$ 
is exact.
\end{thm}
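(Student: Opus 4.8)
The plan is to reduce to the affine situation and then to a statement about the functor $\Tor_1$. Since exactness of a sequence of quasi-coherent sheaves can be checked locally and all the constructions involved are local on $S$, $X$, and $Y$, I would assume $X=\Spec B$, $Y=\Spec A$, $S=\Spec C$ and $\x{F}'=\widetilde{M'}$, $\x{F}=\widetilde{M}$, $\x{F}''=\widetilde{M''}$, $\x{G}=\widetilde{N}$, where $M',M,M''$ are $B$-modules (hence $A$-modules by restriction of scalars), $N$ is a $C$-module, and, by Theorem \ref{t-flat-affine}, $M''$ is a flat $A$-module. By Construction \ref{c-base-change} the sheaves $\x{F}'\otimes_Y\x{G}$, $\x{F}\otimes_Y\x{G}$, $\x{F}''\otimes_Y\x{G}$ are the sheaves associated to the $(B\otimes_A C)$-modules $M'\otimes_A N$, $M\otimes_A N$, $M''\otimes_A N$; since $M\mapsto \widetilde{M}$ is an exact functor, it therefore suffices to show that
$$
0\to M'\otimes_A N\to M\otimes_A N \to M''\otimes_A N\to 0
$$
is exact.

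Right-exactness of $-\otimes_A N$ applied to $M'\to M\to M''\to 0$ already yields exactness at $M\otimes_A N$ and at $M''\otimes_A N$, so the only point left is the injectivity of $M'\otimes_A N\to M\otimes_A N$. To see this I would pick a resolution $P_\bullet\to N\to 0$ of $N$ by free $A$-modules. Each $P_i$ is flat, so tensoring the exact sequence $0\to M'\to M\to M''\to 0$ with $P_i$ keeps it exact, and we obtain a short exact sequence of complexes
$$
0\to M'\otimes_A P_\bullet \to M\otimes_A P_\bullet \to M''\otimes_A P_\bullet \to 0 .
$$

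Passing to the associated long exact sequence of homology finishes the argument: by right-exactness of the tensor product, the degree-zero homology of these three complexes is $M'\otimes_A N$, $M\otimes_A N$, $M''\otimes_A N$ respectively, while $H_1(M''\otimes_A P_\bullet)=\Tor^A_1(M'',N)$ because $P_\bullet$ is a projective resolution of $N$. Since $M''$ is flat over $A$ we have $\Tor^A_1(M'',N)=0$, so the tail of the long exact sequence reads
$$
0=\Tor^A_1(M'',N)\to M'\otimes_A N\to M\otimes_A N \to M''\otimes_A N\to 0 ,
$$
which provides the missing injectivity. The computation is entirely routine; the only steps deserving care are the reduction — checking that all three tensor operations really are local and are correctly identified with the module-level tensor products of Construction \ref{c-base-change} — and the observation that $H_1(M''\otimes_A P_\bullet)$ is $\Tor^A_1(M'',N)$ as defined above, which uses nothing more than that $P_\bullet$ is a projective resolution of $N$ (no balancing of $\Tor$ is needed).
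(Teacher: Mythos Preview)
Your proof is correct and follows essentially the same route as the paper: reduce to the affine case and invoke the long exact sequence of $\Tor$ together with the vanishing $\Tor^A_1(M'',N)=0$ from flatness of $M''$. The only difference is that you spell out the derivation of the long exact sequence via a projective resolution of $N$, whereas the paper simply quotes it.
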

\begin{proof}
Since the statement is local we may assume that $X=\Spec B$, $Y=\Spec A$, $S=\Spec C$, $\x{G}=\widetilde{N}$,
$\x{F}'=\widetilde{M'}$, $\x{F}=\widetilde{M}$, and $\x{F}''=\widetilde{M''}$.
The exactness of $0\to {M'}\to {M}\to {M''}\to 0$ gives a long exact sequence 
$$
0=\Tor^A_1(M'',N)\to {M'}\otimes_A N\to {M}\otimes_A N\to {M''}\otimes_A N\to 0
$$  
which implies the claim.\\
\end{proof}

Let $f\colon X\to Y$ be a morphism of schemes and $\x{G}$ and $\x{E}$ be   
$\x{O}_Y$-modules. For any open subset $U\subseteq Y$, any morphism $\x{G}|_U\to \x{E}|_U$ 
naturally induces a morphism $f^*\x{G}|_{f^{-1}U}\to f^*\x{E}|_{f^{-1}U}$, that is, 
an element of $f_*\xHom_{\x{O}_X}(f^*\x{G},f^*\x{E})(U)$. So, we get a natural 
morphism 
$$
\xHom_{\x{O}_Y}(\x{G},\x{E}) \to f_*\xHom_{\x{O}_X}(f^*\x{G},f^*\x{E})
$$
and since $f^*$ is left adjoint to $f_*$, we get a bifunctorial morphism 
$$
\alpha\colon f^*\xHom_{\x{O}_Y}(\x{G},\x{E}) \to \xHom_{\x{O}_X}(f^*\x{G},f^*\x{E})
$$\\

\begin{thm}
If $Y$ is Noetherian, $f$ is flat, $\x{G}$ is coherent, and $\x{E}$‌ is quasi-coherent, then 
$\alpha$ is an isomorphism.
\end{thm}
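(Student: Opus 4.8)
The plan is to reduce the statement to a purely module-theoretic assertion and then to the standard fact that $\Hom$ for a finitely presented first argument commutes with flat base change. Since "$\alpha$ is an isomorphism" is a local question on both $X$ and $Y$, I would first pick open affine subsets $\Spec A\subseteq Y$ and $\Spec B\subseteq X$ with $f(\Spec B)\subseteq \Spec A$. By Theorem~\ref{t-flat-affine} the flatness of $f$ means exactly that $B$ is a flat $A$-algebra. Since $Y$ is Noetherian, $A$ is Noetherian, so $\x{G}=\widetilde{M}$ for a finitely generated — hence finitely presented — $A$-module $M$, and $\x{E}=\widetilde{N}$ for an arbitrary $A$-module $N$.

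Next I would identify both sides of $\alpha$ over $\Spec A$, $\Spec B$ with quasi-coherent sheaves attached to explicit modules. Because $Y$ is Noetherian and $\x{G}$ is coherent, $\xHom_{\x{O}_Y}(\x{G},\x{E})$ is quasi-coherent and restricts to $\widetilde{\Hom_A(M,N)}$ on $\Spec A$; since $f^*$ commutes with the $\widetilde{(\ )}$ construction (i.e. $f^*\widetilde{L}=\widetilde{L\otimes_A B}$), the left-hand side is $\widetilde{\Hom_A(M,N)\otimes_A B}$. On the other side, $f^*\x{G}=\widetilde{M\otimes_A B}$ and $f^*\x{E}=\widetilde{N\otimes_A B}$, and $M\otimes_A B$ is a finitely presented $B$-module (apply $-\otimes_A B$ to a finite presentation of $M$), so $\xHom_{\x{O}_X}(f^*\x{G},f^*\x{E})$ restricts to $\widetilde{\Hom_B(M\otimes_A B,\,N\otimes_A B)}$. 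Under these identifications $\alpha$ is the $\widetilde{(\ )}$ of the canonical $B$-linear map $\beta\colon \Hom_A(M,N)\otimes_A B\to \Hom_B(M\otimes_A B,\,N\otimes_A B)$, so it suffices to prove that $\beta$ is an isomorphism.

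For the core lemma, choose a finite presentation $A^s\to A^r\to M\to 0$. Applying $\Hom_A(-,N)$ gives an exact sequence $0\to \Hom_A(M,N)\to N^r\to N^s$, and tensoring with the flat $A$-module $B$ preserves exactness, yielding $0\to \Hom_A(M,N)\otimes_A B\to (N\otimes_A B)^r\to (N\otimes_A B)^s$. On the other hand, $-\otimes_A B$ turns the presentation into $B^s\to B^r\to M\otimes_A B\to 0$, and applying $\Hom_B(-,\,N\otimes_A B)$ gives $0\to \Hom_B(M\otimes_A B,\,N\otimes_A B)\to (N\otimes_A B)^r\to (N\otimes_A B)^s$. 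The map $\beta$ fits into a morphism between these two left-exact sequences which is the identity on the two right-hand terms (both are $(N\otimes_A B)^r$, resp. $(N\otimes_A B)^s$, with the same differential given by the matrix of the presentation), so $\beta$ is an isomorphism by the five lemma.

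I expect the only mildly delicate point to be the bookkeeping of the second paragraph: making sure that $f^*$, the functor $\widetilde{(\ )}$, and the formation of $\xHom$ with a coherent (resp. finitely presented) first argument commute as claimed, and in particular that $\xHom_{\x{O}_Y}(\x{G},\x{E})$ is quasi-coherent — this is precisely where the hypotheses "$Y$ Noetherian" and "$\x{G}$ coherent" are used. The ring-theoretic heart, the five-lemma comparison, is entirely routine, and the flatness of $f$ enters exactly once, to keep $0\to \Hom_A(M,N)\to N^r\to N^s$ exact after tensoring with $B$. (Alternatively, one may run the same argument on stalks: $(f^*\x{G})_x=\x{G}_{f(x)}\otimes_{\x{O}_{f(x)}}\x{O}_x$ is finitely presented, $\x{G}$ coherent makes $\xHom$ commute with stalks, and $\x{O}_x$ is flat over $\x{O}_{f(x)}$, reducing to the same lemma over local rings.)
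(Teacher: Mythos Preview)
Your proposal is correct and follows essentially the same approach as the paper: reduce to the affine case, identify $\alpha$ with the canonical map $\Hom_A(M,N)\otimes_A B\to \Hom_B(M\otimes_A B,N\otimes_A B)$, and prove this is an isomorphism by choosing a finite presentation $A^s\to A^r\to M\to 0$ and comparing the two resulting left-exact sequences. The paper's write-up is terser (it simply displays the commutative diagram and invokes the isomorphism for free modules), but the argument is the same, and your extra bookkeeping about quasi-coherence of $\xHom$ is a welcome clarification rather than a departure.
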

\begin{proof}
The statement is local so we may assume that $X=\Spec B$, $Y=\Spec A$, $\x{G}=\widetilde{N}$, 
and $\x{E}=\widetilde{L}$. The morphism $\alpha$ corresponds to the map 
$$
\Hom_A(N,L)\otimes_A B\to \Hom_B(N\otimes_A B,L\otimes_A B)
$$
It is easy to see that this is an isomorphism if $N=A$ or more generally if $N=A^r$ 
for some $r$. Since $A$ is Noetherian and $N$ is finitely generated over $A$, 
there is an exact sequence $A^s\to A^r\to N\to 0$ which leads to a commutative 
diagram
$$
\xymatrix{
0 \ar[r] & \Hom_A(N,L)\otimes_A B \ar[r]\ar[d] & \Hom_A(A^r,L)\otimes_A B\ar[r]\ar[d] & 
\Hom_A(A^s,L)\otimes_A B\ar[d]\\
0 \ar[r] & \Hom_B(N\otimes_A B,L\otimes_A‌ B) \ar[r] & 
\Hom_B(A^r\otimes_A B,L\otimes_A B)\ar[r] & 
\Hom_B(A^s\otimes_A B,L\otimes_A B)
}
$$
with exact rows which implies the theorem since the right and the middle vertical maps are isomorphisms.\\
\end{proof}

%%%%%%%%%%%%%%%%%

\section{Flat base change}

\begin{constr}
Under the notation of Construction \ref{c-base-change}, for any sheaf $\x{M}$ on $X_S$ we have two spectral sequences given by the compositions 
$h:=fg_S=gf_S$ as follows: 
$$
E^{p,q}_2=R^pf_*R^q{g_S}_*\x{M}\implies R^{p+q}h_*\x{M}
$$
$$
E^{m,n}_2=R^mg_*R^n{f_S}_*\x{M}\implies R^{m+n}h_*\x{M}
$$

The first spectral sequence gives natural morphisms 
$$
E^{p,0}_2=R^pf_*({g_S}_*\x{M})\to E^{p,0}_{\infty}\to R^{p}h_*\x{M}
$$ 
and the second spectral sequence gives morphisms 
$$
R^{p}h_*\x{M}\to E^{0,p}_{\infty}\to E^{0,p}_2=g_*R^p{f_S}_*\x{M}
$$
Combining the two gives a natural morphism 
$R^pf_*({g_S}_*\x{M})\to g_*R^p{f_S}_*\x{M}$. By taking $\x{M}=\x{F}_S$ 
we get a natural morphism $R^pf_*({g_S}_*\x{F}_S)\to g_*R^p{f_S}_*\x{F}_S$. 
On the other hand, the natural morphism $\x{F}\to {g_S}_*\x{F}_S$ induces 
a morphism $R^pf_*\x{F}\to g_*R^p{f_S}_*\x{F}_S$ and since $g^*$ is a 
left adjoint of $g_*$ the latter 
morphism corresponds to a morphism 
$$
\phi\colon g^*R^pf_*\x{F}\to R^p{f_S}_*\x{F}_S
$$ 
which is called the base change morphism of cohomology of the base change $g$. 
In particular, if $X$, $Y$, and $S$ are Noetherian schemes with $Y=\Spec A$ and $S=\Spec C$ 
affine, and if $\x{F}$ is quasi-coherent, then the base change morphism corresponds 
to the map
$$
\psi\colon H^p(X,\x{F})\otimes_A C \to H^p(X_S,\x{F}_S)
$$
\end{constr}
\vspace{0.5cm}

\begin{thm}[Flat base change]\label{t-flat-base-change}
If $X$, $Y$, and $S$ are Noetherian schemes, $g$ is flat, $f$ is separated and of finite type, 
and $\x{F}$ is quasi-coherent, then the base change morphism of cohomology $\phi$ is an isomorphism.
\end{thm}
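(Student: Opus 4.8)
The plan is to reduce everything to a computation with \v{C}ech complexes. Since the base change morphism $\phi$ and the assertion that it is an isomorphism are both local on $Y$ and on $S$, I would assume from the start that $Y=\Spec A$ and $S=\Spec C$ are affine, with $C$ flat over $A$; then $\phi$ is identified with the map
$$
\psi\colon H^p(X,\x{F})\otimes_A C\to H^p(X_S,\x{F}_S),
$$
and it suffices to prove $\psi$ is an isomorphism. Note that $X$ is Noetherian and separated over $Y$, that $f_S$ is again separated and of finite type, and that $X_S$ is Noetherian, so the hypotheses are symmetric under the base change.

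As a warm-up and the eventual base case I would treat $X=\Spec B$ affine, $\x{F}=\widetilde{M}$. Here $H^p(X,\x{F})=0$ for $p>0$ and $H^0(X,\x{F})=M$, while $X_S=\Spec(B\otimes_A C)$ is affine, so $H^p(X_S,\x{F}_S)=0$ for $p>0$ and $H^0(X_S,\x{F}_S)=M\otimes_A C$; unwinding the construction of $\phi$ shows that $\psi$ is the identity of $M\otimes_A C$ in degree $0$, so $\psi$ is an isomorphism. (Flatness is not needed for this step.)

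For general $X$, I would use \v{C}ech cohomology. Because $X$ is Noetherian and separated, it has a finite cover $\mathcal{U}=\{U_i\}$ by open affines all of whose finite intersections $U_{i_0\cdots i_q}$ are again affine, and by the comparison theorem between \v{C}ech and derived-functor cohomology for quasi-coherent sheaves on a Noetherian separated scheme, $H^p(X,\x{F})$ is the $p$-th cohomology of the complex $\x{C}^\bullet(\mathcal{U},\x{F})$ of $A$-modules. Pulling $\mathcal{U}$ back along $g_S\colon X_S\to X$ yields a finite affine cover $\mathcal{U}_S$ of $X_S$ whose finite intersections are the spectra of the rings $\x{O}_X(U_{i_0\cdots i_q})\otimes_A C$, which gives a natural isomorphism of complexes
$$
\x{C}^\bullet(\mathcal{U}_S,\x{F}_S)\simeq \x{C}^\bullet(\mathcal{U},\x{F})\otimes_A C,
$$
the point being that the cochain groups are finite products, which commute with $\otimes_A C$. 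Since $C$ is flat over $A$, tensoring with $C$ commutes with taking cohomology of a complex of $A$-modules, so
$$
H^p(X_S,\x{F}_S)\simeq H^p(\x{C}^\bullet(\mathcal{U},\x{F})\otimes_A C)\simeq H^p(\x{C}^\bullet(\mathcal{U},\x{F}))\otimes_A C\simeq H^p(X,\x{F})\otimes_A C.
$$

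The main obstacle I anticipate is not producing \emph{some} isomorphism between these groups — that is the last display — but checking that this composite agrees with the map $\psi$ built from the two Leray spectral sequences for $h=fg_S=gf_S$ and the adjunction $(g^*,g_*)$. I would settle this by computing $\phi$ on the level of the \v{C}ech resolution: on each affine piece of $\mathcal{U}$ the higher direct images under $f$ and $f_S$ vanish, so both spectral sequences degenerate there and $\phi$ is forced to be the canonical comparison map $H^p(\x{C}^\bullet)\otimes_A C\to H^p(\x{C}^\bullet\otimes_A C)$; functoriality of this comparison and of $\phi$ then reduces the verification to the affine case of $X$ already treated. Putting these steps together shows that $\psi$, and hence $\phi$, is an isomorphism.
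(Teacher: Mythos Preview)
Your argument is correct and follows essentially the same route as the paper: reduce to $Y$ and $S$ affine, compute both sides via the \v{C}ech complex of a finite affine cover of $X$ and its pullback to $X_S$, and use flatness of $C$ over $A$ to commute $\otimes_A C$ with cohomology. Your extra care in checking that the resulting isomorphism agrees with $\psi$ is a point the paper leaves implicit.
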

\begin{proof} 
The problem is local on both $Y$ and $S$ so it is enough to prove that 
$\psi$ is an isomorphism. Let $\mathcal{U}=(U_i)_{i\in I}$ be an open 
covering of $X$ by finitely many open affine subschemes. The open subschemes 
$V_i=S\times_YU_i$ give an open affine covering $\mathcal{V}$ of $X_S$. Since $X$ and $X_S$ are  
Noetherian and separated, and since $\x{F}$ and $\x{F}_S$ are quasi-coherent, 
the cohomology  objects of the \v{C}ech complex 
$$
{C}^\bullet:\hspace{1cm} 0 \to {C}^0(\mathcal{U},\x{F}) \to {C}^{1}(\mathcal{U},\x{F}) \to \cdots
$$
are the cohomology groups $H^p(X,\x{F})$, and the cohomology objects of the \v{C}ech complex 
$$
{C}^\bullet_S:\hspace{1cm} 0 \to {C}^0(\mathcal{V},\x{F}_S) \to {C}^{1}(\mathcal{V},\x{F}_S) \to \cdots
$$
are the cohomology groups $H^p(X_S,\x{F}_S)$.

On the other hand, for each $p$, ${C}^p(\mathcal{V},\x{F}_S)\simeq {C}^p(\mathcal{U},\x{F})\otimes_AC$. Moreover, since $g$ is flat, $C$ is flat over $A$. Therefore, the cohomology objects of $C^\bullet_S$ are the cohomology objects 
of $C^\bullet$ tensored with $C$, that is, $\check{H}^p(\mathcal{V},\x{F}_S)\simeq \check{H}^p(\mathcal{U},\x{F})\otimes_AC$ which implies that $\psi$ is an isomorphism.\\
\end{proof}

\begin{rem}
One important situation in which one can apply Theorem \ref{t-flat-base-change} 
is when $g$ is the natural morphism $S=\Spec \x{O}_y\to Y$ for some $y\in Y$.  
In the base change section, we will study the base change morphism of cohomology induced by 
base changes of the form $g\colon \Spec k(y)\to Y$. In general, this kind 
of morphism is not flat so the theorem does not apply. However, if 
$y$ is the generic point of an integral component of $Y$, then $g$ is flat 
because in this case $k(y)=\x{O}_y$.
\end{rem}

%%%%%%%%%%%%%%%%%%%%%%%%%%%%%%%%%
\section{The $T^p_{\x{F}}$ functors and the Grothendieck complex} 

\begin{defn}\label{d-flat-T}
Let $f\colon X\to Y=\Spec A$ be a projective morphism of Noetherian schemes, 
and let $\x{F}$ be a coherent sheaf on $X$. We define 
the functors $T^p_{\x{F}}(-)\colon \mathfrak{M}(A)\to \mathfrak{M}(A)$ associated with this setting as 
$$
T^p_{\x{F}}(M):=H^p(X,\x{F}\otimes_YM)
$$
Note that the functorial homomorphism $M\simeq \Hom_A(A,M)\to \Hom_A(T^p_{\x{F}}(A),T^p_{\x{F}}(M))$ 
corresponds to a functorial homomorphism 
$\beta\colon T^p_{\x{F}}(A)\otimes_A M\to T^p_{\x{F}}(M)$. 
\end{defn}

Note that if $\x{F}$ is flat over $Y$, any exact sequence $0\to M'\to M \to M'' \to 0$ of $A$-modules gives an 
exact sequence $0\to \x{F}\otimes_YM'\to \x{F}\otimes_YM \to \x{F}\otimes_YM'' \to 0$ 
and by taking the associated long exact sequence one can see that the functors 
$T^p_{\x{F}}$ form a $\delta$-functor and they are always exact in the middle.

Before further studying the above functors, we replace the geometric situation 
with an algebraic one.\\

\begin{thm}\label{t-flat-complex}
Under the assumptions of Definition \ref{d-flat-T} with $\x{F}$ flat over $Y$, there is a finite length complex 
$$
L^\bullet: \hspace{1cm} 0\to L^0 \to L^1 \to \cdots
$$ 
of  finitely generated flat $A$-modules, 
called the Grothendieck complex, 
such that for any $p$ and any $A$-module $M$, $T^p_{\x{F}}(M)\simeq h^p(L^\bullet\otimes_AM)$ 
where $h^p(L^\bullet\otimes_AM)$ is the $p$-th cohomology object of the complex $L^\bullet \otimes_A M$. Moreover, the isomorphism is functorial in $M$.
\end{thm}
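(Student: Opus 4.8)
The idea is the standard one: realize the cohomology groups $H^p(X,\x{F}\otimes_Y M)$ via \v{C}ech cochains and then extract a complex of finitely generated flat modules that computes all the $T^p_{\x F}$ simultaneously. First I would reduce to \v{C}ech cohomology: since $f$ is projective with $Y=\Spec A$ affine, fix a closed immersion $X\hookrightarrow \PP^n_Y$ and let $\mathcal U=(U_i)_{i=0}^n$ be the affine cover of $X$ induced by the standard cover of $\PP^n_Y$. For any $A$-module $M$, the sheaf $\x{F}\otimes_Y M$ is quasi-coherent on $X$, which is Noetherian and separated, so its cohomology is computed by the \v{C}ech complex $C^\bullet(\mathcal U,\x{F}\otimes_Y M)$. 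The key point, checked on each affine piece just as in the proof of Theorem \ref{t-flat-base-change}, is that $C^q(\mathcal U,\x{F}\otimes_Y M)\simeq C^q(\mathcal U,\x{F})\otimes_A M$ functorially in $M$; write $C^q:=C^q(\mathcal U,\x{F})$. Thus $T^p_{\x F}(M)\simeq h^p(C^\bullet\otimes_A M)$ for all $p$ and all $M$, functorially. This complex $C^\bullet$ has length $n$ but its terms are usually not finitely generated over $A$.

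Next I would upgrade $C^\bullet$ to a complex of finitely generated flat modules. Because $\x{F}$ is flat over $Y$, each $C^q$ is a flat $A$-module: indeed $C^q$ is a finite direct sum of modules of the form $\Gamma(U_{i_0\dots i_q},\x{F})$, and $\x{F}|_{U_{i_0\dots i_q}}$ is a quasi-coherent sheaf flat over $A$, so its module of sections is a flat $A$-module. Moreover all the $C^q$ vanish outside $0\le q\le n$ and all cohomology modules $T^p_{\x F}(A)=H^p(X,\x{F})$ are finitely generated over the Noetherian ring $A$ (by the projectivity of $f$ and coherence of $\x{F}$). I would now apply the standard homological lemma: given a bounded-above complex $C^\bullet$ of flat $A$-modules with finitely generated cohomology over a Noetherian $A$, one can find a bounded complex $L^\bullet$ of finitely generated flat (equivalently, projective) $A$-modules together with a quasi-isomorphism $L^\bullet\to C^\bullet$; since $C^\bullet$ is concentrated in degrees $[0,n]$ one arranges $L^\bullet$ concentrated in $[0,n]$ as well. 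The construction is by descending induction: build finitely generated free modules surjecting onto the relevant cycles/cohomology and lift, truncating at the top degree using that $C^{>n}=0$ forces the last map of $L^\bullet$ to have the expected cokernel behaviour. I would spell this out as a short lemma or cite it as well known (it is the "complex of finite type" construction, essentially EGA III §6 / Mumford's \emph{Abelian Varieties}).

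Finally I would propagate the quasi-isomorphism after tensoring. The point is that $q\colon L^\bullet\to C^\bullet$ is a quasi-isomorphism between complexes of \emph{flat} $A$-modules, hence it remains a quasi-isomorphism after applying $-\otimes_A M$ for every $A$-module $M$: the mapping cone of $q$ is an acyclic bounded complex of flat modules, so it stays acyclic after $\otimes_A M$ (an acyclic bounded complex of flats is a "flat resolution of $0$", so its tensor product with anything is acyclic). Therefore $h^p(L^\bullet\otimes_A M)\simeq h^p(C^\bullet\otimes_A M)\simeq T^p_{\x F}(M)$, functorially in $M$ because the whole construction — the \v{C}ech identification and the map $q$ — is functorial. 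This gives the Grothendieck complex $L^\bullet\colon 0\to L^0\to\cdots\to L^n\to 0$ of finitely generated flat $A$-modules with $T^p_{\x F}(M)\simeq h^p(L^\bullet\otimes_A M)$ for all $p$, as required.

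\textbf{Main obstacle.} The routine part is the \v{C}ech reduction; the substantive step is the homological lemma producing a bounded complex of finitely generated flats quasi-isomorphic to $C^\bullet$, and in particular making sure the top-degree truncation does not introduce an extra term (one needs $C^\bullet$ bounded, which it is, and one needs flatness — not just projectivity — preserved at the truncated spot, which works because over a Noetherian ring a finitely generated flat module is projective). Getting the functoriality in $M$ stated cleanly, i.e. that the isomorphism $T^p_{\x F}(M)\simeq h^p(L^\bullet\otimes_A M)$ is induced by a single fixed chain map independent of $M$, is the part that requires a little care but no real difficulty.
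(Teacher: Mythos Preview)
Your plan is essentially the paper's proof: reduce to the \v{C}ech complex $C^\bullet$, build $L^\bullet$ by descending induction with free modules in degrees $>0$ and a special $L^0$, then propagate the quasi-isomorphism after $-\otimes_A M$. Two small remarks. First, the delicate truncation is at the \emph{bottom}, not the top: the paper sets $L^0=\ker a^1\oplus H^0(X,\x{F})$ and shows it is flat precisely by your mapping-cone trick (the cone of $L^\bullet\to C^\bullet$ is exact, bounded, and flat in all positive degrees, hence its degree-$0$ term $L^0$ is flat); the equivalence ``finitely generated flat $\Leftrightarrow$ projective'' over Noetherian rings plays no role in that step. Second, for the preservation of the quasi-isomorphism under $-\otimes_A M$ the paper does not reuse the mapping cone but argues directly: reduce to finitely generated $M$ via direct limits, handle free $M$ trivially, and for general finitely generated $M$ take $0\to K\to F\to M\to 0$ with $F$ free and finitely generated, tensor both complexes, and run descending induction on $p$ using the resulting long exact sequences. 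Your mapping-cone argument for this step is cleaner and equally valid.
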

\begin{proof}
 Let $\mathcal{U}=(U_i)_{i\in I}$ be an open 
covering of $X$ by finitely many open affine subschemes. 
Consider the \v{C}ech complex 
$$
{C}^\bullet(\mathcal{U},\x{F}):\hspace{1cm} 0 \to {C}^0(\mathcal{U},\x{F}) \to {C}^{1}(\mathcal{U},\x{F}) \to \cdots
$$
Then, for any $A$-module $M$, we have 
$$
{C}^\bullet(\mathcal{U},\x{F}\otimes_YM)\simeq {C}^\bullet(\mathcal{U},\x{F})\otimes_AM
$$
where the left hand side is the \v{C}ech complex of $\x{F}\otimes_YM$. In particular, 
this means that $H^p(X,\x{F}\otimes_YM)\simeq h^p({C}^\bullet(\mathcal{U},\x{F})\otimes_AM)$. 
Even though the modules ${C}^p(\mathcal{U},\x{F})$ are flat over $A$ but they are not necessarily 
finitely generated. However, the cohomology objects of ${C}^\bullet(\mathcal{U},\x{F})$ 
are finitely generated over $A$.

We will construct $L^\bullet$ out of ${C}^\bullet(\mathcal{U},\x{F})$ as follows. 
For $p\gg 0$ put $L^p=0$. We use descending induction, so assume that we have constructed 
$L^\bullet$ up to some $p>0$ so that we have a commutative diagram 
$$
\xymatrix{
 & L^p \ar[r]^{e^p}\ar[d]^{c^p} & L^{p+1} \ar[d]^{c^{p+1}} \ar[r] &\cdots\\
{C}^{p-1}(\mathcal{U},\x{F}) \ar[r]^{d^{p-1}}& {C}^p(\mathcal{U},\x{F}) \ar[r]^{d^p}& {C}^{p+1}(\mathcal{U},\x{F}) \ar[r]& \cdots
}
$$
such that  the $L^i$ are finitely generated flat $A$-modules, and for any $i>p$,  $h^i(L^\bullet)\simeq h^i({C}^\bullet(\mathcal{U},\x{F}))$ and the natural surjective map 
$\ker d^p\to h^p({C}^\bullet(\mathcal{U},\x{F}))$ induces a surjective map 
$a^p\colon \ker e^p\to h^p({C}^\bullet(\mathcal{U},\x{F}))$. If $p=1$, we let 
$L^0=\ker a^1\oplus H^0(X,\x{F})$ and the maps $e^0$ and $c^0$ are given by the 
obvious first and second projections. We will show below that $L^0$ is flat. 
Meanwhile, if $p>1$ we proceed as follows.
Let $L'\to L^p$ be a homomorphism from a finitely generated free $A$-module which surjects onto $\ker a^p$. The induced 
map $L'\to {C}^{p}(\mathcal{U},\x{F})$ factors through a map $L'\to {C}^{p-1}(\mathcal{U},\x{F})$ because $c^p(\ker a^p)\subseteq \im d^{p-1}$ and because $L'$ is a projective $A$-module. 
On the other hand, let $L''$ be a free finitely generated $A$-module with a map 
$L''\to \ker d^{p-1}$ which induces a surjection $L''\to h^{p-1}({C}^\bullet(\mathcal{U},\x{F}))$. 
By taking the zero map $L''\to L^p$ and putting $L^{p-1}:=L'\oplus L''$ we 
have succeeded in completing the induction step. To summarise, we have a morphism 
$L^\bullet\to {C}^\bullet(\mathcal{U},\x{F})$ inducing isomorphisms on the cohomology 
objects and $L^p$ is a finitely generated flat $A$-module for any $p>0$.

Now we show that $L^0$ is a flat $A$-module. Let $D^\bullet$ be the complex in which 
$D^p=L^p\oplus C^{p-1}(\mathcal{U},\x{F})$ and the map $b^p\colon D^p\to D^{p+1}$ 
is given by $b^p(s,t)=(e^p(s),d^{p-1}(t)+(-1)^pc^p(s))$. It is easy to see that 
in fact $D^\bullet$ is an exact sequence. Since $D^p$ is flat for any $p>0$, and 
since $D^0=L^0$, we deduce that $L^0$ is also flat. 

Now, if $M$ is an $A$-module, then we have a morphism 
$L^\bullet\otimes_AM\to {C}^\bullet(\mathcal{U},\x{F})\otimes_AM$ which induces isomorphisms 
on the cohomology objects as follows. First, $M$ can be regarded as the direct limit 
of its finitely generated submodules and this reduces the problem to the finitely 
generated case because tensor product and taking cohomology both commute with direct 
limits. Second, if $M$ is a free finitely generated $A$-module, then obviously  
$L^\bullet\otimes_AM\to {C}^\bullet(\mathcal{U},\x{F})\otimes_AM$ induces isomorphisms 
on the cohomology objects. 
Finally, for the general finitely generated $M$ we take an exact sequence $0\to K\to F\to M\to 0$ of $A$-modules with $F$ free 
and finitely generated. Since every object in the two complexes $L^\bullet$ and 
${C}^\bullet(\mathcal{U},\x{F})$ are flat we get a commutative diagram of 
complexes
$$
\xymatrix{
0\ar[r]& L^\bullet\otimes_AK \ar[r]\ar[d]& 
L^\bullet\otimes_AF\ar[r]\ar[d]& 
L^\bullet\otimes_AM \ar[d]\ar[r]& 0\\ 
0\ar[r]& {C}^\bullet(\mathcal{U},\x{F})\otimes_AK \ar[r] &
{C}^\bullet(\mathcal{U},\x{F})\otimes_A\ar[r] F& 
{C}^\bullet(\mathcal{U},\x{F})\otimes_AM \ar[r] &0
}
$$
with exact rows whose long exact sequences allow descending induction on $p$ 
to prove the isomorphisms for $M$.\\
\end{proof}

%%%%%%%%%%%%%%%%%%%%%%%%%%%%%%%%%%
\section{Exactness properties of $T^p_{\x{F}}$}

In this section, we assume the setting and notation of Definition \ref{d-flat-T} 
and also assume that $\x{F}$ is flat over $Y$. 
We assume that $L^\bullet$ is the Grothendieck complex of Theorem 
\ref{t-flat-complex}.
We would like to know when the functors $T^p_{\x F}(-)$ are left exact, right exact, 
and exact.\footnote{
In this section, the theorem on the formal functions is used for which we refer to Hartshorne [\ref{Hartshorne}, III, \S 11].}

For 
any complex $N^\bullet$, we introduce the notation 
$W^p(N^\bullet):={\rm{coker}}~ N^{p-1}\to N^{p}$ which then induces exact sequences 
$$
N^{p-1}\to N^p\to W^p(N^\bullet) \to 0
$$
$$
0\to h^p(N^\bullet)\to W^p(N^\bullet) \to N^{p+1}
$$ 

\begin{thm}\label{t-flat-left-exact}
The following are equivalent:

$(i)$ the functor $T^p_{\x{F}}$ is left exact,

$(ii)$ the module $W^p(L^\bullet)$ is flat over $A$,

$(iii)$ there is a unique finitely generated $A$-module 
$Q$ such that we have a functorial isomorphism
$T^p_{\x{F}}(M)\simeq \Hom_A(Q,M)$ for every $A$-module $M$.
\end{thm}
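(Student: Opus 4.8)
The plan is to prove $(ii) \Rightarrow (iii) \Rightarrow (i) \Rightarrow (ii)$, using the Grothendieck complex $L^\bullet$ of finitely generated flat $A$-modules with $T^p_{\x F}(M) \simeq h^p(L^\bullet \otimes_A M)$ functorially in $M$. Throughout I would work with the exact sequences associated to $W^p(L^\bullet)$, namely $L^{p-1} \to L^p \to W^p(L^\bullet) \to 0$ and $0 \to h^p(L^\bullet) \to W^p(L^\bullet) \to L^{p+1}$, together with the analogous sequence $0 \to W^p(L^\bullet) \to L^{p+1} \to W^{p+1}(L^\bullet) \to 0$ which exhibits $W^{p+1}$ as a quotient; note also that $W^p(L^\bullet \otimes_A M) = W^p(L^\bullet) \otimes_A M$ since $L^\bullet$ consists of flat modules and tensoring is right exact.

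\textbf{The implication $(ii) \Rightarrow (iii)$.} Assume $W^p(L^\bullet)$ is flat. Tensoring $L^{p-1} \to L^p \to W^p(L^\bullet) \to 0$ with $M$ is right exact, so $W^p(L^\bullet \otimes_A M) \simeq W^p(L^\bullet) \otimes_A M$, and since $W^p(L^\bullet)$ is flat, tensoring the left-exact sequence $0 \to W^{p-1}(L^\bullet) \to L^p \to W^p(L^\bullet) \to 0$ — wait, more directly: flatness of $W^p(L^\bullet)$ means $\Tor_1^A(W^p(L^\bullet), M) = 0$, so applying $-\otimes_A M$ to $0 \to h^p(L^\bullet) \to W^p(L^\bullet) \to Z^{p+1} \to 0$ where $Z^{p+1} = \ker(L^{p+1}\to L^{p+2})$... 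I would instead argue: the complex $0 \to W^p(L^\bullet) \to L^{p+1} \to L^{p+2} \to \cdots$ computes $h^i(L^\bullet)$ for $i \geq p$ in its cohomology after the first spot, and $h^p(L^\bullet) = \ker(W^p(L^\bullet) \to L^{p+1})$. The cleanest route: since $W^p(L^\bullet)$ is a finitely generated flat $A$-module and $A$ is Noetherian, $W^p(L^\bullet)$ need not be projective unless $A$ is local, so this is a genuine subtlety — but $h^p(L^\bullet) \otimes_A M \hookrightarrow W^p(L^\bullet)\otimes_A M$ still holds when $W^p(L^\bullet)/h^p(L^\bullet) \hookrightarrow L^{p+1}$ has flat-enough quotient behavior. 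The honest approach is to show $W^p(L^\bullet)$ flat forces $h^p(L^\bullet\otimes_A M) = \Hom_A(Q,M)$ with $Q = \operatorname{coker}(\text{some map of f.g. modules})$, realized via the five-term-type exact sequence; take $Q$ the cokernel of the transpose map $(L^{p+1})^\vee \to (W^p(L^\bullet))^\vee$ after showing $W^p(L^\bullet)$ is projective — which does follow because $W^p(L^\bullet)$ flat and finitely generated over Noetherian $A$ need not be projective globally, so actually one should localize and use the functorial isomorphism to glue, or invoke that $W^p(L^\bullet)$ being flat makes $L^{p+1} \to W^{p+1}(L^\bullet)$ split the relevant sequence. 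I expect this is the step requiring the most care: establishing that flatness of $W^p(L^\bullet)$ yields a representing object $Q$, likely by showing the complex can be truncated/replaced near position $p$ so that $h^p$ is computed by $\Hom_A(Q,-)$ for $Q = \operatorname{coker}$ of a map between duals of projectives.

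\textbf{The implications $(iii) \Rightarrow (i)$ and $(i) \Rightarrow (ii)$.} The implication $(iii) \Rightarrow (i)$ is immediate: $\Hom_A(Q,-)$ is left exact for any $Q$, and uniqueness of $Q$ follows from Yoneda (a representable functor determines its representing object up to unique isomorphism). For $(i) \Rightarrow (ii)$: suppose $T^p_{\x F}$ is left exact. I would apply left-exactness to short exact sequences $0 \to M' \to M \to M'' \to 0$ and compare with the long cohomology sequence of $0 \to L^\bullet\otimes M' \to L^\bullet \otimes M \to L^\bullet \otimes M'' \to 0$ (exact since the $L^i$ are flat), which reads $\cdots \to T^{p-1}_{\x F}(M'') \to T^p_{\x F}(M') \to T^p_{\x F}(M) \to T^p_{\x F}(M'') \to T^{p+1}_{\x F}(M') \to \cdots$. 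Left-exactness of $T^p_{\x F}$ means $0 \to T^p_{\x F}(M') \to T^p_{\x F}(M)$ is exact, which forces the connecting map $T^{p-1}_{\x F}(M'') \to T^p_{\x F}(M')$ to be zero for all such sequences. Translating this into the complex language: $h^{p}(L^\bullet \otimes M) $ and the flatness of $W^p$. Specifically, $W^p(L^\bullet)$ flat is equivalent to $\Tor_1^A(W^p(L^\bullet), M) = 0$ for all $M$; and $\Tor_1^A(W^p(L^\bullet),M)$ fits into the comparison because $W^p(L^\bullet \otimes M) = W^p(L^\bullet)\otimes M$, while the kernel of $W^p(L^\bullet)\otimes M \to L^{p+1}\otimes M$ versus $\ker(W^p(L^\bullet)\to L^{p+1})\otimes M$ differ exactly by $\Tor_1^A(W^{p+1}(L^\bullet), M)$, and chasing this through shows left-exactness of $T^p_{\x F}$ is equivalent to vanishing of the relevant Tor, hence to flatness of $W^p(L^\bullet)$. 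I would write this last chase carefully using the sequence $0 \to W^p(L^\bullet) \to L^{p+1} \to W^{p+1}(L^\bullet) \to 0$ and the fact that $h^p(L^\bullet \otimes M)$ is the kernel of $W^p(L^\bullet)\otimes M \to L^{p+1}\otimes M$.
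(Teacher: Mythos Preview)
Your proposal has a genuine gap rooted in a factual error: you claim that a finitely generated flat module over a Noetherian ring ``need not be projective unless $A$ is local.'' This is false --- over any Noetherian ring, finitely generated flat modules are projective (the paper even recalls this fact at the start of the flatness chapter). This mistaken belief is precisely what derails your $(ii) \Rightarrow (iii)$ argument. Once you know $W^p(L^\bullet)$ is finitely generated projective (as is $L^{p+1}$), the construction is clean: both satisfy $P \otimes_A M \simeq \Hom_A(\Hom_A(P,A), M)$ functorially, so setting $Q = \operatorname{coker}\bigl(\Hom_A(L^{p+1},A) \to \Hom_A(W^p(L^\bullet),A)\bigr)$ and applying the left-exact $\Hom_A(-,M)$ gives $\Hom_A(Q,M) \simeq \ker\bigl(W^p(L^\bullet)\otimes_A M \to L^{p+1}\otimes_A M\bigr) = h^p(L^\bullet \otimes_A M)$. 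You circle around exactly this but never land on it because of the projectivity confusion.

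A second, smaller error: the sequence you write as $0 \to W^p(L^\bullet) \to L^{p+1} \to W^{p+1}(L^\bullet) \to 0$ is not exact --- the kernel of $W^p \to L^{p+1}$ is $h^p(L^\bullet)$, not zero --- and this undermines the Tor-chase you sketch for $(i) \Rightarrow (ii)$. The paper's argument for $(i) \Leftrightarrow (ii)$ is more direct and avoids this: for an injection $0 \to M' \to M$, compare the two left-exact rows $0 \to h^p(L^\bullet \otimes_A M') \to W^p(L^\bullet)\otimes_A M' \to L^{p+1}\otimes_A M'$ and the analogous row for $M$. Since $L^{p+1}$ is flat the right vertical map is always injective, so injectivity of the left vertical map (left-exactness of $T^p_{\x F}$) is equivalent to injectivity of the middle vertical map (left-exactness of $W^p(L^\bullet)\otimes_A -$, i.e.\ flatness of $W^p(L^\bullet)$).
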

\begin{proof}

(i) $\Leftrightarrow$ (ii): $T^p_{\x{F}}$ is left exact if and only if for any 
exact sequence $0\to M'\to M$ of $A$-modules the induced 
map $T^p_{\x{F}}(M')\to T^p_{\x{F}}(M)$ is injective. We have a 
commutative diagram 
$$
\xymatrix{
0\ar[r]& T^p_{\x{F}}(M')\simeq h^p(L^\bullet\otimes_AM')\ar[r] \ar[d] & 
W^p(L^\bullet\otimes_AM')\ar[r]\ar[d] & 
L^{p+1}\otimes_AM' \ar[d] \\
0\ar[r]& T^p_{\x{F}}(M)\simeq h^p(L^\bullet\otimes_AM)\ar[r]& 
W^p(L^\bullet\otimes_AM)\ar[r] & 
L^{p+1}\otimes_AM \\
}
$$ 
with exact rows and satisfying $W^p(L^\bullet\otimes_AM')\simeq W^p(L^\bullet)\otimes_AM'$ 
and $W^p(L^\bullet\otimes_AM)\simeq W^p(L^\bullet)\otimes_AM$. Since $L^{p+1}$ 
is flat the right vertical map is always injective hence the middle vertical map is injective  if and only if the left one is injective. That is, $T^p_{\x{F}}(-)$ is left exact if and only if 
$W^p(L^\bullet)\otimes_A-$ is left exact. The latter is equivalent to the flatness 
(hence projectivity) of $W^p(L^\bullet)$.
  
(ii) $\implies$ (iii): Since $W^p(L^\bullet)$ and $L^{p+1}$ are both flat and finitely 
generated they satisfy 
$$
\Hom_A(\Hom_A(W^p(L^\bullet),A),M)\simeq W^p(L^\bullet)\otimes_AM
$$
and 
$$
\Hom_A(\Hom_A(L^{p+1},A),M)\simeq L^{p+1}\otimes_AM
$$ 
for any $A$-module $M$ (these isomorphisms are easier to recognize on the 
level of the associated sheaves). If we put 
$Q:={\rm{coker}}~ \Hom_A(L^{p+1},A)\to \Hom_A(W^p(L^\bullet),A)$ 
then since $\Hom_A(-,M)$ is left exact, we get an exact sequence 
$$
0 \to \Hom_A(Q,M)\to W^p(L^\bullet)\otimes_AM \to L^{p+1}\otimes_AM
$$
for any $A$-module $M$. Comparing this with the above sequences 
proves the claim. 

For the uniqueness of $Q$: if $Q'$ is any other finitely generated $A$-module 
satisfying $T^p_{\x F}(M)\simeq \Hom_A(Q',M)$ for any $A$-module $M$, then having 
the isomorphisms 
$$
\Hom_A(N,\Hom_A(Q,M))\simeq \Hom_A(N\otimes_A Q,M)
$$ 
and 
$$
\Hom_A(N,\Hom_A(Q',M))\simeq \Hom_A(N\otimes_A Q',M)
$$ 
for any $A$-modules $M,N$ 
means that the two functors $-\otimes_AQ$ and $-\otimes_AQ'$ are both 
left adjoints of $T^p_{\x F}(M)$ hence they must be isomorphic. This is 
possible only if $Q\simeq Q'$.

(iii) $\implies$ (i): Since $\Hom_A(Q,-)$ is left exact, the statement is clear.   
\end{proof}

\begin{thm}\label{t-flat-right-exact}
The following statements are equivalent:

$(i)$ the functor $T^p_{\x{F}}$ is right exact,

$(ii)$ the map $\beta\colon T^p_{\x{F}}(A)\otimes_A M\to T^p_{\x{F}}(M)$ 
is surjective for any $A$-module $M$,

$(iii)$ the map $\beta\colon T^p_{\x{F}}(A)\otimes_A M\to T^p_{\x{F}}(M)$ 
is an isomorphism for any $A$-module $M$.
\end{thm}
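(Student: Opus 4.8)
The plan is to prove the cycle of implications (iii) $\implies$ (ii) $\implies$ (i) $\implies$ (iii), working throughout with the Grothendieck complex $L^\bullet$ of Theorem \ref{t-flat-complex}, so that $T^p_{\x{F}}(M)\simeq h^p(L^\bullet\otimes_A M)$ functorially in $M$. The implication (iii) $\implies$ (ii) is trivial, an isomorphism being surjective. Before the rest I would record one preliminary observation: $\beta$ is an isomorphism whenever $M$ is a free $A$-module. For $M=A$ the map $\beta$ is the canonical identification $T^p_{\x{F}}(A)\otimes_A A\simeq T^p_{\x{F}}(A)$, and since $\beta$ is additive and $T^p_{\x{F}}(-)=h^p(L^\bullet\otimes_A-)$ commutes with arbitrary direct sums (direct sums being exact), $\beta_{A^{(I)}}$ is a direct sum of copies of $\beta_A$ and hence an isomorphism.

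For (ii) $\implies$ (i): since $\x{F}$ is flat over $Y$, the $T^p_{\x{F}}$ form a $\delta$-functor, so for every short exact sequence $0\to M'\to M\to M''\to 0$ the sequence $T^p_{\x{F}}(M')\to T^p_{\x{F}}(M)\to T^p_{\x{F}}(M'')$ is exact; thus it suffices to prove that $T^p_{\x{F}}(M)\to T^p_{\x{F}}(M'')$ is surjective. This follows from the square expressing naturality of $\beta$ along the surjection $M\to M''$: its top arrow $T^p_{\x{F}}(A)\otimes_A M\to T^p_{\x{F}}(A)\otimes_A M''$ is surjective because tensor product is right exact, and its right vertical arrow $\beta_{M''}$ is surjective by (ii), so the bottom arrow $T^p_{\x{F}}(M)\to T^p_{\x{F}}(M'')$ is surjective. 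Since an additive functor is right exact precisely when it preserves cokernels --- equivalently, when it carries every short exact sequence $0\to M'\to M\to M''\to 0$ to an exact sequence $F(M')\to F(M)\to F(M'')\to 0$ --- this gives (i).

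For (i) $\implies$ (iii): let $M$ be an arbitrary $A$-module and choose a presentation $F_1\to F_0\to M\to 0$ with $F_0$ and $F_1$ free. Applying the right exact functors $T^p_{\x{F}}(A)\otimes_A-$ and (by hypothesis) $T^p_{\x{F}}(-)$ to this presentation yields a commutative diagram with exact rows
$$
\xymatrix{
T^p_{\x{F}}(A)\otimes_A F_1 \ar[r]\ar[d]^{\beta_{F_1}} & T^p_{\x{F}}(A)\otimes_A F_0 \ar[r]\ar[d]^{\beta_{F_0}} & T^p_{\x{F}}(A)\otimes_A M \ar[r]\ar[d]^{\beta_M} & 0 \\
T^p_{\x{F}}(F_1) \ar[r] & T^p_{\x{F}}(F_0) \ar[r] & T^p_{\x{F}}(M) \ar[r] & 0
}
$$
in which $\beta_{F_0}$ and $\beta_{F_1}$ are isomorphisms by the preliminary observation. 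A short diagram chase (using that the rows are exact and the left two vertical maps are isomorphisms) then forces $\beta_M$ to be an isomorphism, which closes the cycle.

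The whole argument is structurally short once Theorem \ref{t-flat-complex} is available, and I expect no serious obstacle; the only points demanding a little care are formal. One must use that right exactness of an additive functor is detected on short exact sequences, so that the middle-exactness supplied by the $\delta$-functor structure can be combined with the surjectivity statement in (ii) $\implies$ (i). And one must know that $T^p_{\x{F}}$, being computed by a bounded complex of flat modules, commutes with the arbitrary direct sums used to see that $\beta$ is an isomorphism on free modules of arbitrary rank in (i) $\implies$ (iii) --- or, if one prefers, one can reduce first to finitely generated $M$ by a colimit argument and then use finite free presentations. The real content, as before, is simply that passing to $L^\bullet$ turns each assertion into a manipulation of a bounded complex of flat $A$-modules.
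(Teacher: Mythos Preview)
Your proof is correct and follows essentially the same route as the paper: both arguments use that $\beta$ is an isomorphism on free modules and then pass through a free presentation to get (i) $\implies$ (iii), and both verify (ii) $\implies$ (i) via the naturality square for $\beta$ together with right exactness of tensor. The only cosmetic difference is that the paper first reduces to finitely generated $M$ via direct limits and then uses finite free presentations $A^s\to A^r\to M\to 0$, whereas you allow free modules of arbitrary rank and invoke commutation of $T^p_{\x{F}}$ with direct sums; you already note this alternative yourself.
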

\begin{proof}
Since the two functors $T^p_{\x F}$ and $\otimes$ both commute with 
direct limits, it is enough to only consider finitely generated $A$-modules.
Obviously, (iii) $\implies$ (i) and (iii) $\implies$ (ii).

For each finitely generated $A$-module $M$ we can take an exact sequence 
$A^s\to A^r\to M\to 0$ which induces a commutative 
diagram 
$$
\xymatrix{
 T^p_{\x{F}}(A)\otimes_A A^s\ar[r] \ar[d] & 
T^p_{\x{F}}(A)\otimes_A A^r\ar[r]\ar[d] & 
T^p_{\x{F}}(A)\otimes_A M \ar[d]^\beta\ar[r] & 0\\
 T^p_{\x{F}}(A^s)\ar[r]& 
T^p_{\x{F}}(A^r)\ar[r]^\alpha & 
T^p_{\x{F}}(M) &\\
}
$$ 
where the upper row is exact and the left and middle vertical maps are isomorphisms.
If $T^p_{\x F}$ is right exact, then a simple diagram chase shows that $\beta$ is an isomorphism 
hence (i) $\implies$ (iii). On the other hand, if $\beta$ is always surjective then
for any surjective homomorphism $M\to M''$ we get a surjective map 
$T^p_{\x{F}}(A)\otimes_A M\to T^p_{\x{F}}(A)\otimes_A M''$ which immediately 
implies that the corresponding map $T^p_{\x{F}}(M)\to T^p_{\x{F}}(M'')$ is also 
surjective hence $T^p_{\x{F}}$ is right exact, that is,
(ii) $\implies$ (i).\\
\end{proof}

\begin{cor}\label{t-flat-T-exact}
$T^p_{\x{F}}$ is exact if and only if $T^p_{\x{F}}$ is right exact and $T^p_{\x{F}}(A)$ 
is a finitely generated flat $A$-module.
\end{cor}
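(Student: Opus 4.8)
The plan is to deduce this directly from Theorem \ref{t-flat-right-exact} together with the existence of the Grothendieck complex (Theorem \ref{t-flat-complex}); each direction is essentially a matter of assembling facts already established, and I do not expect to need Theorem \ref{t-flat-left-exact} at all.

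For the ``if'' direction, suppose $T^p_{\x{F}}$ is right exact and $T^p_{\x{F}}(A)$ is a finitely generated flat $A$-module. By the equivalence (i)~$\Leftrightarrow$~(iii) of Theorem \ref{t-flat-right-exact}, the natural map $\beta\colon T^p_{\x{F}}(A)\otimes_A M\to T^p_{\x{F}}(M)$ is an isomorphism for every $A$-module $M$, and this isomorphism is functorial in $M$ by the construction of $\beta$ in Definition \ref{d-flat-T}. Thus $T^p_{\x{F}}$ is naturally isomorphic to the functor $-\otimes_A T^p_{\x{F}}(A)$, which is exact because $T^p_{\x{F}}(A)$ is flat; hence $T^p_{\x{F}}$ is exact.

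For the ``only if'' direction, suppose $T^p_{\x{F}}$ is exact. In particular it is right exact, so again $\beta$ gives a functorial isomorphism $T^p_{\x{F}}(A)\otimes_A M\simeq T^p_{\x{F}}(M)$. Since the exact functor $T^p_{\x{F}}$ is then naturally isomorphic to $-\otimes_A T^p_{\x{F}}(A)$, the latter functor is exact, which is precisely the assertion that $T^p_{\x{F}}(A)$ is a flat $A$-module. Finally, to see that $T^p_{\x{F}}(A)$ is finitely generated one uses Theorem \ref{t-flat-complex}: $T^p_{\x{F}}(A)\simeq h^p(L^\bullet)$ is a subquotient of the finitely generated $A$-module $L^p$, and since $A$ is Noetherian this subquotient is finitely generated.

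The only points requiring a little care are the functoriality of $\beta$ in $M$ (needed so that an exact functor forces the representing module to be flat) and the finite generation of $T^p_{\x{F}}(A)$; there is no genuine obstacle here, since the corollary is a formal consequence of the characterisation of right exactness in Theorem \ref{t-flat-right-exact} together with the properties of the Grothendieck complex.
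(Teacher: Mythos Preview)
Your proof is correct and follows essentially the same route as the paper: both arguments use Theorem \ref{t-flat-right-exact} to identify $T^p_{\x{F}}$ with $T^p_{\x{F}}(A)\otimes_A -$ under the right-exactness hypothesis, after which exactness reduces to flatness of $T^p_{\x{F}}(A)$; the paper likewise dismisses finite generation as automatic under the standing assumptions.
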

\begin{proof}
 $T^p_{\x{F}}$ is exact if and only if it is right exact and left exact if and only if  $T^p_{\x{F}}$ is right exact and $T^p_{\x{F}}(A)\otimes_A -$ is left exact if and only if
 $T^p_{\x{F}}$ is right exact and $T^p_{\x{F}}(A)$ is a flat $A$-module. Finite generation always 
holds under the assumptions.\\  
\end{proof}

\begin{rem}\label{r-T0-Tr}
Note that $T^0_{\x{F}}$ is always left exact. This in particular, means that 
there is some $A$-module $Q$ such that $H^0(X,\x{F}\otimes_YM)\simeq \Hom_A(Q,M)$ 
for any $A$-module $M$. 
On the other hand, if $r=\max\{\dim X_y \mid y\in Y\}$, then $T^r_{\x{F}}$ 
is always right exact because $T^p_{\x{F}}=0$ for any $p>r$. We will see that this implies that $R^rf_*\x{F}$ commutes 
with base change.
\end{rem}

\begin{thm}\label{t-base-change-right-exact}
Assume that $A$ is a local ring, $y\in Y$ is the closed point, $\mathfrak{m}$ is the 
maximal ideal of $A$, and $k(y)$ is the residue field of $y$. 
Then, the following are equivalent:

$(i)$  the functor $T^p_{\x{F}}$ is right exact,

$(ii)$ $T^p_{\x{F}}(A)\otimes_A k(y)\to T^p_{\x{F}}(k(y))$ is surjective,

$(iii)$ $T^p_{\x{F}}(A/\mathfrak{m}^l)\to T^p_{\x{F}}(k(y))$ is surjective 
for every $l>1$.
\end{thm}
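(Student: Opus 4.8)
The plan is to establish the cycle of implications $(i)\Rightarrow(ii)\Rightarrow(iii)\Rightarrow(i)$, with the last implication carrying the real content, and to deduce all three from a single refinement of Theorem \ref{t-flat-left-exact}. I would work with the Grothendieck complex $L^\bullet$ of Theorem \ref{t-flat-complex} (available since $\x{F}$ is flat over $Y$), so that $T^p_{\x{F}}(M)\simeq h^p(L^\bullet\otimes_A M)$ functorially; since $A$ is local Noetherian, the finitely generated flat terms $L^i$ are in fact free. Put $B^{p+1}={\rm im}(L^p\to L^{p+1})$, so that $W^{p+1}(L^\bullet)=L^{p+1}/B^{p+1}$. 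The refinement I would prove is: \emph{for every $A$-module $M$ there is a functorial isomorphism ${\rm coker}\,\beta\simeq\Tor^A_1(W^{p+1}(L^\bullet),M)$}, where $\beta\colon T^p_{\x{F}}(A)\otimes_A M\to T^p_{\x{F}}(M)$ is the map of Definition \ref{d-flat-T}. Indeed, using the identity $T^p_{\x{F}}(M)=\ker(W^p(L^\bullet)\otimes_A M\to L^{p+1}\otimes_A M)$ from the proof of Theorem \ref{t-flat-left-exact}, one factors that map as $W^p(L^\bullet)\otimes_A M\twoheadrightarrow B^{p+1}\otimes_A M\to L^{p+1}\otimes_A M$ and tensors with $M$ the two exact sequences $0\to T^p_{\x{F}}(A)\to W^p(L^\bullet)\to B^{p+1}\to 0$ and $0\to B^{p+1}\to L^{p+1}\to W^{p+1}(L^\bullet)\to 0$; since $L^{p+1}$ is flat, a diagram chase identifies the image of $\beta$ with $\ker(W^p(L^\bullet)\otimes_A M\to B^{p+1}\otimes_A M)$ and its cokernel with $\ker(B^{p+1}\otimes_A M\to L^{p+1}\otimes_A M)\simeq\Tor^A_1(W^{p+1}(L^\bullet),M)$. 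Hence $\beta$ is surjective for a given $M$ iff $\Tor^A_1(W^{p+1}(L^\bullet),M)=0$; letting $M$ range and quoting Theorem \ref{t-flat-right-exact}, $T^p_{\x{F}}$ is right exact iff $W^{p+1}(L^\bullet)$ is $A$-flat iff (by finite generation over the local ring $A$) $\Tor^A_1(W^{p+1}(L^\bullet),k(y))=0$.

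With this lemma in hand, $(i)\Rightarrow(ii)$ is immediate from Theorem \ref{t-flat-right-exact} (specialize $M=k(y)$), and $(ii)\Rightarrow(iii)$ is a one-step diagram chase: for $l>1$ apply the naturality of $\beta$ to the surjection $A/\mathfrak{m}^l\twoheadrightarrow k(y)$, observe that the induced map $T^p_{\x{F}}(A)\otimes_A A/\mathfrak{m}^l\to T^p_{\x{F}}(A)\otimes_A k(y)$ is surjective and that $\beta$ for $k(y)$ is surjective by $(ii)$, so the composite $T^p_{\x{F}}(A)\otimes_A A/\mathfrak{m}^l\to T^p_{\x{F}}(k(y))$ is surjective, and note that it factors through $T^p_{\x{F}}(A/\mathfrak{m}^l)$; hence $T^p_{\x{F}}(A/\mathfrak{m}^l)\to T^p_{\x{F}}(k(y))$ is surjective.

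For $(iii)\Rightarrow(i)$ the idea is to prove that $W^{p+1}(L^\bullet)$ is $A$-flat by working one infinitesimal level at a time and then descending. Fix $l$ and restrict the whole situation along $\Spec A/\mathfrak{m}^l\to Y$: with $A_l=A/\mathfrak{m}^l$ (Artinian local, residue field $k(y)$), $\x{F}_l=\x{F}\otimes_Y A_l$, and $f_l$ the induced morphism, $f_l$ is projective, $\x{F}_l$ is coherent and flat over $\Spec A_l$ (Theorem \ref{t-flat-properties}(iv)), the complex $L^\bullet\otimes_A A_l$ serves as a Grothendieck complex for $\x{F}_l$ with degree-$(p+1)$ cokernel $W^{p+1}(L^\bullet)\otimes_A A_l$, and $T^p_{\x{F}}(N)=T^p_{\x{F}_l}(N)$ for every $A_l$-module $N$. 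The map in $(iii)$ for this $l$ is $T^p_{\x{F}_l}(A_l)\to T^p_{\x{F}_l}(k(y))$, which by naturality factors through the surjection $T^p_{\x{F}_l}(A_l)\twoheadrightarrow T^p_{\x{F}_l}(A_l)/\mathfrak{m}\,T^p_{\x{F}_l}(A_l)$ followed by the $A_l$-analogue of $\beta$; hence that analogue is surjective, so the lemma applied over $A_l$ shows $W^{p+1}(L^\bullet)\otimes_A A_l$ is $A_l$-flat, hence $A_l$-free. Now put $r=\dim_{k(y)}\bigl(W^{p+1}(L^\bullet)\otimes_A k(y)\bigr)$, lift a minimal generating set to a surjection $A^r\twoheadrightarrow W^{p+1}(L^\bullet)$ (Nakayama) with kernel $N$; reducing mod $\mathfrak{m}^l$ gives a surjection $(A/\mathfrak{m}^l)^r\to W^{p+1}(L^\bullet)\otimes_A A/\mathfrak{m}^l$ between free $A/\mathfrak{m}^l$-modules of the same rank $r$, hence an isomorphism (equal finite lengths over the Artinian ring), which forces $N\subseteq\mathfrak{m}^l A^r$. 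As this holds for all $l$, Krull's intersection theorem gives $N=0$, so $W^{p+1}(L^\bullet)$ is $A$-free, in particular flat, and the lemma together with Theorem \ref{t-flat-right-exact} yields $(i)$.

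The main obstacle is the lemma of the first paragraph: arranging the two short exact sequences so that ${\rm coker}\,\beta$ comes out as exactly $\Tor^A_1(W^{p+1}(L^\bullet),-)$ requires careful bookkeeping of which tensored maps stay injective (this is where flatness of $L^{p+1}$ enters). The secondary difficulty is the descent in $(iii)\Rightarrow(i)$, converting ``$W^{p+1}(L^\bullet)$ flat modulo every $\mathfrak{m}^l$'' into ``$W^{p+1}(L^\bullet)$ flat over $A$''; alternatively this step can be run through the theorem on formal functions, which identifies $\varprojlim_l T^p_{\x{F}}(A/\mathfrak{m}^l)$ with the $\mathfrak{m}$-adic completion of $T^p_{\x{F}}(A)$ and lets one pass surjectivity statements down to the Artinian quotients directly.
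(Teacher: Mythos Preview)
Your argument is correct and takes a genuinely different route from the paper. The paper runs the cycle $(i)\Rightarrow(iii)\Rightarrow(ii)\Rightarrow(i)$ and relies twice on the theorem on formal functions: once to pass from $(iii)$ to $(ii)$ by taking the inverse limit $\varprojlim_l T^p_{\x F}(A/\mathfrak m^l)$ and identifying it with the completion of $T^p_{\x F}(A)$, and once in $(ii)\Rightarrow(i)$, where surjectivity of $\beta$ is first bootstrapped from $k(y)$ to all finite-length modules by induction on length and then extended to finitely generated $M$ via the completions $\varprojlim_l T^p_{\x F}(M/\mathfrak m^lM)$. Your approach replaces this analytic input with the homological identity ${\rm coker}\,\beta\simeq\Tor_1^A(W^{p+1}(L^\bullet),M)$, which (together with the local criterion $\Tor_1^A(W^{p+1},k(y))=0\Leftrightarrow W^{p+1}$ free) makes $(i)\Leftrightarrow(ii)$ essentially immediate, and handles $(iii)\Rightarrow(i)$ by descending freeness of $W^{p+1}\otimes_A A/\mathfrak m^l$ to freeness of $W^{p+1}$ via Krull's intersection theorem. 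Your lemma is in fact a precursor of the paper's later Theorem~\ref{t-base-change-curves} (where, over a regular one-dimensional base, $W^{p+1}$ can be replaced by $T^{p+1}_{\x F}(A)$ and $\beta$ becomes injective), so you are effectively anticipating that result in the generality needed here. What the paper's method buys is that it never needs to name $W^{p+1}(L^\bullet)$ or compute a $\Tor$; what yours buys is independence from formal functions and a transparent identification of the obstruction to $\beta$ being surjective.
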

\begin{proof}
(i) $\implies$ (iii): We have the commutative diagram
$$
\xymatrix{
T^p_{\x{F}}(A)\otimes_A A/\mathfrak{m}^l\ar[r]\ar[d] & T^p_{\x{F}}(A/\mathfrak{m}^l)\ar[d]\\
T^p_{\x{F}}(A)\otimes_A k(y) \ar[r] & T^p_{\x{F}}(k(y))
}
$$ 
in which the left vertical map is surjective. Applying Theorem \ref{t-flat-right-exact} 
implies the surjectivity of the lower horizontal map which then implies the claim.

(iii) $\implies$ (ii): If $T^p_{\x{F}}(A/\mathfrak{m}^l)\to T^p_{\x{F}}(k(y))$ is surjective 
for every $l>1$, then by taking inverse limits we get a surjective map 
$$
\varprojlim_l T^p_{\x{F}}(A/\mathfrak{m}^l)\to \varprojlim_l T^p_{\x{F}}(k(y))=T^p_{\x{F}}(k(y))
$$
On the other hand, by the theorem on the formal functions, the left hand side is just the completion of $T^p_{\x{F}}(A)$. So the map corresponds to the map 
$T^p_{\x{F}}(A)\to T^p_{\x{F}}(k(y))$. 
Since completion is a faithfully exact functor, the latter map must be surjective. Therefore, 
$T^p_{\x{F}}(A)\otimes_A k(y)\to T^p_{\x{F}}(k(y))$ is also surjective.

(ii) $\implies$ (i): 
 Assume that 
$M$ is an $A$-module of finite length (i.e. $M$ is both Noetherian and Artinian). 
If $M$ has length one, then $M\simeq k(y)$ so in this case the map 
$T^p_{\x{F}}(A)\otimes_{A} M\to T^p_{\x F}(M)$ is surjective. If the length 
of $M$ is larger than one, then one can find an exact sequence 
$0\to M'\to M\to M\to 0$ of $A$-modules such that $M',M''$ have smaller length 
than $M$. Using this sequence and applying induction on the length one easily 
deduces that the map $T^p_{\x{F}}(A)\otimes_{A} M\to T^p_{\x F}(M)$ is again 
surjective.

Now let $M$ be a finitely generated $A$-module. Then, for any $l>0$, the module 
$M/\mathfrak{m}^lM$ has finite length so the map 
$T^p_{\x{F}}(A)\otimes_{A} M/\mathfrak{m}^lM\to T^p_{\x F}(M/\mathfrak{m}^lM)$ is surjective.
By taking inverse limits we get a surjective map 
$$
\varprojlim_l (T^p_{\x{F}}(A)\otimes_{A} M/\mathfrak{m}^lM)\to \varprojlim_l T^p_{\x F}(M/\mathfrak{m}^lM)
$$
The left hand side is just the completion of the module $T^p_{\x{F}}(A)\otimes_{A} M$ 
while the right hand side is the completion of $T^p_{\x{F}}(M)$. Since completion 
is a faithfully exact functor, the map  $T^p_{\x{F}}(A)\otimes_{A} M\to T^p_{\x F}(M)$ 
must be surjective. 

Finally, as usual one proves the same surjectivity for any $A$-module by reducing it 
to the case of finitely generated modules. Now Theorem \ref{t-flat-right-exact} 
implies the right exactness of $T^p_{\x{F}}$.
\end{proof}

%%%%%%%%%%%%%%%%%%%%%%%%%%%%

\section{Base change and semi-continuity}

Let $f\colon X\to Y$ be a projective morphism of Noetherian schemes, 
and let $\x{F}$ be an $\x{O}_X$-module. We would like to know when the 
base change map on cohomology 
$$
R^pf_*\x{F}\otimes_Y k(y)\to H^p(X_y,\x{F}_y)
$$ 
for the base change $\Spec k(y)\to Y$ is an isomorphism.
If the map is an isomorphism for every $y\in Y$, we then say that $R^pf_*\x{F}$ 
commutes with base change. 

We apply the functors $T^p_{\x{F}}$ to solve the base change problem for coherent 
flat sheaves. The following theorem relates the two theories.\\

\begin{thm}\label{t-T-base-change}
Let $f\colon X\to Y$ be a projective morphism of Noetherian schemes with $Y=\Spec A$, 
and let $\x{F}$ be a coherent sheaf on $X$. Then, we have 
$T^p_{\x{F}}(k(y))\simeq H^p(X_y,\x{F}_y)$ for any $y\in Y$ where $\x{F}_y$ 
is the pullback of $\x{F}$ to $X_y$.
\end{thm}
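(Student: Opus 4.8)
The plan is to compute both sides by \v{C}ech cohomology with respect to compatible affine covers and then observe that the two \v{C}ech complexes coincide term by term. Since $f$ is projective, $X$ is separated and Noetherian over $Y=\Spec A$; I would fix a finite open affine cover $\mathcal{U}=(U_i)_{i\in I}$ of $X$ with $U_i=\Spec B_i$. Unwinding Construction \ref{c-base-change} in the case $S=Y$, the sheaf $\x{F}\otimes_Y k(y)$ is just $\x{F}\otimes_{\x{O}_X}f^*\widetilde{k(y)}$, a tensor product of quasi-coherent sheaves, hence quasi-coherent; so by the standard \v{C}ech theorem for quasi-coherent sheaves on a Noetherian separated scheme, $\check{H}^p(\mathcal{U},\x{F}\otimes_Y k(y))\simeq H^p(X,\x{F}\otimes_Y k(y))=T^p_{\x{F}}(k(y))$. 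On the fibre side, let $g\colon\Spec k(y)\to Y$ be the canonical morphism and $g_S\colon X_y\to X$ its base change; then $X_y$ is projective over $\Spec k(y)$, hence separated and Noetherian, and $\mathcal{V}=(V_i)_{i\in I}$ with $V_i:=g_S^{-1}(U_i)=U_i\times_Y\Spec k(y)=\Spec(B_i\otimes_A k(y))$ is a finite open affine cover of $X_y$. Since $\x{F}_y=g_S^*\x{F}$ is coherent, $\check{H}^p(\mathcal{V},\x{F}_y)\simeq H^p(X_y,\x{F}_y)$.

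The heart of the matter is a local identification of sections. Let $W\subseteq X$ be any open affine, say $W=\Spec B'$, with $\x{F}|_W=\widetilde{N'}$. Using compatibility of pullback with the $\widetilde{(-)}$ functor, $(\x{F}\otimes_Y k(y))|_W=\widetilde{N'}\otimes_{\x{O}_W}\widetilde{B'\otimes_A k(y)}=\widetilde{N'\otimes_{B'}(B'\otimes_A k(y))}=\widetilde{N'\otimes_A k(y)}$, so $\Gamma(W,\x{F}\otimes_Y k(y))=N'\otimes_A k(y)$. On the other hand $g_S^{-1}(W)=\Spec(B'\otimes_A k(y))$, and $\x{F}_y$ restricted there is $\widetilde{N'\otimes_{B'}(B'\otimes_A k(y))}=\widetilde{N'\otimes_A k(y)}$, so $\Gamma(g_S^{-1}(W),\x{F}_y)=N'\otimes_A k(y)$ as well; these identifications are manifestly compatible with the restriction maps attached to inclusions $W''\subseteq W'$ of affine opens. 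Equivalently, since $g_S$ is an affine morphism one has $\x{F}\otimes_Y k(y)\simeq g_{S*}\x{F}_y$ by the projection formula — this is the one conceptual point worth isolating, namely that although $\x{F}\otimes_Y k(y)$ is by definition a sheaf on $X$, it is nothing but the pushforward along $g_S$ of the sheaf $\x{F}_y$ on $X_y$.

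Now because $X$ is separated, every finite intersection $U_{i_0}\cap\cdots\cap U_{i_q}$ is again affine, and $g_S^{-1}(U_{i_0}\cap\cdots\cap U_{i_q})=V_{i_0}\cap\cdots\cap V_{i_q}$. Applying the local identification above to these intersections shows that the \v{C}ech complex $C^\bullet(\mathcal{U},\x{F}\otimes_Y k(y))$ and the \v{C}ech complex $C^\bullet(\mathcal{V},\x{F}_y)$ are literally the same complex of $A$-modules, with identical differentials; hence they have the same cohomology. Combining this with the two \v{C}ech-to-derived-functor comparison isomorphisms gives the desired natural isomorphism $T^p_{\x{F}}(k(y))\simeq H^p(X_y,\x{F}_y)$ for every $p$ and every $y\in Y$.

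I do not anticipate any serious obstacle. The points needing a little care are: checking that $\x{F}\otimes_Y k(y)$ is quasi-coherent so that \v{C}ech cohomology computes its sheaf cohomology (it is, being a tensor product of quasi-coherent sheaves, but note $k(y)$ need not be a finite $A$-module, so it is only quasi-coherent, not coherent); verifying that intersections of the chosen affines stay affine, which is precisely separatedness of $X$ and is guaranteed by projectivity; and confirming that the module identification $\Gamma(W,\x{F}\otimes_Y k(y))\cong\Gamma(g_S^{-1}(W),\x{F}_y)$ respects the \v{C}ech restriction maps. All of these are routine once the reduction to \v{C}ech complexes is in place.
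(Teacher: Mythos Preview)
Your proof is correct and takes a genuinely different, more direct route than the paper. The paper first restricts to the closure $\overline{\{y\}}$ with its reduced structure, thereby reducing to the case where $Y$ is integral and $y$ is the generic point; it then invokes the flat base change theorem (\ref{t-flat-base-change}) to identify $H^p(X,\x{F})\otimes_A k(y)$ with $H^p(X_y,\x{F}_y)$, and separately uses flatness of the localisation $k(y)=\x{O}_y$ over $A$ to identify $H^p(X,\x{F})\otimes_A k(y)$ with $T^p_{\x{F}}(k(y))$. Your argument bypasses both the reduction and the appeal to flat base change: you observe directly that the \v{C}ech complexes $C^\bullet(\mathcal{U},\x{F}\otimes_Y k(y))$ and $C^\bullet(\mathcal{V},\x{F}_y)$ coincide term by term, which is essentially the statement $g_{S*}\x{F}_y\simeq \x{F}\otimes_Y k(y)$ for the affine morphism $g_S\colon X_y\to X$ (this is exactly the trick the paper itself uses later, in the proof of Theorem \ref{t-base-change}). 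Your approach is more elementary and self-contained; the paper's approach has the minor virtue of making explicit the geometric point that $T^p_{\x{F}}(k(y))$ depends only on the behaviour of $\x{F}$ over $\overline{\{y\}}$. One terminological nit: what you call ``the projection formula'' is not quite the usual statement (which typically requires a locally free factor), but since you verify the identification of sections by hand this does no harm.
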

\begin{proof}
Pick $y\in Y$. The support of the sheaf $\widetilde{k(y)}$ 
is inside the subscheme $S$ which is the closure of $y$ with the 
induced integral subscheme structure, and $\x{F}\otimes_Yk(y)$ is an 
$\x{O}_{X_S}$-module. Thus,
$$
 H^p(X,\x{F}\otimes_Y k(y))=H^p(X_S,(\x{F}\otimes_Yk(y))_S)
$$
so 
by replacing $Y$ with $S$ we could assume that $Y$ is integral and that
$y$ is its generic point. In that case, $\x{O}_y=k(y)$ so 
by the flat base change theorem 
(\ref{t-flat-base-change}), the base change map on cohomology 
$$
 H^p(X,\x{F})\otimes_A k(y) \to H^p(X_y,\x{F}_y)
$$
for the base change $\Spec k(y)\to Y$ is an isomorphism. On the other hand, 
since  $\x{O}_y=k(y)$, the functor $-\otimes_Ak(y)$ is exact and by looking at 
the \v{C}ech complex of $\x{F}$ (eg, see the proof of Theorem \ref{t-flat-complex}) 
one can easily see that 
$$
T^p_{\x{F}}(k(y))=H^p(X,\x{F}\otimes_Y k(y))\simeq H^p(X,\x{F})\otimes_A k(y)
$$
\end{proof}

\begin{thm}[Semi-continuity]\label{t-flat-semi-continuity}
Let $f\colon X\to Y$ be a projective morphism of Noetherian schemes, 
and let $\x{F}$ be a coherent sheaf on $X$ which is flat over $Y$. 
Then, $\dim_{k(y)}H^p(X_y,\x{F}_y)$ viewed as a function in $y\in Y$ is upper semi-continuous. 

Moreover, for a fixed $z\in Y$, if the function $\dim_{k(y)}H^p(X_y,\x{F}_y)$ 
is not constant near $z$, then either the function $\dim_{k(y)}H^{p-1}(X_y,\x{F}_y)$ 
or the function $\dim_{k(y)}H^{p+1}(X_y,\x{F}_y)$ is not constant near $z$.
\end{thm}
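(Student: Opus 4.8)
The plan is to reduce everything to a statement about a bounded complex of free modules and the semi-continuity of matrix ranks. Both assertions are local on $Y$, so I would first assume $Y=\Spec A$ with $A$ Noetherian. In this situation Theorem \ref{t-flat-complex} provides the Grothendieck complex, a bounded complex
$$
L^\bullet:\qquad 0\to L^0\to L^1\to\cdots\to L^N\to 0
$$
of finitely generated flat $A$-modules with $T^p_{\x{F}}(M)\simeq h^p(L^\bullet\otimes_A M)$ functorially in $M$. Taking $M=k(y)$ and combining with Theorem \ref{t-T-base-change} gives, for every $y\in Y$,
$$
H^p(X_y,\x{F}_y)\simeq T^p_{\x{F}}(k(y))\simeq h^p(L^\bullet\otimes_A k(y)).
$$
Since $A$ is Noetherian, each $L^i$ is locally free of finite rank; so after replacing $\Spec A$ by a neighbourhood of a given point I may assume each $L^i$ is free, say of rank $r_i$ (with $L^i=0$ and $r_i=0$ for $i<0$), and that the differential $d^i\colon L^i\to L^{i+1}$ is given by a matrix over $A$. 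Writing $\bar d^i_y=d^i\otimes_A k(y)$ for the induced map of $k(y)$-vector spaces, a dimension count in the complex $L^\bullet\otimes_A k(y)$ yields
$$
\dim_{k(y)}H^p(X_y,\x{F}_y)=r_p-\operatorname{rank}_{k(y)}\bar d^p_y-\operatorname{rank}_{k(y)}\bar d^{p-1}_y.
$$

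Next I would use the elementary fact that $y\mapsto\operatorname{rank}_{k(y)}\bar d^i_y$ is lower semi-continuous on $\Spec A$: the locus where this rank is $\ge s$ is the union of the basic opens $D(\mu)$ taken over all $s\times s$ minors $\mu$ of $d^i$. Hence $-\operatorname{rank}_{k(y)}\bar d^i_y$ is upper semi-continuous, and the displayed formula exhibits $\dim_{k(y)}H^p(X_y,\x{F}_y)$ as the sum of a constant and two upper semi-continuous functions, which proves the first statement.

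For the second statement I would argue by contraposition. Fix $z$, and on a neighbourhood of $z$ where all the relevant $L^i$ are free set $\phi^i(y)=\operatorname{rank}_{k(y)}\bar d^i_y$, so that $\dim_{k(y)}H^q(X_y,\x{F}_y)=r_q-\phi^q(y)-\phi^{q-1}(y)$ for every $q$. Assume that both $\dim_{k(y)}H^{p-1}(X_y,\x{F}_y)$ and $\dim_{k(y)}H^{p+1}(X_y,\x{F}_y)$ are constant near $z$. From the first, $\phi^{p-1}+\phi^{p-2}$ is constant on a neighbourhood of $z$; since each summand is lower semi-continuous there, each is also upper semi-continuous (being the constant minus the other lower semi-continuous function), and an integer-valued function that is both lower and upper semi-continuous is constant on a possibly smaller neighbourhood of $z$. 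In particular $\phi^{p-1}$ is constant near $z$; the second assumption gives in the same way that $\phi^p$ is constant near $z$. Then $\dim_{k(y)}H^p(X_y,\x{F}_y)=r_p-\phi^p(y)-\phi^{p-1}(y)$ is constant near $z$, which is exactly the contrapositive of the claim.

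I do not expect a genuine obstacle here, since the proof is mainly an assembly of Theorems \ref{t-flat-complex} and \ref{t-T-base-change} with standard facts about semi-continuity; the only points demanding a little care are the reduction to free $L^i$, the semi-continuity of matrix ranks, and the observation that a locally constant sum of two lower semi-continuous integer-valued functions forces each summand to be locally constant.
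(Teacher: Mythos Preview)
Your proof is correct and follows essentially the same route as the paper: reduce to $Y=\Spec A$, invoke the Grothendieck complex (Theorem~\ref{t-flat-complex}) together with Theorem~\ref{t-T-base-change}, and express $\dim_{k(y)}H^p(X_y,\x{F}_y)$ as a locally constant term plus two semi-continuous terms. The only cosmetic difference is that the paper packages the semi-continuous pieces as $\dim_{k(y)}W^p(L^\bullet)\otimes_A k(y)$ with $W^p(L^\bullet)=\operatorname{coker}(d^{p-1})$ (and appeals to the general upper semi-continuity of $\dim_{k(y)}M\otimes_A k(y)$ for finitely generated $M$), whereas you localise to make the $L^i$ free and work directly with $\operatorname{rank}_{k(y)}\bar d^i_y$; since $\dim_{k(y)}W^p(L^\bullet)\otimes_A k(y)=r_p-\phi^{p-1}(y)$ these are the same quantities, and your contrapositive treatment of the second statement is in fact more explicit than the paper's one-line hint.
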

\begin{proof}
The problem is local on $Y$ so we may assume that $Y=\Spec A$ and that $L^\bullet$ 
is the Grothendieck complex.
By Theorem \ref{t-T-base-change}, 
$$
\dim_{k(y)}H^p(X_y,\x{F}_y)=\dim_{k(y)}T^p_{\x{F}}(k(y))=\dim_{k(y)}h^p(L^\bullet\otimes_A k(y))
$$ 
On the other hand, we have an exact sequence 
$$
(*) \hspace{0.5cm} 0\to T^p_{\x{F}}(A)\to W^p(L^\bullet)\to L^{p+1}\to W^{p+1}(L^\bullet)\to 0
$$ 
which induces an exact sequence 
$$
(**) \hspace{0.5cm} 0\to T^p_{\x{F}}(k(y))\to  W^p(L^\bullet)\otimes_Ak(y)\to L^{p+1}\otimes_Ak(y)\to W^{p+1}(L^\bullet)\otimes_A k(y)\to 0
$$
which reduces the problem to proving that the two functions 
$\dim_{k(y)}W^p(L^\bullet)\otimes_Ak(y)$ and $\dim_{k(y)}W^{p+1}(L^\bullet)\otimes_A k(y)$ 
are upper semi-continuous because $\dim_{k(y)}L^{p+1}\otimes_Ak(y)$ is independent of $y$. 
Now use Exercise \ref{exe-flat-semi-continuous}.

The second statement: if the function $\dim_{k(y)}H^p(X_y,\x{F}_y)$ 
is not constant near $z$, then either the function 
$\dim_{k(y)}W^p(L^\bullet)\otimes_Ak(y)$ or the function 
$\dim_{k(y)}W^{p+1}(L^\bullet)\otimes_A k(y)$ is not constant near $z$.

 The result then follows from the corresponding exact sequence $(**)$ for 
$T^p_{\x{F}}(k(y))$ or $T^{p+1}_{\x{F}}(k(y))$.\\
\end{proof}

\begin{thm}[Base change]\label{t-base-change}
Let $f\colon X\to Y$ be a projective morphism of Noetherian schemes, 
and let $\x{F}$ be a coherent sheaf on $X$ which is flat over $Y$, 
and fix $y\in Y$.
Assume that the base change map on cohomology 
$R^pf_*\x{F}\otimes_Y k(y)\to H^p(X_y,\x{F}_y)$ is surjective. Then 
it is an isomorphism and the same holds for any point in a neighborhood of $y$. 
Moreover, the following are equivalent: 

$(i)$ $R^{p-1}f_*\x{F}\otimes_Y k(y)\to H^{p-1}(X_y,\x{F}_y)$ is also surjective,

$(ii)$ $R^pf_*\x{F}$ is locally free near $y$.
\end{thm}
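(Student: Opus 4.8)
The strategy is to transport everything to the affine/algebraic side via the functors $T^p_{\x{F}}$ and the Grothendieck complex $L^\bullet$, then invoke the exactness criteria already established. The problem is local on $Y$, so I would assume $Y=\Spec A$ with $A$ Noetherian, and by replacing $A$ with $\x{O}_y$ (using Theorem \ref{t-flat-base-change}, since $\Spec\x{O}_y\to Y$ is flat, to see this does not change the base change maps in question) I would assume $A$ is local with closed point $y$ and residue field $k(y)$. By Theorem \ref{t-T-base-change} we have $H^p(X_y,\x{F}_y)\simeq T^p_{\x{F}}(k(y))$, and the base change map $R^pf_*\x{F}\otimes_Y k(y)\to H^p(X_y,\x{F}_y)$ is identified with the map $\beta\colon T^p_{\x{F}}(A)\otimes_A k(y)\to T^p_{\x{F}}(k(y))$ of Definition \ref{d-flat-T}.

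\textbf{Surjectivity implies isomorphism.} Given that $\beta$ is surjective at $y$, Theorem \ref{t-base-change-right-exact} (the implication (ii) $\Rightarrow$ (i)) tells us that $T^p_{\x{F}}$ is right exact, and then Theorem \ref{t-flat-right-exact} (the implication (i) $\Rightarrow$ (iii)) tells us that $\beta\colon T^p_{\x{F}}(A)\otimes_A M\to T^p_{\x{F}}(M)$ is an isomorphism for \emph{every} $A$-module $M$; taking $M=k(y)$ gives that the base change map at $y$ is an isomorphism. For the ``neighborhood of $y$'' part: right exactness of $T^p_{\x{F}}$ together with Corollary \ref{t-flat-T-exact}-type reasoning shows $\beta$ surjects for any module, and in particular, unwinding $T^p_{\x F}(A)\otimes_A k(y')\to T^p_{\x F}(k(y'))$ for $y'$ near $y$; the relevant statement is that $W^p(L^\bullet)$ and $W^{p+1}(L^\bullet)$ are ``nice'' near $y$, which one gets because $L^{p+1}$ is free and the cokernel behaviour is controlled by finitely many equations. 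Concretely I would argue: right exactness of $T^p_{\x F}$ forces, via the exact sequence $(*)$ of Theorem \ref{t-flat-semi-continuity}, that $W^{p+1}(L^\bullet)$ is flat at $y$, hence flat in a neighborhood (flatness of a finitely generated module over a Noetherian ring is an open condition), and then $\beta$ is again an isomorphism there by the same chain of implications.

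\textbf{Equivalence of (i) and (ii).} I would again work with the Grothendieck complex. We have the exact sequence $(*)$: $0\to T^p_{\x{F}}(A)\to W^p(L^\bullet)\to L^{p+1}\to W^{p+1}(L^\bullet)\to 0$. Since $\beta$ is an isomorphism at degree $p$, we already know (as above) that $W^{p+1}(L^\bullet)$ is locally free near $y$, so $T^p_{\x F}(A)$ is locally free near $y$ if and only if $\ker(W^p(L^\bullet)\to L^{p+1})$ is, equivalently if and only if $W^p(L^\bullet)$ is locally free near $y$ (splitting off the locally free image). Now by Theorem \ref{t-flat-left-exact}, $W^p(L^\bullet)$ being flat is equivalent to $T^p_{\x{F}}$ being left exact; combined with the already-known right exactness, this is equivalent to $T^p_{\x{F}}$ being exact, which by Corollary \ref{t-flat-T-exact} happens precisely when $T^p_{\x F}(A)=R^pf_*\x{F}$ is (finitely generated) flat, i.e. locally free near $y$. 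On the other hand, $W^p(L^\bullet)$ being flat is also governed by the map one degree down: using the exact sequence $L^{p-1}\otimes_A k(y)\to L^p\otimes_A k(y)\to W^p(L^\bullet)\otimes_A k(y)\to 0$ and the analogous sequence $(*)$ in degree $p-1$, flatness of $W^p(L^\bullet)$ is equivalent to the surjectivity of $\beta$ in degree $p-1$, i.e. to (i). So (i) $\Leftrightarrow$ $W^p(L^\bullet)$ locally free near $y$ $\Leftrightarrow$ (ii).

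\textbf{Main obstacle.} The genuinely delicate point is making precise that surjectivity of $\beta$ at the single point $y$ propagates to a neighborhood and yields the local freeness bookkeeping cleanly — i.e. correctly juggling the two functors $T^{p-1}_{\x F}$, $T^p_{\x F}$, their left/right exactness, and the flatness of the $W^p(L^\bullet)$, without circularity. Everything else is a formal consequence of Theorems \ref{t-flat-left-exact}, \ref{t-flat-right-exact}, Corollary \ref{t-flat-T-exact}, Theorem \ref{t-base-change-right-exact}, and Theorem \ref{t-T-base-change}; the theorem on formal functions is used only inside those cited results.
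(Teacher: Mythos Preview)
Your proposal is correct and follows the same approach as the paper: reduce to $Y=\Spec A$ with $A=\x{O}_y$ local, identify the base change map with $\beta$, then apply Theorems \ref{t-base-change-right-exact}, \ref{t-flat-right-exact}, \ref{t-flat-left-exact}, and Corollary \ref{t-flat-T-exact} exactly as you do. The only cosmetic difference is that the paper phrases the neighborhood step as ``$T^p_{\x{F}}$ right exact $\Rightarrow T^{p+1}_{\x{F}}$ left exact $\Rightarrow W^{p+1}(L^\bullet)\otimes_A\x{O}_y$ free $\Rightarrow$ the associated sheaf is free near $y$, now shrink $Y$,'' whereas you cite the open nature of flatness directly; and for (i)$\Leftrightarrow$(ii) the paper goes straight through exactness of $T^p_{\x F}$ rather than tracking $W^p(L^\bullet)$ separately, but the content is identical.
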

\begin{proof}
We may assume that $Y=\Spec A$ and that $L^\bullet$ is the Grothendieck complex.
Let $Y'=\Spec A'$ where $A':=\x{O}_y$, and let $T^p_{\x{F}'}$ be the functor associated to 
the sheaf $\x{F}':=\x{F}_{Y'}$ on $X':=X_{Y'}=X\times_Y Y'$ and the morphism $f'\colon X'\to Y'$. 
Note that the projection $\mu \colon X'\to X$ is an affine morphism so 
$H^p(X',\x{G})\simeq H^p(X,\mu_*\x{G})$ for any quasi-coherent sheaf $\x{G}$ on $X'$.
In particular, if $\x{G}=\x{F}'\otimes_{Y'}M$ for an $A'$-module $M$, then 
$\mu_*\x{G}\simeq \x{F}\otimes_YM$. Therefore, $T^p_{\x{F}'}$ 
is simply the restriction of $T^p_{\x{F}}$ to the category of $A'$-modules hence $L'^\bullet:=L^\bullet \otimes_AA'$ is the Grothendieck complex of $\x{F}'$ and $f'$.
If $y'$ is the closed point of $Y'$, then 
$R^pf_*'\x{F}'\otimes_{Y'} k(y')\to H^p(X_{y'}',\x{F}_{y'}')$ is surjective (note that the residue fields of $y$ and $y'$ are the same, and the fibres over $y$ and $y'$ are also the same).

By Theorem \ref{t-base-change-right-exact}, 
$T^p_{\x{F}'}$ is right exact. This implies that the functor $T^{p+1}_{\x F'}$ is left exact hence 
the module $W^{p+1}(L'^\bullet)=W^{p+1}(L^\bullet)\otimes_AA'$ is free over $A'$
hence the sheaf associated to $W^{p+1}(L^\bullet)$ is free near $y$. So, 
after replacing $Y$ with some open affine neighborhood of $y$ we can assume that 
the module $W^{p+1}(L^\bullet)$ is flat over $A$ and that 
the functor $T^{p+1}_{\x F}$ is left exact which implies that $T^{p}_{\x F}$ is right exact. 
In particular, the map $R^pf_*\x{F}\otimes_Y k(z)\to H^p(X_z,\x{F}_z)$ is an isomorphism 
for any $z\in Y$ by Theorem \ref{t-flat-right-exact}.

For the equivalence of $(i)$ and $(ii)$: if  
$R^{p-1}f_*\x{F}\otimes_Y k(y)\to H^{p-1}(X_y,\x{F}_y)$ is also surjective, 
then the above arguments show that we may assume that $T^{p-1}_{\x F}$  is also right exact hence $T^{p}_{\x F}$ is left exact and so exact. By Corollary \ref{t-flat-T-exact}, 
$T^{p}_{\x F}(A)$ is a finitely generated flat $A$-module which implies that 
$R^pf_*\x{F}$ is locally free (near $y$; remember that we have replaced $Y$ with smaller
neighborhoods).

Conversely, if $R^pf_*\x{F}$ is locally free near $y$ then
after shrinking $Y$ we may assume that $T^{p}_{\x F}(A)$ is a finitely generated flat $A$-module 
hence $T^{p}_{\x F}$ is exact so
$T^{p-1}_{\x F}$ is right exact, in particular, 
$R^{p-1}f_*\x{F}\otimes_Y k(y)\to H^{p-1}(X_y,\x{F}_y)$ is surjective.\\
\end{proof}

\begin{cor}\label{t-flat-base-change-locally-free}
Let $f\colon X\to Y$ be a projective morphism of Noetherian schemes, 
and let $\x{F}$ be a coherent sheaf on $X$ which is flat over $Y$. 
Then, for any $m$, $R^pf_*\x{F}$ commutes with base change for every $p\ge m$ if and only 
if $R^pf_*\x{F}$ is locally free for every $p>m$.
\end{cor}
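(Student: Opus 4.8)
The plan is to reduce to the affine case and then run everything through the Base Change Theorem \ref{t-base-change}, combined with a downward induction on the cohomological degree. Since both properties in the statement — "$R^pf_*\x{F}$ commutes with base change" and "$R^pf_*\x{F}$ is locally free" — are local on $Y$, I would first fix an open affine $Y=\Spec A$. As $f$ is projective it factors through some $\PP^n_Y$, so the higher direct images (and the fibre cohomologies $H^q(X_y,\x{F}_y)$) are computed by a \v{C}ech complex with $n+1$ terms; hence $R^pf_*\x{F}=0$ and $H^p(X_y,\x{F}_y)=0$ for all $p>n$. Putting $r=\max\{\dim X_y\mid y\in Y\}$ (so in particular $r\le n$), this gives the base case of the induction for free: for $p>r$ both sides of the base change map vanish, so $R^pf_*\x{F}$ trivially commutes with base change, and it is trivially locally free.

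For the implication that "commutes with base change for all $p\ge m$" forces "locally free for all $p>m$", I would fix $p>m$ and an arbitrary $y\in Y$. Since $p\ge m$ and $p-1\ge m$, the base change maps $R^pf_*\x{F}\otimes_Y k(y)\to H^p(X_y,\x{F}_y)$ and $R^{p-1}f_*\x{F}\otimes_Y k(y)\to H^{p-1}(X_y,\x{F}_y)$ are both isomorphisms, hence surjective. Apply Theorem \ref{t-base-change} at degree $p$: surjectivity at degree $p$ makes the equivalence $(i)\iff(ii)$ there available, condition $(i)$ (surjectivity at degree $p-1$ near $y$) holds, so condition $(ii)$ holds, i.e. $R^pf_*\x{F}$ is locally free near $y$. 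As $y$ was arbitrary, $R^pf_*\x{F}$ is locally free.

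For the converse, assume $R^pf_*\x{F}$ is locally free for all $p>m$ and prove by downward induction on $p\ge m$ that $R^pf_*\x{F}$ commutes with base change; the base case $p>r$ was handled above. For the inductive step, suppose $R^qf_*\x{F}$ commutes with base change for all $q>p$, where $p\ge m$, and fix $y\in Y$. By the inductive hypothesis the base change map at degree $p+1$ is an isomorphism at $y$, hence surjective, and by hypothesis $R^{p+1}f_*\x{F}$ is locally free near $y$ (note $p+1>m$). Applying Theorem \ref{t-base-change} at degree $p+1$: surjectivity there permits the equivalence $(i)\iff(ii)$; condition $(ii)$ holds, so condition $(i)$ holds, i.e. the base change map at degree $p$ is surjective at $y$. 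Now invoke Theorem \ref{t-base-change} once more, this time at degree $p$: surjectivity of the base change map at $y$ forces it to be an isomorphism at $y$. Since $y$ was arbitrary, $R^pf_*\x{F}$ commutes with base change, closing the induction.

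I do not expect a genuine obstacle: the content is carried entirely by Theorem \ref{t-base-change}. The only points needing care are the bookkeeping of the cohomological index when invoking that theorem — applying it at degree $p+1$ to push surjectivity down to degree $p$, and then at degree $p$ to upgrade surjectivity to an isomorphism — and checking that the reduction to affine $Y$ together with the vanishing $R^pf_*\x{F}=0$ for large $p$ legitimately supplies the base of the downward induction.
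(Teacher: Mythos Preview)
Your proposal is correct and follows essentially the same approach as the paper: both arguments are downward inductions powered entirely by Theorem \ref{t-base-change}. The paper organizes the induction on the parameter $m$ (assuming the corollary for $m+1$ and deducing it for $m$), while you induct on the cohomological degree $p$ within a fixed $m$ and handle the forward implication without induction; this is a cosmetic difference, and your version is if anything more explicit about the index bookkeeping.
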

\begin{proof}
Fix $m$. We use decreasing induction on $m$ and assume that the theorem is true for $m+1$. 
If $R^pf_*\x{F}$ commutes with base change for every $p\ge m$, then by induction 
$R^pf_*\x{F}$ is locally free for every $p>m+1$. Moreover, by the base change theorem, 
the fact that $R^mf_*\x{F}$ commutes with base change is equivalent to the 
local freeness of $R^{m+1}f_*\x{F}$. 

Conversely, if $R^pf_*\x{F}$ is locally free for every $p>m$, then 
by induction, $R^pf_*\x{F}$ commutes with base change for every $p>m$. 
Another application of the base change theorem shows that the local freeness 
of $R^{m+1}f_*\x{F}$ implies that $R^mf_*\x{F}$ commutes with base change.\\ 
\end{proof}

\begin{cor}\label{c-flat-locally-free}
Let $f\colon X\to Y$ be a projective morphism of Noetherian schemes, 
and let $\x{F}$ be a coherent sheaf on $X$ which is flat over $Y=\Spec A$. 
Consider the following properties: 

$(i)$ $T^p_{\x F}$ is exact,

$(ii)$  $R^pf_*\x{F}$ is locally free and it commutes with base change, 

$(iii)$ $\dim_{k(y)}H^p(X_y,\x{F}_y)$ viewed as a function in $y\in Y$ is locally constant,\\

Then, $(i) \Leftrightarrow (ii) \implies (iii)$, and if $Y$ is reduced, then 
$(i) \Leftrightarrow (ii) \Leftrightarrow (iii)$. 
\end{cor}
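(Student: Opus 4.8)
The proof of Corollary \ref{c-flat-locally-free} should be assembled entirely from the machinery already in place: Corollary \ref{t-flat-T-exact} (exactness of $T^p_{\x F}$ means right exact plus $T^p_{\x F}(A)$ finitely generated flat), the base change theorem \ref{t-base-change}, the semi-continuity theorem \ref{t-flat-semi-continuity}, and the identification $T^p_{\x F}(k(y))\simeq H^p(X_y,\x F_y)$ of Theorem \ref{t-T-base-change}. The only genuinely new input is the classical fact that a coherent sheaf on a \emph{reduced} Noetherian scheme whose fibre dimension is locally constant is locally free; this is where the reducedness hypothesis enters, and it is the one ingredient I would either cite or prove by a short local argument (a finitely generated module over a reduced Noetherian local ring $A$ for which $\dim_{k(\mathfrak p)}M\otimes k(\mathfrak p)$ is constant on $\Spec A$ is free — pick a minimal generating set at the closed point by Nakayama, get a surjection $A^n\to M$ with kernel $K$, and observe $K\otimes k(\mathfrak p)=0$ for every prime since dimensions add up, so $K$ is supported nowhere on the reduced scheme, hence $K=0$).

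**Step 1: $(i)\Leftrightarrow(ii)$.** First I would show $(i)\implies(ii)$. If $T^p_{\x F}$ is exact, then by Corollary \ref{t-flat-T-exact} it is right exact and $T^p_{\x F}(A)=H^p(X,\x F)=R^pf_*\x F(Y)$ is a finitely generated flat $A$-module; localizing, $R^pf_*\x F$ is locally free. Right exactness of $T^p_{\x F}$ gives via Theorem \ref{t-flat-right-exact} that $\beta\colon T^p_{\x F}(A)\otimes_A M\to T^p_{\x F}(M)$ is an isomorphism for all $M$; taking $M=k(y)$ and using Theorem \ref{t-T-base-change} this says exactly that $R^pf_*\x F\otimes_Y k(y)\to H^p(X_y,\x F_y)$ is an isomorphism for every $y$, i.e. $R^pf_*\x F$ commutes with base change. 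Conversely, $(ii)\implies(i)$: if $R^pf_*\x F$ is locally free and commutes with base change, then locally $T^p_{\x F}(A)$ is finitely generated flat; moreover commuting with base change plus Corollary \ref{t-flat-base-change-locally-free} (applied at level $m=p-1$, or directly via the base change theorem \ref{t-base-change} for the equivalence of surjectivity in degree $p-1$ with local freeness of $R^pf_*\x F$) forces $R^{p-1}f_*\x F\otimes_Y k(y)\to H^{p-1}(X_y,\x F_y)$ to be surjective, hence by Theorem \ref{t-base-change} the functor $T^{p-1}_{\x F}$ is right exact and so $T^p_{\x F}$ is left exact; combined with $T^p_{\x F}(A)$ flat this yields exactness by Corollary \ref{t-flat-T-exact}. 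Alternatively and more cleanly: $R^pf_*\x F$ locally free and commuting with base change makes $T^p_{\x F}(A)$ flat and $\beta$ an isomorphism, and an isomorphism $\beta$ together with $T^p_{\x F}(A)$ flat gives exactness of $T^p_{\x F}$ directly.

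**Step 2: $(ii)\implies(iii)$ and the reduced case.** If $R^pf_*\x F$ commutes with base change then $\dim_{k(y)}H^p(X_y,\x F_y)=\dim_{k(y)}R^pf_*\x F\otimes_Y k(y)$, which is locally constant precisely because $R^pf_*\x F$ is locally free; this gives $(iii)$. For the final clause, assume $Y$ reduced and $(iii)$. By Theorem \ref{t-T-base-change}, $y\mapsto\dim_{k(y)}T^p_{\x F}(k(y))=\dim_{k(y)}h^p(L^\bullet\otimes_A k(y))$ is locally constant. Using the exact sequence $(**)$ from the proof of the semi-continuity theorem \ref{t-flat-semi-continuity}, constancy of this dimension forces the functions $\dim_{k(y)}W^p(L^\bullet)\otimes_A k(y)$ to be locally constant (one needs also that the neighbouring cohomology behaves, or one argues directly on the complex $L^\bullet$ of flat finitely generated modules), and then the reducedness fact quoted above shows $W^p(L^\bullet)$ is locally free, whence $T^p_{\x F}$ is left exact by Theorem \ref{t-flat-left-exact}; since $L^\bullet$ has length bounded by $r$ and the dimensions of all cohomologies are locally constant, a descending induction shows $T^q_{\x F}$ is right exact for all $q\ge p$ too, so $T^p_{\x F}$ is exact, giving $(i)$.

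**The main obstacle.** The delicate point is not the formal bookkeeping with the $\delta$-functor $T^\bullet$, which is handled by the theorems already proved, but rather extracting local freeness of $W^p(L^\bullet)$ — equivalently left-exactness of $T^p_{\x F}$ — from local constancy of the single fibre function $\dim_{k(y)}H^p(X_y,\x F_y)$ over a reduced base. The subtlety is that constancy of $\dim h^p$ alone does not immediately split off the $W^p$ term in $(**)$; one must exploit that over a reduced ring a coherent sheaf with constant fibre dimension is locally free (applied to the image and kernel sheaves inside the two-term complex $W^p(L^\bullet)\to L^{p+1}$), and possibly run a downward induction starting from $T^r_{\x F}$, which is automatically right exact by Remark \ref{r-T0-Tr}, to propagate right-exactness downward to degree $p$. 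Threading the reducedness hypothesis through at exactly the right place is the crux.
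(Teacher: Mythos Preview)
Your Steps 1 and the easy implication $(ii)\Rightarrow(iii)$ are fine and match the paper's argument (the paper actually proves $(i)\Rightarrow(iii)$ via the $W^p$ modules rather than going through $(ii)$, but this is cosmetic).

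The gap is in $(iii)\Rightarrow(i)$. You correctly identify that one needs $W^p(L^\bullet)$ and $W^{p+1}(L^\bullet)$ to be flat, and that over a reduced base this follows once their fibre dimensions are locally constant. But you do not explain how to extract local constancy of $\dim_{k(y)}W^p(L^\bullet)\otimes_A k(y)$ and $\dim_{k(y)}W^{p+1}(L^\bullet)\otimes_A k(y)$ from local constancy of $\dim_{k(y)}h^p(L^\bullet\otimes_A k(y))$ alone. Your parenthetical ``one needs also that the neighbouring cohomology behaves'' and the proposed descending induction both assume information about degrees other than $p$ which is not part of the hypothesis; condition $(iii)$ concerns only the single degree $p$.

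The paper's trick is as follows. From the exact sequence $(**)$ in the proof of Theorem \ref{t-flat-semi-continuity},
\[
0\to T^p_{\x F}(k(y))\to W^p(L^\bullet)\otimes_A k(y)\to L^{p+1}\otimes_A k(y)\to W^{p+1}(L^\bullet)\otimes_A k(y)\to 0,
\]
one gets
\[
\dim W^p\otimes k(y)+\dim W^{p+1}\otimes k(y)=\dim T^p_{\x F}(k(y))+\dim L^{p+1}\otimes k(y),
\]
and the right side is locally constant. But by Exercise \ref{exe-flat-semi-continuous} each of $\dim W^p\otimes k(y)$ and $\dim W^{p+1}\otimes k(y)$ is upper semi-continuous; two upper semi-continuous functions with locally constant sum are each locally constant. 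Now Exercise \ref{exe-flat-locally-constant-flat-module} (which needs $Y$ reduced) gives $W^p$ and $W^{p+1}$ flat, hence by Theorem \ref{t-flat-left-exact} both $T^p_{\x F}$ and $T^{p+1}_{\x F}$ are left exact. Left exactness of $T^{p+1}_{\x F}$ is the same as right exactness of $T^p_{\x F}$, so $T^p_{\x F}$ is exact. No induction, no information about other degrees, is required.
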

\begin{proof}
(i) $\Leftrightarrow$ (ii): In view of Corollary \ref{t-flat-T-exact} and 
Theorem \ref{t-base-change} (and its proof), $T^p_{\x F}$ is exact if and only if $T^p_{\x F}$ is 
right exact and $T^p_{\x{F}}(A)$ is a finitely generated flat $A$-module if and only if 
$R^pf_*\x{F}$ commutes with base change and it is locally free.

(i) $\implies$ (iii): The exactness of $T^p_{\x F}$ implies that the modules 
$W^p(L^\bullet)$ and $W^{p+1}(L^\bullet)$ are flat over $A$ hence the functions 
$\dim_{k(y)}W^p(L^\bullet)\otimes_Ak(y)$ and $\dim_{k(y)}W^{p+1}(L^\bullet)\otimes_A k(y)$ 
are locally constant. Now the exact sequence  $(**)$ of the proof of Theorem \ref{t-flat-semi-continuity} shows that the function $\dim_{k(y)}H^p(X_y,\x{F}_y)$ is locally constant.

Now in addition assume that $Y$ is integral.
We show that (iii) $\implies$ (i): The exact sequence $(**)$ of the proof of Theorem \ref{t-flat-semi-continuity} shows that if the function $\dim_{k(y)}H^p(X_y,\x{F}_y)$ is locally constant, then the two functions 
$\dim_{k(y)}W^p(L^\bullet)\otimes_Ak(y)$ and $\dim_{k(y)}W^{p+1}(L^\bullet)\otimes_A k(y)$ 
are locally constant. The latter is equivalent to the flatness of $W^p(L^\bullet)$ and $W^{p+1}(L^\bullet)$ by Exercise \ref{exe-flat-locally-constant-flat-module}. 
By Theorem \ref{t-flat-left-exact}, this is in turn equivalent to the left exactness of  
$T^p_{\x F}$ and $T^{p+1}_{\x F}$ which is the same as $T^p_{\x F}$ being exact.
\end{proof}

%%%%%%%%%%%%%%%%%%%%%%%%%%%
\section{Invariance of Euler characteristic and Hilbert polynomial}

\begin{thm}\label{t-flat-Euler}
Let $f\colon X\to Y$ be a projective morphism of Noetherian schemes, 
and let $\x{F}$ be a coherent sheaf on $X$ which is flat over $Y$. Then, the  
Euler characteristic $\mathcal{X}(X_y,\x{F}_y)$ viewed as a function in $y$ is 
locally constant on $Y$.
\end{thm}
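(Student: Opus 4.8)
The plan is to reduce immediately to the affine case and then exploit the Grothendieck complex. Since the assertion is local on $Y$, I may assume $Y=\Spec A$ with $A$ Noetherian. As $\x{F}$ is flat over $Y$, Theorem \ref{t-flat-complex} furnishes a finite length complex
$$
L^\bullet:\hspace{1cm} 0\to L^0\to L^1\to\cdots\to L^N\to 0
$$
of finitely generated flat $A$-modules such that $T^p_{\x{F}}(M)\simeq h^p(L^\bullet\otimes_AM)$, functorially in $M$. By Theorem \ref{t-T-base-change}, for each $y\in Y$ we have $H^p(X_y,\x{F}_y)\simeq T^p_{\x{F}}(k(y))\simeq h^p(L^\bullet\otimes_Ak(y))$, so that
$$
\mathcal{X}(X_y,\x{F}_y)=\sum_{p}(-1)^p\dim_{k(y)}h^p(L^\bullet\otimes_Ak(y)).
$$

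Next I would apply the Euler--Poincar\'e principle to the finite complex of finite-dimensional $k(y)$-vector spaces $L^\bullet\otimes_Ak(y)$: the alternating sum of the dimensions of its cohomology objects equals the alternating sum of the dimensions of its terms. Hence
$$
\mathcal{X}(X_y,\x{F}_y)=\sum_{p}(-1)^p\dim_{k(y)}(L^p\otimes_Ak(y)).
$$
This is the crux of the argument: although the individual cohomology dimensions may jump from point to point (this is exactly the content of the semi-continuity theorem \ref{t-flat-semi-continuity}), their alternating sum does not, because after the Euler--Poincar\'e cancellation it only sees the fixed terms $L^p$ of the complex.

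Finally, each $L^p$ is a finitely generated flat module over the Noetherian ring $A$, hence projective, hence locally free; therefore $y\mapsto\dim_{k(y)}(L^p\otimes_Ak(y))$ is the rank function of a locally free sheaf and is locally constant on $Y$. A finite alternating sum of locally constant functions is locally constant, which gives the result. I do not expect a serious obstacle: essentially all the work has already been done in constructing the Grothendieck complex, and the remaining ingredients are the standard additivity of Euler characteristic along a bounded complex and the local freeness of finitely generated flat modules over Noetherian rings. The only point to be careful about is that $L^\bullet$ must be of finite length so that the alternating sums are genuinely finite sums — which is precisely what Theorem \ref{t-flat-complex} guarantees.
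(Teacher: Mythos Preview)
Your proposal is correct and follows essentially the same approach as the paper: reduce to the affine case, invoke the Grothendieck complex $L^\bullet$ together with Theorem \ref{t-T-base-change} to identify $H^p(X_y,\x{F}_y)$ with $h^p(L^\bullet\otimes_Ak(y))$, apply the Euler--Poincar\'e principle to pass from cohomology to terms, and conclude using the local freeness of the finitely generated flat modules $L^p$. The paper presents the last two steps in the opposite order (first shrinking $Y$ so the ranks of the $L^p$ are constant, then invoking Euler--Poincar\'e), but this is purely cosmetic.
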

\begin{proof}
The problem is local on $Y$ so we may assume that $Y=\Spec A$ and that 
$L^\bullet$ is the Grothendieck complex.
By definition, 
$$
\mathcal{X}(X_y,\x{F}_y)=\sum_{p}(-1)^p\dim_{k(y)}H^p(X_y,\x{F}_y)
$$
By Theorems \ref{t-flat-complex} and \ref{t-T-base-change}, 
$$
\dim_{k(y)}H^p(X_y,\x{F}_y)=\dim_{k(y)}T^p_{\x{F}}(k(y))=\dim_{k(y)}h^p(L^\bullet\otimes_Ak(y))
$$

Since all the modules $L^p$ in $L^\bullet$ are flat and finitely generated, for any $z\in Y$ 
the values $\dim_{k(y)}L^\bullet\otimes_Ak(y)$ are the same in a neighborhood 
of $z$. So, we may assume that these values are independent of $y$. 
The result follows immediately from some simple linear algebra, that is, the fact 
that
$$
\mathcal{X}(L^\bullet\otimes_Ak(y)):=\sum_{p}(-1)^p\dim_{k(y)}L^\bullet\otimes_Ak(y)=\sum_{p}(-1)^p\dim_{k(y)}h^p(L^\bullet\otimes_Ak(y))
$$
\end{proof}

\begin{thm}\label{t-flat-twist}
Let $f\colon X\to Y$ be a projective morphism of Noetherian schemes, 
and let $\x{F}$ be a coherent sheaf on $X$. Then, $\x{F}$ is flat over $Y$ if and only if $f_*\x{F}(l)$ is locally free for every $l\gg 0$.
\end{thm}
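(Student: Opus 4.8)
The plan is to prove both directions using the Grothendieck complex and the interplay between flatness of $\x{F}$, local freeness of the higher direct images, and Serre vanishing. Throughout, the statement is local on $Y$, so I would reduce immediately to the affine case $Y=\Spec A$ with $A$ Noetherian, and fix a very ample invertible sheaf $\x{O}_X(1)$ for $f$.

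For the "only if" direction, assume $\x{F}$ is flat over $Y$. The key observation is that for $l\gg 0$ one has $H^p(X_y,\x{F}_y(l))=0$ for all $p>0$ and all $y\in Y$; this follows from Serre vanishing applied on the fibres, together with the fact that the bound on $l$ can be taken uniform in $y$ since $Y$ is Noetherian (one uses a single coherent sheaf computation, e.g. via the Grothendieck complex $L^\bullet$ of $\x{F}(l)$, or rather the complex for $\x{F}$ twisted appropriately — more precisely, I would note that $T^p_{\x{F}(l)}(k(y))\simeq H^p(X_y,\x{F}_y(l))$ by Theorem \ref{t-T-base-change}, and this vanishes for $p>0$ and $l\gg 0$ independently of $y$ by examining the \v{C}ech complex). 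Then by the semi-continuity theorem \ref{t-flat-semi-continuity} and induction downward (or directly by Corollary \ref{c-flat-locally-free}), the vanishing of $H^p(X_y,\x{F}_y(l))$ for all $p>0$ forces $R^pf_*\x{F}(l)=0$ for $p>0$ and forces $R^0f_*\x{F}(l)=f_*\x{F}(l)$ to commute with base change; since the fibre dimensions $\dim_{k(y)}H^0(X_y,\x{F}_y(l))$ are then governed by the locally constant Euler characteristic (Theorem \ref{t-flat-Euler}) — all higher terms vanishing — the function $y\mapsto\dim_{k(y)}H^0(X_y,\x{F}_y(l))$ is locally constant, and Corollary \ref{c-flat-locally-free} (the implication toward local freeness, valid here since $T^0$ is automatically left exact and now also right exact) gives that $f_*\x{F}(l)$ is locally free.

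For the "if" direction, assume $f_*\x{F}(l)$ is locally free for all $l\gg 0$. I would use the standard criterion that $\x{F}$ is flat over $Y=\Spec A$ iff $\bigoplus_{l\gg 0}H^0(X,\x{F}(l))$, regarded as a graded module, is flat over $A$; concretely, flatness of $\x{F}$ at a point $x$ over $\x{O}_{f(x)}$ can be checked after passing to the graded module $M=\bigoplus_{l\ge l_0}H^0(X,\x{F}(l))$ over the graded ring $S=\bigoplus_{l\ge 0}H^0(X,\x{O}_X(l))$, using that $X$ is (locally) $\Proj$ of such an algebra and $\x{F}$ is the sheaf associated to $M$. Since each graded piece $H^0(X,\x{F}(l))=f_*\x{F}(l)(Y)$ is, by hypothesis, a projective (= flat, as $A$ is Noetherian and these are finitely generated) $A$-module, the module $M$ is flat over $A$, and hence $\x{F}=\widetilde{M}$ is flat over $Y$. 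One subtlety is that flatness of $\x{F}$ only sees the module $M$ in high degrees, so I would invoke the fact that $\x{F}$ and $\widetilde{M}$ agree (the sheafification only depends on $M_{\ge l_0}$ for any $l_0$), and that flatness of a finitely generated graded module over $A$ can be tested degree by degree once we are above the Castelnuovo–Mumford regularity.

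The main obstacle I anticipate is making the uniformity claims rigorous: in the "only if" direction, that the Serre vanishing bound $l\gg 0$ and the base-change/local-freeness conclusions hold uniformly over a neighbourhood of each point of $Y$ — this is exactly where one must lean on Theorems \ref{t-flat-complex}, \ref{t-T-base-change}, and Corollary \ref{c-flat-locally-free} rather than fibrewise arguments; and in the "if" direction, the passage between local freeness of the sheaves $f_*\x{F}(l)$ and flatness of the associated graded module over $A$, which requires care about which range of $l$ controls $\x{F}$ and a clean statement that a graded $S$-module all of whose components are $A$-flat is itself $A$-flat. Everything else is routine bookkeeping with the tools already developed in this chapter.
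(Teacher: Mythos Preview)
Your proposal is correct and follows the same approach as the paper's proof. Two minor simplifications the paper makes: for the forward direction it bypasses the fibrewise uniformity issue by using $R^pf_*\x{F}(l)=0$ for $p>0$ and $l\gg 0$ directly together with Corollary~\ref{t-flat-base-change-locally-free} (and remarks parenthetically that $H^0(X,\x{F}(l))$ is simply a direct summand of the flat module $L^0$ in the Grothendieck complex of $\x{F}(l)$); for the converse it makes your ``routine bookkeeping'' step explicit by observing that $M_{(t_i)}$ is the degree-zero direct summand of the $A$-flat localisation $M_{t_i}$, so no appeal to Castelnuovo--Mumford regularity is needed.
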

\begin{proof}
We may assume that $Y=\Spec A$ and by embedding $X$ into a projective space over $Y$ we may assume that $X=\PP^n_Y=\Proj A[t_0,\dots,t_n]$.
Assume that $\x{F}$ is flat.  For $l\gg 0$, $R^pf_*\x{F}(l)=0$ if $p>0$. 
By Corollary \ref{t-flat-base-change-locally-free},  for any $l\gg 0$, each $R^pf_*\x{F}(l)$ commutes with base change which in particular 
implies that $f_*\x{F}(l)$ is locally free by the base change theorem (\ref{t-base-change})
(actually this local freeness simply follows from the construction of the Grothendieck 
complex of $\x{F}(l)$; $T^0_{\x{F}(l)}(A)$ is a direct summand of the flat $A$-module 
$L^0$).

Conversely, assume that $f_*\x{F}(l)$ is locally free for every $l\gg 0$. 
 Then, $\x{F}=\widetilde{M}$ 
where  $M:=\oplus_{l\gg 0} H^0(X,\x{F}(l))$ 
and by assumptions each $ H^0(X,\x{F}(l))$ is a flat $A$-module for $l\gg 0$ (here 
the $\widetilde{M}$ is the sheaf associated to the graded module $M$ on $\Proj A[t_0,\dots,t_n]$). 
In particular, $M$ is a flat $A$-module and so $M_{t_i}$ 
is also flat over $A$. On the other hand, the localised module 
$M_{(t_i)}$ determines the sheaf $\x{F}|_{D_+(t_i)}$ and it is 
enough to prove that each $M_{(t_i)}$ is a flat $A$-module. 
The module $M_{t_i}$ has a natural graded structure in which 
the piece of degree $d$ is generated by the elements $m/t_i^r$ where  
$m$ is homogeneous of degree $r+d$. In particular, the piece of degree zero is just $M_{(t_i)}$. 
Thus, the latter module is a direct summand of $M_{t_i}$ hence it 
is flat over $A$.\\
\end{proof}

For a coherent sheaf $\x{F}$ on a scheme $X$ projective over a field $k$, 
one defines the Hilbert polynomial $\Phi$ of $\x{F}$ on $X$ to be 
the unique polynomial in $\Q[t]$ satisfying $\Phi(l)=\mathcal{X}(X,\x{F}(l))$ 
for any $l\in \Z$. Note that $\Phi$ also depends on the choice of 
a very ample sheaf $\x{O}_X(1)$.\\

\begin{thm}\label{t-flat-Hilbert-polynomial}
Let $f\colon X\to Y$ be a projective morphism of Noetherian schemes, 
and let $\x{F}$ be a coherent sheaf on $X$. If $\x{F}$ is flat over $Y$, then the 
Hilbert polynomial $\Phi_y$ of $\x{F}_y$ on $X_y$ viewed as a function in $y$ 
is locally constant on $Y$. Moreover, if $Y$ is integral the converse also holds.
\end{thm}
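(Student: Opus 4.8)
The plan is to prove the two implications separately. The forward direction is a short deduction from Theorem \ref{t-flat-Euler}, while the converse requires a genuine base-change input beyond the flat theory developed so far.

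For the forward direction, note that for every $l\in\Z$ the sheaf $\x{F}(l)=\x{F}\otimes\x{O}_X(l)$ is coherent and again flat over $Y$ (tensoring by an invertible sheaf is an auto-equivalence, so it preserves flatness of the stalks over $\x{O}_Y$), and $(\x{F}(l))_y=\x{F}_y(l)$ on $X_y$ since $\x{O}_X(1)$ restricts to $\x{O}_{X_y}(1)$. Applying Theorem \ref{t-flat-Euler} to $\x{F}(l)$ shows that $y\mapsto\mathcal{X}(X_y,\x{F}_y(l))=\Phi_y(l)$ is locally constant on $Y$ for each fixed $l$. Since $f$ is projective, locally on $Y$ we may embed $X$ into some $\PP^n_Y$, so there $\dim X_y\le n$ and hence $\deg\Phi_y\le n$. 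Now fix the $n+1$ integers $0,1,\dots,n$: near any $z\in Y$ there is an open $W$ on which each of the locally constant functions $y\mapsto\Phi_y(i)$, $i=0,\dots,n$, is constant, and since a polynomial of degree $\le n$ is determined by its values at $n+1$ distinct points, $\Phi_y=\Phi_z$ for every $y\in W$. Thus $y\mapsto\Phi_y$ is locally constant.

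For the converse, assume $Y$ is integral. Flatness being local on $Y$, we may take $Y=\Spec A$ with $A$ a Noetherian domain; moreover, pushing $\x{F}$ forward along a closed immersion $e\colon X\to\PP^n_Y$ preserves flatness over $Y$ (the $\x{O}_Y$-algebra structure on a stalk of $\x{O}_X$ factors through that of $\x{O}_{\PP^n_Y}$) and, $e$ being finite, preserves all fibrewise cohomology and hence the Hilbert polynomials, so we may assume $X=\PP^n_A$. By Theorem \ref{t-flat-twist} it suffices to show that $f_*\x{F}(l)$ is locally free over $A$ for all $l\gg0$; since $A$ is reduced and $f_*\x{F}(l)$ is coherent, by Exercise \ref{exe-flat-locally-constant-flat-module} this reduces to showing that $y\mapsto\dim_{k(y)}\bigl(f_*\x{F}(l)\otimes_A k(y)\bigr)$ is locally constant — equivalently, $\Spec A$ being irreducible, constant — for $l\gg0$. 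The point is the \emph{uniform} statement that there is $l_0$ such that for all $l\ge l_0$ and all $y\in\Spec A$ one has $H^i(X_y,\x{F}_y(l))=0$ for $i>0$ and the base-change map $f_*\x{F}(l)\otimes_A k(y)\to H^0(X_y,\x{F}_y(l))$ is an isomorphism. Granting this, $\dim_{k(y)}\bigl(f_*\x{F}(l)\otimes_A k(y)\bigr)=\dim_{k(y)}H^0(X_y,\x{F}_y(l))=\mathcal{X}(X_y,\x{F}_y(l))=\Phi_y(l)$, which is independent of $y$ since the Hilbert polynomial is locally constant and $\Spec A$ is connected; this is what we wanted.

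The main obstacle is exactly this uniform vanishing-and-base-change claim: it cannot be read off from the flat theory above, since flatness of $\x{F}$ is the conclusion and not a hypothesis. I would deduce it from a Castelnuovo--Mumford regularity bound for $\x{F}$ on $\PP^n_A$ that depends only on the degrees appearing in a graded presentation of the module $M$ with $\widetilde{M}=\x{F}$; because those degrees do not increase under a base change $A\to B$ (the generators and relations of $M\otimes_A B$ are images of those of $M$), the bound, and with it $l_0$, is uniform in $y$. An alternative route is to combine generic flatness over $Y$ (which settles a dense open, where flat base change applies) with Noetherian induction on $\Spec A$. Everything else in the argument is routine.
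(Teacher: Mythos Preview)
Your forward direction is essentially the paper's: both apply Theorem~\ref{t-flat-Euler} to each twist $\x{F}(l)$ and conclude. Your interpolation remark just makes explicit what the paper leaves implicit.

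For the converse, your argument is correct but takes a genuinely different route from the paper. You stay global over $\Spec A$ and aim to show that $y\mapsto\dim_{k(y)}f_*\x{F}(l)\otimes k(y)$ is constant for all $l\gg 0$, then invoke Exercise~\ref{exe-flat-locally-constant-flat-module}. As you correctly identify, this forces you to prove a \emph{uniform} base-change statement (one $l_0$ working for every $y$), which you propose to get via Castelnuovo--Mumford regularity or via generic flatness plus Noetherian induction. Both of these work (and indeed the paper later establishes exactly this uniform statement in Step~1 of Theorem~\ref{t-flat-Hilb-stratification}), but they are heavier than what is needed here, and in the paper's logical order generic flatness is proved only afterwards.

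The paper instead sidesteps uniformity entirely. Since flatness of $\x{F}$ over $Y$ is local on $Y$, it suffices to show $\x{F}_{\Spec\x{O}_y}$ is flat over $\Spec\x{O}_y$ for each $y$. So the paper first replaces $A$ by the local ring $\x{O}_y$ via flat base change. Now there are only two points to handle: at the closed point $y$ one runs the non-flat base-change argument (a direct diagram chase from the presentation $A^s\to A\to k(y)\to 0$, valid for $l\gg 0$ depending on $y$, which is now harmless), and at the generic point $\eta$ one uses flat base change. Since $\Phi_y=\Phi_\eta$, the two fibre dimensions of $H^0(X,\x{F}(l))$ agree, and Exercise~\ref{exe-module-local-free} (the local-domain criterion: equal ranks at the closed and generic points force freeness) finishes. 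So the paper trades your global constancy criterion (Exercise~\ref{exe-flat-locally-constant-flat-module}) for a pointwise local one (Exercise~\ref{exe-module-local-free}), and thereby avoids any uniform-regularity input. Your approach has the merit that the uniform statement it isolates is independently useful; the paper's has the merit of being entirely self-contained at this stage.
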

\begin{proof}
We may assume that $Y$ is connected. For each $l$ the function 
$\mathcal{X}(X_y,\x{F}(l)_y)$ is locally constant on $Y$ and since $Y$ 
is connected it is just constant. Therefore the Hilbert polynomial $\Phi_y$ 
is independent of $y$.

Now assume that $Y$ is integral and that the 
Hilbert polynomial $\Phi_y$ as a function in $y$ is locally constant on $Y$.
Since $Y$ is connected the polynomial is independent of $y$.
We can assume that $Y=\Spec A$, and 
by embedding $X$ into a projective space over $Y$ we may assume that 
$X=\PP^n_Y=\Proj A[t_0,\dots,t_n]$. For $l\gg 0$, $R^pf_*\x{F}(l)=0$ if $p>0$. 
By Theorem \ref{t-flat-twist}, it is enough to show that 
$f_*\x{F}(l)$ is locally free for $l\gg 0$. 

Pick $y\in Y$. Applying the flat base change theorem (\ref{t-flat-base-change}), we can replace 
$A$ by $\x{O}_y$ so we can assume that $A$ is a local ring and that $y$ is the 
closed point of $Y$.

We have an exact sequence $A^s\to A\to k(y)\to 0$ which for $l\gg 0$ gives rise to a commutative 
diagram 
$$
\xymatrix{
H^p(X,\x{F}(l))\otimes_A A^s \ar[r] \ar[d]&
H^p(X,\x{F}(l))\otimes_A A\ar[r]\ar[d] &
H^p(X,\x{F}(l))\otimes_Ak(y) \ar[d] \ar[r]&
0\\
H^p(X,\x{F}(l)\otimes_YA^s) \ar[r]&
H^p(X,\x{F}(l)\otimes_YA) \ar[r] &
H^p(X,\x{F}(l)\otimes_Yk(y))\ar[r]&
0
}
$$
in which the two vertical maps on the left are isomorphisms hence the right one is also an 
isomorphism. Therefore, by Theorem \ref{t-T-base-change},  
$$
\Phi_y(l)=\dim_{k(y)} H^0(X_y,\x{F}(l)_y)=\dim_{k(y)}H^0(X,\x{F}(l))\otimes_Ak(y)
$$ 
for $l\gg 0$. 
On the other hand, if $\eta$ is the generic point of $Y$, then the flat 
base change theorem implies that 
$$
\Phi_\eta(l)=\dim_{k(\eta)} H^0(X_{\eta},\x{F}(l)_{\eta})=\dim_{k(\eta)}H^0(X,\x{F}(l))\otimes_Ak(\eta)
$$ 
for $l\gg 0$. Since $\Phi_y=\Phi_\eta$, 
$\dim_{k(y)}H^0(X,\x{F}(l))\otimes_Ak(y)=\dim_{k(\eta)}H^0(X,\x{F}(l))\otimes_Ak(\eta)$.
Now use Exercise \ref{exe-module-local-free} to deduce that $H^0(X,\x{F}(l))$ 
is a free $A$-module for $l\gg 0$.\\
\end{proof}

%%%%%%%%%%%%%%%%%%%%%%%%%%%%%%
\section{Generic flatness, flat morphisms, flat families of schemes}

\begin{thm}[Generic flatness]\label{t-flat-generic-flatness}
Let $f\colon X\to Y$ be a morphism of finite type between Noetherian schemes with $Y$
reduced, and $\x{F}$ a coherent sheaf on $X$. Then, $\x{F}$ is generically flat 
over $Y$ in the sense that there is a non-empty open subset $U\subseteq Y$ 
such that $\x{F}|_{f^{-1}U}$ is flat over $U$.
\end{thm}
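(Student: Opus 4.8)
The plan is to peel off, by standard dévissages, everything geometric, reducing to Grothendieck's affine ``generic freeness'' lemma, and then to prove that lemma by induction on the number of polynomial generators.

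\emph{Reduction to $Y$ integral and affine, and to an algebra statement.} Since $Y$ is reduced and Noetherian it has finitely many irreducible components $Y_1,\dots,Y_n$, each carrying its reduced (hence integral) scheme structure; the open set $U_1:=Y\setminus(Y_2\cup\cdots\cup Y_n)$ is a nonempty open subscheme of $Y$ which is at the same time an open subscheme of the integral scheme $Y_1$. Hence it suffices to prove the theorem when $Y$ is integral. Shrinking $Y$ to an affine open, assume $Y=\Spec A$ with $A$ a Noetherian domain. As $X$ is Noetherian it is covered by finitely many affine opens $X_j=\Spec B_j$ with $B_j$ of finite type over $A$ and $\x{F}|_{X_j}=\widetilde{M_j}$ for a finitely generated $B_j$-module $M_j$. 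Since flatness over the base is local on the source, it is enough to produce, for each $j$, a nonzero $a_j\in A$ such that $(M_j)_{a_j}$ is flat over $A_{a_j}$; then $U=D(\prod_j a_j)$ does the job. So everything comes down to the following: if $A$ is a Noetherian domain, $B$ a finitely generated $A$-algebra and $M$ a finitely generated $B$-module, then there is a nonzero $a\in A$ with $M_a$ \emph{free} over $A_a$.

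\emph{Proof of generic freeness.} Since $M$ is also finitely generated over a polynomial ring $A[x_1,\dots,x_n]$ surjecting onto $B$, we may assume $B=A[x_1,\dots,x_n]$ and induct on $n$. For $n=0$, $M$ is finite over $A$; put $K=\operatorname{Frac}(A)$ and $r=\dim_K(M\otimes_A K)$, choose $m_1,\dots,m_r\in M$ mapping to a $K$-basis, and let $\varphi\colon A^r\to M$ be the resulting map. Then $\varphi\otimes_A K$ is an isomorphism, so $\ker\varphi$ is a torsion submodule of $A^r$, hence $0$, and $\operatorname{coker}\varphi$ is finitely generated and torsion, hence annihilated by some nonzero $a\in A$; thus $A_a^r\xrightarrow{\ \sim\ }M_a$.

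\emph{The inductive step, and the main obstacle.} For $n\ge 1$ write $B=B'[x_n]$ with $B'=A[x_1,\dots,x_{n-1}]$. The difficulty is that $M$ need not be finite over $B'$, so one cannot simply apply induction. The remedy is to filter $M$ by the $B'$-submodules $M_k$ generated by $x_n^{\,i}m_j$ for $i\le k$, where $m_1,\dots,m_s$ generate $M$ over $B$: the $M_k$ are finitely generated over $B'$, increase to $M$, and multiplication by $x_n$ induces surjections $M_k/M_{k-1}\twoheadrightarrow M_{k+1}/M_k$ (because $M_{k+1}=M_k+x_nM_k$ and $x_nM_{k-1}\subseteq M_k$). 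A descending chain of surjections of Noetherian $B'$-modules stabilizes (the kernels in $M_1/M_0$ form an ascending chain of submodules), so $M_k/M_{k-1}$ is a fixed finitely generated $B'$-module $N$ for all $k\ge k_0$. Applying the induction hypothesis to the finitely many $B'$-modules $M_0,\,M_1/M_0,\,\dots,\,M_{k_0}/M_{k_0-1}$ (which includes $N$) yields a single nonzero $a\in A$ making all of them, hence every $M_k/M_{k-1}$, free over $A_a$. Then each $(M_k)_a$ is a successive extension of free $A_a$-modules, hence free (a short exact sequence with free quotient splits), and $M_a=\varinjlim_k (M_k)_a\cong (M_0)_a\oplus\bigoplus_{k\ge 0}(M_{k+1}/M_k)_a$ is a direct sum of free modules, hence free. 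I expect this inductive step --- isolating the correct $B'$-filtration and checking that its graded pieces eventually become constant --- to be the only genuinely delicate point; the reductions of the first paragraph and the base case $n=0$ are routine.
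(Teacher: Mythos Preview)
Your proof is correct; it is the standard commutative-algebra proof of Grothendieck's generic freeness lemma, and the filtration argument in the inductive step is carried out accurately.

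The paper takes a genuinely different route. After the same reduction to $Y=\Spec A$ integral, it does not reduce to a module-theoretic statement but instead compactifies to $X=\PP^n_Y$, and runs induction on $n$ by restricting $\x{F}$ to a general hyperplane $H\simeq\PP^{n-1}_Y$. Using the exact sequences $0\to\x{F}(l)\to\x{F}(l+1)\to\x{F}(l+1)\otimes\x{O}_H\to 0$ together with Theorem~\ref{t-flat-twist} (flatness over $Y$ is equivalent to $f_*\x{F}(l)$ being locally free for all $l\gg 0$), the paper shows that once $\x{F}\otimes\x{O}_H$ is flat and $f_*\x{F}(l_0)$ is locally free for one large $l_0$ (which can be arranged after shrinking $Y$), local freeness propagates to all $l\ge l_0$, hence $\x{F}$ is flat. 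So the paper's argument leans on the cohomological machinery already developed in the chapter, whereas your argument is self-contained commutative algebra and in fact yields the slightly sharper conclusion of generic \emph{freeness}. The paper itself acknowledges this alternative in the remark immediately following the theorem (``There are direct and more elementary proofs of the previous theorem'').
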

\begin{proof}
We may assume that $Y=\Spec A$, $X=\Spec B$, and also that $Y$ is integral. 
Moreover, since $B$ is a finitely generated $A$-algebra, $f$ factors 
as the composition of a closed immersion $X\to \A^n_Y$ and the natural map $\A^n_Y\to Y$ 
for some $n$. By replacing $X$ with $\A^n_Y$ we may assume that $X=\A^n_Y$.
In addition, as $\A^n_Y$ can be embedded as an open subset of $\PP^n_Y$ 
we can replace $X$ and assume that $X=\PP^n_Y=\Proj A[t_0,\dots,t_n]$ (we can extend 
$\x{F}$ to a coherent sheaf on $\PP^n_Y$ for example as in Hartshorne [\ref{Hartshorne}, II, Exercise 5.15]). 
If $n=0$, $X=Y$ and we are done since $\x{F}$ would be a coherent sheaf 
on $Y$ and it is generically flat, i.e. locally free. So, assume that $n>0$.

If $Y$ has finitely many points, then we may replace $Y$ with its generic 
point in which case $A$ would be a field and flatness is automatic in this case.
So, we may assume that $Y$ is infinite which in particular means that $A$ 
has infinitely many elements.

Let  $h\in A[t_0,\dots,t_n]$ be a homogeneous polynomial
of degree one and let $H$ be the closed subscheme of $\PP^n_Y$ defined by $h$.
We then have the natural exact sequence 
$0\to \x{I}_H\to \x{O}_X\to \x{O}_H\to 0$. If $h$ is sufficiently general (this makes 
sense since $A$ is infinite), 
the sequence
$$
0\to \x{F}\otimes_{\x{O}_X}\x{I}_H\to \x{F}\otimes_{\x{O}_X}\x{O}_X\to \x{F}\otimes_{\x{O}_X}\x{O}_H\to 0
$$  
is exact (see Exercise \ref{exe-flatness-tensor-exact}). 
Moreover, perhaps after shrinking $Y$ we can assume that the coefficients 
of $h$ are invertible elements of $A$ and so after a linear change of variables 
we may assume that $h=t_1$. We then get the exact sequence 
$$
0\to \x{F}(l)\to \x{F}(l+1)\to \x{F}(l+1)\otimes\x{O}_H\to 0
$$ 
If $l\gg 0$, we get the induced exact sequence 
$$
0\to f_*\x{F}(l)\to f_*\x{F}(l+1)\to f_*(\x{F}(l+1)\otimes\x{O}_H)\to 0
$$ 
Since $H\simeq \PP^{n-1}_Y$, by applying induction on $n$ we may assume that 
$\x{F}\otimes\x{O}_H$ is flat over $Y$ hence by Theorem \ref{t-flat-twist}, we can assume that 
$f_*(\x{F}(l+1)\otimes\x{O}_H)$ is locally free for every $l\gg 0$. 
If we shrink $Y$ we can assume that $f_*\x{F}(l_0)$ is locally free 
for some sufficiently large $l_0$. Now the above exact sequence implies that 
$f_*\x{F}(l_0+1)$ is also locally free. Inductively, we deduce that 
$f_*\x{F}(l)$ is locally for any $l\gg 0$. Now another application of Theorem \ref{t-flat-twist}
implies that $\x{F}$ is flat over $Y$.\\  
\end{proof}

\begin{rem}
There are direct and more elementary proofs of the previous theorem 
(cf. [\ref{F}, Theorem 5.12]). The theorem is not true without the 
reduced assumption. For example, if $X=\Spec k$ and $Y=\Spec k[t]/\langle t^2 \rangle$ 
for some field $k$, then the induced morphism $f\colon X\to Y$ is not generically 
flat. In fact, for $f$ to be generically flat the sheaf $f_*\x{O}_X$ should 
be locally free which is not the case. 
\end{rem}

A morphism $f\colon X\to Y$ is called open if the image of any open subset $U\subseteq X$ 
is open in $Y$. It is said to be universally open if for any base change 
$S\to Y$, the induced morphism $X_S\to S$ is open.

\begin{thm}\label{t-flat-open}
Let $f\colon X\to Y$ be a flat morphism of finite type between Noetherian schemes. 
Then, $f$ is universally open.
\end{thm}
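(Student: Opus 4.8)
The plan is to reduce universal openness to plain openness, and then prove openness by combining Chevalley's constructibility theorem with the going-down property enjoyed by flat morphisms. Since flatness is preserved under arbitrary base change by Theorem \ref{t-flat-properties}(iv), and being of finite type is obviously preserved, the class of flat morphisms of finite type is stable under base change; hence it suffices to show that every flat morphism $f\colon X\to Y$ of finite type between Noetherian schemes is open (for an arbitrary, possibly non-Noetherian, base $S\to Y$ one further reduces to the Noetherian case by writing $S$ affine-locally as a filtered limit of affine schemes of finite type over $Y$ and passing to the limit, a standard reduction I would only sketch). Since openness can be checked locally on $X$ and on $Y$, I may assume $Y=\Spec A$ and $X=\Spec B$ with $B$ a finitely generated flat $A$-algebra.

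The first ingredient is Chevalley's theorem: for any open $U\subseteq X$, the restriction $U\to Y$ is still of finite type between Noetherian schemes, so $f(U)$ is a constructible subset of $Y$. The second ingredient is going-down for flat ring maps: if $\mathfrak q\in\Spec B$ lies over $\mathfrak p\in\Spec A$ and $\mathfrak p'\subseteq\mathfrak p$ is a prime of $A$, then there is a prime $\mathfrak q'\subseteq\mathfrak q$ of $B$ lying over $\mathfrak p'$. I would prove this by localizing: $B_{\mathfrak q}$ is flat over $A_{\mathfrak p}$ since a localization of a flat module is flat and flatness is local, and the induced map $A_{\mathfrak p}\to B_{\mathfrak q}$ is a flat local homomorphism, hence faithfully flat (because $B_{\mathfrak q}/\mathfrak p A_{\mathfrak p}B_{\mathfrak q}$ surjects onto the nonzero ring $B_{\mathfrak q}/\mathfrak q B_{\mathfrak q}$); as $\Spec$ of a faithfully flat ring map is surjective, the prime $\mathfrak p'A_{\mathfrak p}$ of $A_{\mathfrak p}$ is the image of some prime of $B_{\mathfrak q}$, and its contraction to $B$ is the desired $\mathfrak q'$.

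Finally I would combine the two. Going-down says precisely that $f(U)$ is stable under generization: if $y=f(x)$ with $x\in U$ and $y'$ is a generization of $y$, pick by going-down a point $x'\in X$ lying over $y'$ with $x\in\overline{\{x'\}}$; since $U$ is an open neighbourhood of $x$ it must contain $x'$, so $y'=f(x')\in f(U)$. Now invoke the topological fact that a constructible subset of a Noetherian scheme which is stable under generization is open (a constructible set is a finite union of locally closed pieces, and stability under generization forces the relevant irreducible pieces to be open in their closures). Therefore $f(U)$ is open, which shows $f$ is open, and, by the first paragraph, universally open.

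I expect the step requiring the most care to be the interface between the two ingredients: one must check that the scheme-theoretic relation ``$y'$ generizes $y$'' corresponds exactly to the prime containment fed into going-down, and, crucially, that the generization $x'$ produced by going-down automatically lands in $U$ merely because $U$ is open and contains $x$. The commutative-algebra inputs (faithful flatness of a flat local homomorphism, surjectivity of $\Spec$ for faithfully flat maps) and Chevalley's theorem are standard and can be quoted; the assembly described above is the only genuinely delicate point.
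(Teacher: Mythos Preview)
Your proposal is correct and follows essentially the same route as the paper: reduce to openness via stability of flatness under base change, use Chevalley's theorem to get constructibility of the image, prove going-down by showing that a flat local homomorphism is faithfully flat (hence surjective on $\Spec$), and combine. The only cosmetic difference is that the paper phrases the final step dually, showing the complement of $f(X)$ is constructible and stable under specialization (hence closed), whereas you show $f(U)$ is constructible and stable under generization (hence open); these are equivalent. Your remark about the non-Noetherian base change is a point the paper simply glosses over.
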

\begin{proof}
Since flatness is preserved under base change it is enough to prove that 
$f$ is open. We may assume that $Y$ is affine, say $\Spec A$.
Assume that $y=f(x)$. Then, the map $\alpha\colon \x{O}_y\to \x{O}_x$ is flat. 
Moreover, since $\alpha(\mathfrak{m}_y)\subseteq \mathfrak{m}_x$ we have 
$\mathfrak{m}_y\x{O}_x\neq \x{O}_x$ hence $\alpha$ is faithfully 
flat and the corresponding morphism $\Spec \x{O}_x\to \Spec \x{O}_y$ is surjective 
by Exercise \ref{exe-faithfully-flat}. In particular, this means that 
if we consider $y$ as a prime ideal of $A$, then for any prime ideal $y'$ of $A$, 
$y'\subset y$ implies that $y'\in f(X)$.

If $f$ is surjective, then there is nothing to prove. 
So, assume that $z\in Y$ is not in $f(X)$. If $z'\in \overline{z}$ (the closure of the 
point $z$), then $z'\notin f(X)$ otherwise $z\in f(X)$ by the above arguments. 

On the other hand, it is well-known that $f(X)$ is a constructible subset of $Y$, that is, 
it is the disjoint union of finitely many locally closed subsets of $Y$ 
(cf. [\ref{Hartshorne}, II, Exercise 3.19]). Moreover, the set 
$X\setminus f(X)$ is also constructible hence the union of finitely many locally 
closed subsets $W_i\cap Z_i$ where $W_i$ is an open subset and $Z_i$ is a 
closed subset of $Y$. If $z$ is the generic point of a component of 
$W_i\cap Z_i$, then $\overline{z}\subseteq X\setminus f(X)$. Therefore, 
$X\setminus f(X)$ is a closed subset hence $f(X)$ is open.\\
\end{proof}

\begin{exa}
Theorem \ref{t-flat-open} does not hold without the finite type condition. 
For example let $Y$ be an integral scheme and let $\eta$ be the generic point.
Then, the morphism $f\colon X=\Spec k(\eta)\to Y$ is flat but not of finite type 
and $f(X)$ is not open in most cases.\\
\end{exa}

\begin{thm}\label{t-flat-associated-point}
Let $f\colon X\to Y$ be a morphism of schemes.
If $f$ is flat, then any associated point of $X$ is mapped to an associated point of $Y$. Moreover, 
if $Y$ is regular of dimension one, then the converse also holds.
\end{thm}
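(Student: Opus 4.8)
The plan is to prove the two implications separately; each is local on $X$ and $Y$, and the substance is elementary commutative algebra about associated primes and flatness, using only facts already recalled in the text.

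For the direct implication, suppose $f$ is flat and take $x\in\Ass(X)$, $y=f(x)$. By definition of an associated point there is a nonzero $s\in\x{O}_x$ with $\mathfrak m_xs=0$. Since $\x{O}_y\to\x{O}_x$ is a local homomorphism, $\mathfrak m_y\x{O}_x\subseteq\mathfrak m_x$, hence $\mathfrak m_ys=0$ when we regard $\x{O}_x$ as an $\x{O}_y$-module via $f$. Now I would argue by contradiction: if $\mathfrak m_y\notin\Ass_{\x{O}_y}(\x{O}_y)$, then $\mathfrak m_y$ is contained in no associated prime of $\x{O}_y$, so by prime avoidance together with the fact recalled in the proof of Theorem~\ref{t-lc-depth} (the zero-divisors of a module, with $0$, form the union of its associated primes) there is $t\in\mathfrak m_y$ which is a non-zero-divisor of $\x{O}_y$. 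Tensoring the injection $\x{O}_y\xrightarrow{\cdot t}\x{O}_y$ with the flat $\x{O}_y$-module $\x{O}_x$ shows that multiplication by $t$ is injective on $\x{O}_x$, contradicting $ts=0$, $s\neq0$. Hence $\mathfrak m_y\in\Ass_{\x{O}_y}(\x{O}_y)$, i.e. $y\in\Ass(Y)$.

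For the converse, assume $Y$ is regular of dimension one, so that every stalk $\x{O}_y$ is either a field or a discrete valuation ring, and assume each associated point of $X$ maps into $\Ass(Y)$. If $f$ were not flat, locality of flatness would give $x\in X$, $y=f(x)$, with $\x{O}_x$ not flat over $\x{O}_y$; then $\x{O}_y$ is not a field (every module over a field is flat), so $\x{O}_y$ is a DVR with uniformiser $\pi$, and since over a PID flatness is equivalent to torsion-freeness, $\x{O}_x$ is not $\pi$-torsion-free. Choosing a torsion element annihilated by $\pi^n$ with $n\ge1$ minimal and multiplying by $\pi^{n-1}$, I obtain a nonzero $s\in\x{O}_x$ with $\pi s=0$, so $\operatorname{Ann}_{\x{O}_x}(s)$ is a proper ideal containing $\pi$. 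Assuming $\x{O}_x$ Noetherian, the nonzero submodule $\x{O}_xs\subseteq\x{O}_x$ has an associated prime $\mathfrak q$, and $\mathfrak q\in\Ass_{\x{O}_x}(\x{O}_xs)\subseteq\Ass_{\x{O}_x}(\x{O}_x)$; writing $\mathfrak q=\operatorname{Ann}_{\x{O}_x}(ts)$ with $ts\neq0$ gives $\mathfrak q\supseteq\operatorname{Ann}_{\x{O}_x}(s)\ni\pi$. The prime $\mathfrak q$ corresponds to a generisation $x'$ of $x$ with $\x{O}_{x'}\simeq(\x{O}_x)_{\mathfrak q}$, and because localisation preserves associated primes, $x'\in\Ass(X)$. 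On the other hand $\mathfrak q\cap\x{O}_y$ is a prime of the DVR $\x{O}_y$ containing $\pi$, hence equals $\mathfrak m_y$, so $f(x')=y$; but $\x{O}_y$ is a one-dimensional domain, so its only associated prime is $(0)\neq\mathfrak m_y$, whence $y\notin\Ass(Y)$. This contradicts the hypothesis, so $f$ must be flat.

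The only non-formal points, which I expect to need the most care, are the dictionary between associated primes of the stalk $\x{O}_x$ and associated points of $X$ (a point of $\Spec\x{O}_x$ is a generisation of $x$, and localisation carries associated primes to associated primes), and the small reduction that clears the power of $\pi$ so that the chosen torsion element is annihilated by $\pi$ itself; everything else is routine manipulation of flatness, prime avoidance, and the basic properties of associated primes of submodules.
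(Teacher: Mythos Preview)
Your proof is correct and follows essentially the same approach as the paper's: for the forward direction, both use flatness to transport a non-zero-divisor from $\mathfrak m_y$ to $\mathcal O_x$ and derive a contradiction; for the converse, both locate an associated prime of $\mathcal O_x$ containing the image of a nonzero element of $\mathfrak m_y$ and use the hypothesis to contradict that a DVR has only $(0)$ as associated prime. Your version is slightly more explicit (flagging the Noetherian hypothesis, spelling out the dictionary between associated primes of $\mathcal O_x$ and associated points generising $x$), and the paper's converse is organised as a direct proof of torsion-freeness rather than a contrapositive, but the substance is identical.
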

\begin{proof}
Assume that $f$ is flat and that $x\in X$ is an associated point which means that 
the maximal ideal $\mathfrak{m}_x$ of $\x{O}_x$ is an associated prime, that is, 
its elements are all zero-divisors. Since $f$ is flat, the natural map 
$\x{O}_y\to \x{O}_x$ is flat where $y=f(x)$. If  
 $a\in \mathfrak{m}_y\subset \x{O}_y$ is not a zero-divisor, then 
the multiplication homomorphism $\x{O}_y\xrightarrow{\cdot a} \x{O}_y$ is injective 
and since $\x{O}_x$ is flat over $\x{O}_y$, the induced map 
$\x{O}_y\otimes_{\x{O}_y} \x{O}_x\xrightarrow{\cdot a} \x{O}_y\otimes_{\x{O}_y}\x{O}_x$ 
is also injective which is equivalent to saying that the image of $a$ in 
$\x{O}_x$ is not a zero divisor which is not possible. Therefore, 
$\mathfrak{m}_y$ is an associated prime hence $y$ is an associated point.  

Now assume that $Y$ is regular of dimension one. So, for any point $y\in Y$, the local 
ring $\x{O}_y$ is a discrete valuation ring, in particular, it is a principal ideal domain. 
If $f(x)=y$, then the natural map $\x{O}_y\to \x{O}_x$  being flat is equivalent to 
saying that $\x{O}_x$ has no torsion elements over $\x{O}_y$. Assume that 
$a\in \x{O}_y$ and $b\in \x{O}_x$ such that $ab=0$. If $b\neq 0$, then $a$ is a zero-divisor hence 
it should belong to an associated prime $P$ of $\x{O}_x$. The ideal $P$ corresponds 
to an associated point of $X$ which by assumptions is mapped to an associated 
point of $Y$. The only associated point of $Y$ is its generic point. Thus, 
the inverse image of $P$ under $\x{O}_y\to \x{O}_x$ is the zero ideal. In particular, 
this means that $a=0$.\\     
\end{proof}

\begin{rem}\label{r-flat-reduced-rings}
Let $A$ be a Noetherian ring with trivial nil-radical, i.e. $\Spec A$ reduced. 
Assume that $P$ is an associated prime of $A$. 
So, $Pa=0$ for some $0\neq a\in A$. But then if $P$ is not a minimal prime ideal,
$a$ must belong to every minimal prime ideal of $A$ hence $a\in {\rm{nil}}(A)=0$, a contradiction. 
Therefore, the only 
associated primes of $A$ are the minimal prime ideals.\\
\end{rem}

\begin{cor}
Let $f\colon X\to Y$ be a morphism of schemes where $X$ is reduced and
$Y$ is regular of dimension one. Then, $f$ is flat if and only if 
each irreducible component of $X$ dominates $Y$.
\end{cor}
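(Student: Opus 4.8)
The plan is to obtain this as an immediate consequence of Theorem \ref{t-flat-associated-point} and Remark \ref{r-flat-reduced-rings}. Flatness is a local condition on both $X$ and $Y$, and the remaining condition concerns only the generic points of the irreducible components, so working locally we may assume that $X=\Spec B$ and $Y=\Spec A$ are affine and Noetherian, with $A$ regular of dimension one.

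First I would record the two relevant descriptions of associated points. Since $Y$ is regular of dimension one, every local ring $\x{O}_y$ is either a field or a discrete valuation ring, hence an integral domain; in particular $Y$ is reduced, so by Remark \ref{r-flat-reduced-rings} the associated points of $Y$ are exactly the generic points of its irreducible components. As $X$ is reduced by hypothesis, the same remark (read on the local rings of $X$) shows that the associated points of $X$ are exactly the generic points of the irreducible components of $X$.

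Next I would invoke Theorem \ref{t-flat-associated-point}: because $Y$ is regular of dimension one, $f$ is flat if and only if every associated point of $X$ is mapped by $f$ to an associated point of $Y$. By the previous paragraph this says precisely that for every irreducible component $Z$ of $X$, with generic point $\eta_Z$, the point $f(\eta_Z)$ is the generic point of an irreducible component of $Y$; since $f$ is continuous and $Z=\overline{\{\eta_Z\}}$, this is equivalent to $\overline{f(Z)}=\overline{\{f(\eta_Z)\}}$ being an irreducible component of $Y$, i.e. to $Z$ dominating $Y$. Hence $f$ is flat if and only if every irreducible component of $X$ dominates $Y$, which is the assertion.

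There is no hard step here; the only point requiring care is the translation between the scheme-theoretic formulation of Theorem \ref{t-flat-associated-point} ("associated points of $X$ map to associated points of $Y$") and the geometric statement about components dominating $Y$. This translation rests entirely on Remark \ref{r-flat-reduced-rings}, applied to the reduced Noetherian local rings of $X$ and $Y$, together with the elementary identification of the associated points of a reduced locally Noetherian scheme with the generic points of its irreducible components.
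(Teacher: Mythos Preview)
Your proof is correct and follows the same route as the paper: invoke Theorem \ref{t-flat-associated-point} (flatness over a regular one-dimensional base is equivalent to associated points mapping to associated points) together with Remark \ref{r-flat-reduced-rings} to identify the associated points of the reduced schemes $X$ and $Y$ with the generic points of their irreducible components. The paper's proof is a single sentence stating exactly this; you have simply spelled out the translation more carefully.
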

\begin{proof}
This follows from Theorem \ref{t-flat-associated-point} because $X$ being reduced, its associated points are the generic points of its irreducible components.\\
\end{proof}

A morphism $f\colon X\to Y$ of schemes can be viewed as a family of schemes, that is 
the fibres of $f$, parameterised by the points of $Y$. In general, the fibres do not 
necessarily share strong common properties. However, when $f$ is flat, one can think of 
$f$ as a family of schemes in which the fibre changes "nicely" from point to point.   

According to Theorem \ref{t-flat-Hilbert-polynomial}, the Hilbert polynomial 
(of the structure sheaf) of the fibres of a flat projective morphism of Noetherian schemes is locally 
constant. In particular, the dimension, the arithmetic genus, the degree, and 
any other invariant of the fibres which is determined by the Hilbert polynomial would be locally 
constant.

Dimension of the fibres of a flat family behaves well even in the local case, i.e. 
when $f$ is not projective. If $X$ is a 
scheme and $x\in X$, we define $\dim_xX=\dim \x{O}_x$.\\

\begin{thm}
Let $f\colon X\to Y$ be a flat morphism of Noetherian schemes of finite dimension.
Then, for any $x\in X$ we have $\dim_xX_y=\dim_xX-\dim_yY$ where $y=f(x)$.
\end{thm}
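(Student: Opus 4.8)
The plan is to reduce the statement to a local, commutative-algebra fact and prove that. Set $A=\x{O}_y$ and $B=\x{O}_x$; the morphism $f$ induces a homomorphism $A\to B$, which is local (a unit of $A$, i.e.\ a germ not vanishing at $y$, pulls back to a germ not vanishing at $x$) and flat (because $f$ is flat), between Noetherian local rings. Since $\dim_xX=\dim B$, $\dim_yY=\dim A$, and the local ring of $x$ on the fibre $X_y$ is $B\otimes_Ak(y)=B/\mathfrak{m}_yB$, so that $\dim_xX_y=\dim(B/\mathfrak{m}_yB)$, the theorem becomes the dimension formula
\[
\dim B=\dim A+\dim(B/\mathfrak{m}_AB)
\]
for a flat local homomorphism of Noetherian local rings. (The hypothesis that $X$ and $Y$ have finite dimension only serves to make all these numbers finite; for Noetherian local rings this is automatic.) I would prove the two inequalities separately.

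For $\dim B\le\dim A+\dim(B/\mathfrak{m}_AB)$, flatness is not needed, and I would argue with systems of parameters. Let $d=\dim A$, $e=\dim(B/\mathfrak{m}_AB)$, choose a system of parameters $a_1,\dots,a_d\in\mathfrak{m}_A$ of $A$, and choose $b_1,\dots,b_e\in\mathfrak{m}_B$ whose images in $B/\mathfrak{m}_AB$ form a system of parameters there. The key point is that $I:=(a_1,\dots,a_d,b_1,\dots,b_e)B$ is $\mathfrak{m}_B$-primary: modulo $\mathfrak{m}_AB$ the ideal $(b_1,\dots,b_e)$ is $\mathfrak{m}_B$-primary, so $B/(\mathfrak{m}_AB+(b_1,\dots,b_e))$ is Artinian; and since $\mathfrak{m}_A^N\subseteq(a_1,\dots,a_d)$ for some $N$, the ring $B/I$ is a quotient of $B/(\mathfrak{m}_A^NB+(b_1,\dots,b_e))$, which has finite length because its $\mathfrak{m}_A$-adic filtration has layers that are finitely generated modules over the Artinian ring $B/(\mathfrak{m}_AB+(b_1,\dots,b_e))$. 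Hence $\dim B\le d+e$ by Krull's theorem on systems of parameters.

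For the reverse inequality $\dim B\ge\dim A+\dim(B/\mathfrak{m}_AB)$, flatness enters through the \emph{going-down} property. I would pick a maximal chain of primes $\mathfrak{q}_0\subsetneq\mathfrak{q}_1\subsetneq\cdots\subsetneq\mathfrak{q}_e=\mathfrak{m}_B$ with $\mathfrak{m}_AB\subseteq\mathfrak{q}_0$ (so each $\mathfrak{q}_i$ contracts to $\mathfrak{m}_A$ in $A$), together with a maximal chain $\mathfrak{p}_0\subsetneq\cdots\subsetneq\mathfrak{p}_d=\mathfrak{m}_A$ in $A$. Because $A\to B$ is flat it satisfies going-down (for a prime $\mathfrak{q}$ of $B$ over a prime $\mathfrak{p}$ of $A$, the local map $A_{\mathfrak{p}}\to B_{\mathfrak{q}}$ is faithfully flat and hence induces a surjection on $\Spec$, cf.\ the exercise on faithfully flat morphisms used in the proof of Theorem \ref{t-flat-open}), so starting from $\mathfrak{q}_0$ I can descend to primes lying over $\mathfrak{p}_{d-1},\dots,\mathfrak{p}_0$, obtaining $\mathfrak{q}_0\supsetneq\mathfrak{q}_0^{(1)}\supsetneq\cdots\supsetneq\mathfrak{q}_0^{(d)}$, with strict inclusions since consecutive primes lie over distinct primes of $A$. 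Splicing the two chains yields a chain of primes of $B$ of length $d+e$, so $\dim B\ge d+e$, and combining the inequalities gives the formula, hence the theorem.

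I expect the $\ge$ direction to be the main point: one must invoke going-down — which is exactly where flatness is used, and without it the inequality genuinely fails (e.g.\ for a non-equidimensional local ring modulo a minimal prime of small coheight) — and then check that the spliced chain is strictly increasing of the correct length. The $\le$ direction is routine once the finite-length statement for $B/I$ is made precise, and the reduction to the ring-theoretic formula in the first paragraph is elementary.
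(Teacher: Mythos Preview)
Your proof is correct but takes a different route from the paper. You argue the dimension formula directly by proving the two inequalities: the upper bound via systems of parameters (no flatness needed) and the lower bound via going-down for flat maps, splicing a chain in the fibre with one lifted from $A$. The paper instead proceeds by induction on $\dim A$: in the base case $\dim A=0$ the maximal ideal is nilpotent so $\dim(B/\mathfrak{m}_AB)=\dim B$; for the inductive step one reduces $A$ modulo its nilradical, picks a non-zero-divisor $a\in\mathfrak{m}_A$ (which exists since $A$ is reduced of positive dimension), observes that flatness forces $a$ to remain a non-zero-divisor in $B$ (multiplication by $a$ stays injective after $\otimes_AB$), and then both $\dim A$ and $\dim B$ drop by one while the fibre is unchanged. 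Your approach is the standard commutative-algebra proof and has the virtue of isolating exactly where flatness enters (going-down for the $\ge$ direction only); the paper's approach is shorter, avoids invoking going-down, and reuses an idea already present in the chapter (flatness preserves injectivity of multiplication maps), at the cost of citing the fact that modding out a non-zero-divisor drops the Krull dimension of a Noetherian local ring by one.
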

\begin{proof}
The problem is local so may assume that $X=\Spec B$ and $Y=\Spec A$ 
and that $f$ corresponds to a homomorphism $\alpha\colon A\to B$. 
By replacing $A$ with $\x{O}_y$ we can assume that $A$ is local and 
$y$ is the closed point, and by replacing $B$ with $\x{O}_x$ we can also 
assume that $B$ is local too and $x$ is the closed point. Note that the local 
ring of $x$ on the fibre $X_y$ is then nothing but $B/\mathfrak{m}_yB$.

Now if $\dim A=0$, then the maximal ideal $\mathfrak{m}_y$ of $A$ is 
nilpotent hence $\mathfrak{m}_yB$ is also a nilpotent ideal. Therefore, 
$$
\dim_xX_y=\dim B/\mathfrak{m}_yB=\dim B=\dim_xX=\dim_xX-\dim_yY
$$ 
in this case. 
But if $\dim A>0$, we first replace $A$ by $A/{\rm{nil}}(A)$ which does not 
change the dimensions we are concerned with. Then, by Remark \ref{r-flat-reduced-rings}, 
there is some 
element $a\in A$ which is not a zero-divisor. 
The multiplication homomorphism $A\xrightarrow{\cdot a} A$ is injective 
and since $B$ is flat over $A$, the induced map $A\otimes_A B\xrightarrow{\cdot a} A\otimes_AB$ 
is also injective which is equivalent to saying that $\alpha(a)$ is not 
a zero divisor in $B$. Therefore, $\dim A/\langle a\rangle=\dim A-1$ and 
$\dim B/\langle \alpha(a)\rangle B=\dim B-1$ (cf. [\ref{Atiyah-Macdonald}, Corollary 11.18]) and by replacing $A$ with $A/\langle a\rangle$
and $B$ with $B/\langle \alpha(a)\rangle B$ we can use induction 
(the fibre does not change).\\
\end{proof}

%%%%%%%%%%%%%%%%%%%%%%%%%%%%%%
\section{Stratification by Hilbert polynomials}

Let $f\colon X\to Y$ be a projective morphism of Noetherian schemes, and 
$\x{F}$ a coherent sheaf on $X$. We also choose an invertible sheaf $\x{O}_X(1)$ that is 
very ample over $Y$ and we use this to calculate Hilbert polynomials.
If $\x{F}$ is flat over $Y$, then 
we know from Theorem \ref{t-flat-Hilbert-polynomial} that the 
Hilbert polynomial of $\x{F}$ on the fibres, as a function on $Y$, is locally constant.
When $\x{F}$ is not flat over $Y$, we use the generic flatness  theorem (\ref{t-flat-generic-flatness}) 
to get a finite stratification of $Y$ so that on the strata we have constant 
Hilbert polynomials. More precisely, the set 
$$
\mathcal{H}=\{\Phi_y \mid \Phi_y ~\mbox{is the Hilbert poynomial of $\x{F}_y$ on 
$X_y$}\}=\{\Phi_1,\dots,\Phi_r\}
$$ 
is finite and $Y_\Phi=\{y\in Y \mid \Phi_y=\Phi\}\subseteq Y$ is a disjoint 
union of finitely many locally closed subschemes 
of $Y$ for each $\Phi\in \mathcal{H}$; this is proved below. 
Moreover, $Y_\Phi$ is disjoint from $Y_\Psi$ if $\Phi\neq \Psi$. 
 By reordering, we may assume that 
$\Phi_i(l)<\Phi_j(l)$ for any $i<j$ and $l\gg 0$. 

We are interested in the $Y_\Phi$ not only as subsets but also 
with a certain scheme structure which is specified in the next theorem.\\ 

\begin{thm}\label{t-flat-Hilb-stratification}
Under the above assumptions, we have the following:

$(i)$  for each $i$, the union $\bigcup_{i\le j} Y_{\Phi_j}$ is a closed subset of $Y$,

$(ii)$ each  $Y_\Phi$ carries a unique (locally closed) subscheme structure 
of $Y$ such that if $Y'$ is the scheme formed by the disjoint union of all the $Y_\Phi$, then 

$(iii)$ $\x{F}_{Y'}$ is flat over $Y'$, and

$(iv)$ if $S\to Y$ is any morphism from a Noetherian scheme such that $\x{F}_S$ 
is flat over $S$, then $S\to Y$ factors through the natural morphism $Y'\to Y$.  
\end{thm}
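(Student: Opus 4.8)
The strategy is to reduce to an affine projective-space situation, isolate a uniform regularity bound that controls the cohomology of the (in general non-flat) sheaf $\x{F}$ under \emph{every} base change, and then cut out the strata $Y_\Phi$ by Fitting-ideal conditions on the coherent sheaves $f_*\x{F}(m)$; the universal property will come from Theorem \ref{t-flat-twist}. Since all assertions are local on $Y$, I take $Y=\Spec A$ Noetherian; embedding $X$ in some $\PP^n_Y$ and extending $\x{F}$ by zero (Hartshorne [\ref{Hartshorne}, II, Exercise 5.15]), and noting that a coherent sheaf on $\PP^n_Y$ supported in $X$ is flat over a base exactly when its restriction to $X$ is, I reduce to the case $X=\PP^n_Y=\Proj A[t_0,\dots,t_n]$. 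The technical heart is the following statement, which I would prove by applying the generic flatness theorem (\ref{t-flat-generic-flatness}) to $Y_{\mathrm{red}}$, passing to irreducible components, and using Noetherian induction to reduce to the case in which $\x{F}$ is flat over a reduced base --- where Serre vanishing [\ref{Hartshorne}, III, Theorem 5.2] and the base change theorem (\ref{t-base-change}) give everything with a bound depending only on that stratum, after which one takes the maximum over the finitely many strata and checks stability under base change: \emph{there is an integer $r$ such that for all $m\ge r$ and every morphism $g\colon S\to Y$ from a Noetherian scheme one has $R^p(f_S)_*\x{F}_S(m)=0$ for $p>0$ and the base change map $g^*f_*\x{F}(m)\to(f_S)_*\x{F}_S(m)$ is an isomorphism.} The same discussion shows that $\mathcal H=\{\Phi_1,\dots,\Phi_r\}$ is finite and that for $m\ge r$ and every $y\in Y$ one has $\dim_{k(y)}H^0(X_y,\x{F}_y(m))=\mathcal{X}(X_y,\x{F}_y(m))=\Phi_y(m)$, an upper semicontinuous function of $y$; enlarging $r$ I may also assume the integers $\Phi_i(m)$ are pairwise distinct and ordered compatibly with the given ordering for all $m\ge r$, and, using the base change isomorphism, that $\dim_{k(y)}(f_*\x{F}(m)\otimes_{\x{O}_Y}k(y))=\Phi_y(m)$ for such $m$.

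With $r$ fixed and $\x{E}_m:=f_*\x{F}(m)$, I would construct the strata as follows. For each index $i$ put $J_i:=\sum_{m\ge r}\mathrm{Fitt}_{\Phi_i(m)-1}(\x{E}_m)\subseteq\x{O}_Y$; by the ascending chain condition this sum is finite, so $J_i$ is a coherent ideal sheaf and $Z_i:=V(J_i)$ a closed subscheme. Since $V(\mathrm{Fitt}_{d-1}(\x{E}_m))$ is precisely the locus where $\x{E}_m$ has fibre dimension $\ge d$, and that fibre dimension equals $\Phi_y(m)$, the underlying set of $Z_i$ is $\{y:\Phi_y(m)\ge\Phi_i(m)\text{ for all }m\ge r\}=\bigcup_{j\ge i}Y_{\Phi_j}$; this is (i), and it gives a descending chain $Z_1\supseteq Z_2\supseteq\cdots\supseteq Z_{r+1}:=\emptyset$ of closed subschemes. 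Define $Y_{\Phi_i}$ to be the open subscheme $Z_i\setminus Z_{i+1}$ of $Z_i$, and $Y':=\coprod_i Y_{\Phi_i}$ over $Y$. Over $Y_{\Phi_i}$ the ideal $\mathrm{Fitt}_{\Phi_i(m)-1}(\x{E}_m)$ pulls back to zero (it lies in $J_i$, which pulls back to $0$ on $Z_i$) while $\mathrm{Fitt}_{\Phi_i(m)}(\x{E}_m)$ pulls back to the unit ideal (as $Y_{\Phi_i}$ avoids $V(\mathrm{Fitt}_{\Phi_i(m)}(\x{E}_m))\subseteq Z_{i+1}$), so by the standard Fitting-ideal criterion $\x{E}_m|_{Y_{\Phi_i}}$ is locally free of rank $\Phi_i(m)$ for every $m\ge r$.

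It remains to deduce (iii) and (iv). Applying the key statement to the inclusion $Y_{\Phi_i}\hookrightarrow Y$ gives $(f_{Y_{\Phi_i}})_*\x{F}_{Y_{\Phi_i}}(m)\simeq\x{E}_m|_{Y_{\Phi_i}}$, which is locally free for all $m\ge r$; hence $\x{F}_{Y_{\Phi_i}}$ is flat over $Y_{\Phi_i}$ by Theorem \ref{t-flat-twist}, proving (iii). For (iv), let $g\colon S\to Y$ be a morphism from a Noetherian scheme with $\x{F}_S$ flat over $S$. On a connected component of $S$, the key statement gives $R^{p}(f_S)_*\x{F}_S(m)=0$ for $p>0$, $m\ge r$, so $(f_S)_*\x{F}_S(m)$ is locally free by the base change theorem (\ref{t-base-change}), of a rank that by Theorem \ref{t-flat-Hilbert-polynomial} is the constant $\Phi_i(m)$ for a single $i$; and this sheaf is $g^*\x{E}_m$. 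As Fitting ideals commute with base change, $g^*\mathrm{Fitt}_{\Phi_i(m)-1}(\x{E}_m)=\mathrm{Fitt}_{\Phi_i(m)-1}(g^*\x{E}_m)=0$ and $g^*\mathrm{Fitt}_{\Phi_i(m)}(\x{E}_m)=\x{O}_S$ for all $m\ge r$, so $g$ kills $J_i$ and misses $Z_{i+1}$, i.e. factors through $Y_{\Phi_i}$. Uniqueness of the scheme structures in (ii) is then formal, since $Y'\to Y$ is by (iii)--(iv) universal among Noetherian $Y$-schemes over which $\x{F}$ becomes flat. The principal obstacle is the key statement of the first paragraph: because $\x{F}$ is not assumed flat over $Y$, the base change behaviour of $H^\bullet(X,\x{F}(m))$ is subtle, and securing a single regularity bound $r$ that works for all $m\ge r$ and simultaneously for every Noetherian base change is exactly what lets Theorem \ref{t-flat-twist} and the Fitting-ideal formalism combine without friction.
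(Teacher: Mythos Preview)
Your proof is correct and follows the same underlying strategy as the paper's: reduce to $X=\PP^n_Y$ with $Y$ affine, establish a uniform bound so that for all large $m$ the sheaves $f_*\x{F}(m)$ commute with every base change and their fibre dimensions read off the Hilbert polynomial (the paper's Step~1 together with Exercise~\ref{exe-noflat-base-change}, corresponding to your ``key statement''), and then cut out the strata by rank conditions on these sheaves, invoking Theorem~\ref{t-flat-twist} for the flatness criterion. The difference is one of packaging. The paper constructs the defining ideal of the bottom stratum $Y_{\Phi_r}$ by hand --- choosing a local presentation $A_b^p\to M_b\to 0$ with $p=\Phi_r(l)$ and taking the ideal $J$ generated by the coordinates of the kernel, which is literally $\mathrm{Fitt}_{p-1}(M_b)$ --- and then proceeds by induction on the number of Hilbert polynomials, peeling off one closed stratum at a time. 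You instead invoke the Fitting-ideal formalism directly, building all the nested closed subschemes $Z_i=V(\sum_m\mathrm{Fitt}_{\Phi_i(m)-1}(\x{E}_m))$ simultaneously. Your route is the standard modern treatment and is cleaner, since base-change compatibility of Fitting ideals and the local-freeness criterion ($\mathrm{Fitt}_{d-1}=0$ and $\mathrm{Fitt}_d=\x{O}$) come pre-packaged; the paper's hands-on construction has the compensating virtue of being self-contained, requiring no prior acquaintance with Fitting ideals.
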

\begin{proof}
 
It is enough to prove the theorem locally on $Y$ 
and so we may assume that $Y$ is affine, say $Y=\Spec A$.
The uniqueness of $Y'$ allows one to glue local data to 
prove the theorem when $Y$ is not affine.

\emph{Step 1.} Let $V$ be any reduced locally closed subscheme of $Y$ 
which comes with a natural morphism $V\to Y$. Then 
by Theorem \ref{t-flat-generic-flatness} and 
Theorem \ref{t-flat-Hilbert-polynomial}, $\x{F}_{V}$ is flat over a  
non-empty maximal open subset $U$ of $V$ such that $\Phi_y$ does not depend 
on $y\in U$.  Replace $V$ by $V\setminus U$ and continue the 
process. By construction, Exercise \ref{exe-noflat-base-change}, 
Theorem \ref{t-base-change}, and the vanishing $R^pf_*\x{F}(l)=0$ when $p>0$ 
and $l\gg 0$, show that there is some $l_0$ such that 
$H^p(X_y,\x{F}(l)_y)=0$ when $p>0$, and 
$$
\dim_{k(y)}f_*\x{F}(l)\otimes k(y)=\dim_{k(y)}H^0(X_y,\x{F}(l)_y)=\Phi_y(l)
$$
for any $y\in V$ and any $l\ge l_0$.

In particular, the process decomposes $V$ into a disjoint union of locally 
closed subschemes of $Y$ (set-theoretically).
Taking $V=Y_{\rm red}$ proves that each $Y_\Phi$ is a disjoint 
union of finitely many locally closed subschemes.\\

\emph{Step 2.} We prove (i). If $i=1$, the statement is trivial. So we 
assume that $i>1$. If $y\in Y\setminus \bigcup_{i\le j} Y_{\Phi_j}$, 
then $y\in Y_{\Phi_k}$ for some $k<i$. Then, by Step 1, for $l\ge l_0$ we have   
$$
\dim_{k(y)}f_*\x{F}(l)\otimes k(y)=\dim_{k(y)}H^0(X_y,\x{F}(l)_y)=\Phi_k(l)
$$
On the other hand, by Exercise \ref{exe-flat-semi-continuous}, 
there is a neighborhood $U$ of $y$ such that 
$$
\dim_{k(u)}f_*\x{F}(l)\otimes k(u)\le \dim_{k(y)}f_*\x{F}(l)\otimes k(y)=\Phi_k(l)
$$ 
for any $u\in U$. Since $\Phi_k(l)<\Phi_i(l)<\cdots<\Phi_r(l)$, no point of $\bigcup_{i\le j} Y_{\Phi_j}$ can be in $U$ because if $z\in Y_{\Phi_j}$ with $i\le j$ then 
$$
\dim_{k(z)}f_*\x{F}(l)\otimes k(z)=\dim_{k(\eta)}H^0(X_z,\x{F}(l)_z)=\Phi_j(l)
$$ 
 Therefore,  
$U\cap (\bigcup_{i\le j}Y_{\Phi_j})=\emptyset$. Thus, $y$ cannot be in the closure of $\bigcup_{i\le j} Y_{\Phi_j}$. 

The above implies that $Y_{\Phi_i}$ is an open subset of $\bigcup_{i\le j} Y_{\Phi_j}$, and that  $Y_{\Phi_r}$ is a closed subset of $Y$.\\

\emph{Step 3.}  
We do induction on $r$. We may assume that the theorem holds 
for $W:=Y\setminus Y_{\Phi_r}$ in place of $Y$. In particular, this determines the 
scheme structures on $Y_{\Phi_i}$ if $i<r$ and also provides $W'$ as in the theorem.
We will construct $Y'$ as a disjoint of $W'$ and $Y_{\Phi_r}$ with a scheme structure 
to be defined. In Step 2, it was shown that $Y_{\Phi_r}$ is a closed subset of $Y$. 
Let $I$ be the radical ideal such that $V(I)=Y_{\Phi_r}$

Let $l\ge l_0$, and let $M=f_*\x{F}(l)(Y)$. Pick $y\in Y_{\Phi_r}$ and 
let $p=\dim_{k(y)}M\otimes k(y)=\Phi_r(l)$. Then, there is some 
$b\in A$ with $y\in D(b)=Y\setminus V(b)$ such that we have an exact sequence   
$$
0\to K\to A^p_b\to M_b\to 0
$$ 
where the subscript $b$ stands for localisation at $b$. Each element of $K$ can be  
written as $(a_1,\dots,a_p)$ for certain $a_i\in A_b$. Let $J$ be the ideal in $A_b$ 
generated by all the coordinates $a_i$ of the elements of $K$. 

Pick a ring homomorphism $A_b\to C$ with kernel $J'$. Then,  
$ A^p_b\otimes_{A_b}C\to M_b\otimes_{A_b}C$ is an isomorphism if and only if $J\subseteq J'$:  
the diagram 
$$
\xymatrix{
 && 0\ar[d]  & & \\
 &&  A^p_b\otimes J'\ar[d]^\beta & & \\
0 \ar[r] & K\otimes A_b \ar[d]\ar[r]^\alpha & A^p_b\otimes A_b \ar[d]\ar[r] & M_b\otimes A_b\ar[d]\ar[r] & 0\\ 
 & K\otimes C\ar[r] & A^p_b\otimes C\ar[r]^\gamma & M_b\otimes C\ar[r] & 0\\
}
$$
shows that if $\gamma$ is an isomorphism, then $\im(\alpha)\subseteq \im(\beta)$ which implies 
that $J\subseteq J'$. Conversely, if $J\subseteq J'$, then $A_b\to C$ factors 
through $A_b\to A_b/J$ hence $\gamma$ is an isomorphism as it is an isomorphism 
when $C=A_b/J$.

We can rephrase the property of $J$ in the last paragraph as: if $g\colon S\to D(b)$ is any morphism from a Noetherian scheme such that $g^*f_*\x{F}(l)$ is locally free of rank $p$, then $g$ factors through $\Spec A_b/J\to Y$. 
So, the ideals $J$ for various $y$ glue together to 
give an ideal ${I}_l$ such that if $Z_l$ is the closed subscheme corresponding to ${I}_l$, then the pullback of $f_*\x{F}(l)$ on $Z_l$ is locally free of rank $p$ and 
if $g\colon S\to Y$ is any other morphism from a Noetherian scheme such that $g^*f_*\x{F}(l)$ 
is locally free of rank $p$, then $g$ factors through $Z_l\to Y$ (note that near 
the points of $Y_{\Phi_r}$ we determined $I_l$ precisely; away from $Y_{\Phi_r}$ the ideal sheaf 
$\widetilde{I_l}$ is just the structure sheaf).

In fact, the underlying set of $V(I_l)$ is just $Y_{\Phi_r}$: $V(I_l)\subseteq Y_{\Phi_r}$ because 
by construction the pullback of $f_*\x{F}(l)$ on $Z_l$ is locally free of rank $\Phi_r(l)$. 
On the other hand, $Y_{\Phi_r}\subseteq V(I_l)$ because the sheaf $f_*\x{F}(l)$ pulled back on 
$\Spec A/I$ becomes locally free of rank $p$ by Exercise \ref{exe-flat-locally-constant-flat-module}.\\

\emph{Step 4.} For each $l\ge l_0$ we constructed an ideal ${I}_l$, in 
Step 3. Let ${I}'=\sum_l {I}_l$. The Noetherian property implies that only finitely many 
of the ${I}_l$ generate ${I}'$. Let $Z'$ be the closed subscheme defined by  
${I}'$ and let $h_1\colon Z' \to Y$ be the induced morphism. Note that $h_1$ factors through $Z_l\to Y$ for any $l\ge l_0$. Therefore, for such $l$, $h_1^*f_*\x{F}(l)$ is a 
locally free sheaf of rank $\Phi_r(l)$ which implies that $\x{F}_{Z'}$ is flat over $Z'$.

On the other hand, let $W$ and $W'$ be as in Step 3 and $h_2\colon W'\to Y$ be the given morphism. Define $Y'$ to be the scheme which is the disjoint of $W'$ and $Z'$. Set-theoretically, $Y'$ is the disjoint union of all the $Y_{\Phi_i}$. 
By assumptions, $\x{F}_{W'}$ is flat over $W'$ hence $\x{F}_{Y'}$ is flat 
over $Y'$. This proves (iii).

Now let $g\colon S\to Y$ be a morphism from a Noetherian scheme such that $\x{F}_S$ 
is flat over $S$. Then, for each $l\gg 0$, by Theorem \ref{t-flat-twist} and Exercise \ref{exe-noflat-base-change}, $g^*f_*\x{F}(l)$ is locally free. So, by the flat  
base change theorem (\ref{t-flat-base-change}) and Step 1,  
we can write $S$ as the disjoint union of $S_1$ and $S_2$ 
such that for any $l\gg 0$, $S_1$ is the union of those connected 
components of $S$ on which 
$g^*f_*\x{F}(l)$ is locally free of rank $\Phi_r(l)$. In particular, $S_1$ is mapped into 
$Y_{\Phi_r}$ and $S_2$ is mapped into $W$. By assumptions and constructions, $S_1\to Y$ factors through $Z'$ and $S_2\to Y$ factors through $W'\to Y$ hence 
$g$ factors through $Y'\to Y$. This proves (iv).\\   
\end{proof}

%%%%%%%%%%%%%%%%%%%%%%%%%%%%%%
\section{Further results and related problems}

In studying the base change problem over 
a regular base scheme it is natural to frequently reduce arguments 
to the case of regular schemes of dimension one. The following theorem 
describes the obstruction to base change in that situation.\\ 

\begin{thm}\label{t-base-change-curves}
Let $f\colon X\to Y$ be a projective morphism of Noetherian schemes 
where $Y=\Spec A$ is regular of dimension one at every point, 
and let $\x{F}$ be a coherent sheaf on $X$ which is flat over $Y$. 
Then, for any $A$-module $M$ there is an exact sequence 
$$
0\to T^p_{\x{F}}(A)\otimes_A M\xrightarrow{\beta} T^p_{\x{F}}(M)\to \Tor^A_1(T^{p+1}_{\x{F}}(A),M)\to 0
$$
which is functorial in $M$.
\end{thm}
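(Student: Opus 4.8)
The plan is to convert the statement into homological algebra over $A$ by means of the Grothendieck complex, and then carry out a universal-coefficients computation; the only input beyond the existence of that complex is that a regular ring of dimension one is hereditary.

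First I would reduce to the case $Y=\Spec A$ with $A$ regular of dimension one at every point, so that each local ring $\x{O}_y$ is a field or a discrete valuation ring; consequently $A$ has global dimension $\le 1$ and is therefore hereditary, i.e.\ every submodule of a projective module is projective. By Theorem~\ref{t-flat-complex} there is a Grothendieck complex $L^\bullet\colon 0\to L^0\to L^1\to\cdots$ of finitely generated flat $A$-modules with functorial isomorphisms $T^p_{\x F}(M)\simeq h^p(L^\bullet\otimes_A M)$. Write $d^\bullet$ for the differentials, $Z^p=\ker d^p$, $B^p=\im d^{p-1}$, and $H^p=Z^p/B^p\simeq T^p_{\x F}(A)$. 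Since a finitely generated flat module over a Noetherian ring is projective, each $L^p$ is projective, hence so are the submodules $Z^p\subseteq L^p$ and $B^{p+1}\subseteq L^{p+1}$; in particular $Z^p$ and $B^p$ are flat for every $p$.

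The core step is a diagram chase on the short exact sequences
$$0\to Z^p\to L^p\xrightarrow{d^p} B^{p+1}\to 0,\qquad 0\to B^{p+1}\to Z^{p+1}\to H^{p+1}\to 0$$
and their analogues in every degree, after tensoring with $M$. Flatness of $B^{p+1}$ gives $Z^p\otimes_A M\hookrightarrow L^p\otimes_A M$ and identifies $\im(d^{p-1}\otimes M)$ with the image of $B^p\otimes_A M$; flatness of $Z^{p+1}$ gives $\Tor^A_1(Z^{p+1},M)=0$, so that $\Tor^A_1(H^{p+1},M)$ is exactly the kernel of $B^{p+1}\otimes_A M\to L^{p+1}\otimes_A M$. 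Factoring $d^p\otimes M$ through the surjection $L^p\otimes_A M\to B^{p+1}\otimes_A M$ therefore produces an exact sequence $0\to Z^p\otimes_A M\to\ker(d^p\otimes M)\to\Tor^A_1(H^{p+1},M)\to 0$, while right exactness of $\otimes$ gives $(Z^p\otimes_A M)/\im(B^p\otimes_A M)\simeq H^p\otimes_A M$. Passing to the quotient by $\im(d^{p-1}\otimes M)$ then exhibits $T^p_{\x F}(M)\simeq h^p(L^\bullet\otimes_A M)$ as an extension of $\Tor^A_1(T^{p+1}_{\x F}(A),M)$ by $T^p_{\x F}(A)\otimes_A M$, which is the required short exact sequence; all maps involved are manifestly natural in $M$, so the sequence is functorial.

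Finally I would check that the inclusion $T^p_{\x F}(A)\otimes_A M\hookrightarrow T^p_{\x F}(M)$ obtained above is the canonical comparison map $h^p(L^\bullet)\otimes_A M\to h^p(L^\bullet\otimes_A M)$, and that a direct computation on cocycle representatives shows this canonical map coincides with the map $\beta$ of Definition~\ref{d-flat-T}. The only genuinely laborious part is the bookkeeping in the chase — juggling the several exact sequences and $\Tor$-terms at once — and the dimension-one hypothesis enters exactly once, through the hereditariness of $A$ that makes $Z^p$ and $B^p$ flat.
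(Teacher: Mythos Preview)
Your argument is correct and is essentially the same as the paper's: both pass to the Grothendieck complex, use the dimension-one hypothesis to conclude that the cycles $Z^p$ and boundaries $B^p$ are flat, and then run a universal-coefficients chase. The only cosmetic differences are that the paper tracks $W^p(L^\bullet)=\operatorname{coker}d^{p-1}$ rather than $Z^p$ directly, and establishes flatness of $Z^p,B^p$ by localising at each point (over a DVR, submodules of torsion-free modules are torsion-free) rather than invoking hereditariness globally.
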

\begin{proof}
Let $L^\bullet$ be the Grothendieck complex. We let $B^p=\im L^{p-1}\to L^p$ 
and $Z^p=\ker L^p\to L^{p+1}$. We then have the following exact sequences
$$
0\to T^p_{\x{F}}(A)\to W^p(L^\bullet)\to B^{p+1}\to 0
$$
$$
0\to B^{p+1}\to Z^{p+1}\to T^{p+1}_{\x{F}}(A)\to 0
$$
$$
0\to Z^{p+1} \to L^{p+1} \to B^{p+2} \to 0
$$

On the other hand, by assumptions, $A_y$ is a discrete valuation ring for every $y\in Y$. This 
in particular means that a module over $A_y$ is flat if and only if it is 
torsion free. Thus, submodules of a flat $A_y$-module are also flat.  
Since $L^p_y$ is a flat $A_y$-module for each $p$, $B^p_y$ 
and $Z^p_y$ are also flat $A_y$-modules which in turn implies that 
$B^p$ and $Z^p$ are flat $A$-modules. Therefore, for any $A$-module $M$ 
the above exact sequences induce the following exact sequences
$$
0=\Tor^A_1(B^{p+1},M) \to T^p_{\x{F}}(A)\otimes_A M\to W^p(L^\bullet)\otimes_A M\xrightarrow{u} B^{p+1}\otimes_AM\to 0
$$
$$
0=\Tor^A_1(Z^{p+1},M) \to \Tor^A_1(T^{p+1}_{\x{F}}(A),M) \to B^{p+1}\otimes_A M\xrightarrow{v} Z^{p+1}\otimes_A M
$$
$$
0=\Tor^A_1(B^{p+2},M) \to Z^{p+1}\otimes_A M \xrightarrow{w} L^{p+1}\otimes_A M 
$$ 

As it was observed in the proof of Theorem \ref{t-flat-left-exact}, we have an 
exact sequence 
$$
0\to T^p_{\x{F}}(M)\to W^p(L^\bullet)\otimes_A M\xrightarrow{t} L^{p+1}\otimes_AM
$$
The map $t$ is the composite of the maps $u,v,w$, and since $w$ is injective, 
$\ker t=\ker (vu)$. Now the claim follows from the exact sequence 
$$
0\to \ker u \to \ker (vu)\to \ker v \to 0
$$
\end{proof}

\begin{rem}[Invariance of plurigenera]
(1) Let $f\colon X\to Y$ be a smooth projective morphism of smooth varieties over $\C$ 
with fibres of dimension $r$. 
Pick a natural number $m\ge 0$ and put $\x{F}=\omega_f^{\otimes m}$. 
Then, it is an important result of birational geometry, proved by 
Siu, that $\dim_{k(y)}H^0(X_y,\x{F}_y)$ is 
independent of $y$ and we call this the invariance of $m$-genus. Note that we can assume 
that $Y$ is a smooth affine curve. By Corollary \ref{c-flat-locally-free}, the problem is equivalent to saying that $f_*\x{F}$ 
is locally free and that it commutes with base change which in turn is equivalent to 
the functor $T^0_{\x{F}}$ being exact. Since $T^0_{\x{F}}$ is in any case left exact, the latter  statement  is equivalent to the left exactness of $T^1_{\x{F}}$.

(2) Under the assumptions of (1), suppose that $R^pf_*\x{F}=0$ for any $p>0$. Then, 
by Theorem \ref{t-base-change}, $f_*\x{F}$ 
is locally free and it commutes with base change, so the invariance of $m$-genus holds  
in this case. However, in general the higher direct images do not vanish. 
But at least when the generic fibre of $f$ is a variety of general type
one can use the minimal model program to achieve the vanishing. 

(3) Let $\x{G}:=\omega_f^{\otimes 1-m}=\x{F}^\vee\otimes \omega_f$ and assume that 
$R^{r-1}f_*\x{G}$ commutes with base change which is equivalent to local freeness of 
$R^rf_*\x{G}$ by Theorem \ref{t-base-change}. Then, by the relative duality 
theorem (\ref{t-duality}), for any finitely generated $A$-module $M$ we have 
$$
T^1_{\x{F}}(M)\simeq H^1(X,\x{F}\otimes_Y M)\simeq
\Ext^1_{\x{O}_X}(\x{O}_X,\x{F}\otimes_Y M)\simeq 
$$
$$
\Ext^1_{\x{O}_X}(\x{G},\omega_f\otimes_Y {M})
\simeq \Hom_{\x{O}_Y}(R^{r-1}f_*\x{G}, \widetilde{M})
$$
Since $R^{r-1}f_*\x{G}$ is fixed, this means that  $T^1_{\x{F}}$ is left exact 
hence the invariance of $m$-genus follows. 

(4) For any $y\in Y$, by Theorem \ref{t-base-change-curves}, there is an exact sequence 
$$
0\to T^0_{\x{F}}(A)\otimes_A k(y)\xrightarrow{\beta} T^0_{\x{F}}(k(y))\to \Tor^A_1(T^{1}_{\x{F}}(A),k(y))\to 0
$$
So the invariance of $m$-genus is equivalent to the vanishing 
$$
\Tor^A_1(T^{1}_{\x{F}}(A),k(y))=\Tor^A_1(H^1(X,\x{F}),k(y))=0
$$ 
for any $y\in Y$. It is enough to 
take closed points $y$ in which case $k(y)=\C$.
\end{rem}

\section*{Exercises}
\begin{enumerate}
\item Let $f_1\colon X_1\to Y_1$ and $f_2\colon X_2\to Y_2$ be two flat morphisms where 
$Y_1,Y_2$ are schemes over another scheme $Z$. Show that 
the induced morphism $ X_1\times_ZX_2\to Y_1\times_Z Y_2$ is a flat morphism.\\

\item Let $f\colon X\to Y$ be a flat morphism. Show that if $X$‌ is reduced, then $Y$‌ 
is reduced too. Show that if $X$‌ is normal, then $Y$‌ is normal too.\\

\item\label{exe-faithfully-flat} Let $f\colon X\to Y$ be a morphism of schemes, and $\x{F}$ a quasi-coherent $\x{O}_X$-module.
We say that $\x{F}$ is faithfully flat over $Y$ if for any base change $g\colon S\to Y$, a sequence $0\to \x{G}'\to \x{G} \to \x{G}''\to 0$ of quasi-coherent $\x{O}_S$-modules is exact  
if and only if the induced sequence $0\to \x{F}\otimes_Y \x{G}'\to \x{F}\otimes_Y\x{G} \to \x{F}\otimes_Y\x{G}''\to 0$ is exact. If $\x{F}=\x{O}_X$ is faithfully flat over $Y$, 
we simply say that $f$ is faithfully flat. Show that 

(i) $\x{F}$ is faithfully flat over $Y$ if and only if it is flat over $Y$ and for every $y\in Y$, $\x{F}_y\neq 0$ on $X_y$;

(ii) if $X=\Spec B$, $Y=\Spec A$, and $\x{F}=\widetilde{M}$, then $\x{F}$ is faithfully 
flat over $Y$ if and only if $M$ is a faithfully flat $A$-module if and only if $M$ 
is a flat $A$-module and for any maximal ideal $P$ of $A$, $PM\neq M$;

(iii) if $\x{F}$ is faithfully flat over $Y$, then $f(\Supp \x{F})=Y$. In particular, 
$f$ is surjective.

(iv) $f$ is faithfully flat if and only if $f$ is flat and surjective.\\

\item Under the assumptions of Definition \ref{d-flat-T} with $\x{F}$ flat over $Y$, 
generalise the result of 
Remark \ref{r-T0-Tr} as follows. Let $\x{G}$ be a coherent $\x{O}_X$-module.
Show that there is a finitely generated $A$-module $Q$ such that for any $A$-module $M$ 
we have 
$$
\Hom_{\x{O}_X}(\x{G},\x{F}\otimes_YM)\simeq \Hom_A(Q,M)
$$

\item\label{exe-flat-semi-continuous} Let $Y=\Spec A$ where $A$ is a Noetherian ring and $M$ a finitely generated $A$-module. Show that $\dim_{k(y)}M\otimes_A k(y)$ is an upper 
semi-continuous function in $y\in Y$.\\

\item\label{exe-flat-locally-constant-flat-module} Let $Y=\Spec A$ be a 
reduced Noetherian scheme and $M$ a finitely generated $A$-module. 
Show that if the function $\dim_{k(y)}M\otimes_A k(y)$ is constant, then 
$M$ is flat over $A$.\\

\item\label{exe-module-local-free} Let $A$ be an integral local Noetherian ring 
with residue field $k$ and fraction field $K$, 
and $M$ a finitely generated $A$-module. 
Show that if $\dim_{k}M\otimes_A k=\dim_{K} M\otimes_A K$, then 
$M$ is a free $A$-module.\\

\item\label{exe-noflat-base-change} Let $f\colon X\to Y$ be a projective morphism of Noetherian schemes,  
$\x{F}$ a coherent sheaf on $X$, and $g\colon S\to Y$ a morphism. 
Show that the base change morphism on cohomology $g^*R^pf_*\x{F}(l)\to R^p{f_S}_*\x{F}(l)_S$ is 
an isomorphism for any $l\gg 0$. (The point is that $\x{F}$ need not be flat over $Y$ 
and $g$ need not be a flat morphism.)\\

\item Let $A$ be a Noetherian ring and let $T\colon \mathfrak{M}(A)\to \mathfrak{M}(A)$ be 
a covariant additive left exact functor which commutes with direct sums. Show that there is a 
unique $A$-module $Q$ such that there is a functorial isomorphism 
$T(M)\simeq \Hom_A(Q,M)$ for any $A$-module $M$. (This gives the $Q$ in Theorem \ref{t-flat-left-exact} but it does not give the fact that $Q$ is finitely generated.)\\

\item Let $A$ be a Noetherian ring and let $T\colon \mathfrak{M}(A)\to \mathfrak{M}(A)$ be 
a covariant additive right exact functor which commutes with direct sums. Show that there is a unique 
$A$-module $N$ such that there is a functorial isomorphism $T(M)\simeq N\otimes_AM$ 
for any $A$-module $M$.\\

\item\label{exe-flatness-tensor-exact} Let $X=\Proj A[t_0,\dots,t_n]$,  
$h\in A[t_0,\dots,t_n]$ a homogeneous polynomial, and $H$ the closed subscheme of $X$ 
defined by $h$. Show that if $\x{F}$ is a coherent sheaf on $X$ such that 
$H$ does not contain the (finitely many) associated points of $\x{F}$, then  
the sequence
$$
0\to \x{F}\otimes_{\x{O}_X}\x{I}_H\to \x{F}\otimes_{\x{O}_X}\x{O}_X\to \x{F}\otimes_{\x{O}_X}\x{O}_H\to 0
$$  
is exact.\\  
\end{enumerate}

%%%%%%%%%%%%%%%%%%%%%%%%%%%%%%%%%%
%%%%%%%%%%%%%%%%%%%%%%%%%%%%%%%%%%

\chapter{\tt Hilbert and Quotient schemes}

Constructing moduli or parameterising spaces of objects in algebraic geometry 
is an important part of the classification process. One tries to put an algebraic 
structure on the parametrising object making it into a scheme, a sheaf, etc, so 
that the methods of algebraic geometry can be applied to study these spaces.
The importance of constructing moduli spaces is not only to parameterise 
objects but even more importantly the moduli space gives information about 
families of those objects.

Before we get into the theory of Hilbert and Quotient schemes, 
we discuss a simple example, that of parametrising projective curves over an algebraically 
closed field $k$. The set of all such curves is obviously huge. The idea is to 
divide it into smaller groups each containing curves which are similar in some way. 
One approach tries to use the invariant genus. Smooth curves with the same genus have many similar properties. For example, all smooth projective curves of genus $g$ over $\C$ are  
topologically the same, that is, they are homeomorphic to the compact Riemann surface with $g$ holes. It is well-known that for each $g\in \N$, 
there is a smooth quasi-projective variety $M_g$ over $k$ 
whose $k$-rational points (i.e. closed points in this case) correspond to 
smooth projective curves over $k$ of genus $g$, in a one-to-one way.
However, these moduli spaces are not perfect in the sense that they do not have 
a universal family so they are called coarse moduli spaces. 

Lets restrict our attention to projective curves in 
$\PP^2_k=\Proj k[t_0,t_1,t_2]$. Then, we can use the invariant degree to put curves 
into separate groups. In fact, one can easily parameterise 
all projective curves of a given degree in $\PP^2_k$ by simply considering 
the coefficients of the defining equation as parameters. For example, any 
projective curve in $\PP^2_k$ of degree two is defined by a non-zero
homogeneous polynomial 
$$
h=a_{0,0}t_0^2+a_{0,1}t_0t_1+a_{0,2}t_0t_2+a_{1,1}t_1^2+a_{1,2}t_1t_2+a_{2,2}t_2^2
$$
which can be considered as the point $(a_{0,0}:a_{0,1}:a_{0,2}:a_{1,1}:a_{1,2}:a_{2,2})$  
in $\PP^5_k$. One might primarily be interested only in those points of 
$\PP^5_k$ which correspond to smooth curves. However, the set of such points is 
 an open subset $U$ of $\PP^5_k$. A crucial feature of moduli theory is to 
try to compactify the moduli space so that one can apply the methods of study 
of projective (and more generally proper) schemes and morphisms. In this specific example 
the compactified space is just $\PP^5_k$. The points in $\PP^5_k\setminus U$ 
correspond to singular curves of degree two which are still acceptable since 
they are not too singular. This in particular suggests that one should try to 
parameterise more general schemes rather than just the smooth ones.

The equation $h$ defines a closed subscheme $\mathfrak{U}\subseteq \PP^2_k\times_k\PP^5_k$ 
such that a closed point $(x,\alpha)\in \mathfrak{U}$ if and only if $\alpha(x)=0$. 
In particular, the closed fibres of the projection $u\colon \mathfrak{U}\to \PP^5_k$ are 
simply the projective curves in $\PP^2_k$ of degree two. The scheme $\mathfrak{U}$ 
together with the projection $u$ is called the universal family of the moduli.
The universal family satisfies the following important property: let $S$ be a 
 Noetherian scheme over $k$ which parameterises a family of projective 
curves of degree two, that is, there is a closed subscheme $Z$ of $\PP^2_S$ 
such that the projection $Z\to S$ is flat and the fibre $Z_s$ over $s\in S$ is a 
projective curve of degree two in $\PP^2_{k(s)}$. Then, there is a unique 
morphism $S\to \PP^5_k$ over $k$ such that $Z\simeq S\times_{\PP^5_k}\mathfrak{U}$, 
that is, the family $Z\to S$ is the pullback of the universal family via the 
morphism $S\to \PP^5_k$.    

In the above example the invariant degree was enough to classify curves in 
$\PP^2_k$. However, if one wants to parameterise projective subschemes
of $\PP^n_k$, then the degree is not enough. Instead one tries to fix the 
Hilbert polynomial which in particular fixes the degree. 
We know from the last chapter that the Hilbert 
polynomial in a flat family of projective schemes is locally constant. 

The purpose of this chapter is to present Grothendieck's systematic 
treatment of constructing moduli schemes of families of schemes and sheaves. 
The original material are mostly in Grothendieck's FGA [\ref{FGA}] however 
our presentation follows [\ref{F}] which takes into account improvements by Mumford.\\

%%%%%%%%%%%%%%%%%%%%%%%%%%%%%%%%%

\section{Moduli spaces of subschemes of a scheme, and quotients of a sheaf}

We would like to construct the moduli space of the closed subschemes of a given 
scheme $X$ projective over a field $k$ using Hilbert polynomials.
For families, we are interested in flat families with a given Hilbert polynomial. 
Instead of just parameterising the closed subschemes of $X$, one better try to construct 
the moduli space of closed subschemes of the fibres of 
any $X_S\to S$ once and for all. For example if $S=\Spec L$ 
where $k\subseteq L$ is a field extension then we parameterise closed 
subschemes of $X$ as well as $X_S$. The advantage of this approach is that 
one frequently constructs moduli spaces for schemes over $\Spec \Z$ and 
get the corresponding moduli space for other base schemes for free.

Here we consider a more general form of Hilbert polynomials than those considered 
in the last chapter. Let $X$ be a projective scheme over a field $k$, $\x{L}$ an 
invertible sheaf on $X$ and $\x{F}$ a coherent sheaf on $X$. The Hilbert polynomial 
of $\x{F}$ on $X$ with respect to $\x{L}$ is the unique polynomial $\Phi\in \Q[t]$ 
which satisfies $\Phi(l)=\mathcal{X}(X,\x{F}\otimes \x{L}^{\otimes l})$ for every 
$l\in \Z$ (see Exercise \ref{exe-Hilb-polynomial}). If $\x{F}=\x{O}_X$, then we call $\Phi$ 
the Hilbert polynomial of $X$ with respect to $\x{L}$.\\

\begin{defn}[Hilb functor]
Let $f\colon X\to Y$ be a projective morphism of Noetherian schemes, $\x{L}$ 
an invertible sheaf on $X$, and $\Phi\in \Q[t]$ a polynomial. Let 
$\rm{N}\mathfrak{S}ch/Y$ denote the category of Noetherian schemes over $Y$.
We define the Hilbert functor 
$$
\cHilb_{X/Y}^{\Phi,\x{L}}\colon \rm{N}\mathfrak{S}ch/Y\to \mathfrak{S}et
$$
to the category of sets by 
$$
\cHilb_{X/Y}^{\Phi,\x{L}}(S)=\{\mbox{closed subschemes $Z$ of $X_S$ flat over $S$ 
such that for each $s\in S$, $\Phi_s=\Phi$}\}
$$  
where $\Phi_s$ is the Hilbert polynomial of the fibre $Z_s$ with respect to $\x{L}_s$, 
the pullback of $\x{L}$ to $Z_s$. One may think of $Z\to S$ as a flat family of 
schemes parameterised by the points of $S$ such that the fibres look like closed 
subschemes of the fibres of $f$ (the fact that we actually have a contravariant functor 
is proved easily, see Exercise \ref{exe-Hilb-funtoriality}).
\end{defn}

The Hilbert polynomial is locally constant on the base for a flat sheaf. 
This is proved in Theorem \ref{t-flat-Hilbert-polynomial} for $\x{L}$ a very ample sheaf but 
the proof works for any invertible sheaf.

To give a closed subscheme of a Noetherian scheme $X$ is the same as giving a coherent ideal 
sheaf which in turn is the same as giving a surjective morphism 
$\x{O}_X\to \x{G}$ of coherent sheaves. One may simply call $\x{G}$ a 
coherent quotient of $\x{O}_X$. More generally, if $\x{F}\to \x{G}$ is a surjective  
morphism of coherent sheaves, we call $\x{G}$ a coherent quotient of $\x{F}$.
Two coherent quotients $\x{G}$ and $\x{G}'$ of $\x{F}_S$ are 
considered the same or equivalent if there is an isomorphism $\psi\colon \x{G}\to \x{G}'$ 
such that $\phi'=\psi\phi$ where $\phi\colon \x{F}\to \x{G}$ and 
$\phi'\colon \x{F}\to \x{G}'$ are the given surjective morphisms.
We are led to the following generalisation of the Hilbert functor.\\ 

\begin{defn}[Quot functor]
Let $f\colon X\to Y$ be a projective morphism of Noetherian schemes, 
$\x{L}$ an invertible sheaf on $X$, $\x{F}$ a coherent sheaf on $X$, and 
$\Phi\in \Q[t]$ a polynomial. 
We define the quotient functor 
$$
\cQuot_{\x{F}/X/Y}^{\Phi,\x{L}}\colon \rm{N}\mathfrak{S}ch/Y\to \mathfrak{S}et
$$
by 
$$
\cQuot_{\x{F}/X/Y}^{\Phi,\x{L}}(S)=\{\mbox{coherent quotients $\x{G}$ of $\x{F}_S$ flat over $S$ 
with $\Phi_s=\Phi$}\}
$$  
where $\Phi_s$ is the Hilbert polynomial of the sheaf $\x{G}_s$ on the fibre $X_s$ of 
$X_S\to S$ over $s\in S$ with respect to $\x{L}_s$ the pullback of $\x{L}$.\\
\end{defn}

\begin{defn}[Representability]
We say that the functor $\cQuot_{\x{F}/X/Y}^{\Phi,\x{L}}$ is representable if we have 
the following:

$(i)$  a scheme $\Quot_{\x{F}/X/Y}^{\Phi,\x{L}}$ in $\rm{N}\mathfrak{S}ch/Y$ called the quotient scheme, 

$(ii)$ a coherent sheaf 
$\mathfrak{G}\in \cQuot_{\x{F}/X/Y}^{\Phi,\x{L}}(\Quot_{\x{F}/X/Y}^{\Phi,\x{L}})$ on $X\times_Y\Quot_{\x{F}/X/Y}^{\Phi,\x{L}}$ called the universal family, 

$(iii)$  the functorial map  
$$
\Hom_{\mathfrak{S}ch/Y}(S,\Quot_{\x{F}/X/Y}^{\Phi,\x{L}}) \to \cQuot_{\x{F}/X/Y}^{\Phi,\x{L}}(S) 
$$ 
which sends a morphism $S\to \Quot_{\x{F}/X/Y}^{\Phi,\x{L}}$ to the pullback of $\mathfrak{G}$ 
via the induced morphism $X_S\to X\times_Y\Quot_{\x{F}/X/Y}^{\Phi,\x{L}}$, is an 
isomorphism.\\
 
If $\cQuot_{\x{F}/X/Y}^{\Phi,\x{L}}$ is representable as above, we simply say that 
the functor is represented by $\Quot_{\x{F}/X/Y}^{\Phi,\x{L}}$. In case $\x{F}=\x{O}_X$, instead of $\Quot_{\x{O}_X/X/Y}^{\Phi,\x{L}}$ we use the notation $\Hilb_{X/Y}^{\Phi,\x{L}}$ and call it the Hilbert scheme 
of $X$ over $Y$.\\
\end{defn}

We come to the main theorem of this chapter:

\begin{thm}\label{t-Hilb-quot-functor}
Let $f\colon X\to Y$ be a projective morphism of Noetherian schemes, 
$\x{L}=\x{O}_X(1)$ a very ample invertible sheaf over $Y$, 
and $\Phi\in \Q[t]$ a polynomial. Let  $\x{F}$ be a coherent sheaf on $X$ 
which is the quotient of a sheaf $\x{E}=\x{O}_X(m)^r$ for some 
$r\in \N$ and $m\in \Z$. 
Then, the quotient functor $\cQuot_{\x{F}/X/Y}^{\Phi,\x{L}}$ is represented by a 
scheme $\Quot_{\x{F}/X/Y}^{\Phi,\x{L}}$ which is projective over $Y$.\\
\end{thm}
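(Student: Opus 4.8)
The plan is: (1) reduce to the Quot functor of a free sheaf on a projective space over $Y$; (2) for $m\gg 0$, send a flat quotient $\x{G}$ of $\x{F}_S$ to the locally free quotient $\pi_{S*}(\x{G}(m))$, thereby embedding the functor into a Grassmannian over $Y$; (3) carve out the image using the Hilbert‑stratification theorem \ref{t-flat-Hilb-stratification}; (4) upgrade ``locally closed'' to ``closed'' via the valuative criterion. For Step 1, choose a closed immersion $X\hookrightarrow\PP^n_Y$ with $\x{L}=\x{O}_X(1)$ the restriction of $\x{O}_{\PP^n_Y}(1)$. Pushing forward along $i$ is exact and preserves Euler characteristics, $i_*\x{F}$ is a quotient of $\x{O}_{\PP^n_Y}(m)^r$ (projection formula, since $i_*\x{O}_X$ is a quotient of $\x{O}_{\PP^n_Y}$), and a flat quotient of $(i_*\x{F})_S$ lies in the image of $i_{S*}$ exactly when it is annihilated by the ideal sheaf of $X_S$ --- a closed condition on $S$. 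Likewise a quotient of $\x{F}_S$ is the same as a quotient of $\x{O}(m)^r_S$ killing $\ker(\x{O}(m)^r\to\x{F})$, again a closed condition, and $\otimes\,\x{O}(-m)$ is an equivalence on quotients (shifting $\Phi$). Since a closed subfunctor of a functor representable by a $Y$‑projective scheme is again so representable, I am reduced to showing $\cQuot_{\x{O}_{\PP^n_Y}^r/\PP^n_Y/Y}^{\Phi,\x{O}(1)}$ is representable by a $Y$‑projective scheme.

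\textbf{The crux: uniform regularity.} The key input, which I expect to be the main obstacle, is a boundedness statement: there is an integer $m_1=m_1(n,r,\Phi)$ such that for every field $k'$ at a point of $Y$ and every quotient $\x{O}_{\PP^n_{k'}}^r\twoheadrightarrow\x{G}$ with Hilbert polynomial $\Phi$, the kernel $\x{H}$ (and hence $\x{G}$) is $m$‑regular in the sense of Castelnuovo--Mumford for all $m\ge m_1$. Granting this, for $m\ge m_1$ one gets $H^i(\x{H}(m))=H^i(\x{G}(m))=0$ for $i>0$, surjectivity of $H^0(\x{O}(m)^r)\to H^0(\x{G}(m))$, and that $\x{G}$ is recovered as the sheaf associated to the degree‑$\ge m$ truncation of $\bigoplus_l H^0(\x{G}(l))$. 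Proving the bound is the hard part; the standard route is induction on $n$, slicing by a general hyperplane to compare the regularities of $\x{H}$, $\x{H}(-1)$ and $\x{H}|_H$, noting that only finitely many Hilbert polynomials can occur and that they too are bounded, and using that an $m$‑regular sheaf is $(m+1)$‑regular.

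\textbf{Embedding into a Grassmannian.} Fix $m\ge m_1$, large enough that all higher direct images in sight vanish and $\pi_*$ commutes with base change (Chapter 3). Put $\x{W}=\pi_*\big(\x{O}_{\PP^n_Y}(m)^r\big)$, a free $\x{O}_Y$‑module of rank $r\binom{n+m}{n}$, and $G=\Grass_Y(\x{W},\Phi(m))$, projective over $Y$ with its universal locally free rank‑$\Phi(m)$ quotient. Over any $S\to Y$ and any flat quotient $\x{F}_S\twoheadrightarrow\x{G}$ of Hilbert polynomial $\Phi$, the sheaf $\x{G}(m)$ is relatively $m$‑regular, so $\pi_{S*}(\x{G}(m))$ is locally free of rank $\Phi(m)$ and $\x{W}_S\twoheadrightarrow\pi_{S*}(\x{G}(m))$ is surjective; this is a natural transformation $\cQuot\to\underline{G}$, injective on $S$‑points since $\x{G}$ is reconstructed from $\pi_{S*}(\x{G}(m))$. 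On $\PP^n_G$ form $\x{Q}'=\operatorname{coker}\big(\pi_G^*\x{S}\hookrightarrow\pi_G^*\x{W}_G\twoheadrightarrow\x{O}_{\PP^n_G}(m)^r\big)$, where $\x{S}\subseteq\x{W}_G$ is the universal subsheaf and the second map is the relative evaluation. Applying Theorem \ref{t-flat-Hilb-stratification} to $\x{Q}'(-m)$ produces a locally closed subscheme $G_\Phi\hookrightarrow G$ over which $\x{Q}'(-m)$ is flat with Hilbert polynomial $\Phi$ and which is universal for this; using the regularity of Step 2 one checks that over $G_\Phi$ the flat quotient $\x{Q}'(-m)$ induces back the tautological $G$‑point, so $G_\Phi$ together with $\x{Q}'(-m)|_{\PP^n_{G_\Phi}}$ represents $\cQuot_{\x{O}_{\PP^n_Y}^r/\PP^n_Y/Y}^{\Phi,\x{O}(1)}$.

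\textbf{Projectivity.} It remains to see $G_\Phi$ is \emph{closed} in $G$. As $G_\Phi$ is of finite type and separated over $Y$ (a subscheme of the $Y$‑projective scheme $G$), by the valuative criterion it suffices to show every $\Spec K$‑point of $\cQuot$ extends uniquely to a $\Spec R$‑point, for $R$ a discrete valuation ring over $Y$ with fraction field $K$. The unique extension of a quotient $\x{F}_K\twoheadrightarrow\x{G}_K$ is its flat closure: $\x{F}_R/\x{V}$ with $\x{V}=\ker\big(\x{F}_R\to j_*\x{G}_K\big)$, $j\colon\PP^n_K\hookrightarrow\PP^n_R$; this is $R$‑flat because $R$‑torsion‑free, and its Hilbert polynomial is locally constant on $\Spec R$ hence equal to $\Phi$, while uniqueness of flat extensions over a DVR is standard; the associated $\Spec R\to G$ is then the unique extension of the given $\Spec K\to G$ (separatedness of $G/Y$), so it lands in $G_\Phi$. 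Therefore $G_\Phi\to Y$ is proper, hence $G_\Phi\to G$ is a proper locally closed immersion, hence a closed immersion. Thus $G_\Phi=\Quot_{\x{F}/X/Y}^{\Phi,\x{L}}$ is a closed subscheme of a $Y$‑projective scheme, so projective over $Y$, with universal family the restriction of $\x{Q}'(-m)$ (after undoing the reductions of Step 1).
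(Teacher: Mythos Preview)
Your proof is correct and follows essentially the same route as the paper: reduce to $\x{O}_{\PP^n_Y}^r$, invoke uniform Castelnuovo--Mumford regularity (stated in the paper as Theorem \ref{t-regularity-2}), embed in a Grassmannian, carve out the Quot scheme via the Hilbert stratification of Theorem \ref{t-flat-Hilb-stratification}, and close up by the valuative criterion. Two small remarks on your reductions: the passage from $X$ to $\PP^n_Y$ is in fact an \emph{isomorphism} of Quot functors (any quotient of $(i_*\x{F})_S$ is automatically supported on $X_S$, since $(i_*\x{F})_S$ already is), and the paper isolates the $\x{E}\twoheadrightarrow\x{F}$ reduction as a separate Lemma \ref{l-Hilb-quot-functor-embedding}, proving closedness there---just as in the main argument---via stratification plus the valuative criterion rather than by a bare ``closed condition'' assertion.
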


The theorem is proved in Section \ref{sec-Hilb-proofs}.
Note that the condition of $\x{F}$ being a quotient of a sheaf $\x{E}=\x{O}_X(m)^r$ 
is automatically satisfied for example if $Y$ is affine. Another occasion where 
this property is trivially satisfied is when $\x{F}=\x{O}_X$ hence we get the 
following result.

\begin{cor}
Let $f\colon X\to Y$ be a projective morphism of Noetherian schemes, 
$\x{L}=\x{O}_X(1)$ a very ample invertible sheaf over $Y$, 
and $\Phi\in \Q[t]$ a polynomial. 
Then, the Hilbert functor $\cHilb_{X/Y}^{\Phi,\x{L}}$ is represented by a 
scheme $\Hilb_{X/Y}^{\Phi,\x{L}}$ which is projective over $Y$.
\end{cor}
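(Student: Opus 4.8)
The plan is to deduce the corollary directly from Theorem \ref{t-Hilb-quot-functor} by specialising to $\x{F}=\x{O}_X$. The first task is to check that the hypothesis of that theorem is met: we need $\x{O}_X$ to be a quotient of a sheaf of the form $\x{O}_X(m)^r$. This is immediate---take $m=0$ and $r=1$, so that $\x{E}=\x{O}_X(0)=\x{O}_X$ and the required surjection $\x{E}\to\x{O}_X$ is the identity morphism. Hence Theorem \ref{t-Hilb-quot-functor} applies and yields a scheme $\Quot_{\x{O}_X/X/Y}^{\Phi,\x{L}}$, projective over $Y$, representing $\cQuot_{\x{O}_X/X/Y}^{\Phi,\x{L}}$; by the notational convention introduced above, this scheme is precisely $\Hilb_{X/Y}^{\Phi,\x{L}}$.

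Next I would verify that $\cHilb_{X/Y}^{\Phi,\x{L}}$ and $\cQuot_{\x{O}_X/X/Y}^{\Phi,\x{L}}$ are naturally isomorphic functors. For a Noetherian scheme $S$ over $Y$, giving a closed subscheme $Z\subseteq X_S$ is the same as giving its coherent ideal sheaf $\x{I}_Z\subseteq\x{O}_{X_S}$, equivalently the surjection $\x{O}_{X_S}\to\x{O}_Z$; and two such surjections determine the same closed subscheme exactly when the corresponding coherent quotients of $\x{O}_{X_S}$ are equivalent in the sense used in the definition of the Quot functor. Moreover $Z\to S$ is flat if and only if $\x{O}_Z$ is flat over $S$, and for each $s\in S$ the fibre $Z_s\subseteq X_s$ is the closed subscheme cut out by the pullback quotient $\x{O}_{X_s}\to(\x{O}_Z)_s=\x{O}_{Z_s}$, so the Hilbert polynomial of $Z_s$ with respect to $\x{L}_s$ agrees with that of the sheaf $(\x{O}_Z)_s$. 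Thus $Z\mapsto(\x{O}_{X_S}\to\x{O}_Z)$ gives a bijection $\cHilb_{X/Y}^{\Phi,\x{L}}(S)\to\cQuot_{\x{O}_X/X/Y}^{\Phi,\x{L}}(S)$, and since forming ideal sheaves, quotients, and fibres all commute with base change, this bijection is compatible with pullback along any morphism $S'\to S$ in $\rm{N}\mathfrak{S}ch/Y$; that is the naturality.

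Combining the two steps, $\cHilb_{X/Y}^{\Phi,\x{L}}\simeq\cQuot_{\x{O}_X/X/Y}^{\Phi,\x{L}}$ is represented by $\Hilb_{X/Y}^{\Phi,\x{L}}=\Quot_{\x{O}_X/X/Y}^{\Phi,\x{L}}$, which is projective over $Y$, the universal family being the universal quotient of $\x{O}_{X\times_Y\Hilb_{X/Y}^{\Phi,\x{L}}}$, equivalently the universal closed subscheme. There is essentially no obstacle beyond bookkeeping here; the only point deserving a line of care is confirming that the identification of the two functors respects the functorial (base-change) structure, so that one obtains a genuine isomorphism of functors rather than a mere pointwise bijection---this is exactly what is needed to transport representability, together with the projectivity of the representing scheme, from $\cQuot_{\x{O}_X/X/Y}^{\Phi,\x{L}}$ to $\cHilb_{X/Y}^{\Phi,\x{L}}$.
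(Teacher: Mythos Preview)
Your proposal is correct and follows exactly the paper's approach: the paper's proof is the single line ``Apply Theorem \ref{t-Hilb-quot-functor},'' relying on the remark just before the corollary that the quotient hypothesis is trivially satisfied for $\x{F}=\x{O}_X$, and on the earlier discussion that closed subschemes correspond to coherent quotients of $\x{O}_X$. You have simply spelled out these identifications in more detail than the paper does.
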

\begin{proof}
Apply Theorem \ref{t-Hilb-quot-functor}.\\
\end{proof}

%%%%%%%%%%%%%%%%%%%%%%%%%%%%%%%%

\section{Examples}

In this section, we discuss several examples of Hilbert and Quotient schemes.

\begin{exa}
Let $f\colon X\to Y$ be a projective morphism of Noetherian schemes, $\x{L}=\x{O}_X(1)$, 
and let $\Phi=1$. The closed subschemes of the fibres of $f$ with 
Hilbert polynomial $\Phi$ are exactly the closed points. It is then easy to guess 
that  $\Hilb_{X/Y}^{\Phi,\x{L}}=X$ and the universal family is the diagonal  
$\Delta\subset X\times_YX$. In fact, if $S$ is any Noetherian scheme over $Y$ 
and $Z\subseteq X_S$ 
a closed subscheme flat over $S$ with fibres having Hilbert polynomial 
$\Phi$, then $Z\to S$ is an isomorphism since it is a finite morphism of degree one as 
the fibres are single points. So, we get a unique morphism $S\to X$ which 
induces a morphism $S\to \Delta$. Moreover, $Z=S$ is just the product 
$S\times_X\Delta$ because $\Delta \to X$ is an isomorphism.\\
\end{exa}

\begin{exa}
Let $X$ be a smooth projective curve of genus $g$ over $Y=\Spec k$ 
where $k$ is an algebraically closed field, 
and $\x{L}=\x{O}_X(1)$. 

(1) Let $\Phi(t)=(\deg \x{L})t+1-g$. Then the only closed subscheme of $X$ 
with Hilbert polynomial $\Phi$ is $X$ itself. In this case, $\Hilb_{X/Y}^{\Phi,\x{L}}=\Spec k$ 
and the universal family is just the given morphism $X\to Y$.  
If $S$ is a Noetherian scheme over $k$, and $Z\subseteq X_S$ 
a closed subscheme flat over $S$ with fibres having Hilbert polynomial 
$\Phi$, then $Z=X_S$, that is, there is only one family parameterised by $S$ 
which corresponds to the given morphism $S\to Y$.

(2) Let $\Phi(t)=m$ for some $m>0$. Then, the only closed subschemes of $X$ 
with Hilbert polynomial $\Phi$ are the zero-dimensional closed subschemes 
$Z$ with $\dim_kH^0(Z,\x{O}_Z)=m$. Such a $Z$ can be identified with 
an effective Cartier divisor $D$ of degree 
$m$. Each such divisor can be written as a sum $\sum d_iD_i$ where $D_i$ is a 
single point on $X$ and $\sum d_i=m$. Any such divisor can be considered as a 
point on the product of $X$ with itself $m$ times, that is, 
$X^m$. However, since the order of the $D_i$ does not make any difference 
we should take the quotient of $X^m$ by the equivalence relation $\sim$ 
which is defined on the closed points as follows: $(x_1,\dots,x_m)\sim (x_1',\dots,x_m')$
if $(x_1',\dots,x_m')$ can be obtained from $(x_1,\dots,x_m)$ by a permutation 
of the $x_i$. The quotient $X^m/\sim$ is called the $m$-th symmetric product 
of $X$ and it is denoted by $\Sym^mX$. The fact that the symmetric product is an 
algebraic variety follows from a general fact that quotients of quasi-projective varieties by 
finite groups are again quasi-projective varieties. We leave it to the reader 
to verify that $\Hilb_{X/Y}^{\Phi,\x{L}}=\Sym^m X$.\\ 
\end{exa}

\begin{exa}
Let $X=\PP^n_\Z=\Proj \Z[t_0,\dots,t_n]$, $Y=\Spec \Z$, $f\colon X\to Y$ the natural 
morphism, and $\x{L}=\x{O}_X(1)$. Pick a natural number $d$ and let 
$\Phi(t)=\binom{n+t}{n}-\binom{n-d+t}{n}$. Note that strictly speaking a function 
like $\binom{2+t}{2}$ takes in only values $t\ge 0$, however, we rather have the  
function $(t+2)(t+1)/2$ in mind which agrees with $\binom{2+t}{2}$ for $t\ge 0$.
We would like to compute $\Hilb_{X/Y}^{\Phi,\x{L}}$. We have chosen the polynomial 
$\Phi$ so that if $Z$ is a hypersurface of degree $d$ in the projective space $\PP^n_k$ over some 
field $k$, then $\Phi$ is the Hilbert polynomial of $Z$. Conversely, if $\Phi$ 
is the Hilbert polynomial of a closed subscheme $Z$ of $\PP^n_k$, then 
$Z$ is a hypersurface of degree $d$ (see Exercise \ref{exe-Hilb-hypersurface}).
As in the example in the 
introduction to this chapter such $Z$ can be parameterised by looking at its 
defining equation, a homogeneous polynomial of degree $d$ in $n+1$ variables. 
Such polynomials correspond to the points of the projective space 
$\PP^N_k$ where $N=\binom{n+d}{n}-1$. This suggests that 
$\Hilb_{X/Y}^{\Phi,\x{L}}\simeq \PP^N_\Z$ and we are going to verify it.   

Let $Z\subseteq X_S$ be a flat family in $\cHilb_{X/Y}^{\Phi,\x{L}}(S)$ for some 
Noetherian scheme $S$ over $Y$. Consider the natural exact sequence 
$0\to \x{I}_Z(d)\to \x{O}_{X_S}(d)\to \x{O}_Z(d) \to 0$ in which 
$\x{O}_{X_S}(d)$ and $\x{O}_Z(d)$ are flat over $S$ hence $\x{I}_Z(d)$ is also 
flat over $S$. By Theorem \ref{t-flat-exact-preserving},
for any point $s\in S$ the sequence 
$$
0\to \x{I}_Z(d)_s\to \x{O}_{X_S}(d)_s\to \x{O}_Z(d)_s \to 0
$$
on the fibre of $X_S\to S$ over $s$, say $X_s$, is exact. In particular, this means that 
$(\x{I}_Z)_s$ is the ideal sheaf of $Z_s$ in $X_s$ hence $(\x{I}_Z(d))_s=\x{O}_{X_s}$ 
because $\x{I}_{Z_s}=\x{O}_{X_s}(-d)$ as $Z_s$ is a hypersurface of degree $d$ in 
$X_s=\PP^n_{k(s)}$.

On the other hand, by looking at the cohomology on the fibres we have the vanishings 
 $H^p(X_S,\x{I}_Z(d)\otimes_Sk(s))=0$, $H^p(X_S,\x{O}_{X_S}(d)\otimes_Sk(s))=0$, 
and $H^p(X_S, \x{O}_Z(d)\otimes_Sk(s))=0$ for $p>0$. Thus, by the base change theorem 
(\ref{t-base-change}), $R^1{f_S}_*\x{I}_Z(d)=0$, and we get an exact sequence 
$$
0\to \x{M}_1:={f_S}_*\x{I}_Z(d)\to \x{M}_2:={f_S}_*\x{O}_{X_S}(d)\to \x{M}_3:={f_S}_*\x{O}_Z(d) \to 0
$$
of locally free sheaves. The rank of these sheaves can be determined by looking 
at the fibres, in particular, ${f_S}_*\x{I}_Z(d)$ is of rank one. Moreover, 
the morphism $f_S^*{f_S}_*\x{I}_Z(d)\to \x{I}_Z(d)$ is an isomorphism since the 
induced morphism $f_S^*{f_S}_*\x{I}_Z(d)_s\to \x{I}_Z(d)_s$ on $X_s$ is an isomorphism 
so one can apply Exercise \ref{exe-Hilb-fibre-isom}. 

Let $g\colon S\to Y$ be the given morphism and let $\x{E}:=\x{O}_Y^{N+1}$. So, $\PP(\x{E})=\PP^N_\Z=\Proj \Z[w_0,\dots,w_N]$.
Moreover, 
$$
\x{M}_2^\vee\simeq \x{M}_2\simeq \Sym^d\x{O}_S^{n+1}\simeq \x{O}_S^{N+1}\simeq g^*\x{E}
$$
Therefore, the exact sequence 
$$
0\to \x{M}_3^\vee \to \x{M}_2^\vee \to \x{M}_1^\vee \to 0
$$
uniquely induces a morphism $h\colon S\to \PP^N_\Z$ such that $h^*\x{O}_{\PP^N_\Z}(1)\simeq \x{M}_1^\vee$.

Let $\Psi_0,\dots,\Psi_N$ be all the monomials of 
degree $d$ in the variables $t_0,\dots,t_n$. The expression 
$\Theta=\sum_iw_i\Psi_i$ defines a closed subscheme $\mathfrak{U}$ of $\PP^n_\Z\times_\Z\PP^N_\Z$ 
such that the fibre of the projection $u\colon \mathfrak{U}\to \PP^N_\Z$ over a point 
$p$ is the hypersurface of degree $d$ in $\PP^n_{k(p)}$ defined by the polynomial 
$\sum_iw_i(p)\Psi_i$ where $w_i(p)$ is the image of $w_i$ in the residue field 
$k(p)$. In particular, the Hilbert polynomial of the fibres is constant hence 
$\mathfrak{U}\to \PP^N_\Z$ is flat by Theorem \ref{t-flat-Hilbert-polynomial}.
We will prove that $\mathfrak{U}$ is the universal family.

Let $e\colon \PP^n_\Z\times_\Z\PP^N_\Z \to \PP^N_\Z$ be the second projection 
and $e'$ the first projection. Then 
the above arguments show that 
$$
0\to \x{N}_1:=e_*\x{I}_{\mathfrak{U}}(d)\to \x{N}_2:=e_*\x{O}_{\PP^n_\Z\times_\Z\PP^N_\Z}(d)\to \x{N}_3:=e_*\x{O}_{\mathfrak{U}}(d) \to 0
$$
is an exact sequence of locally free sheaves with $\x{N}_1$ of rank one and 
$e^*e_*\x{I}_{\mathfrak{U}}(d) \to \x{I}_{\mathfrak{U}}(d)$ an isomorphism, 
thus $ \x{I}_{\mathfrak{U}}(d)$ is an invertible sheaf. 
Moreover,  the Picard group of 
$\PP^n_\Z\times_\Z\PP^N_\Z$ is generated, as a free abelian group, by $e^*\x{O}_{\PP^N_\Z}(1)$
and $e'^*\x{O}_{\PP^n_\Z}(1)$, and since $\x{I}_{\mathfrak{U}}$ has degree $-1$ 
over $\PP^n_\Z$ and has degree $-d$ over $\PP^N_\Z$, we deduce that 
$$
\x{I}_{\mathfrak{U}}\simeq e^*\x{O}_{\PP^N_\Z}(-1)\otimes e'^*\x{O}_{\PP^n_\Z}(-d)
$$
In view of the isomorphism  $e^*e_*\x{I}_{\mathfrak{U}}(d) \to \x{I}_{\mathfrak{U}}(d)$ 
we get $e_*\x{I}_{\mathfrak{U}}(d)\simeq \x{O}_{\PP^N_\Z}(-1)$.
Therefore, $\x{M}_1\simeq g^*\x{N}_1$, and if $c$ 
is the induced morphism from $X_S$ to $\PP^n_\Z\times_\Z\PP^N_\Z$, then 
$c^*\x{I}_\mathfrak{U}(d)\simeq \x{I}_Z(d)$. This implies that 
$c^*\x{O}_\mathfrak{U}\simeq \x{O}_Z$ and we are done since 
$X_S$ is just the product of $S$ and $\PP^n_\Z\times_\Z\PP^N_\Z$ 
over $\PP^N_\Z$ (uniqueness of $h$ is left as an exercise).\\ 
\end{exa}

\begin{exa}
Let $X$ be a Noetherian scheme, $Y=X$, $f\colon X\to Y$ the identity 
morphism, $\x{L}$ any invertible sheaf, $\x{F}$ a locally free sheaf of finite 
rank, and $\Phi=1$. We will show that the functor  $\cQuot_{\x{F}/X/Y}^{\Phi,\x{L}}$
is represented by  $\Quot_{\x{F}/X/Y}^{\Phi,\x{L}}=\PP(\x{F})$ 
with the natural morphism $\pi\colon \PP^(\x{F})\to Y$ and the universal 
family being the quotient $\pi^*\x{F}\to \mathfrak{G}:=\x{O}_{\PP(\x{F})}(1)$.
Let $g\colon S\to Y$ be a morphism from a Noetherian scheme. Then, $X_S\to S$ 
is the identity morphism and any coherent quotient $\x{G}$ of $g^*\x{F}$ 
which is flat over $S$ is locally free. Moreover, since $\Phi=1$, the 
quotient having Hilbert polynomial $\Phi$ means that $\x{G}$ has rank 
one, that is, it is an invertible sheaf. Therefore, any such quotient 
determines a morphism $h\colon S\to \PP(\x{F})$. In fact, the natural 
quotient $\pi^*\x{F}\to \mathfrak{G}$ is pulled back via $h$ to the 
quotient $g^*\x{F}\to \x{G}$. Uniqueness of $h$ follows from the 
properties of $\PP(\x{F})$. Note that the fibres of $f$ are the 
$\Spec k(y)$ for points $y\in Y$ and the pullback of $\x{F}$ to 
$\Spec k(y)$ is a vector space $\x{F}_y$. The quotients 
of $\x{F}_y$ of dimension one correspond to the linear subspaces of 
$\x{F}_y$ of codimension one which are parameterised by the 
points of $\PP(\x{F}_y)$, that is, the fibre of $\pi$ 
over $y$.\\
\end{exa}

\begin{exa}
Let $f\colon X\to Y$ and $f'\colon X'\to Y$ be two projective morphisms of 
Noetherian schemes with $f$ flat. First assume that $Y=\Spec k$ for a field 
$k$. We are interested in parameterising morphisms $X\to X'$ over $Y$. 
Note that any such morphism uniquely determines a closed immersion 
$X\to X\times_YX'$ hence a closed subscheme of $X\times_YX'$ which is 
called the graph of $X\to X'$. 
A family of "such morphisms" parameterised by a scheme $S$ over $Y$ can be 
defined simply as a morphism $X_S\to X_S'$ over $S$. Over each point $s$ of $S$, 
the morphism $X_S\to X_S'$ gives a morphism of the fibres over $s$ which 
just looks like a morphism $X\to X'$ over $Y$. 

Now we want to parameterise morphisms in the general case, that is, 
when $Y$ is any Noetherian scheme. Pick $\Phi\in \Q[t]$, and let $\x{L}$ be 
an invertible sheaf on $X\times_Y X'$ which is very ample over $Y$. 
A morphism $a\colon X_S\to X_S'$ over $S$ determines a closed immersion 
$\Gamma_a\colon X_S\to X_S\times_SX_S'\simeq (X\times_YX')_S$, called the graph of $a$,  
which is flat over $S$. We can consider $\Gamma_a$ as a closed subscheme of 
$(X\times_YX')_S$. We then define a functor 
$$
\xHom_{Y}^{\Phi}(X,X')\colon \rm{N}\mathfrak{S}ch/Y\to \mathfrak{S}et
$$
by 
$$
\xHom_{Y}^{\Phi}(X,X')(S)=\{\mbox{morphisms $a\colon X_S\to X_S'$ over $S$ such that 
$\Gamma_a\in\cHilb_{X\times_YX'/Y}^{\Phi,\x{L}}(S)$ }\}
$$
Each element $a$ of $\xHom_{Y}^{\Phi}(X,X')(S)$, by definition, determines 
$\Gamma_a\in\cHilb_{X\times_YX'/Y}^{\Phi,\x{L}}(S)$ which in turn induces unique morphisms 
$h\colon S\to \Hilb_{X\times_YX'/Y}^{\Phi,\x{L}}$ and $X_S\to \mathfrak{U}$ where $u\colon \mathfrak{U}\to 
\Hilb_{X\times_YX'/Y}^{\Phi,\x{L}}$ is the universal family. However, the points of 
$S$ are mapped to those points of $\Hilb_{X\times_YX'/Y}^{\Phi,\x{L}}$  over which the   
fibre of $u$ and the projection 
$$
Z:=X\times_Y\Hilb_{X\times_YX'/Y}^{\Phi,\x{L}} \to \Hilb_{X\times_YX'/Y}^{\Phi,\x{L}}
$$ 
are isomorphic because 
$$X_S\simeq \Gamma_a\simeq S\times_{\Hilb_{X\times_YX'/Y}^{\Phi,\x{L}}} \mathfrak{U}
~~~~\mbox{and}~~~~
X_S\simeq S\times_{\Hilb_{X\times_YX'/Y}^{\Phi,\x{L}}}Z
$$ 
The set of such points turns out to be an (possibly empty) open subscheme of $\Hilb_{X\times_YX'/Y}^{\Phi,\x{L}}$  denoted by $\Hom_{Y}^{\Phi}(X,X')$ which represents the functor 
$\xHom_{Y}^{\Phi}(X,X')$ (cf. [\ref{F}, 5.23]).\\
\end{exa}

%%%%%%%%%%%%%%%%%%%%%%%%%%%%%%%%

\section{The Grassmannian}

In this section, we discuss the Grassmannian which serves both as a good classical example of a parameterising space and as a technical tool in the proof of Theorem \ref{t-Hilb-quot-functor}.

\begin{exa}\label{exa-Hilb-Grassmannian}
Let $k$ be an algebraically closed field, $V$ an $n$-dimensional $k$-vector space, and $d$ a non-negative integer. 
The Grassmannian $\Grass(V,d)$ is the space which parameterises the $(n-d)$-dimensional $k$-vector subspaces of $V$ (caution: there are several different notations for Grassmannian). If $d=n-1$, then $\Grass(V,d)$ parameterises the lines passing through the origin so it is nothing but the classical projective space of $V$ which is isomorphic to $\PP^{n-1}_k$. Similarly, if $d=1$, $\Grass(V,d)$ is 
the classical projective space of the dual of $V$ which is again isomorphic to $\PP^{n-1}_k$. 
Obviously, $\Grass(V,0)$ and $\Grass(V,n)$ are single points.

Let $W$ be a $d$-dimensional $k$-vector space. Any subvector space $V'$ of $V$ 
of dimension $n-d$ is the kernel of some surjective $k$-linear map $\phi\colon V\to W$ 
which is in turn determined by a $d\times n$ matrix $M_\phi$ over $k$ of rank $d$.
However, $\phi$ and $M_\phi$ are not uniquely determined by $V'$. They are uniquely 
determined up to a certain equivalence relation. 
One can think of $\phi$ as a quotient of $V$ of dimension $d$ and say that two quotients 
$\phi\colon V\to W$ and $\psi\colon V\to W$ are equivalent if there is a  $k$-isomorphism
$\alpha\colon W\to W$ satisfying $\psi=\alpha\phi$. Similarly, $M_\phi$ is uniquely 
determined up to the following equivalence:  $M_\phi$ 
is equivalent to any matrix of the form $NM_\phi$ where $N$ is an invertible $d\times d$ 
matrix. 

The set of all $d\times n$ matrices over $k$ is parameterised by the affine space 
$\A^{dn}_k$, and the subset corresponding to matrices of rank $d$ corresponds to 
an open subset $U\subset \A^{dn}_k$. So, $\Grass(V,d)$ is just the quotient of 
$U$ by the group ${\rm{GL}}(k,d)$. We can describe $\Grass(V,d)$ locally as follows. 
Pick a point in $\Grass(V,d)$ which is represented by a matrix $M$ with entries 
$m_{i,j}$ and columns $\gamma_1,\dots,\gamma_n$. 
If $I\subseteq \{1,\dots,n\}$ is a subset of size $d$, by $M_I$ we mean the 
$I$-th submatrix of $M$, that is, the $d\times d$ matrix with columns $\gamma_i$, $i\in I$.
Assuming that $d>0$, there is an $I$ such that $\det M_I\neq 0$.  
Moreover, perhaps after choosing another representative instead 
of $M$, we may assume that $M_I$ is the $d\times d$ identity matrix. The other entries 
of $M$ which are not in $M_I$ are then uniquely determined.
So, all the points $M$ 
of $\Grass(V,d)$ with $\det M_I\neq 0$ simply correspond to the points of 
the affine space $\A^{dn-d^2}_k$. Let $G_I$ be the set of such points.

If $I,J\subseteq \{1,\dots,n\}$ are subsets of size $d$, then we let  
$G_{I,J}$ to be the set of those points $M\in G_I$ with $\det M_J\neq 0$ 
(note that by definition we already have $\det M_I\neq 0$). We define a 
morphism $f_{I,J}\colon G_{I,J}\to G_{J,I}$ be sending $M$ to $M_J^{-1}M$.
If $I,J,K\subseteq \{1,\dots,n\}$ are subsets of size $d$, then
$f_{I,K}=f_{J,K}f_{I,J}$ because if $M\in G_{I,J}\cap G_{I,K}$, then   
$$
f_{J,K}f_{I,J}(M)=f_{J,K}(M_J^{-1}M)=(M_J^{-1}M)_K^{-1}M_J^{-1}M=
$$
$$
(M_J^{-1}M_K)^{-1}M_J^{-1}M=M_K^{-1}M_JM_J^{-1}M=M_K^{-1}M=f_{I,K}(M)
$$
This, in particular, means that $f_{J,I}f_{I,J}=f_{I,I}=\rm{id}$ hence 
the $f_{I,J}$ are isomorphisms. Therefore, we can glue the $G_I$ 
via the $f_{I,J}$ and put a scheme structure on $\Grass(V,d)$.

Another way of describing  $\Grass(V,d)$ is via the so-called Pl\"uker coordinates.
For each linear subspace $V'$ of dimension $n-d$, choose a basis $v_1,\dots,v_{n-d}$ 
to which we can associate the point  $v_1\wedge \dots \wedge v_{n-d}$ in 
the projective space $\PP(\wedge^{n-d}V)$. It turns out that the point only 
depends on $V'$ and not on the basis chosen. Moreover, the image of $\Grass(V,d)$ 
under this association is a closed subset of $\PP(\wedge^{n-d}V)$ which can be 
explicitly described by some quadratic equations. This in particular means that 
$\Grass(V,d)$ is projective. The scheme structure coincides with the one obtained 
above. 

Another important fact about the Grassmannian is that it carries a certain 
universal locally free sheaf with some strong properties as mentioned in the next theorem.\\
\end{exa}

\begin{thm}\label{t-Hilb-Grassmannian}
Let $Y$ be a Noetherian scheme and $\x{E}$ a locally free sheaf of rank $n$. 
Then there exist a unique (up to isomorphism) scheme $\Grass(\x{E},d)$ with a 
closed embedding into $\PP(\wedge^d \x{E})$ and the induced  
morphism $\pi\colon \Grass(\x{E},d)\to Y$, and a rank $d$ locally free 
quotient $\pi^*\x{E}\to \mathfrak{G}$ satisfying the following universal 
property: for any morphism $g\colon S\to Y$, and any  
 rank $d$ locally free quotient $g^*\x{E}\to \x{G}$, there is a unique morphism 
$h\colon S\to \Grass(\x{E},d)$ over $Y$ such that $g^*\x{E}\to \x{G}$ coincides 
with the pullback of $\pi^*\x{E}\to \mathfrak{G}$ via $h$. 
\end{thm}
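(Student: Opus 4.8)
The plan is to build $\Grass(\x{E},d)$ directly --- it is the special case $X=Y$, $f=\mathrm{id}$, $\x{F}=\x{E}$ of the quotient functor, but since the Grassmannian is needed as a tool in the proof of Theorem \ref{t-Hilb-quot-functor} we cannot invoke that theorem here. I would construct it first when $\x{E}=\x{O}_Y^n$ is free, then glue over $Y$, and at the same time realise it as a closed subscheme of $\PP(\wedge^d\x{E})$ via the Pl\"ucker embedding, which makes projectivity over $Y$ automatic. Representability being a property of a functor, the representing scheme (with its universal quotient) is unique up to unique isomorphism, and this is what legitimises the gluing: taking an open cover $(U_i)$ of $Y$ trivialising $\x{E}$, the schemes $\Grass(\x{E}|_{U_i},d)$, their universal quotients, and their embeddings agree canonically over $U_i\cap U_j$, the cocycle condition being automatic, so they glue. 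Hence we may assume $Y=\Spec A$ and $\x{E}=\x{O}_Y^n$.

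In that situation, for each $I\subseteq\{1,\dots,n\}$ with $|I|=d$ I would introduce the subfunctor $F_I$ of the quotient functor consisting of those rank $d$ locally free quotients $q\colon\x{O}_S^n\to\x{G}$ for which the composite $\x{O}_S^I\hookrightarrow\x{O}_S^n\to\x{G}$ is an isomorphism (here $\x{O}_S^I$ is the subsheaf spanned by the basis vectors indexed by $I$). The claim is that $F_I$ is represented by the affine space $\A^{d(n-d)}_Y$: after identifying $\x{G}$ with $\x{O}_S^I$ via that composite, $q$ is the identity on the columns indexed by $I$ and an arbitrary $d\times(n-d)$ matrix of global functions on the remaining columns, and every such matrix arises; this is the reduced column echelon normal form of a quotient, and the universal quotient on the chart is the evident one. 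Then I would verify that the $F_I$ are open subfunctors and that they jointly cover the quotient functor: given a rank $d$ locally free quotient over $S$, one works locally on $S$ where $\x{G}$ is free, represents $q$ by a $d\times n$ matrix, and observes that at each point of $S$ some $d\times d$ minor is invertible, hence invertible on a neighbourhood, which puts $q$ into the corresponding $F_I$ there. Gluing the charts $\A^{d(n-d)}_Y$ along $F_I\cap F_J$ by the transition maps $M\mapsto M_J^{-1}M$ --- whose cocycle identity $f_{I,K}=f_{J,K}f_{I,J}$ is exactly the computation recorded in Example \ref{exa-Hilb-Grassmannian} --- produces a $Y$-scheme $\Grass(\x{O}_Y^n,d)$ with a tautological rank $d$ locally free quotient $\pi^*\x{E}\to\mathfrak{G}$, whose universal property holds chart by chart.

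Finally I would produce the embedding. Applying $\wedge^d$ to the universal quotient gives a surjection $\pi^*(\wedge^d\x{E})=\wedge^d(\pi^*\x{E})\to\wedge^d\mathfrak{G}$ onto an invertible sheaf, and by the universal property of $\PP(\wedge^d\x{E})$ --- the one used in the examples of the previous section --- this determines a $Y$-morphism $\iota\colon\Grass(\x{E},d)\to\PP(\wedge^d\x{E})$. I would check $\iota$ is a closed immersion locally over $Y$: on the chart $F_I$ it maps into the standard chart of $\PP(\wedge^d\x{E})$ where the $I$-th Pl\"ucker coordinate is invertible, and there the other Pl\"ucker coordinates are polynomial (determinantal) functions of the echelon entries, among which the echelon entries themselves occur, so the associated ring homomorphism is surjective; alternatively the image is cut out by the Pl\"ucker quadratic relations. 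Since $\PP(\wedge^d\x{E})\to Y$ is projective, $\Grass(\x{E},d)\to Y$ is projective, and uniqueness is once more functorial. The step I expect to be the real work is the middle one: showing that $F_I$ is represented by $\A^{d(n-d)}_Y$ with the correct universal quotient, that the $F_I$ form an open cover of the quotient functor, and that the transition data glue coherently; once those are in place, the rest is formal.
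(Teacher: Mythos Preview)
Your construction is correct and is the standard one. The paper, however, does not prove this theorem at all: its proof consists solely of a reference to EGA I [\ref{EGA-new}, 9.7.4, 9.7.5, 9.8.4], with the subsequent remark that the Grassmannian ``is constructed by mimicking Example \ref{exa-Hilb-Grassmannian} in a way that the construction is functorial in $\x{F}$.'' Your proposal is precisely that mimicking carried out in detail --- the affine charts $F_I$, the transition maps $M\mapsto M_J^{-1}M$, and the Pl\"ucker embedding all follow the pattern of Example \ref{exa-Hilb-Grassmannian} over a field, upgraded to work over an arbitrary Noetherian base --- and is essentially what one finds in EGA. So there is nothing to compare: you have supplied a proof where the paper supplies only a citation.
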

\begin{proof}
See Grothendieck's EGA I (new edition) [\ref{EGA-new}, 9.7.4, 9.7.5, and 9.8.4].\\
\end{proof}

The Grassmannian $\Grass(\x{F},d)$ is constructed by mimicking Example  \ref{exa-Hilb-Grassmannian} 
in a way that the construction is functorial in $\x{F}$.  
In EGA, the theorem is stated essentially as in the next corollary. However, the 
above formulation is more natural in the sense that it generalises the corresponding 
result for the projective space of a locally free sheaf 
(cf. Hartshorne [\ref{Hartshorne}, II, 7.12]).\\

\begin{cor}
Let $X=Y$ be a Noetherian scheme, $f\colon X\to Y$ the identity morphism, $\x{L}=\x{O}_X$,
$\x{F}$ a locally free sheaf of rank $n$, and $\Phi=d$. Then, the quotient 
functor $\cQuot^{\Phi,\x{L}}_{\x{F}/X/Y}$ is represented by the Grassmannian
$\Grass(\x{F},d)$. 
\end{cor}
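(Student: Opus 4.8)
The plan is to unwind both definitions and observe that $\cQuot_{\x{F}/X/Y}^{d,\x{O}_X}$ is, verbatim, the functor which Theorem \ref{t-Hilb-Grassmannian} represents by $\Grass(\x{F},d)$; the corollary then follows by quoting that theorem with $\x{E}=\x{F}$.

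First I would evaluate $\cQuot_{\x{F}/X/Y}^{d,\x{O}_X}$ on an arbitrary Noetherian scheme $g\colon S\to Y$ over $Y$. Since $X=Y$ and $f=\mathrm{id}$, we have $X_S=S\times_YX=S$, the projection $f_S\colon X_S\to S$ is the identity, and $g_S\colon X_S\to X$ is just $g$; hence $\x{F}_S=g^*\x{F}$. Because $X_S=S$ is a Noetherian scheme, the theorem characterising flat coherent sheaves (a coherent sheaf on a Noetherian scheme is flat over that scheme if and only if it is locally free) shows that a coherent quotient $\x{G}$ of $g^*\x{F}$ flat over $S$ is the same thing as a locally free coherent quotient of $g^*\x{F}$.

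Next I would translate the Hilbert-polynomial condition. For $s\in S$ the fibre $X_s$ of $X_S\to S$ is $\Spec k(s)$ and the restriction $\x{L}_s$ of $\x{L}=\x{O}_X$ is $\x{O}_{\Spec k(s)}$, so the Hilbert polynomial of $\x{G}_s$ with respect to $\x{L}_s$ is the constant polynomial $\mathcal{X}(\Spec k(s),\x{G}_s)=\dim_{k(s)}\x{G}_s$. Thus $\Phi_s=d$ for every $s$ says exactly that the locally free sheaf $\x{G}$ has rank $d$ at every point, and conversely every rank-$d$ locally free quotient of $g^*\x{F}$ is flat over $S$ with $\Phi_s=d$. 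Moreover the equivalence relation used in the definition of the Quot functor, namely an isomorphism of quotients commuting with the two surjections, is precisely the one implicit in Theorem \ref{t-Hilb-Grassmannian}. Hence $\cQuot_{\x{F}/X/Y}^{d,\x{O}_X}(S)$ is, functorially in $S$, the set of rank-$d$ locally free quotients of $g^*\x{F}$ up to that equivalence.

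Finally I would apply Theorem \ref{t-Hilb-Grassmannian} with $\x{E}=\x{F}$: it furnishes a scheme $\Grass(\x{F},d)$ over $Y$, projective over $Y$ via its closed embedding into $\PP(\wedge^d\x{F})$, together with the rank-$d$ locally free quotient $\pi^*\x{F}\to\mathfrak{G}$, and its universal property states exactly that assigning to a morphism $h\colon S\to\Grass(\x{F},d)$ the pullback of $\pi^*\x{F}\to\mathfrak{G}$ is a bijection onto the set computed in the previous paragraph. This is the assertion that $\cQuot_{\x{F}/X/Y}^{d,\x{O}_X}$ is represented by $\Quot_{\x{F}/X/Y}^{d,\x{O}_X}=\Grass(\x{F},d)$ with universal family $\mathfrak{G}$. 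I do not expect any real obstacle beyond bookkeeping; the only points requiring a little care are the collapse of ``flat over $S$'' to ``locally free'', which is immediate because $X_S\to S$ is an isomorphism, and the matching of the two equivalence relations on quotients.
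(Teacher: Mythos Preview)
Your proposal is correct and follows essentially the same approach as the paper: identify $X_S=S$ so that $\x{F}_S=g^*\x{F}$, observe that a coherent flat quotient is then locally free, read the Hilbert-polynomial condition with $\x{L}=\x{O}_X$ as the rank being $d$, and invoke Theorem~\ref{t-Hilb-Grassmannian}. Your write-up is in fact a little more explicit than the paper's about the collapse of flatness to local freeness and about matching the equivalence relations, but the underlying argument is the same.
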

\begin{proof}
Let $g\colon S\to Y$ be a morphism from a Noetherian scheme. The sheaf $\x{F}_s$ on the fibre  $X_s=\Spec k(s)$ of $f_S=\rm{id}$ is a $k(s)$-vector space of dimension $n$. If $\x{G}$ is a 
quotient of $\x{F}_S=g^*\x{F}$ in  $\cQuot^{\Phi,\x{L}}_{\x{F}/X/Y}(S)$, then on $X_s$ we get the 
$d$-dimensional quotient $\x{G}_s$ of $\x{F}_s$. Thus, since $\x{G}$ is flat over $S$, it is locally 
free of rank $d$. By Theorem \ref{t-Hilb-Grassmannian}, the surjection $\x{F}_S\to \x{G}$ 
uniquely determines a morphism $h\colon S\to \Grass(\x{F},d)$ such that 
$g^*\x{F}\to \x{G}$ coincides with the pullback of $\pi^*\x{F}\to \mathfrak{G}$.\\
\end{proof}

%%%%%%%%%%%%%%%%%%%%%%%%%%%%%%%%
\section{Proof of main results}\label{sec-Hilb-proofs}

In this section we give the proof of Theorem \ref{t-Hilb-quot-functor}.

\begin{proof}(of Theorem \ref{t-Hilb-quot-functor})
\emph{Step 1.}
We can replace $\x{F}$ by $\x{F}(-m)$. Indeed, let $\Psi$ be the polynomial defined 
by $\Psi(t)=\Phi(t-m)$. Then, there is a natural transformation of functors 
$$
\cQuot^{\Phi,\x{L}}_{\x{F}/X/Y}\to \cQuot^{\Psi,\x{L}}_{\x{F}(-m)/X/Y}
$$
which is defined by sending a quotient $\x{F}_S\to \x{G}$ in 
$\cQuot^{\Phi,\x{L}}_{\x{F}/X/Y}(S)$ to $\x{F}_S(-m)\to \x{G}(-m)$ 
for any Noetherian scheme $S$ over $Y$. The above natural transformation is 
an isomorphism of functors so we can indeed replace $\x{F}$ by $\x{F}(-m)$ 
and so assume that $\x{E}=\x{O}_X^r$.\\

\emph{Step 2.} 
Take a closed immersion $e\colon X\to \PP^n_Y$ such that $\x{O}_{\PP^n_Y}(1)$ pulls back 
to $\x{L}$. Then, there is a natural transformation 
$$
\cQuot^{\Phi,\x{L}}_{\x{F}/X/Y}\to \cQuot^{\Phi,\x{O}_{\PP^n_Y}(1)}_{e_*\x{F}/{\PP^n_Y}/Y}
$$
which is an isomorphism of functors: indeed for any morphism $S\to Y$ from a Noetherian 
scheme and any quotient $\x{F}_S\to \x{G}$ in  $\cQuot^{\Phi,\x{L}}_{\x{F}/X/Y}(S)$ 
the morphism ${e_S}_*\x{F}_S\to {e_S}_*\x{G}$ is a quotient in 
$\cQuot^{\Phi,\x{O}_{\PP^n_Y}(1)}_{e_*\x{F}/{\PP^n_Y}/Y}(S)$ where $e_S$ is the 
induced closed immersion $X_S\to \PP^n_S$. Note that ${e_S}_*\x{F}_S$ is nothing but the 
pullback of $e_*\x{F}$ via the morphism $\PP^n_S\to \PP^n_Y$. Conversely, if 
 ${e_S}_*\x{F}_S\to \x{G}'$ is a quotient in $\cQuot^{\Phi,\x{O}_{\PP^n_Y}(1)}_{e_*\x{F}/{\PP^n_Y}/Y}(S)$, then $\x{G}'$ is a module over $\x{O}_{X_S}$ so there is some $\x{G}$ on $X_S$ 
 with ${e_S}_*\x{G}=\x{G}'$. So, the quotient  ${e_S}_*\x{F}_S\to \x{G}'$ is the direct 
 image of the corresponding quotient  $\x{F}_S\to \x{G}$.

Moreover, the surjection $\x{E}=\x{O}_X^r\to \x{F}$ 
gives the surjection $e_*\x{E}\to e_*\x{F}$ which in turn induces a surjection 
$\x{O}_{\PP^n_Y}^r\to e_*\x{F}$. Therefore, we can from now on assume that 
$X=\PP^n_Y$, $\x{L}=\x{O}_X(1)$, and that $\x{E}=\x{O}_X^r$.\\

\emph{Step 3.} 
The surjective morphism $\x{E}=\x{O}_X^r\to \x{F}$ induces a 
natural transformation of functors 
$$
\cQuot^{\Phi,\x{L}}_{\x{F}/X/Y}\to \cQuot^{\Phi,\x{L}}_{\x{E}/X/Y}
$$
which is defined by sending a quotient $\x{F}_S\to \x{G}$ in 
$\cQuot^{\Phi,\x{L}}_{\x{F}/X/Y}(S)$ to the induced quotient 
$\x{E}_S\to \x{G}$, for any Noetherian scheme $S$ over $Y$.
By Lemma \ref{l-Hilb-quot-functor-embedding} below, if $\cQuot^{\Phi,\x{L}}_{\x{E}/X/Y}$ is represented by $\Quot^{\Phi,\x{L}}_{\x{E}/X/Y}$, 
then $\cQuot^{\Phi,\x{L}}_{\x{F}/X/Y}$ is represented by a closed subscheme 
$\Quot^{\Phi,\x{L}}_{\x{F}/X/Y}$ of $\Quot^{\Phi,\x{L}}_{\x{E}/X/Y}$. Thus, 
we may take $\x{F}=\x{E}=\x{O}_X^r$.\\

\emph{Step 4.}  In this step, among other things, we show that there is 
$l\gg 0$ such that: for any 
morphism $g\colon S\to Y$ from a Noetherian scheme and any  
quotient $\x{F}_S\to \x{G}$ in $\cQuot^{\Phi,\x{L}}_{\x{F}/X/Y}(S)$ with kernel $\x{K}$,  
the induced sequence 
$$
(1) \hspace{1cm} 0\to {f_S}_*\x{K}(l)\to {f_S}_*\x{F}_S(l)\to {f_S}_*\x{G}(l)\to 0
$$
is an exact sequence of locally free sheaves. For any point $s\in S$,  
the sequence 
$$
0\to \x{K}_s\to \x{F}_s\to \x{G}_s\to 0
$$
is exact on the fibre $X_s=\PP^n_{k(s)}$ over $s$ by Theorem \ref{t-flat-exact-preserving}. 
In particular, $\x{K}_s$ is a subsheaf of $\x{O}_{X_s}^r$. 
Now, since the Hilbert polynomial of $\x{K}_s$ is independent of $s$, $S$, and $\x{G}$, 
by Theorem \ref{t-regularity-2}, there is $l\gg 0$ not depending on $S$, 
$s$, and $\x{G}$ such that $\x{K}_s$ is $l$-regular. 

Now, by Theorem \ref{t-regularity-1}, 
we have $H^p(X_s,\x{K}(l)_s)=0$ if $p>0$. So,  
the vanishing  $H^p(X_s,\x{F}_S(l)_s)=0$ for $p>0$ implies that $H^p(X_s,\x{G}(l)_s)=0$ if $p>0$
and that $\x{K}(l)_s$, $\x{F}_S(l)_s$ and $\x{G}(l)_s$ are generated 
by global sections (perhaps after replacing $l$ with $l+n$). Now the base change theorem (\ref{t-base-change}) allows us to 
deduce that $R^p{f_S}_*\x{K}(l)=0$, $R^p{f_S}_*\x{F}_S(l)=0$, $R^p{f_S}_*\x{G}(l)=0$ if $p>0$, 
and that the sequence $(1)$ is indeed an exact sequence of locally free sheaves. 
These properties also imply that in the diagram 
$$ 
\xymatrix{
 0\ar[r] & f_S^*{f_S}_*\x{K}(l)\ar[r]\ar[d]^\alpha & f_S^*{f_S}_*\x{F}_S(l)\ar[r]\ar[d]^\beta & f_S^*{f_S}_*\x{G}(l)\ar[r]\ar[d]^\gamma & 0\\
 0\ar[r] & \x{K}(l)\ar[r] & \x{F}_S(l) \ar[r]& \x{G}(l)\ar[r] & 0
}
$$
the maps $\alpha$, $\beta$, and $\gamma$ are surjective: we verify it for $\alpha$ and 
similar arguments apply to $\beta$ and $\gamma$ (actually it is well-known that 
$\beta$ is surjective which also implies surjectivity of $\gamma$). Here we may assume 
$Y$ is affine. For any $s\in S$, by base change, the map 
$$
{f_S}_*\x{K}(l)\otimes k(s)=H^0(X_S,\x{K}(l))\otimes k(s)\to H^0(X_s,\x{K}(l)_s)
$$
is an isomorphism. Now the pullback of the sheaf ${f_S}_*\x{K}(l)\otimes k(s)$ on $X_s$ 
is 
$$
{f_S}_*\x{K}(l)\otimes k(s)\otimes \x{O}_{X_s}\simeq H^0(X_s,\x{K}(l)_s)\otimes \x{O}_{X_s}
$$
which surjects onto $\x{K}(l)_s$ since $K(l)_s$ is generated by global sections. 
Therefore, $\alpha$ restricted to $X_s$ is 
surjective for every $s$. This implies that the cokernel of $\alpha$ is 
zero hence $\alpha$ is surjective.\\

\emph{Step 5.} In Step 4, for each morphism $g\colon S\to Y$ from a Noetherian scheme 
and each quotient $\x{F}_S\to \x{G}$ in $\cQuot^{\Phi,\x{L}}_{\x{F}/X/Y}(S)$ we constructed 
a quotient ${f_S}_*\x{F}_S(l)\to {f_S}_*\x{G}(l)$ which actually is an element 
of $\cQuot^{\Phi(l),\x{O}_Y}_{f_*\x{F}(l)/Y/Y}(S)$ where $\Phi(l)$ is considered 
as a constant polynomial. We are using the isomorphism 
${f_S}_*\x{F}_S(l)\simeq g^*f_*\x{F}(l)$ which follows from the facts that 
$X=\PP^n_Y$, $\x{F}=\x{O}_X^r$, and that the  
natural map $g^*f_*\x{F}(l)\otimes k(s)\to {f_S}_*\x{F}_S(l)\otimes k(s)$ is an isomorphism 
for any $s\in S$. Thus, we get a natural transformation 
$$
\cQuot^{\Phi,\x{L}}_{\x{F}/X/Y} \to \cQuot^{\Phi(l),\x{O}_Y}_{f_*\x{F}(l)/Y/Y}
$$
which is injective in the sense that for any morphism $g\colon S\to Y$ from a Noetherian scheme 
the map 
$$
\cQuot^{\Phi,\x{L}}_{\x{F}/X/Y}(S) \to \cQuot^{\Phi(l),\x{O}_Y}_{f_*\x{F}(l)/Y/Y}(S)
$$
is injective: in fact if for two quotients $\x{F}_S\to \x{G}$ and $\x{F}_S\to \x{G}'$
in $\cQuot^{\Phi,\x{L}}_{\x{F}/X/Y}(S)$ the quotients ${f_S}_*\x{F}_S(l)\to {f_S}_*\x{G}(l)$
and ${f_S}_*\x{F}_S(l)\to {f_S}_*\x{G}'(l)$ are equivalent, then 
${f_S}_*\x{K}(l)={f_S}_*\x{K}'(l)$ where $\x{K}$ and $\x{K}'$ are the corresponding kernels.
Thus, $f_S^*{f_S}_*\x{K}(l)=f_S^*{f_S}_*\x{K}'(l)$ and the diagram 
in Step 4 shows that both $\x{K}(l)$ and $\x{K}'(l)$ are the image of $f_S^*{f_S}_*\x{K}(l)$ 
under the map $f_S^*{f_S}_*\x{F}(l)\to \x{F}(l)$. Therefore, $\x{K}(l)=\x{K}'(l)$ 
which implies that $\x{K}=\x{K}'$ and that the two quotients $\x{F}_S\to \x{G}$ and $\x{F}_S\to \x{G}'$ are equivalent hence the same object in $\cQuot^{\Phi,\x{L}}_{\x{F}/X/Y}(S)$.\\

\emph{Step 6.} 
The functor $\cQuot^{\Phi(l),\x{O}_Y}_{f_*\x{F}(l)/Y/Y}$ is represented by the Grassmannian scheme $G:=\Grass(f_*\x{F}(l),\Phi(l))$, by Theorem \ref{t-Hilb-Grassmannian}, which is 
a closed subscheme of $\PP(\wedge^{\Phi(l)}f_*\x{F}(l))$ hence projective over $Y$. 
For ease of notation let $\x{M}=f_*\x{F}(l)$, let $\mathfrak{G}$ be the universal family 
on $G$ which comes with a surjective morphism $\pi^*\x{M}\to \mathfrak{G}$ whose kernel we denote by $\x{L}$ where $\pi$ is the structure morphism $G\to Y$. Pulling back the exact sequence 
$$
0\to \x{L}\to \pi^*\x{M}\to \mathfrak{G}\to 0
$$ 
onto $X_G$ via $f_G$ gives the exact sequence 
$$
0\to f_G^*\x{L}\to f_G^*\pi^*\x{M}\to f_G^*\mathfrak{G}\to 0
$$
Since $\pi^*\x{M}={f_G}_*\x{F}_G(l)$, we have  $f_G^*\pi^*\x{M}=f_G^*{f_G}_*\x{F}_G(l)$. The natural 
morphism  $f_G^*{f_G}_*\x{F}_G(l)\to \x{F}_G(l)$ then induces a morphism $f_G^*\x{L}\to \x{F}_G(l)$ whose 
cokernel we denote by $\x{R}$. So, we have an exact sequence 
$f_G^*\x{L}\to \x{F}_G(l)\to \x{R}\to 0$.

Now let $g\colon S\to Y$ be a morphism from a Noetherian scheme, 
$\x{F}_S\to \x{G}$ an element in $\cQuot^{\Phi,\x{L}}_{\x{F}/X/Y}(S)$, and 
${f_S}_*\x{F}_S(l)\to {f_S}_*\x{G}(l)$ the corresponding element 
in $\cQuot^{\Phi(l),\x{O}_Y}_{f_*\x{F}(l)/Y/Y}(S)$. Then, there is a unique 
morphism $h\colon S\to G$ such that 
$$
0\to {f_S}_*\x{K}(l)\to {f_S}_*\x{F}_S(l)\to {f_S}_*\x{G}(l)\to 0
$$
coincides with
$$
0\to h^*\x{L}\to h^*\pi^*\x{M}\to h^*\mathfrak{G}\to 0
$$ 
We are 
using the facts that $g^*\x{M}={f_S}_*\x{F}_S(l)$ and that $\mathfrak{G}$ 
is locally free. Thus, the sequence 
$$
0\to f_G^*\x{L}\to f_G^*\pi^*\x{M}\to f_G^*\mathfrak{G}\to 0
$$
 pulls back via the 
induced morphism $c\colon X_S\to X_G$ to the sequence 
$$ 
0\to f_S^*{f_S}_*\x{K}(l)\to f_S^*{f_S}_*\x{F}_S(l)\to f_S^*{f_S}_*\x{G}(l)\to 0
$$ 
Therefore, the morphism $c$ pulls back $f_G^*\x{L}\to \x{F}_G(l)\to \x{R}\to 0$ 
to 
$$
f_S^*{f_S}_*\x{K}(l)\to \x{F}_S(l)\to \x{G}(l)\to 0
$$ 
in particular, $\x{R}(-l)$ 
is pulled back to $\x{G}$ which is flat over $S$. 

Now the Hilbert stratification of $G$ for the sheaf $\x{R}(-l)$ as in Theorem 
\ref{t-flat-Hilb-stratification} shows that the morphism $h\colon S\to G$ 
factors through the locally closed subscheme $G_{\Phi}$. On the other hand, 
any morphism $S\to G$ over $Y$ which factors through $G_{\Phi}$ 
will give a quotient of $\x{F}_S$ in $\cQuot^{\Phi,\x{L}}_{\x{F}/X/Y}(S)$ 
which is nothing but the pullback of the quotient $\x{F}_G\to \x{R}(-l)$.
Therefore, the scheme $G_{\Phi}$ together with the universal family 
$\x{F}_G\to \x{R}(-l)$ restricted 
to $X_{G_{\Phi}}$ represent the functor $\cQuot^{\Phi,\x{L}}_{\x{F}/X/Y}$.
 The quotient scheme  $\Quot^{\Phi,\x{L}}_{\x{F}/X/Y}$
is then nothing but $G_{\Phi}$.\\

\emph{Step 7.} It remains to prove that the scheme $G_{\Phi}$ is a closed subset 
of $G$. Since $G$ is projective over $Y$, it is enough to show that $G_{\Phi}$ is proper 
over $Y$. To do that we use the valuative criterion of properness as $G_{\Phi}$ is 
Noetherian and of finite type over $Y$. Let $R$ be a DVR, $K$ its fraction field 
and assume that we have a commutative diagram 
$$
\xymatrix{
T=\Spec K \ar[r]\ar[d]^j & G_{\Phi}\ar[d]\ar[r] & G \ar[ld] \\
S=\Spec R \ar[r] & Y
}
$$
which induces a commutative diagram 
$$
\xymatrix{
X_T\ar[r]\ar[d]^{j'} & X_{G_{\Phi}}\ar[d]\ar[r] & X_G \ar[ld] \\
X_S\ar[r] & X
}
$$
where $j$ and $j'$ are open immersions.
Now the pullback of $\x{F}_G\to \x{R}(-l)$ gives a quotient $\x{F}_T\to \x{G}$ 
in $\cQuot^{\Phi,\x{L}}_{\x{F}/X/Y}(T)$ from which we get a morphism $j'_*\x{F}_T\to j'_*\x{G}$. On the other hand, 
since $\x{F}_T=j'^*\x{F}_S$, there is a natural morphism $\x{F}_S\to j'_*\x{F}_T$ 
which combined with $j'_*\x{F}_T\to j'_*\x{G}$ determines a morphism 
$\phi\colon \x{F}_S\to j'_*\x{G}$ hence a quotient $\x{F}_S\to \x{H}$ where 
$\x{H}$ is the image of $\phi$. 

Since $\x{G}$ is flat over $T$, $j'_*\x{G}$ is flat over $S$. Moreover, since $R$ 
is a DVR, any subsheaf of $j'_*\x{G}$ is also flat over $R$ hence $\x{H}$ is flat 
over $S$ (note that $R$ is a PID so flatness is equivalent to torsion-freeness). Thus, the Hilbert polynomial of $\x{H}$ on the fibres of $X_S\to S$ 
is the same over the two points of $S$. Therefore, the quotient $\x{F}_S\to \x{H}$ 
is an element of $\cQuot^{\Phi,\x{L}}_{\x{F}/X/Y}(S)$. By construction, the pullback 
of $\x{F}_S\to \x{H}$ to $X_T$ coincides with $\x{F}_T\to \x{G}$. Moreover, 
the arguments in Step 6 show that $\x{F}_G\to \x{R}(-l)$ pulls back to 
$\x{F}_S\to \x{H}$.
Therefore, there is a unique morphism $S\to G_{\Phi}$ over $Y$ which restricts to the 
given morphism $T\to G_{\Phi}$. This proves that $G_{\Phi}$ is indeed proper 
hence projective over $Y$.\\
\end{proof}

\begin{lem}\label{l-Hilb-quot-functor-embedding}
Let $f\colon X\to Y$ be a projective morphism of Noetherian schemes, 
$\x{L}=\x{O}_X(1)$ a very ample invertible sheaf over $Y$, 
and $\Phi\in \Q[t]$ a polynomial. Let $\x{F}'\to \x{F}$ be a surjective 
morphism of coherent sheaves on $X$.
If $\cQuot^{\Phi,\x{L}}_{\x{F}'/X/Y}$ is represented 
by a scheme $\Quot^{\Phi,\x{L}}_{\x{F}'/X/Y}$, then $\cQuot^{\Phi,\x{L}}_{\x{F}/X/Y}$ 
is represented by a closed subscheme of $\Quot^{\Phi,\x{L}}_{\x{F}'/X/Y}$. 
\end{lem}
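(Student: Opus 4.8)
The surjection $\x{F}'\to\x{F}$ induces, for each Noetherian $S$ over $Y$, the map sending a quotient $\x{F}_S\to\x{G}$ to the composite $\x{F}'_S\to\x{F}_S\to\x{G}$, hence a natural transformation $\Theta\colon\cQuot^{\Phi,\x{L}}_{\x{F}/X/Y}\to\cQuot^{\Phi,\x{L}}_{\x{F}'/X/Y}$. The plan is to realise $\cQuot^{\Phi,\x{L}}_{\x{F}/X/Y}$ via $\Theta$ as a closed subfunctor of $\cQuot^{\Phi,\x{L}}_{\x{F}'/X/Y}$: I will produce a closed subscheme $Q_0\hookrightarrow Q:=\Quot^{\Phi,\x{L}}_{\x{F}'/X/Y}$ such that a morphism $g\colon S\to Q$ lies in the image of $\Theta_S$ — under the identification $\Hom(S,Q)\simeq\cQuot^{\Phi,\x{L}}_{\x{F}'/X/Y}(S)$ — if and only if $g$ factors through $Q_0$; then $Q_0$, with the universal family obtained by factoring the restricted universal quotient of $\x{F}'$ through $\x{F}$, represents $\cQuot^{\Phi,\x{L}}_{\x{F}/X/Y}$. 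First I would check two elementary points. (a) $\Theta_S$ is injective: if $\x{F}_S\to\x{G}_1$ and $\x{F}_S\to\x{G}_2$ become equivalent quotients of $\x{F}'_S$, then the compatibility isomorphism $\x{G}_1\to\x{G}_2$ is already compatible over $\x{F}_S$ because $\x{F}'_S\to\x{F}_S$ is an epimorphism. (b) The image of $\Theta_S$ consists exactly of those quotients $\x{F}'_S\to\x{G}$ for which the composite $\x{K}_{0,S}\to\x{F}'_S\to\x{G}$ vanishes, where $\x{K}_0:=\ker(\x{F}'\to\x{F})$ and $\x{K}_{0,S}$ is its pullback to $X_S$; indeed $\x{F}_S=\operatorname{coker}(\x{K}_{0,S}\to\x{F}'_S)$ by right exactness of pullback, so such a composite vanishes precisely when $\x{F}'_S\to\x{G}$ factors through $\x{F}_S$, and the resulting quotient $\x{F}_S\to\x{G}$ is then an element of $\cQuot^{\Phi,\x{L}}_{\x{F}/X/Y}(S)$ since $\x{G}$ retains its flatness over $S$ and fibrewise Hilbert polynomial $\Phi$. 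Letting $q\colon\x{F}'_Q\to\mathfrak{G}$ be the universal quotient on $X_Q$ and $w\colon\x{K}_{0,Q}\to\mathfrak{G}$ the composite of $\x{K}_{0,Q}\to\x{F}'_Q$ with $q$, the subfunctor to be represented is thus $S\mapsto\{\,g\colon S\to Q\mid c^{*}w=0\,\}$, where $c\colon X_S\to X_Q$ is induced by $g$.

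The core of the argument is representing the condition $c^{*}w=0$ by a closed subscheme, and the decisive feature here is that the target $\mathfrak{G}$ is flat over $Q$ while the source $\x{K}_{0,Q}$ need not be. Since the claim is local on $Y$ I may assume $Y$ affine. For $l\gg0$: by Theorem \ref{t-flat-twist} and Corollary \ref{t-flat-base-change-locally-free} the sheaf $\x{W}:=(f_Q)_*\mathfrak{G}(l)$ is locally free of finite rank and commutes with base change; by Serre's theorem $\x{K}_0(l)$ is a quotient of $\x{O}_X^N$, hence $\x{K}_{0,Q}(l)$ and $\x{K}_{0,S}(l)$ are generated by global sections relative to $f_Q$, resp.\ $f_S$; and by Exercise \ref{exe-noflat-base-change} the base change maps $g^{*}(f_Q)_*\x{K}_{0,Q}(l)\to(f_S)_*\x{K}_{0,S}(l)$ and $g^{*}\x{W}\to(f_S)_*c^{*}\mathfrak{G}(l)$ are isomorphisms. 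As twisting by $\x{O}(l)$ is an autoequivalence, $c^{*}w=0\iff c^{*}w(l)=0$; applying $(f_S)_*$ and chasing the naturality square of the base change maps against $w(l)$ gives $c^{*}w=0\iff(f_S)_*(c^{*}w(l))=0\iff g^{*}((f_Q)_*w(l))=0$, where the first equivalence uses that a morphism out of a relatively globally generated sheaf is zero once its pushforward is (surjectivity of the counit), and the second uses the base change isomorphisms. Finally, with $v:=(f_Q)_*w(l)\colon\x{V}\to\x{W}$, $\x{V}:=(f_Q)_*\x{K}_{0,Q}(l)$ coherent and $\x{W}$ locally free, I would dualise: the composite $\x{V}\otimes\x{W}^{\vee}\xrightarrow{v\otimes1}\x{W}\otimes\x{W}^{\vee}\xrightarrow{\mathrm{tr}}\x{O}_Q$ is a morphism $\varphi$ whose image is a coherent ideal $\x{J}\subseteq\x{O}_Q$; because $\x{W}$ is locally free, $g^{*}v=0\iff g^{*}\varphi=0\iff g^{-1}\x{J}\cdot\x{O}_S=0$, i.e.\ iff $g$ factors through $Q_0:=V(\x{J})$. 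That $Q_0$ does not depend on the auxiliary $l$ follows from the uniqueness of a closed subscheme representing a given subfunctor, which also lets the local constructions glue over $Y$.

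With $Q_0$ in hand it remains only to note that $w$ restricts to zero on $X_{Q_0}$, so the restriction of $q$ factors uniquely as $\x{F}_{Q_0}\to\mathfrak{G}|_{X_{Q_0}}$, an element of $\cQuot^{\Phi,\x{L}}_{\x{F}/X/Y}(Q_0)$ (flatness and the fibrewise Hilbert polynomial being inherited from $\mathfrak{G}$), and that this family represents the functor: for $g\colon S\to Q$ the pullback of $\x{F}_{Q_0}\to\mathfrak{G}|_{X_{Q_0}}$ is defined, and coincides with the $\x{F}_S$-quotient structure on $c^{*}\mathfrak{G}$, exactly when $g$ factors through $Q_0$, and by (a) and (b) the set of such $g$ is precisely $\cQuot^{\Phi,\x{L}}_{\x{F}/X/Y}(S)$. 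I expect the main obstacle to be the middle paragraph: converting ``a morphism of coherent sheaves on $X_S$ vanishes'' into ``a morphism of coherent sheaves on $S$ vanishes'' and then into a closed condition. This is what forces the twist, the combined use of relative global generation and the non-flat base change statement (Exercise \ref{exe-noflat-base-change}), and the trace reduction; and it is exactly flatness of $\mathfrak{G}$ over $Q$ that makes $\x{W}$ locally free, which is what makes the trace reduction available.
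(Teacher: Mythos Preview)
Your proof is correct in its overall strategy and takes a genuinely different route from the paper. The paper identifies the same subfunctor (quotients of $\x{F}'_S$ that kill the image of $\ker(\x{F}'\to\x{F})$), but then forms the further quotient $\x{R}:=\x{F}'_{Q'}/(\x{L}+\x{N})$ of the universal family (with $\x{L}=\ker(\x{F}'_{Q'}\to\mathfrak{G}')$ and $\x{N}=\ker(\x{F}'_{Q'}\to\x{F}_{Q'})$), applies the flattening stratification of Theorem \ref{t-flat-Hilb-stratification} to $\x{R}$ to obtain a \emph{locally closed} stratum $Q'_{\Phi}$, and finally invokes the valuative criterion of properness to upgrade locally closed to closed. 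Your approach is more direct: you convert the vanishing of $c^*w$ into the vanishing of a morphism on $Q$ with locally free target and read off an honest ideal sheaf, so closedness comes out immediately and no valuative-criterion step is needed. What the paper's route buys is that it reuses machinery (Theorem \ref{t-flat-Hilb-stratification}) already developed; what yours buys is that it is self-contained and explains exactly which ideal cuts out $Q_0$.

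One small gap to flag: your equivalence $(f_S)_*(c^{*}w(l))=0\iff g^{*}((f_Q)_*w(l))=0$ relies on the base-change map $g^{*}(f_Q)_*\x{K}_{0,Q}(l)\to(f_S)_*\x{K}_{0,S}(l)$ being an isomorphism, but Exercise \ref{exe-noflat-base-change} only supplies this for $l$ depending on $g\colon S\to Q$, whereas you need a single $l$ valid for all $S$. The fix is already in your hands: precompose with the surjection $\x{O}_{X}^N\twoheadrightarrow\x{K}_{0}(l)$ you introduced, so that the source becomes $f_Q^{*}\x{O}_Q^N$. Then $c^{*}w(l)=0$ iff the composite $f_S^{*}\x{O}_S^N\to c^{*}\mathfrak{G}(l)$ vanishes (the first arrow being surjective), and by adjunction together with the base-change isomorphism for $\mathfrak{G}$ \emph{alone} this is exactly $g^{*}u=0$ for the map $u\colon\x{O}_Q^N\to\x{W}$ adjoint to $\x{O}_{X_Q}^N\to\mathfrak{G}(l)$. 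No base change for $\x{K}_{0,Q}$ is needed, and your trace reduction (or simply the $N$ resulting sections of $\x{W}$) then cuts out $Q_0$.
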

\begin{proof}
For ease of notation we let $Q':=\Quot^{\Phi,\x{L}}_{\x{F}'/X/Y}$. Let 
${\mathfrak{G}'}$ be the universal family on $X_{Q'}$ which comes with a 
surjective morphism $\x{F}'_{Q'}\to {\mathfrak{G}'}$ whose kernel we denote by 
$\x{L}$. The given morphism 
$\x{F}'\to \x{F}$ also gives a surjective morphism $\x{F}'_{{Q'}}\to \x{F}_{{Q'}}$ 
whose kernel we denote by $\x{N}$. Put $\x{R}=\x{F}'_{{Q'}}/(\x{L}+\x{N})$.
So, we have a commutative diagram 
$$
\xymatrix{
0\ar[rd] &                  &    0\ar[d]   &  &\\
 &         \x{L}+\x{N}\ar[rd]         &  \x{L}\ar[d] & &\\
0\ar[r] & \x{N}\ar[r] & \x{F}'_{Q'}\ar[r]\ar[d]\ar[rd] &  \x{F}_{{Q'}}\ar[r]\ar[d] & 0\\
 &               &   {\mathfrak{G}'} \ar[r]\ar[d] & \x{R} \ar[r]\ar[d]\ar[rd] & 0\\ 
   &            &     0     &  0 & 0
}
$$
Let $Q:=Q'_{\Phi}$ be the locally closed subscheme of $Q'$ corresponding to $\Phi$ given by the stratification of $Q'$ as in Theorem \ref{t-flat-Hilb-stratification} for the sheaf $\x{R}$. 
 
Now let $g\colon S\to Y$ be a morphism from a Noetherian scheme, 
$\x{F}_S\to \x{G}$ an element in $\cQuot^{\Phi,\x{L}}_{\x{F}/X/Y}(S)$. 
The surjection $\x{F}'_S\to \x{F}_S$ composed with  $\x{F}_S\to \x{G}$
naturally gives a quotient $\x{F}'_S\to \x{G}$ in $\cQuot^{\Phi,\x{L}}_{\x{F}'/X/Y}(S)$. 
Thus, we have a natural transformation 
$$
\cQuot^{\Phi,\x{L}}_{\x{F}/X/Y}\to \cQuot^{\Phi,\x{L}}_{\x{F}'/X/Y}
$$
Moreover, the quotient  $\x{F}'_S\to \x{G}$ uniquely determines 
a morphism $h\colon S\to Q'$ such that if $c\colon X_S\to X_{Q'}$ is the 
induced morphism, then the quotient $\x{F}'_{Q'}\to {\mathfrak{G}'}$ 
is pulled back to $\x{F}'_S\to \x{G}$ via $c$. The above diagram gives 
the commutative diagram 
$$
\xymatrix{
 &                  &    0\ar[d]   &  &\\
 &        c^*(\x{L}+\x{N})\ar[rd]         &  c^*\x{L}\ar[d] & &\\
 & c^*\x{N}\ar[r] & \x{F}'_{S}\ar[r]^\beta\ar[d]^\alpha\ar[rd] &  \x{F}_{S}\ar[r]\ar[d] & 0\\
 &               &   \x{G} \ar[r]^\gamma\ar[d] & c^*\x{R} \ar[r]\ar[d]\ar[rd] & 0\\ 
   &            &     0     &  0 & 0
}
$$
By construction, the morphism $\alpha$ factors through the morphism $\beta$ 
hence the image of $c^*\x{L}$ contains the image of $c^*\x{N}$ which implies that 
 the image of $c^*\x{L}$ is equal to the image of $c^*(\x{L}+\x{N})$. Thus,  
$\gamma$ is an isomorphism. In particular, this implies that $h\colon S\to Q'$ 
factors through $Q$. Therefore, the functor  $\cQuot^{\Phi,\x{L}}_{\x{F}/X/Y}$ is represented 
by the scheme $Q$ and the quotient $\x{F}_{Q'}\to \x{R}$ 
restricted to $X_Q$.

Finally, we need to prove that $Q$ is a closed subscheme of $Q'$. This can be done 
by proving that $Q$ is proper over $Y$ using the valuative criterion 
for properness similar to Step 7 of the proof of Theorem 
\ref{t-Hilb-quot-functor}.
\end{proof}

%%%%%%%%%%%%%%%%%%%%%%%%%%%%%%%%%%

\section*{Exercises}
\begin{enumerate}
\item\label{exe-Hilb-polynomial} 
Let $X$ be a projective scheme over a field $k$, $\x{L}$ an 
invertible sheaf on $X$ and $\x{F}$ a coherent sheaf on $X$. Show that the function $\Phi$ on 
$\Z$ defined by $\Phi(l)=\mathcal{X}(X,\x{F}\otimes \x{L}^{\otimes l})$ is a 
 polynomial in $\Q[t]$ which is called the Hilbert polynomial of $\x{F}$ with respect to 
 $\x{L}$.\\

\item\label{exe-Hilb-funtoriality}  
Let $f\colon X\to Y$ be a projective morphism of Noetherian schemes, $\x{L}$ an 
invertible sheaf on $X$, $\x{F}$ a coherent sheaf on $X$ and $\x{F}\to \x{G}$ 
a coherent quotient such that $\x{G}$ is flat over $Y$ with Hilbert polynomial 
$\Phi$  with respect to $\x{L}$. Let $g\colon S\to Y$ be a morphism 
from a Noetherian scheme. Show that $\x{F}_S\to \x{G}_S$ is a 
coherent quotient with $\x{G}_S$ flat over $S$ having Hilbert polynomial $\Phi$.\\
 
\item\label{exe-Hilb-fibre-isom} 
Let $f\colon X\to Y$ be a projective morphism of Noetherian schemes and $\phi\colon \x{F}\to \x{G}$ 
a morphism of coherent sheaves on $X$ such that the induced morphism $\phi_y\colon \x{F}_y\to \x{G}_y$ 
on the fibre $X_y$ is an isomorphism, for every $y\in Y$. Show that $\phi$ is surjective. 
Moreover, if $\x{G}$ is flat over $Y$, show that $\phi$ is an isomorphism (Hint: use 
Exercise \ref{exe-noflat-base-change} of Chapter \ref{ch-flat} and the fact that if $\x{G}$ is flat over $Y$, then $f_*\x{G}(l)$ is locally free 
for every $l\gg 0$).\\

\item\label{exe-Hilb-hypersurface} 
Let $X=\PP^n_k=\Proj k[t_0,\dots,t_n]$ where $k$ is a field and let $\Phi(t)=\binom{n+t}{n}-\binom{n-d+t}{n}$.
Let $Z$ be a closed subscheme of $X$ with Hilbert polynomial $\Phi$ with respect to 
$\x{O}_X(1)$. Show that $Z$ is a hypersurface of degree $d$. 
\end{enumerate}

%%%%%%%%%%%%%%%%%%%%%%%%%%%%%%%%%%
%%%%%%%%%%%%%%%%%%%%%%%%%%%%%%

\appendix\chapter{\tt Castelnuovo-Mumford Regularity}

\begin{defn}
Let $X$ be a projective scheme over a field $k$, and let $\x{F}$ be a coherent 
sheaf on $X$. We say that $\x{F}$ is $m$-regular (with respect to a very ample 
invertible sheaf $\x{O}_X(1)$) if $H^p(X,\x{F}(m-p))=0$ for every 
$p>0$.\\
\end{defn}

Obviously, a fixed $\x{F}$ is $m$-regular for every $m\gg 0$. The following theorems are 
used in the chapter on Hilbert and Quotient schemes. They are proved using easy elementary 
arguments, for example, see Nitsure [\ref{F},\S 5].\\

\begin{thm}\label{t-regularity-1}
Let $X$ be a projective scheme over a field $k$, and let $\x{F}$ be a coherent 
sheaf on $X$ which is $m$-regular. Then, we have the following properties:

$(i)$ $\x{F}$ is $l$-regular for any $l\ge m$,

$(ii)$ the natural map $H^0(X,\x{O}_X(1))\otimes_k H^0(X,\x{F}(l))\to H^0(X,\x{F}(l+1))$ is surjective for any $l\ge m$,

$(iii)$ $\x{F}(l)$ is generated by global sections for any $l\ge m$.\\
\end{thm}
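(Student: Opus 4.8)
The plan is to reduce to the case $X=\PP^n_k$ and then induct on $n$, the essential geometric input being a sufficiently general hyperplane section. First I would cut the problem down to a convenient setting. Using flat base change (Theorem \ref{t-flat-base-change}) together with faithful flatness of a field extension, one checks that $\x{F}$ is $m$-regular, resp. satisfies (i), (ii) or (iii), if and only if $\x{F}_{\bar k}$ on $X_{\bar k}$ does; hence I may assume $k$ is infinite. Next, embedding $X$ into some $\PP^n_k$ via the complete linear system $|\x{O}_X(1)|$ and replacing $\x{F}$ by its (coherent) pushforward, one reduces all three statements to $X=\PP^n_k$, $\x{O}_X(1)=\x{O}_{\PP^n}(1)$: here one uses that pushforward along a closed immersion is exact, commutes with twisting and preserves cohomology, and that passing to the complete system makes $H^0(\PP^n,\x{O}(1))\to H^0(X,\x{O}_X(1))$ an isomorphism, which is what allows statement (ii) to transfer.

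Now I would argue by induction on $n$, the case $n=0$ being trivial. For $n\ge 1$, since $k$ is infinite and $\x{F}$ has only finitely many associated points, I can choose a hyperplane $H\cong\PP^{n-1}$ whose defining section $h\in H^0(\PP^n,\x{O}(1))$ lies in no associated prime of $\x{F}$; then $h$ is a nonzerodivisor on $\x{F}$, and there is a short exact sequence $0\to\x{F}(j-1)\xrightarrow{\ \cdot h\ }\x{F}(j)\to\x{F}_H(j)\to 0$ for every $j$, with $\x{F}_H=\x{F}\otimes\x{O}_H$. Chasing the long exact cohomology sequence shows first that $\x{F}_H$ is $m$-regular on $\PP^{n-1}$, since for $p>0$ both neighbouring groups $H^p(\x{F}(m-p))$ and $H^{p+1}(\x{F}(m-p-1))$ vanish; so the inductive hypothesis applies to $\x{F}_H$. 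For (i) I would show $\x{F}$ is $(m+1)$-regular: in the long exact sequence $H^p(\x{F}(m-p))$ vanishes by $m$-regularity of $\x{F}$, and $H^p(\x{F}_H(m+1-p))$ vanishes because $\x{F}_H$, being $m$-regular, is $(m+1)$-regular by induction; iterating the twist gives $l$-regularity for all $l\ge m$. For (ii), fix $l\ge m$ and $s\in H^0(\x{F}(l+1))$: its restriction to $H$ decomposes as $\sum\bar v_i\bar t_i$ with $\bar v_i\in H^0(\x{O}_H(1))$, $\bar t_i\in H^0(\x{F}_H(l))$ by (ii) for $\x{F}_H$; lift the $\bar v_i$ using surjectivity of $H^0(\x{O}_{\PP^n}(1))\to H^0(\x{O}_H(1))$ (from $H^1(\x{O}_{\PP^n})=0$) and the $\bar t_i$ using surjectivity of $H^0(\x{F}(l))\to H^0(\x{F}_H(l))$ (from $H^1(\x{F}(l-1))=0$, which holds by (i)); then $s-\sum v_it_i$ dies on $H$, hence equals $h\cdot t'$ with $t'\in H^0(\x{F}(l))$, and $s=\sum v_it_i+ht'$ lies in the image of $H^0(\x{O}(1))\otimes H^0(\x{F}(l))$.

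Finally, (iii) follows from (ii) with no further use of $H$: iterating (ii), together with surjectivity of $\Sym^j H^0(\x{O}_{\PP^n}(1))\to H^0(\x{O}_{\PP^n}(j))$, shows that the graded module $M=\bigoplus_{l\ge m}H^0(\PP^n,\x{F}(l))$ over $S=k[x_0,\dots,x_n]$ is generated in degree $m$. Since $\x{F}$ is the sheaf associated to $M$ on $\PP^n=\Proj S$ (truncating the negative-degree pieces of $\Gamma_*(\x{F})$ does not change the associated sheaf), each localisation $M_{(x_i)}=H^0(D_+(x_i),\x{F})$ is generated over $\x{O}(D_+(x_i))$ by the images of $H^0(\x{F}(m))$, so $\x{F}(m)$ is generated by global sections; applying this to $\x{F}$ regarded as $l$-regular (by (i)) for any $l\ge m$ yields (iii).

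The main obstacle I expect is not a single computation but getting the bookkeeping of the two reductions exactly right — in particular checking that statement (ii), unlike (i) and (iii), only transfers through a \emph{complete} embedding, and verifying that a good hyperplane exists once $k$ is infinite (a finite union of proper linear subspaces of $H^0(\x{O}(1))$ cannot be everything). After that, the inductive step is routine diagram-chasing in long exact sequences.
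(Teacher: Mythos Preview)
Your argument is correct and is precisely the classical Mumford proof that the paper defers to (it gives no proof of its own, only the reference to Nitsure in \emph{FGA explained}); the reduction to infinite $k$ and to $X=\PP^n_k$, the choice of a hyperplane avoiding the associated points of $\x{F}$, and the inductive long-exact-sequence chase for (i), (ii), (iii) are exactly the steps in that source. One minor remark: for transferring (ii) you only need surjectivity of $H^0(\PP^n,\x{O}(1))\to H^0(X,\x{O}_X(1))$, not bijectivity, so any projective embedding by $\x{O}_X(1)$ would do---but your choice of the complete linear system is harmless.
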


Recall that a polynomial $\Theta(t)\in \Q[t]$ is called numerical if $\Theta(l)\in \Z$ 
for any $l\in \Z$. It is well-known that such a polynomial can be written as 
$\Theta(t)=\sum_{i=0}^n a_i \binom{t}{i}$ for certain integers $a_0,\dots,a_n$ 
(cf. Hartshorne [\ref{Hartshorne}, I, 7.3]).\\

\begin{thm}\label{t-regularity-2}
Let $n,r$ be non-negative integers. Then, there is a polynomial $\Psi_{n,r}\in \Z[t_0,\dots,t_n]$ 
with the following property: if 

$\bullet$ $k$ is a field,

$\bullet$ $\x{F}$ is a coherent sheaf on $X=\PP^n_k$ which is a subsheaf of $\x{O}_X^r$,

$\bullet$ the Hilbert polynomial of $\x{F}$ is $\Phi(t)=\sum_{i=0}^n a_i \binom{t}{i}$,\\\\ 
then $\x{F}$ is $\Psi_{n,r}(a_0,\dots,a_n)$-regular. 
\end{thm}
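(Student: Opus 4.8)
The plan is to prove the statement by induction on $n$, following Mumford's classical argument (see Nitsure [\ref{F}, \S 5]). Since cohomology, Hilbert polynomials, and the inclusion $\x{F}\subseteq\x{O}_X^r$ are all unaffected by a flat extension of the base field (flat base change, Theorem \ref{t-flat-base-change}), I would first enlarge $k$ so as to assume it infinite. For $n=0$ one has $X=\Spec k$, so all higher cohomology vanishes and $\x{F}$ is $m$-regular for every $m$; thus $\Psi_{0,r}=0$ works. So assume $n\ge 1$ and that $\Psi_{n-1,r}$ has been constructed.

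For the inductive step I would choose a general hyperplane $H\cong\PP^{n-1}_k\subset X$ with cutting form $h$; since $k$ is infinite, $H$ can be taken to contain no associated point of $\x{F}$ nor of $\x{Q}:=\x{O}_X^r/\x{F}$. This yields exact sequences $0\to\x{F}(-1)\xrightarrow{\,\cdot h\,}\x{F}\to\x{F}_H\to 0$ and, since $\Tor_1^{\x{O}_X}(\x{Q},\x{O}_H)=0$, $0\to\x{F}_H\to\x{O}_H^r\to\x{Q}_H\to 0$, so $\x{F}_H$ is a subsheaf of $\x{O}_H^r$. From the first sequence $\chi(H,\x{F}_H(l))=\Phi(l)-\Phi(l-1)$, and expanding this in the basis $\binom{t}{j}$ shows that the coefficients $b_0,\dots,b_{n-1}$ of the Hilbert polynomial of $\x{F}_H$ in that basis are explicit $\Z$-linear combinations of $a_0,\dots,a_n$. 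By induction $\x{F}_H$ is $m_0$-regular with $m_0:=\Psi_{n-1,r}(b_0,\dots,b_{n-1})$, a polynomial in the $a_i$.

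Next I would transfer this back to $X$. The regularity of $\x{F}_H$ gives $H^p(H,\x{F}_H(q))=0$ for $p>0$ and $q\ge m_0-p$; feeding these into the cohomology sequence of $0\to\x{F}(l-1)\to\x{F}(l)\to\x{F}_H(l)\to 0$ produces isomorphisms $H^p(X,\x{F}(l-1))\simeq H^p(X,\x{F}(l))$ for $p\ge 2$ and $l\ge m_0-p+1$, whence, with Serre vanishing, $H^p(X,\x{F}(l))=0$ for all $p\ge 2$ and $l\ge m_0-p$, and also that $H^1(X,\x{F}(l-1))\to H^1(X,\x{F}(l))$ is surjective for $l\ge m_0-1$. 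In particular, for $m\ge m_0$ the sheaf $\x{F}$ is $m$-regular as soon as $H^1(X,\x{F}(m-1))=0$. Using the vanishing of $H^{\ge 2}(X,\x{F}(m_0-1))$ and the Euler characteristic, $\dim_k H^1(X,\x{F}(m_0-1))=\dim_k H^0(X,\x{F}(m_0-1))-\Phi(m_0-1)\le r\binom{n+m_0-1}{n}-\Phi(m_0-1)=:N$, again a polynomial in $a_0,\dots,a_n$.

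The crux --- and the step I expect to require the most care --- is to show that $l\mapsto\dim_k H^1(X,\x{F}(l))$ is strictly decreasing as long as it is positive, for $l\ge m_0-1$. If two consecutive values agree, the surjection $H^1(X,\x{F}(l))\to H^1(X,\x{F}(l+1))$ is an isomorphism, which forces the restriction $H^0(X,\x{F}(l+1))\to H^0(H,\x{F}_H(l+1))$ to be surjective; combining the surjectivity of $H^0(X,\x{O}_X(1))\to H^0(H,\x{O}_H(1))$ with Theorem \ref{t-regularity-1}(ii) applied to the $m_0$-regular sheaf $\x{F}_H$, this surjectivity propagates to all higher twists, so $\dim_k H^1(X,\x{F}(\,\cdot\,))$ becomes eventually constant, contradicting Serre vanishing unless it is zero. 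Hence this dimension drops by at least $1$ at each step until it vanishes, so $H^1(X,\x{F}(m_0-1+N))=0$ and $\x{F}$ is $(m_0+N)$-regular. Finally $m_0+N$ is a numerical polynomial in $a_0,\dots,a_n$; since $\x{F}$ is also $M$-regular for any integer $M\ge m_0+N$, replacing $m_0+N$ by a dominating polynomial with integer coefficients yields the required $\Psi_{n,r}\in\Z[t_0,\dots,t_n]$ and closes the induction.
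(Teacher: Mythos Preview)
Your argument is the standard Mumford induction and is correct; it is precisely the proof given in Nitsure [\ref{F}, \S 5], which is exactly what the paper invokes in lieu of a proof (the paper itself does not spell out the argument). One small point worth tightening in a final write-up: the bound $\dim_k H^0(X,\x{F}(m_0-1))\le r\binom{n+m_0-1}{n}$ and the subsequent ``replace by a dominating integer polynomial'' step are easiest to justify after first arranging $\Psi_{n-1,r}\ge 1$ (harmless by Theorem \ref{t-regularity-1}(i)), but this is a cosmetic fix and does not affect the substance of your proof.
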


%%%%%%%%%%%%%%%%%%%%%%%%%%%%%%%
%%%%%%%%%%%%%%%%%%%%%%%%%%%%%%%
%%%%%%%%%%%%%%%%%%%%%%%%%%%%%%%
%\section{Further topics}

%is existence of the Hilbert scheme for $\x{L}$ is equivalent to the 
%semi-ampleness of $\x{L}$? Applications (deformations [Hartshorne]), quotient by 
%a flat equivalence relation, Moduli spaces in birational geometry, 
%Hilbert-Chow morphism for Hilbert schemes of points.
%section{Apendix 2: Quotients by group actions}
%{\tt Formal geometry}
%Include the computation of cohomology on formal schemes as a direct limit: 
%see SGA 2 [expose II, expose VI section 2].\\

%%%%%%%%%%%%%%%%%%%%%%%%%%%%%%%%%%
%%%%%%%%%%%%%%%%%%%%%%%%%%%%%%

\small

\end{document}